\documentclass[12pt,a4paper,nosumlimits]{amsart}
\usepackage{amssymb,amsxtra,eucal}
\usepackage[cmtip,arrow]{xy}
\usepackage{pb-diagram,pb-xy}
\usepackage{hyperref}

\pagestyle{plain}
\raggedbottom

\textwidth=36pc
\calclayout

\emergencystretch=2em

\hyphenation{co-as-so-ci-a-tiv-ity con-tra-as-so-ci-a-tiv-ity
co-ac-tion co-ac-tions con-tra-ac-tion con-tra-ac-tions co-uni-tal
co-al-ge-bra co-al-ge-bras con-tra-mod-ule con-tra-mod-ules co-ker-nels
co-uni-tal-ity con-tra-uni-tal con-tra-uni-tal-ity co-unit co-units
semi-unit semi-uni-tal-ity semi-con-tra-mod-ule semi-con-tra-mod-ules
semi-uni-tal-ity semi-as-so-ci-a-tiv-ity semi-con-tra-ac-tion
semi-con-tra-uni-tal-ity semi-con-tra-as-so-ci-a-tiv-ity}

\newcommand{\+}{\nobreakdash-}
\renewcommand{\:}{\colon}
\renewcommand{\;}{,\medspace}
\renewcommand{\.}{\text{$\mskip .5\thinmuskip$}}

\renewcommand{\ge}{\geqslant}

\DeclareMathOperator{\Hom}{Hom}
\DeclareMathOperator{\End}{End}
\DeclareMathOperator{\Ext}{Ext}
\DeclareMathOperator{\Tor}{Tor}
\DeclareMathOperator{\Cohom}{Cohom}

\DeclareMathOperator{\GL}{GL}
\DeclareMathOperator{\SL}{SL}
\DeclareMathOperator{\Ad}{Ad}

\DeclareMathOperator{\gl}{\mathfrak{gl}}
\DeclareMathOperator{\ad}{\mathfrak{ad}}

\newcommand{\Vir}{\mathbb{V}\mathrm{ir}}

\newcommand{\ot}{\otimes}
\newcommand{\oc}{\mathbin{\text{\smaller$\square$}}}

\newcommand{\ocn}{\odot}
\newcommand{\Ocn}{\circledcirc}

\newcommand{\rarrow}{\longrightarrow}
\newcommand{\larrow}{\longleftarrow}
\newcommand{\birarrow}{\rightrightarrows}
\newcommand{\eps}{\varepsilon}
\newcommand{\kap}{\varkappa}

\newcommand{\lrarrow}{\.\relbar\joinrel\relbar\joinrel\rightarrow\.}

\newcommand{\comp}{\sphat\,}
\newcommand{\dual}{\spcheck}
\newcommand{\til}{\sptilde}
\newcommand{\wot}{\mathbin{\overset{\leftarrow}
                        {\vphantom{o}\smash\ot}}}
\newcommand{\eot}{\overset{\leftarrow}{\ }}
\newcommand{\tim}{{\rightthreetimes}}

\newcommand{\id}{{\mathrm{id}}}
\newcommand{\ev}{{\mathrm{ev}}}
\newcommand{\rop}{{\mathrm{op}}}

\let\SS\S

\renewcommand{\S}{{\boldsymbol{\mathcal S}}}
\newcommand{\bL}{{\boldsymbol{\mathcal L}}}
\newcommand{\bM}{{\boldsymbol{\mathcal M}}}
\newcommand{\bN}{{\boldsymbol{\mathcal N}}}
\newcommand{\bcJ}{{\boldsymbol{\mathcal J}}}
\newcommand{\bP}{{\boldsymbol{\mathfrak P}}}
\newcommand{\bQ}{{\boldsymbol{\mathfrak Q}}}
\newcommand{\bF}{{\boldsymbol{\mathfrak F}}}

\newcommand{\C}{{\mathcal C}}
\newcommand{\D}{{\mathcal D}}
\newcommand{\cE}{{\mathcal E}}
\newcommand{\cJ}{{\mathcal J}}
\newcommand{\K}{{\mathcal K}}
\renewcommand{\L}{{\mathcal L}}
\newcommand{\M}{{\mathcal M}}
\newcommand{\N}{{\mathcal N}}

\renewcommand{\P}{{\mathfrak P}}
\newcommand{\Q}{{\mathfrak Q}}
\newcommand{\R}{{\mathfrak R}}
\newcommand{\T}{{\mathfrak T}}
\newcommand{\E}{{\mathfrak E}}
\newcommand{\F}{{\mathfrak F}}
\newcommand{\I}{{\mathfrak I}}
\newcommand{\J}{{\mathfrak J}}
\newcommand{\gK}{{\mathfrak K}}
\newcommand{\gL}{{\mathfrak L}}
\newcommand{\gG}{{\mathfrak G}}
\newcommand{\gH}{{\mathfrak H}}
\newcommand{\gA}{{\mathfrak A}}
\newcommand{\gS}{{\mathfrak S}}

\newcommand{\m}{{\mathfrak m}}
\newcommand{\g}{{\mathfrak g}}
\newcommand{\h}{{\mathfrak h}}

\newcommand{\be}{{\mathbf e}}
\newcommand{\bm}{{\mathbf m}}
\newcommand{\bn}{{\mathbf n}}
\newcommand{\bp}{{\mathbf p}}

\newcommand{\vect}{{\operatorname{\mathsf{--vect}}}}
\newcommand{\modl}{{\operatorname{\mathsf{--mod}}}}
\newcommand{\modr}{{\operatorname{\mathsf{mod--}}}}
\newcommand{\bimod}{{\operatorname{\mathsf{--mod--}}}}
\newcommand{\discr}{{\operatorname{\mathsf{--discr}}}}
\newcommand{\comodl}{{\operatorname{\mathsf{--comod}}}}
\newcommand{\comodr}{{\operatorname{\mathsf{comod--}}}}
\newcommand{\bicomod}{{\operatorname{\mathsf{--comod--}}}}
\newcommand{\contra}{{\operatorname{\mathsf{--contra}}}}
\newcommand{\simodl}{{\operatorname{\mathsf{--simod}}}}
\newcommand{\sicntr}{{\operatorname{\mathsf{--sicntr}}}}

\newcommand{\sop}{{\mathsf{op}}}
\newcommand{\co}{{\mathsf{co}}}
\newcommand{\ctr}{{\mathsf{ctr}}}
\newcommand{\si}{{\mathsf{si}}}
\newcommand{\inj}{{\mathsf{inj}}}
\newcommand{\proj}{{\mathsf{proj}}}

\newcommand{\dinj}{{\operatorname{\mathsf{--inj}}}}
\newcommand{\dproj}{{\operatorname{\mathsf{--proj}}}}

\newcommand{\Ab}{{\mathsf{Ab}}}
\newcommand{\Sets}{{\mathsf{Sets}}}
\DeclareMathOperator{\Add}{\mathsf{Add}}

\newcommand{\sA}{{\mathsf A}}
\newcommand{\sB}{{\mathsf B}}
\newcommand{\sD}{{\mathsf D}}
\newcommand{\sO}{{\mathsf O}}
\newcommand{\sE}{{\mathsf E}}
\newcommand{\sM}{{\mathsf M}}
\newcommand{\sN}{{\mathsf N}}
\newcommand{\sK}{{\mathsf K}}
\newcommand{\sV}{{\mathsf V}}

\newcommand{\Z}{{\mathbb Z}}

\theoremstyle{plain}
\newtheorem*{thm}{Theorem}
\newtheorem*{thm1}{Theorem 1}
\newtheorem*{thm2}{Theorem 2}
\newtheorem*{prop}{Proposition}
\newtheorem*{prop1}{Proposition 1}
\newtheorem*{prop2}{Proposition 2}
\newtheorem*{prop3}{Proposition 3}
\newtheorem*{lem}{Lemma}
\newtheorem*{lem1}{Lemma 1}
\newtheorem*{lem2}{Lemma 2}
\newtheorem*{lem3}{Lemma 3}
\newtheorem*{cor}{Corollary}
\theoremstyle{definition}
\newtheorem*{rem}{Remark}
\newtheorem*{ex}{Example}
\newtheorem*{ex1}{Example 1}
\newtheorem*{ex2}{Example 2}

\newcommand{\Section}[1]{\bigskip\addtocontents{toc}{\smallskip}
\section{#1}\addtocontents{toc}{\protect\nopagebreak\smallskip}\medskip}
\newcommand{\Subsection}[1]{\par\medskip\noindent #1. }

\setcounter{tocdepth}{2}

\begin{document}

\title{Contramodules}

\author{Leonid Positselski}

\address{Institute of Mathematics of the Czech Academy of Sciences,
\v Zitn\'a~25, 115~67 Prague~1, Czech Republic; and
\newline\indent Laboratory of Algebra and Number Theory, Institute for
Information Transmission Problems, Moscow 127051, Russia}

\email{positselski@math.cas.cz}

\begin{abstract}
 Contramodules are module-like algebraic structures endowed with
infinite summation (or, occasionally, integration) operations
satisfying natural axioms.
 Introduced originally by Eilenberg and Moore in 1965 in the case of
coalgebras over commutative rings, contramodules experience a small
renaissance now after being all but forgotten for three decades between
1970--2000.
 Here we present a review of various definitions and results related to
contramodules (drawing mostly from our monographs, papers, and
preprints~\cite{Psemi,Pkoszul,Pweak,Pcosh,PR,PS1,Pper,Pcoun})---%
including contramodules over corings, topological associative rings, topological Lie algebras and topological groups, semicontramodules
over semialgebras, and a ``contra version'' of
the Bernstein--Gelfand--Gelfand category~$\sO$.
 Several underived manifestations of the comodule-contramodule
correspondence phenomenon are discussed.
\end{abstract}

\maketitle

\tableofcontents

\setcounter{section}{-1}
\section{Introduction}\medskip

\Subsection{0.0}
 Comodules over coalgebras or corings are familiar to many
algebraists.
 Being asked about the natural ways to assign an abelian category
to a coalgebra over a field, most people would probably mention
the left comodules and the right comodules.
 This is indeed a good answer in the case of module categories
\emph{over a ring}, where considering the left modules or the right
modules exhausts the basic possibilities.
 But the ``left or right comodules'' answer is strikingly
\emph{incomplete}, for in fact there are \emph{four} such
abelian categories.
 In addition to the left and right \emph{comodules}, there are also
the left and right \emph{contramodules}, which are no less basic, and
very much analogous, or rather \emph{dual-analogous} to (though
different from) the comodules.

 Contramodules were introduced, \emph{on par with} comodules, in
the classical 1965 AMS Memoir of Eilenberg and Moore~\cite{EM}, but
little attention was paid.
 The 2003 monograph~\cite{BW}, which was supposed to contain state of
the art on corings and comodules at the time, never mentioned
contramodules.
 As it was noticed in the presentation~\cite{Bsli}, near the end of
2000's decade there still existed only \emph{three} papers featuring
contramodules that a MathSciNet search would bring: in addition to
Eilenberg and Moore's original memoir, there were a 1965
paper~\cite{Vaz} by V\'azquez Garc\'\i a (in Spanish) and
a rather remarkable 1970 paper of Barr~\cite{Bar}.
 The next mention of contramodules in any kind of mathematical
literature that the present author is aware of comes in his own 
letters~\cite{Plet}, written (in transliterated Russian) in 2000
and~2002.

 The 2000--2002 letters were eventually noticed by two groups of
authors~\cite{GK2,Brz} and one of them got interested specifically
in contramodules, so the number of relevant MathSciNet search hits
grew a little by now (see, e.~g., \cite{BBW} and~\cite{Wis}).
 In the meantime, the present author's ideas about contramodules
and the co-contra correspondence materialized in a sequence of books,
papers, and preprints~\cite{Psemi,Pkoszul,Pweak,Pcosh,Pmgm,%
PR,Psm,Pper,PS1,PS2,Pproperf,Pcoun,PS3,BPS,Pdc}; there are also
presentations~\cite{Psli,Psli2,Psli3}.
 Still we feel that it may be difficult for a researcher or a student
to navigate this corps of work without additional guidance.
 The present paper is intended to provide such guidance, including both
an accessible exposition of the basics and an overview of some of
the more advanced topics. 

\Subsection{0.1}
 A \emph{coring} may be informally defined as a ``coalgebra over
a noncommutative ring'' (or more precisely, a coalgebra object in
the tensor category of bimodules over a ring).
 Eilenberg and Moore's original definition of contramodules~\cite{EM}
was formulated in the generality of coalgebras over commutative rings
(i.~e., coalgebra objects in the tensor category of modules), but
the generalization to corings is straightforward.
 So a comodule over a coring can be described as ``a comodule along
the coalgebra variables in the coring and a module along the ring
variables''; a contramodule over a coring is ``a contramodule along
the coalgebra variables and a module along the ring variables''.

 Another option is to consider ``algebras over coalgebras'' (or more
precisely, algebra objects in the tensor categories of bicomodules);
these are what we call \emph{semialgebras}.
 The corresponding module objects are called the \emph{semimodules}
and the \emph{semicontramodules}.
 Once again, a semimodule is ``a module along the algebra variables
in the semialgebra and a comodule along the coalgebra variables'';
a semicontramodule is ``a module along the algebra variables and
a contramodule along the coalgebra variables''.

 In the maximal natural generality achieved in
the monograph~\cite{Psemi}, one considers three-story towers of
``algebras over coalgebras over rings'', or \emph{semialgebras over
corings}.
 These still have four module categories attached to them, namely,
the left and right semimodules and the left and right semicontramodules.
 That is the generality level in which the principal results of
the main body of the book~\cite{Psemi} are obtained.

 There are many more ``comodule-like'' abelian categories in algebra
than just comodules over corings or semimodules over semialgebras,
though.
 Generally, just about every class of ``discrete'', ``smooth'', or
``torsion'' modules can be viewed as that of comodules ``along
a part of the variables'' in one sense or another.
 Every such module category is typically accompanied by a much less
familiar, but no less interesting, abelian category of contramodules.
 Hence one comes to the definitions of contramodules over topological
rings and topological Lie algebras.

\Subsection{0.2}
 Generally, contramodules are modules with \emph{infinite summation
operations}, understood algebraically as operations of infinite
arity subjected to natural axioms.
 Contramodules feel like being in some sense ``complete'', but carry
no underlying topologies on them.
 Indeed, simple counterexamples~\cite{Sim,Yek1,Psemi,Pcta} show that
contramodule infinite summation operations \emph{cannot} be interpreted
as any kind of limit of finite partial sums (for all the finite partial
sums of a particular series can vanish in a contramodule while
the infinite sum does not).

 Comodule categories typically have exact functors of filtered
inductive limits and enough injective objects, but nonexact
functors of infinite product and no projectives.
 Contramodule categories have exact functors of infinite product,
and typically enough projective objects, but nonexact functors of
infinite direct sum and no injectives.
 The historical obscurity/neglect of contramodules seems to be
the reason why many people believe that projective objects are much
less common than injective ones in ``naturally appearing'' abelian
categories.

\Subsection{0.3}
 On the other hand, there is a remarkably simple case of
contramodules over the adic completion of a Noetherian ring,
where the forgetful functor from contramodules to modules is
fully faithful, so the contramodule infinite summation operation
can be \emph{recovered} from the conventional module structure.
 Moreover, there are simple descriptions of the essential image
of the fully faithful forgetful functor and the recovery procedure.
 In this setting, there is a different stream of literature,
going back to the 1959 paper by Harrison~\cite{Har}, where
contramodules were known and studied under different names
(and neither the connection with the Eilenberg--Moore definition,
\emph{nor} the existence of the infinite summation operations were
apparently ever noticed).
 The key modern term in this connection is
the \emph{MGM} (Matlis--Greenlees--May)
\emph{duality}~\cite{Mat2,DG,PSY,Pmgm,Pcta,PMat,BP2}.

 So (what we would call) \emph{projective contramodules} over
the ring of $l$\+adic integers $\Z_l$ were studied in~\cite{Har} in
connection with the classification of (what Harrison called)
\emph{co\+torsion abelian groups}.
 A definitive result in this direction was obtained by Enochs
in~\cite{En}, where (what are since known as) \emph{flat cotorsion
modules} over a Noetherian commutative ring were classified in
terms of (what we call) projective contramodules over complete
Noetherian local rings (see also~\cite[Theorem~1.3.8]{Pcosh}).
 The argument in~\cite{En} was based on Matlis' classification
of injective modules~\cite{Mat1}.
 An equivalence between the categories of (what we would call)
injective discrete modules and projective contramodules
over $\Z_l$ was also noticed in~\cite{Har}.

 As to the arbitrary (not necessarily projective) contramodules
over $\Z_l$, these were studied under the name of
\emph{Ext-$l$-complete} abelian groups by Bousfield and
Kan~\cite{BK} and as \emph{weakly $l$\+complete}
abelian groups by Jannsen~\cite{Jan}.
 Finally, contramodules over the adic completions of Noetherian (and
certain other) rings became known as \emph{cohomologically complete}
modules in the papers of Yekutieli et al.~\cite{PSY,PSY2,Yek2}.
 These names are derived from reflection over the basic fact
that contramodules over $\Z_l$ and other adic completions are
actually \emph{always} adically complete, but
\emph{not} necessarily adically separated (as
the above-mentioned counterexamples show).
 Partially extending Enochs' result, over a Noetherian commutative ring
of Krull dimension~$1$ all cotorsion modules can be described
in terms of divisible modules and arbitrary contramodules over
the completions of the ring at its maximal ideals~\cite{Pcta}.

\Subsection{0.4}
 In the author's own research, contramodules first appeared as
a necessary ingredient for developing the \emph{semi-infinite
cohomology theory} of associative algebraic structures~\cite{Plet},
and were subsequently studied in connection with the phenomenon of 
\emph{comodule-contramodule correspondence}~\cite{Psli}.
 The latter means \emph{covariant} equivalences between appropriate
categories of comodules and contramodules.
 The simplest example is the natural equivalence between the additive
categories of injective left comodules and projective left contramodules
over a coalgebra~$\C$ over a field~$k$.
 Attempting to extend this equivalence to \emph{complexes} of left
$\C$\+comodules and left $\C$\+contramodules using complexes of
injective comodules and projective contramodules as resolutions,
one discovers that unbounded acyclic complexes of contramodules are
sometimes assigned to irreducible comodules and vice versa.

 The same problem occurs in the more complicated situation of
the correspondence between complexes of left semimodules and left
semicontramodules over a semialgebra $\S$ over~$\C$
\cite{FF0,FF,RCW,Psemi}.
 Hence the \emph{derived} co-contra correspondence is, generally
speaking, an equivalence between \emph{exotic}, rather than
conventional, derived categories.
 The \emph{coderived category} of $\C$\+comodules is equivalent to
the homotopy category of complexes of injective comodules, and
similarly, the \emph{contraderived category} of $\C$\+contramodules
is equivalent to the homotopy category of projective
contramodules~\cite{Pkoszul}.
 So the coderived category of left $\C$\+comodules and the contraderived
category of left $\C$\+contramodules are naturally equivalent to
each other, $\sD^\co(\C\comodl)\simeq\sD^\ctr(\C\contra)$
\cite[Sections~0.2.6\+-7]{Psemi}.

 This phenomenon of equivalence between ``derived categories of
the second kind'' is reproduced in a situation not involving comodules
or contramodules in the papers~\cite{Jor,Kra,IK,Sto,Pfp}, where
the homotopy categories of unbounded complexes of projective or
injective modules over a ring are studied and an equivalence between
them is sometimes obtained.
 An extension of this theory to quasi-coherent sheaves on nonaffine
schemes was developed in the papers~\cite{Neem,Murf,Psing}; and
an even more advanced version involving \emph{contraherent cosheaves}
was suggested in~\cite[Section~5.7]{Pcosh}.

\Subsection{0.5}
 In the relative situation of semimodules and semicontramodules over
a semialgebra $\S$ over a coalgebra $\C$, the \emph{derived
semimodule-semicontramodule correspondence} is an equivalence between
the \emph{semi}(co)\emph{derived category} of left $\S$\+semimodules and
the \emph{semi}(contra)\emph{derived category} of left
$\S$\+semicontramodules,
$$
 \sD^\si(\S\simodl)\simeq\sD^\si(\S\sicntr).
$$
 The former is a ``mixture of the coderived category along the variables
from $\C$ and the conventional derived category along the variables from
$\S$ relative to $\C$'' , while the latter is a ``mixture of
the contraderived category in the direction of $\C$ and the derived
category in the direction of $\S$ relative to~$\C$''
\cite[Corollary and Remark~D.3.1]{Psemi}.
 A version of the derived semico-semicontra correspondence reproduced
in a situation not involving contramodules can be found
in~\cite[Section~5]{Pfp}.

 On the other hand, the coderived category of left comodules and
the contraderived category of left contramodules over a \emph{coring}
$\C$ over a ring $A$ are equivalent when the ring $A$ has finite
homological dimension (so the coderived and contraderived categories
of $A$\+modules are indistinguishable from their derived category).
 In other words, the coderived category of comodules and
the contraderived category of contramodules are equivalent in
the relative situation provided that the homological dimension
``along the ring variables'' is finite (when it is not, one needs
a dualizing complex along the ring variables to be chosen).

 Similarly, the conventional derived categories of comodules and
contramodules may be equivalent in a relative situation mixing
the ring and coalgebra variables when the homological dimension
``along the coalgebra variables'' is finite.
 This includes, e.~g., the case of quasi-compact semi-separated schemes,
which are glued from the affine pieces by ``a gluing procedure of
finite homological dimension'' (not exceeding the number of the pieces).
 The related version of derived co-contra correspondence for
quasi-coherent sheaves and contraherent cosheaves was developed under
the name of the ``na\"\i ve co-contra correspondence''
in~\cite[Chapter~4]{Pcosh}.

 Furthermore, an affine Noetherian formal scheme is cut out from its
ambient Noetherian scheme by a ``cutting out procedure of finite
homological dimension''  \cite[Corollaries~4.28 and~5.27]{PSY}.
 This can be roughly explained by noticing that the formal completion
of a scheme $X$ along its closed subscheme $Z$ consists in
``subtracting from $X$ the open complement $U$ to $Z$ in $X$'',
and $U$ is a quasi-compact scheme whenever, say, $X$ is affine and
$Z$ is defined by a finitely generated ideal.
 So the Matlis--Greenlees--May duality is, in fact, an \emph{equivalence
between the conventional derived categories of torsion modules and
contramodules} over certain formal schemes~\cite{Pmgm}.

\Subsection{0.6}
 A common feature of all or almost all kinds of contramodules is
that they form abelian categories with enough projective objects.
 One can define ``a contramodule category'' in the most general
sense of the word as a \emph{locally presentable abelian category with
enough projective objects}, or equivalently, a locally presentable
abelian category with a projective generator~\cite{Pper}.
 Abelian categories with a fixed projective generator are described
by additive monads on the category of sets; and among such categories,
the locally presentable ones correspond to accessible
monads~\cite{Vit,PR,Pper}.
 Another name for locally presentable abelian categories with
a projective generator is \emph{the categories of models of additive
$\kappa$\+ary algebraic theories}, where $\kappa$~stands for
some regular cardinal (depending on the category) \cite{Wr,PR}.

 The categories of comodule-like structures, on the other hand, tend
to be (at least) Grothendieck abelian categories.
 So one can say, very roughly, that both the comodule and
the contramodule categories are locally presentable abelian categories;
but the difference is that the comodule categories have enough
injectives, while the contramodule categories have enough projectives.

 Abelian categories with a projective generator are known to arise as
the hearts of tilting (and even silting) t\+structures associated with
``big'' (infinitely generated) tilting or silting objects
\cite[Proposition~4.3]{PV}, \cite[Proposition~4.9]{Ang}.
 Hence the connection between contramodules and infinitely generated
tilting/silting theory.
 Dually, the hearts of the cotilting (and cosilting) t\+structures
are abelian categories with an injective cogenerator.
 The \emph{$n$\+tilting-cotilting correspondence}, as developed
in~\cite{PS1}, and even more so the \emph{$\infty$\+tilting-cotilting
correspondence} of the paper~\cite{PS2}, are a generalization and
an abstractly-categorical interpretation of the co-contra
correspondence phenomenon.

\Subsection{0.7}
 Before we finish this introduction, let as say a few words about
\emph{applications} of contramodules.
 There are different kinds of applications.
 As we mentioned above, in the work of the present author contramodules
were first used in order to \emph{formulate} a certain theory, namely,
the semi-infinite homological algebra of associative algebraic
structures~\cite{Plet,Psemi}.
 Countramodules also found their place in the formulation of
the derived nonhomogeneous Koszul duality~\cite{Pkoszul,Prel}.
 Moreover, even the classical topic of MGM duality is best formulated
using contramodules~\cite{Pmgm,Pdc}.

 Applications in which contramodules are used in order to \emph{prove}
theorems (in whose formulations they are \emph{not} mentioned)
are a different matter.
 In the work of the present author, such applications started to appear
relatively recently.
 Mostly, these are applications to commutative
algebra~\cite{BP1,PSl1,PSl2}, among which the most important one, in
our view, is the proof of the \emph{Very Flat Conjecture}
(which was formulated in the February~2014 version of the long
preprint~\cite{Pcosh} and proved in the August~2017
preprint~\cite{PSl1}).
 There is also an application to noncommutative ring
theory~\cite{Pcoun} and an application to direct limits of classes of
modules over noncommutative rings~\cite{PT}.

\Subsection{0.8}
 We refrain from elaborating any further upon the various derived
categories and the derived co-contra correspondence in this paper,
restricting ourselves mostly to the short discussion above
in this introduction.
 Indeed, it appears that the coderived and contraderived categories
have attracted already some attention in the recent years, and
a number of people have mastered the beginnings of the related
techniques in one form or another.
 Besides, there is the presentation~\cite{Psli} discussing
the philosophy of the derived co-contra correspondence.

 Instead, we concentrate on the even more basic, and at the same time
perhaps presently more counterintuitive, concepts of the abelian
categories of contramodules.
  In addition, the selection of the more advanced material for inclusion
into this paper is oriented towards representation theory (rather
than commutative algebra or algebraic geometry).
 Moreover, the numerous categories of contramodules that are
\emph{defined} as full subcategories in module categories (typically,
the right Ext-perpendicular subcategories to some modules)
\cite{Pmgm,Pcta,PMat,Pper,BP2,Pdc} are left almost entirely outside of
the scope of this overview.
 This excludes the major applications to commutative
algebra~\cite{PSl1,PSl2}, which are therefore only briefly mentioned
above in this introduction.
 A detailed treatment of this material will be presented elsewhere.

 The simplest examples of the categories of contramodules over
coalgebras over fields, the $l$\+adic integers, the Virasoro algebra,
and locally compact totally disconnected topological groups are
discussed in Section~\ref{first-examples}.
 The key definitions of the categories of contramodules over
topological rings, topological associative and Lie algebras,
corings and semialgebras, and the category $\sO^\ctr$ are
introduced in Section~\ref{co-and-contra-categories}.
 Tensor and Hom-like operations on the categories of contramodules
and comodules and relations between various classes of objects
adjusted to these operations (analogues and dual versions of
the classes of flat, projective, and injective modules) are briefly
considered in the first three subsections of
Section~\ref{tensor-and-adjusted}.
 Several \emph{underived} co-contra correspondence constructions are
discussed in the middle part of Section~\ref{tensor-and-adjusted}.
 Some additional topics, most notable of them concerning
full-and-faithfulness of contramodule forgetful functors,
occupy the final Subsections~\ref{addm}\+-\ref{fully-faithful}.

\medskip\noindent
\textbf{Acknowledgement.}
 I~learned the definition of a contramodule from a hard copy of
the Eilenberg--Moore 1965 AMS Memoir stored in the library
of the Institute for Advanced Study, where I~went in the Spring
of 1999 to look for relevant literature after the discovery of
(what are now known as) the coderived and contraderived categories
shortly before.
 I~was supported by an NSF grant at the time.
 Subsequently, I~studied contramodules for many years in Moscow,
starting from the Summers of 2000 and 2002 and then in 2006--2014.
 I~was partially supported by grants from EPDI, CRDF, INTAS,
Pierre Deligne's Balzan prize, Simons Foundation, and several RFBR
grants over the years.
 After Spring 2014, my research on contramodules continued in
Be'er Sheva, Haifa, Brno, Prague, and Padova, where I~was supported
by ISF and GA\v CR grants.
 This paper was largely written when I~was visiting the Technion in
Haifa in October 2014--March 2015, where I~was supported by
a fellowship from the Lady Davis Foundation, and then updated in
2019 and 2021 when, as a researcher at the Institute of Mathematics
of the Czech Academy of Sciences in Prague, I~was supported by research
plan RVO:~67985840.
 In 2021, I~was also supported by the GA\v CR project 20-13778S.
 I~am grateful to Dmitry Kaledin who suggested the idea of
writing an overview on contramodules to me some years ago.
 I~would like to thank Joseph Bernstein and Amnon Yekutieli
for helpful discussions.

\Section{First Examples} \label{first-examples}

\subsection{Contramodules over coalgebras over fields}
\label{over-coalgebras}
 We start with recalling the largely familiar definitions.
 A coassociative \emph{coalgebra} $\C$ with counit over a field~$k$
is a $k$\+vector space endowed with a \emph{comultiplication} map
$\mu_\C\:C\rarrow\C\ot_k\C$ and a \emph{counit} map
$\eps_\C\:\C\rarrow k$ satisfying the equations dual to the equations
on the multiplication and unit maps of an associative algebra with unit.
 Explicitly, the two compositions of the comultiplication map~$\mu$
with the two maps $\mu\ot\id_\C$ and $\id_\C\ot\mu\:\C\ot_k\C\birarrow
\C\ot_k\C\ot_k\C$ induced by the comultiplication map
$$
 \C\rarrow\C\ot_k\C\birarrow\C\ot_k\C\ot_k\C
$$
should be equal to each other, $(\mu\ot\id_\C)\circ\mu=
(\id_\C\ot\mu)\circ\mu$, and both the compositions of
the comultiplication map with the two maps $\eps\ot\id_\C$ and
$\id_\C\ot\eps\:\C\ot_k\C\birarrow\C$ induced by the counit map~$\eps$
$$
 \C\rarrow\C\ot_k\C\birarrow\C
$$
should be equal to the identity map,
$(\eps\ot\id_\C)\circ\mu=\id_\C=(\id_\C\ot\eps)\circ\mu$.

 A \emph{left comodule} $\M$ over a coalgebra $\C$ is a $k$\+vector
space endowed with a \emph{left coaction} map
$\nu_\M\:\M\rarrow\C\ot_k\M$ satisfying the coassociativity and
counitality equations.
 Explicitly, the two compositions of the coaction map~$\nu$ with
the two maps $\mu\ot\id_\M$ and $\id_\C\ot\nu\:\C\ot_k\M\birarrow
\C\ot_k\C\ot_k\M$ induced by the comultiplication and coaction maps
$$
 \M\rarrow\C\ot_k\M\birarrow\C\ot_k\C\ot_k\M
$$
should be equal to each other, $(\mu\ot\id_\M)\circ\nu=
(\id_\C\ot\nu)\circ\nu$, and the composition of the coaction map
with the map $\eps\ot\id_\M\:\C\ot_k\M\rarrow\M$ induced by
the counit map~$\eps_\C$
$$
 \M\rarrow\C\ot_k\M\rarrow\M
$$
should be equal to the identity map, $(\eps\ot\id_\M)\circ\nu=\id_\M$.
 A \emph{right comodule} $\N$ over $\C$ is a $k$\+vector space
endowed with a right coaction map $\nu=\nu_\N\:\N\rarrow\N\ot_k\C$
satisfying the similar equations,
$(\nu\ot\id_\C)\circ\nu=(\id_\N\ot\mu)\circ\nu$
$$
 \N\rarrow\N\ot_k\C\birarrow\N\ot_k\C\ot_k\C,
$$
and $(\id_\N\ot\eps)\circ\nu=\id_\N$
$$
 \N\rarrow\N\ot_k\C\rarrow\N.
$$

 In order to arrive to the definition of a contramodule over $\C$,
one only has to rewrite the most familiar definition of a module
over an associative algebra in a slightly different form before
quite formally dualizing it.
 Given an associative algebra $A$ over~$k$ with the multiplication
map $m\:A\ot_kA\rarrow A$ and the unit map $e\:k\rarrow A$, one
would usually define a left $A$\+module $M$ as a $k$\+vector space
endowed with a left action map $n\:A\ot_kM\rarrow M$ satisfying
the associativity and unitality equations $n\circ(m\ot\id_M)=
n\circ(\id_A\ot n)$
$$
 A\ot_kA\ot_kM\birarrow A\ot_kM\rarrow M
$$
and $n\circ(e\ot_k\id_M)=\id_M$
$$
 M\rarrow A\ot_kM\rarrow M.
$$
 However, having a map~$n$ is the same thing as having a map
$$
 p\:M\rarrow\Hom_k(A,M),
$$
which then has to satisfy the associativity and unitality equations
written in the form $\Hom(m,\id_M)\circ p=\Hom(\id_A,p)\circ p$
$$
 M\rarrow\Hom_k(A,M)\birarrow\Hom_k(A\ot_kA\;M)\simeq
 \Hom_k(A,\Hom_k(A,M))
$$
and $\Hom(e,\id_M)\circ p = \id_M$
$$
 M\rarrow\Hom_k(A,M)\rarrow M.
$$

 In this approach, the difference between the left and right modules
lies in the way one identifies the Hom from the tensor product
$\Hom_k(A\ot_kA\;M)$ with the double Hom space $\Hom_k(A,\Hom_k(A,M))$:
presuming the identification
\begin{equation} \label{left-tensor-identification}
 \Hom_k(U\ot_kV\;W)\simeq\Hom_k(V,\Hom_k(U,W))
\end{equation}
leads to the definition of a \emph{left} $A$\+module, while
identifying $\Hom_k(A\ot_kA\;N)$ with $\Hom_k(A,\Hom_k(A,N))$
by the rule
\begin{equation} \label{right-tensor-identification}
 \Hom_k(U\ot_kV\;W)\simeq\Hom_k(U,\Hom_k(V,W))
\end{equation}
and writing the same equations produces the definition of
a \emph{right} $A$\+module~$N$.

 Now we can formulate our main definition.
 A \emph{left contramodule} $\P$ over a coalgebra $\C$ is
a $k$\+vector space endowed with a \emph{left contraaction} map
$$
 \pi_\P\:\Hom_k(\C,\P)\rarrow\P
$$
satisfying the following \emph{contraassociativity} and
\emph{contraunitality} equations.
 Firstly, the two compositions of the two maps
$\Hom(\mu,\P)\:\Hom_k(\C\ot_k\C\;\P)\rarrow\Hom_k(\C,\P)$
and $\Hom(\C,\pi)\:\Hom_k(\C,\Hom_k(\C,\P))\rarrow\Hom_k(\C,\P)$
induced by the comultiplication map $\mu=\mu_\C$ and
the contraaction map $\pi=\pi_\P$ with the contraaction map~$\pi$
$$
 \Hom_k(\C,\Hom_k(\C,\P))\simeq\Hom_k(\C\ot_k\C\;\P)
 \birarrow\Hom_k(\C,\P)\rarrow\P
$$
should be equal to each other, $\pi\circ\Hom(\mu,\P)=
\pi\circ\Hom(\C,\pi)$, presuming the identification of
$\Hom_k(\C\ot_k\C\;\P)\simeq\Hom_k(\C,\Hom_k(\C,\P))$ by
the left rule~\eqref{left-tensor-identification}.
 Secondly, the composition of the map $\Hom(\eps,\P)\:
\P\rarrow\Hom_k(\C,\P)$ induced by the counit map $\eps=\eps_\C$
with the contraaction map
$$
 \P\rarrow\Hom_k(\C,\P)\rarrow\P
$$
should be equal to the identity map, $\pi\circ\Hom(\eps,\P)=\id_\P$.

 This definition can be found in~\cite[Section~0.2.4]{Psemi}; see
also~\cite[Section~2.2]{Pkoszul} (the classical source
is~\cite[Section~III.5]{EM}).
 Using the identification by the right
rule~\eqref{right-tensor-identification} instead
of~\eqref{left-tensor-identification} produces the definition
of a \emph{right contramodule} over~$\C$.
 The way to understand why \eqref{left-tensor-identification}~is
the ``left'' rule and \eqref{right-tensor-identification}~is
the ``right'' one lies in replacing a basic field~$k$ with
a noncommutative ring; see
Section~\ref{over-corings} below.

\subsection{Basic properties of comodules and contramodules}
\label{basic-properties}
 The simplest way to produce examples of contramodules is by
applying the Hom functor to comodules in the first argument.
 Specifically, let $\N$ be a right comodule over a coalgebra
$\C$ over~$k$ and $V$ be a $k$\+vector space.
 Then the vector space $\P=\Hom_k(\N,V)$ has a natural structure
of left contramodule over~$\C$.
 The left contraaction map~$\pi_\P$ is constructed by applying
the functor $\Hom_k({-},V)$ to the right coaction map~$\nu_\N$
$$
 \Hom_k(\C,\Hom_k(\N,V))\simeq \Hom_k(\N\ot_k\C\;V)
 \rarrow\Hom_k(\N,V).
$$

 Let us denote by $k\vect$ the category of $k$\+vector spaces,
by $\C\comodl$ the category of left $\C$\+comodules, by
$\comodr\C$ the category of right $\C$\+comodules, and by
$\C\contra$ the category of left $\C$\+contramodules.
 The $k$\+vector space of morphisms between left $\C$\+comodules
$\L$ and $\M$ will be denoted by $\Hom_\C(\L,\M)$, and 
the vector space of morphisms between left $\C$\+contramodules
$\P$ and $\Q$ by $\Hom^\C(\P,\Q)$.

 The category $\C\comodl$ is abelian and the forgetful functor
$\C\comodl\rarrow k\vect$ is exact.
 To prove as much, one has to use the observation that the tensor
product functor $\C\ot_k{-}$ is exact, or more specifically,
\emph{left exact}.
 The forgetful functor also preserves inductive limits, so filtered
inductive limits are exact functors in $\C\comodl$.
 The infinite products in $\C\comodl$ are not preserved by
the forgetful functor (unless $\C$ is finite-dimensional) and are
\emph{not} exact in $\C\comodl$ in general.

 In other words, the abelian category of $\C$\+comodules satisfies
Grothendieck's axioms Ab5 and Ab3*, but not in general Ab4*
\cite[N$^{\mathrm o}$~1.5]{GrToh}.
 It also admits a set of generators (for which one can take
the finite-dimensional comodules), so it has enough injective
objects~\cite[N$^{\mathrm o}$~1.10]{GrToh}.
 These can be explicitly described as follows.

 A \emph{cofree} left $\C$\+comodule is a $\C$\+comodule of the form
$\C\ot_k V$, where $V$ is a $k$\+vector space, with the left
$\C$\+coaction induced by the comultiplication in~$\C$.
 For any left $\C$\+comodule $\L$, there is a natural isomorphism
of $k$\+vector spaces
$$
 \Hom_\C(\L\;\C\ot_kV)\simeq\Hom_k(\L,V),
$$
so cofree $\C$\+comodules are injective.
 The coaction map $\nu\:\M\rarrow\C\ot_k\M$ embeds any left
$\C$\+comodule into a cofree one, so there are enough cofree
$\C$\+comodules.
 It follows that a $\C$\+comodule is injective if and only if it is
a direct summand of a cofree one~\cite[Sections~0.2.1, 1.1.2,
and~5.1.5]{Psemi}.

 The category $\C\contra$ is abelian and the forgetful functor
$\C\contra\rarrow k\vect$ is exact (here one has to observe that
the functor $\Hom_k(\C,{-})$ is exact, or more specifically,
\emph{right exact}).
 The forgetful functor also preserves infinite products, so
infinite products are exact functors in $\C\contra$.
 The infinite direct sums are \emph{not} preserved by
the forgetful functor (unless $\C$ is finite-dimensional) and
are not exact in $\C\contra$ in general.
 (However, \emph{un}like the infinite products of $\C$\+comodules,
the infinite direct sums of $\C$\+contramodules remain exact
when the homological dimension of the category $\C\contra$
does not exceed~$1$ \cite[Remark~1.2.1]{Pweak}.)

 In other words, the abelian category of $\C$\+contramodules satisfies
Grothendieck's axioms Ab3 and Ab4*, but not in general Ab4 or Ab5*.
 It also has enough projective objects, which can be explicitly
described as follows.

 A \emph{free} left $\C$\+contramodule is a $\C$\+contramodule of
the form $\Hom_k(\C,V)$, where $V$ is a $k$\+vector space, with
the left $\C$\+contraaction constructed as explained in
the beginning of this section.
 For any left $\C$\+contramodule $\Q$, there is a natural isomorphism
of $k$\+vector spaces
$$
 \Hom^\C(\Hom_k(\C,V),\Q)\simeq\Hom_k(V,\Q),
$$
so free $\C$\+contramodules are projective.
 The contraaction map $\pi\:\Hom_k(\C,\P)\rarrow\P$ presents any
$\C$\+contramodule as the quotient contramodule of a free one, so
there are enough free contramodules.
 It follows that a $\C$\+contramodule is projective if and only if
it is a direct summand of a free one~\cite[Sections~0.2.4,
3.1.2, and~5.1.5]{Psemi}.

 Notice that the class of injective $\C$\+comodules is not only
closed under infinite products in $\C\comodl$ (which holds in any
abelian category), but also under infinite direct sums.
 Similarly, the class of projective $\C$\+contramodules is not
only closed under infinite direct sums in $\C\contra$ (as in any
abelian category), but also under infinite products.
 These observations are important for the theory of coderived and
contraderived categories~\cite[Section~4.4, cf.\
Sections~3.7--3.8]{Pkoszul}.

 The correspondence assigning the free $\C$\+contramodule
$\Hom_k(\C,V)$ to the cofree $\C$\+comodule $\C\ot_k V$ is 
an equivalence between the additive categories of cofree
left $\C$\+comodules and free left $\C$\+contramodules.
 Hence the additive categories of injective left $\C$\+comodules
and projective left $\C$\+contramodules are equivalent,
too~\cite[Sections~0.2.6 and~5.1.3]{Psemi} (see
also~\cite{BBW} and~\cite[Sections~5.1--5.2]{Pkoszul}).

\subsection{Contramodules over the formal power series}
\label{over-power-series}
 The linear duality functor identifies the category opposite to
the category of conventional infinite-dimensional (otherwise known
as discrete, or ind-finite-dimensional) vector spaces with the category
of \emph{linearly compact}, or pro-finite-dimensional, vector spaces.
 In particular, a coassociative coalgebra with counit is the same thing
(up to inverting the arrows) as a linearly compact or
pro-finite-dimensional topological associative algebra with unit.
 Notice that any coassociative coalgebra is the union of its
finite-dimensional subcoalgebras~\cite[Section~2.2]{Swe}, so any 
topological associative algebra with a pro-finite-dimensional
underlying topological vector space is a projective limit
of finite-dimensional associative algebras.

 In particular, one can identify coalgebras by the names of their
dual linearly compact topological algebras.
 In this section we consider the simplest example of
an infinite-dimensional coassociative coalgebra---the coalgebra $\C$
for which the dual topological algebra $\C^*$ is isomorphic to
the algebra $k[[z]]$ of formal Taylor power series in one variable
over a field~$k$.
 Explicitly, $\C$ is the $k$\+vector space with a countable basis
consisting of the formal symbols $1^*$, $z^*$, $z^2{}^*$, \dots,
$z^n{}^*$, \dots, $n\in\Z_{\ge0}$, with the comultiplication map
given by the rule
$$
 \mu(z^n{}^*)=\sum_{i+j=n}z^i{}^*\ot z^j{}^*
$$
and the counit map $\eps(1^*)=1$, \ $\eps(z^n{}^*)=0$ for $n>0$.

 Then a (left or right) $\C$\+comodule $\M$ is the same thing
as a $k$\+vector space endowed with a \emph{locally nilpotent}
linear operator $z\:\M\rarrow\M$.
 In other words, for any vector $m\in\M$ there must exist
an integer $n\ge1$ such that $z^n(m)=0$ in~$\M$.
 Indeed, given a linear operator~$z$ on $\M$ one would define
the coaction map $\nu\:\M\rarrow\C\ot_k\M$ by the formula
$$
 \nu(m)=\sum_{n=0}^\infty z^n{}^*\ot z^n(m),
$$
and the local nilpotence condition is needed for the sum to be
well-defined (i.~e., finite) for every vector $m\in\M$.

 A $\C$\+contramodule structure on a $k$\+vector space $\P$ is,
by the definition, the datum of a $k$\+linear map
$\pi\:\Hom_k(\C,\P)\rarrow\P$ satisfying the contraassociativity
and contraunitality axioms.
 Having such a map is the same thing as the following \emph{infinite
summation operation} being defined in~$\P$.
 For every sequence of vectors $p_0$, $p_1$, $p_2$~\dots~$\in\P$
there should be given a vector denoted figuratively by
$$
 \sum_{n=0}^\infty z^np_n\in\P.
$$
 This infinitary operation in $\P$ should satisfy the equations of
linearity
$$
 \sum_{n=0}^\infty z^n(ap_n+bq_n) =
a\sum_{n=0}^\infty z^np_n + b\sum_{n=0}^\infty z^nq_n,
$$
contraassociativity
$$
 \sum_{i=0}^\infty z^i\left(\sum_{j=0}^\infty z^jp_{ij}\right)=
 \sum_{n=0}^\infty z^n\left(\sum_{i+j=n} p_{ij}\right),
$$
and unitality
$$
 \sum_{n=0}^\infty z^np_n=p_0
 \quad \text{when $p_1=p_2=p_3=\dotsb=0$ in $\P$}
$$
for any $p_n$, $q_n$, $p_{ij}\in\P$ and $a$, $b\in k$.
 Notice that in the main (middle) equation the first three summation
signs denote the contramodule infinite summation operation, while
the fourth one is the conventional finite sum of elements of
a vector space~\cite[Section~A.1.1]{Psemi}.

 As we will see below in Section~\ref{recovering},
the $\C$\+contramodule structure on a vector space $\P$ is in fact
determined by a single linear operator $z\:\P\rarrow\P$,
$$
 z(p)=1\cdot 0 + z\cdot p + z^2\cdot 0 + z^3\cdot 0 +\dotsb
$$
 However, \emph{un}like for the comodules, for contramodules over
more complicated coalgebras the similar statement is, of course,
no longer true.

\subsection{Contramodules over the $l$-adic integers}
\label{over-l-adics}
 A left module $\M$ over a topological ring $\R$ is called
\emph{discrete} if the action map $\R\times\M\rarrow\M$ is continuous
in the discrete topology of $\M$ and the given topology of~$\R$.
 In other words, this means that the annihilator of every element
of $\M$ must be an open left ideal in~$\R$.
 Discrete left $\R$\+modules form an abelian category, which we denote
by $\R\discr$.

 The discussion of topological algebras dual to coalgebras in
the previous section ignored one point which we now have to clarify.
 Given a coassociative coalgebra $\C$ over~$k$, one can define
the multiplication on the dual vector space $\C^*$ in two
approximately equally natural ways which differ by the passage to
the opposite algebra, i.~e., switching the left and right arguments
of the product map.
 Let us make the choice of defining the multiplication on $\C^*$
in such a way that left $\C$\+comodules acquire natural structures
of left $\C^*$\+modules and right $\C$\+comodules become right
$\C^*$\+modules.
 Explicitly, this means applying the formula
$$
 \langle fg,c\rangle = \langle f,c_{(2)}\rangle\langle g,c_{(1)}\rangle
$$
where $\langle\,,\,\rangle$ denotes the natural pairing $\C^*\times
\C\rarrow k$ and $c\longmapsto c_{(1)}\ot c_{(2)}$ is Sweedler's
symbolic notation for the comultiplication map~$\mu$
\cite[Section~1.2]{Swe}.

 Then the category of left $\C$\+comodules can be described as
the full subcategory in the category of left $\C^*$\+modules
$\C^*\modl$ consisting precisely of those $\C^*$\+modules that are
discrete with respect to the topology of~$\C^*$.
 Similarly, a right $\C$\+comodule is the same thing as a discrete
right $\C^*$\+module~\cite[Section~2.1]{Swe}.

 Now the explicit description of contramodules over the coalgebra $\C$
with $\C^*=k[[t]]$ given in the previous section raises the question
about defining contramodules over topological rings other than
pro-finite-dimensional algebras over fields.
 The most close analogues of the rings $k[[t]]$ being the rings
of $l$\+adic integers $\Z_l$, they are the natural starting point of
the desired generalization (whose full development we postpone until
Sections~\ref{over-topol-rings}\+-\ref{over-top-algebras}).

 So let $l$~be a prime number.
 Let us start with mentioning that a discrete module over
the topological ring of $l$\+adic integers $\Z_l$ is the same thing
as an $l$\+primary abelian group, i.~e., an abelian group where
the order of every element is a power of~$l$.
 The category $\Z_l\discr$ is abelian with exact functors of filtered
inductive limits, which are also preserved by the embedding functor
$\Z_l\discr\rarrow\Ab$ into the category of abelian groups.
 The infinite products in $\Z_l\discr$ are not preserved by
the forgetful functor and \emph{not} exact.
 In other words, the category $\Z_l\discr$ satisfies Ab5 and Ab3*,
but not Ab4*.
 It has enough injective objects, but no nonzero projectives.
 The injective discrete $\Z_l$\+modules are precisely the direct
sums of copies of the group $\mathbb Q_l/\Z_l$.

 A \emph{$\Z_l$\+contramodule} $\P$ is an abelian group endowed with
the following infinite summation operation.
 For any sequence of elements $p_0$, $p_1$, $p_2$~\dots~$\in\P$
an element denoted symbolically by
$$
 \sum_{n=0}^\infty l^n p_n\in\P
$$
should be defined.
 This infinitary operation should satisfy the equations of additivity
$$
 \sum_{n=0}^\infty l^n(p_n+q_n) =
 \sum_{n=0}^\infty l^np_n + \sum_{n=0}^\infty l^nq_n,
$$
contraassociativity
$$
 \sum_{i=0}^\infty l^i\left(\sum_{j=0}^\infty l^jp_{ij}\right)=
 \sum_{n=0}^\infty l^n\left(\sum_{i+j=n} p_{ij}\right),
$$
and unitality + compatibility with the abelian group structure
$$
 \sum_{n=0}^\infty l^np_n=p_0+p_1+\dotsb+p_1
 \text{ ($l$ summands $p_1$) \ when $p_2=p_2=p_3=\dotsb=0$}
$$
for any elements $p_n$, $q_n$, and $p_{ij}\in\P$.

 For any $l$\+primary abelian group $\M$ and abelian group $V$,
the abelian group $\Hom_\Z(\M,V)$ has a natural $\Z_l$\+contramodule
structure provided by the rule
$$
 \left(\sum_{n=0}^\infty l^np_n\right)(m)=\sum_{n=0}^\infty p_n(l^nm)
$$
for any $p_n\in\Hom_\Z(\M,V)$ and $m\in\M$.
 The category $\Z_l\contra$ of $\Z_l$\+contramodules is abelian and
the forgetful functor $\Z_l\contra\rarrow\Ab$ is exact.
 As we will see in Section~\ref{recovering}, the forgetful functor
is fully faithful.
 It preserves infinite products, but \emph{not} infinite direct sums.
 Both the infinite direct sums and infinite products are exact
functors in $\Z_l\contra$.
 In other words, the category $\Z_l\contra$ satisfies Ab4 and Ab4*
(but \emph{not} Ab5 or Ab5*).
 It has enough projective objects, but no injectives.

 The \emph{free\/ $\Z_l$\+contramodule} generated by a set $X$ is
the set $\Z_l[[X]]$ of all infinite formal linear combinations
$\sum_{x\in X} a_xx$ of elements of $X$ with the coefficients
$a_x\in\Z_l$ such that for every $n\ge1$ all but a finite number
of $a_x$ are divisible by $l^n$ in~$\Z_l$.
 Notice that any formal linear combination satisfying this condition
is, in fact, supported in an at most countable subset in~$X$.
 As we will see below in
Sections~\ref{over-topol-rings}\+-\ref{over-adic-completions},
for any $\Z_l$\+contramodule $\P$ the group of all
$\Z_l$\+contramodule morphisms $\Z_l[[X]]\rarrow\P$ is isomorphic
to the group $\P^X$ of arbitrary maps of sets $X\rarrow\P$.
 The classes of free and projective $\Z_l$\+contramodules coincide.

 The additive categories of injective discrete $\Z_l$\+modules and
projective $\Z_l$\+con\-tramodules are equivalent; the equivalence is
provided by the functors $\M\longmapsto\Hom_\Z(\mathbb Q_l/\Z_l,\M)$
and $\P\longmapsto\mathbb Q_l/\Z_l\ot_\Z\P$
\cite[Proposition~2.1]{Har}.
 In particular, one has
$$\textstyle
 \Hom_\Z(\mathbb Q_l/\Z_l\;\bigoplus_X \mathbb Q_l/\Z_l)\simeq\Z_l[[X]]
 \quad\text{and}\quad
 \mathbb Q_l/\Z_l\ot_Z\Z_l[[X]]\simeq\bigoplus_X\mathbb Q_l/\Z_l.
$$

\subsection{Counterexamples}  \label{counterexamples}
 For any topological ring~$\R$, one can compute infinite products in
the abelian category $\R\discr$ in the following way.
 Let $\M_\alpha$ be a family of discrete left $\R$\+modules;
denote by $M$ their product in the abelian category of
arbitrary $\R$\+modules.
 Then the product of the family of objects $\M_\alpha$ in the category
$\R\discr$ can be obtained as an $\R$\+submodule $\M\subset M$
consisting precisely of all the elements $m\in M$ whose annihilators
in $\R$ are open left ideals.

 In particular, this provides a rule for computing infinite products
in the abelian categories $\C\comodl$ of comodules over coalgebras
over fields.
 Another way to formulate such a rule is as follows.
 In any abelian category, infinite products are left exact functors;
in other words, they preserve kernels of morphisms.
 Since any $\C$\+comodule can be presented as the kernel of a morphism
of cofree $\C$\+comodules, it suffices to know what the products of
families of cofree $\C$\+comodules are.
 The latter are easily seen to be given by the formula
$\prod_\alpha \C\ot_k V_\alpha = \C\ot_k\prod_\alpha V_\alpha$.

 Similarly, in order to compute the infinite direct sum of a family
of objects in $\C\contra$, one can present these as the cokernels of
morphisms of free $\C$\+contramod\-ules.
 Since any $\C$\+contramodule can be obtained as such a cokernel
and the infinite direct sums preserve cokernels, it remains to use
the formula $\bigoplus_\alpha \Hom_k(\C,V_\alpha) = \Hom_k(\C\;
\bigoplus_\alpha V_\alpha)$ for the direct sum of a family of
free $\C$\+contramodules.

 Let us return to the example of the coalgebra $\C$ dual to
the topological algebra of formal power series $k[[z]]$
considered in Section~\ref{over-power-series}.
 Viewed as a discrete module over the algebra $\C^*=k[[z]]$,
the coalgebra $\C$ can be identified with the quotient module
$k((z))/k[[z]]$ of the $k[[z]]$\+module of Laurent series
$k((z))$ by its submodule~$k[[z]]$.
 Consider the family of discrete $k[[z]]$\+modules
$z^{-n}k[[z]]/k[[z]]$, \ $n=1$, $2$,~\dots{}
 They can be included into short exact sequences of discrete
$k[[z]]$\+modules
$$
 0\lrarrow z^{-n}k[[z]]/k[[z]]\lrarrow k((z))/k[[z]]\lrarrow
 k((z))/z^{-n}k[[z]]\lrarrow0.
$$
 Passing to the infinite product of these short exact sequences
in the category $\C\comodl$ over all $n\ge1$, one discovers
that the map $k((z))/k[[z]]\ot_k\prod_nk=
\prod_n k((z))/k[[z]]\rarrow\prod_n k((z))/z^{-n}k[[z]]=
k((z))/k[[z]]\ot_k\prod_nkz^{-n}$ is not surjective, as, e.~g.,
the vector $(z^{-n-1})_n\in\prod_n k((z))/z^{-n}k[[z]]$ does not
belong to its image.
 One also computes the infinite product $\prod_n z^{-n}k[[z]]/k[[z]]$
in the category $\C\comodl$ as isomorphic to the inductive limit
$\varinjlim_m \big(\prod_{n=1}^m z^{-n}k[[z]]/k[[z]]\times
\prod_{n=m+1}^\infty z^{-m}k[[z]]/k[[z]]\big)$.

 Now consider the family of $\C$\+contramodules
$k[[z]]/z^nk[[z]]$, \ $n=1$, $2$~\dots{}
 They can be viewed as parts of the short exact sequences of
$\C$\+contramodules
$$
 0\lrarrow z^nk[[z]]\lrarrow k[[z]]\lrarrow k[[z]/z^nk[[z]]
 \lrarrow0.
$$
 Passing to the infinite direct sum of these short exact
sequences in the category $\C\contra$ over all $n\ge1$, one
finds out that the map $\Hom_k(\C\;\bigoplus_n kz^n)=
\bigoplus_n z^nk[[z]]\allowbreak\rarrow\bigoplus_n k[[z]]=
\Hom_k(\C\;\bigoplus_n k)$ is injective.
 Its cokernel $\P=\bigoplus_n k[[z]]/z^nk[[z]]\in\C\contra$
is the $\C$\+contramodule that we are interested in.

 Let us start with introducing a more careful notation.
 Let $\E$ denote the free $\C$\+contramodule generated by
a $k$\+vector space $E$ with a countable basis
$e_1$, $e_2$, $e_3$,~\dots{}
 Explicitly, $\E$ is the set of all formal linear combinations
$\sum_{n=1}^\infty a_n(z)e_n$, where the sequence of formal power
series $a_n(z)\in k[[z]]$ converges to zero in the topology of~$k[[z]]$.
 The $k[[z]]$\+contramodule infinite summation operations on $\E$
are defined in the obvious way.
 Let $\F$ be the similar free $\C$\+contramodule generated by
a $k$\+vector space $F$ with a basis $f_1$, $f_2$, $f_3$,~\dots{}
 Define a morphism of $\C$\+contramodules $\F\rarrow\E$ by
the rule $\sum_{n=1}^\infty b_n(z)f_n\longmapsto
\sum_{n=1}^\infty z^nb_n(z)e_n$.
 Clearly, this morphism is injective; denote its cokernel by
$\P=\E/\F$.

 Set $p_n\in\P$ to be the images of the elements $e_n\in\E$ under
the surjective morphism $\E\rarrow\P$.
 Then the infinite sum $p=\sum_{n=1}^\infty z^np_n$ is a nonzero
vector in $\P$, since the element $\sum_{n=1}^\infty z^ne_n$ does
not belong to $\F\subset\E$ (there being no element
$\sum_{n=1}^\infty f_n$ in~$\F$).
 On the other hand, every finite partial sum $zp_1+z^2p_2+\dotsb+
z^np_n=zp_1+\dotsb+z^np_n+z^{n+1}\cdot0+\dotsb$ vanishes in $\P$,
the finite sum $ze_1+\dotsb+z^ne_n$ being the image of
the vector $f_1+\dotsb+f_n\in\F$ in~$\E$.
 It follows that our vector $p=z^n(p_n+zp_{n+1}+\dotsb)$ belongs to
$z^n\P$ for every $n\ge1$, so the $z$\+adic topology on $\P$ is
not separated.
 This counterexample can be found in~\cite[Section~A.1.1]{Psemi};
it also occured, under slightly different guises,
in~\cite[Example~2.5]{Sim} and~\cite[Example~3.20]{Yek1}.

 Among other things, $\P$~provides an example of a $\C$\+contramodule
that does not have the form $\Hom_k(\N,V)$ for any $\C$\+comodule~$\N$.
 An example of a finite-dimensional contramodule not of this form
(over a more complicated coalgebra~$\C$) can be found
in~\cite[Section~A.1.2]{Psemi}.
 Concerning the above coalgebra $\C$ with $\C^*=k[[z]]$, let us
point out that the natural map $\Q\rarrow\varprojlim_n\Q/z^n\Q$,
though not necessarily injective, is always \emph{surjective} for
a $\C$\+contramodule~$\Q$ \cite[Lemma~A.2.3]{Psemi}.
 Indeed, let $q_n\in\Q$ be a sequence of vectors such that
$q_{n+1}-q_n\in z^n\Q$ for every $n=1$, $2$,~\dots{}
 Suppose $q_{n+1}-q_n=z^np_n$; then the infinite sum
$q=q_1+\sum_{n=1}^\infty z^np_n$ provides an element $q\in\Q$ for which
$q-q_n\in z^n\Q$ for every $n\ge1$.

 Similarly, consider the family of $l$\+primary abelian groups
$l^{-n}\Z/\Z$, \ $n=1$, $2$,~\dots{}
 They can be included into short exact sequences of $l$\+primary
abelian groups
$$
 0\lrarrow l^{-n}\Z/\Z\lrarrow\mathbb Q_l/\Z_l\lrarrow
 \mathbb Q_l/l^{-n}\Z_l\lrarrow0.
$$
 Passing to the infinite product of these short exact sequences
in the category $\Z_l\discr$ over all~$n\ge1$, one discovers that
the map $\prod_n\mathbb Q_l/\Z_l\rarrow\prod_n\mathbb Q_l/
l^{-n}\Z_l$ is not surjective, as, e.~g., the element
$(l^{-n-1})_n\in\prod_n\mathbb Q_l/l^{-n}\Z_l$ does not belong to
its image.
 One also computes the infinite product $\prod_n l^{-n}\Z/\Z$ in
the category $\Z_l\discr$ as isomorphic to the inductive limit
$\varinjlim_m\big(\prod_{n=1}^m l^{-n}\Z/\Z\times
\prod_{n=m+1}^\infty l^{-m}\Z/\Z\big)$.

 Consider the family of $\Z_l$\+contramodules $\Z/l^n\Z$, \ 
$n=1$, $2$,~\dots{}
 They can be viewed as parts of the short exact sequences of
$\Z_l$\+contramodules
$$
 0\lrarrow l^n\Z_l\lrarrow\Z_l\lrarrow\Z/l^n\Z\lrarrow0.
$$
Passing to the infinite direct sum of these short exact sequences
in the category $\Z_l\contra$ over all $n\ge1$, one finds out
that the map $\bigoplus_n l^n\Z_l\rarrow\bigoplus_n\Z_l$ is
injective.
 Its cokernel $\P=\bigoplus_n\Z/l^n\Z\in\C\contra$ can be described
as follows.

 Let $\E$ denote the free $\Z_l$\+contramodule generated by
a sequence of symbols $e_1$, $e_2$, $e_3$,~\dots{}
 Explicitly, $\E$ is the set of all formal linear combinations
$\sum_{n=1}^\infty a_ne_n$, where the sequence of $l$\+adic integers
$a_n\in\Z_l$ converges to zero in the topology of~$\Z_l$.
 Let $\F$ be the similar $\Z_l$\+contramodule generated by
a sequence of symbols $f_1$, $f_2$, $f_3$,~\dots{}
 Define a morphism of $\C$\+contramodules $\F\rarrow\E$ by the rule
$\sum_{n=1}^\infty b_nf_n\longmapsto\sum_{n=1}^\infty l^nb_ne_n$.
 Clearly, this morphism is injective; its cokernel $\E/\F$ is our
$\Z_l$\+contramodule~$\P$.

 Set $p_n=e_n\bmod\F\in\P$.
 Then the infinite sum $p=\sum_{n=1}^\infty l^np_n$ is a nonzero
element in $\P$, since the element $\sum_{n=1}^\infty l^ne_n$
does not belong to $\F\subset\E$.
 On the other hand, every summand $l^np_n$ vanishes in $\P$,
the element $l^ne_n$ being the image of the element $f_n\in\F$
in~$\E$.
 It follows that the element~$p$ belongs to $l^n\P$ for every $n\ge1$,
so the $l$\+adic topology on $\P$ is not separated.
 Notice that the natural map $\Q\rarrow\varprojlim_n \Q/l^n\Q$,
though not necessarily injective, is always surjective for
a $\Z_l$\+contramodule~$\P$ \cite[Lemma~D.1.1]{Pcosh}.
 The proof is similar to the above argument for $k[[z]]$\+contramodules.

\subsection{Recovering the contramodule structure} \label{recovering}
 We have seen in the previous section that a $k[[z]]$\+contramodule
can contain infinitely $z$\+divisible \emph{vectors}, i.~e., vectors
$p\in\P$ for which there exists a sequence of vectors $p_n\in\P$
such that $p=z^np_n$ for every $n\ge1$.
 Let us now show that \emph{no} $k[[z]]$\+contramodule can contain
infinitely $z$\+divisible \emph{$k[z]$\+submodules}.
 In other words, one can never choose the sequence of vectors
$p_n\in\P$ in a compatible way, i.~e., any sequence of
vectors $p_n\in\P$ such that $p_n=zp_{n+1}$ for all $n\ge 0$ is
the sequence of zero vectors.

 Indeed, consider the expression $q=\sum_{n=0}^\infty z^np_n\in\P$.
 By assumption, we have
$$
 \sum_{n=0}^\infty z^n p_n=\sum_{n=0}^\infty z^n\cdot zp_{n+1}
 = \sum_{n=0}^\infty z^{n+1}p_{n+1}=\sum_{n=1}^\infty z^np_n,
$$
that is $q=q-p_0$ and $p_0=0$.
 Here the last two equations conceal the use of the contraassociativity
axiom from Section~\ref{over-power-series}, which is being applied
to the double sequence of vectors $p_{ij}=p_{i+1}$ when $j=1$ and
$p_{ij}=0$ otherwise.
 The assertion we have proven is essentially a particular case of
\emph{Nakayama's lemma for contramodules}
(see Section~\ref{over-topol-rings} below).

 Now we are in the position to show that the forgetful functor
$k[[z]]\contra\rarrow k[z]\modl$ (where we denote by $k[[z]]\contra$
the category $\C\contra$ of contramodules over the coalgebra $\C$ 
with $\C^*=k[[z]]$) is \emph{fully faithful}, i.~e.,
the $\C$\+contramodule structure on a $k$\+vector space $\P$ can be
uniquely recovered from the single linear operator $z\:\P\rarrow\P$.
 Indeed, suppose that we want to ``compute'' the value of
the infinite sum $\sum_{n=0}^\infty z^np_n$ in~$\P$.
 Consider the infinite system of linear equations
\begin{equation} \label{recovering-k[[z]]-contramodule}
 q_n=p_n+zq_{n+1}, \quad\text{$n=0$, $1$, $2$,~\dots}
\end{equation}
in the indeterminates $q_n\in\P$.
 We have just shown that the related system of homogeneous linear
equations $q_n=zq_{n+1}$ has no nonzero solutions in~$\P$.
 Hence a solution of the system~\eqref{recovering-k[[z]]-contramodule}
is unique if it exists.
 Given a $k[[z]]$\+contramodule structure in $\P$, one produces
such a solution by setting
$$
 q_n=\sum_{i=0}^\infty z^ip_{n+i}.
$$
 The value of $\sum_{n=0}^\infty z^np_n$ can be recovered as
the vector $q_0\in\P$.

 We have essentially shown that a $k[z]$\+module $P$ admits
an (always unique) $k[[z]]$\+contramodule structure if and only if
the system of nonhomogeneous linear
equations~\eqref{recovering-k[[z]]-contramodule} has a unique solution
in~$q_n$ for every sequence of vectors $p_n\in P$.
 The latter condition is equivalent to the vanishing of the two Ext
spaces $\Ext^*_{k[z]}(k[z,z^{-1}]\;P)$ (see~\cite[Remark~A.1.1]{Psemi}
and~\cite[Lemmas~B.5.1 and~B.7.1]{Pweak}).

 Similarly, we have seen that a $\Z_l$\+contramodule $\P$ can contain
infinitely $l$\+divisible \emph{elements}, i.~e., there can be nonzero
elements $p\in\P$ for which there exists a sequence of
elements $p_n\in\P$ such that $p=l^np_n$ for every $n\ge1$.
 Let us show that no $\Z_l$\+contramodule can contain infinitely
$l$\+divisible \emph{subgroups}.
 In other words, one can never choose the sequence $p_n\in\P$ in
a compatible way, i.~e., any sequence of elements $p_n\in\P$ 
such that $p_n=lp_{n+1}$ for all $n\ge0$ is the zero sequence.

 Indeed, consider the expression $\sum_{n=0}^\infty l^np_n\in\P$.
 By assumption, we have
$$
 \sum_{n=0}^\infty l^np_n=\sum_{n=0}^\infty l^n\cdot lp_{n+1}=
 \sum_{n=0}^\infty l^{n+1}p_{n+1}=\sum_{n=1}^\infty l^np_n,
$$
that is $q=q-p_0$ and $p_0=0$.
 Here the first equation signifies the use of the ``compatibility
with the abelian group structure'' axiom from
Section~\ref{over-l-adics}, while the last two equations presume
an application of the contraassociativity axiom.

 Let us show that the forgetful functor $\Z_l\contra\rarrow\Ab$ is
fully faithful, i.~e., a $\Z_l$\+contramodule structure on an abelian
group $\P$ is uniquely determined by the abelian group structure.
 Suppose that we want to ``compute'' the value of the infinite
sum $\sum_{n=0}^\infty l^np_n$ in~$\P$.
 Consider the infinite system of linear equations
\begin{equation} \label{recovering-Z_l-contramodule}
 q_n=p_n+lq_{n+1}, \quad\text{$n=0$, $1$, $2$~\dots}
\end{equation}
in the indeterminates $q_n\in\P$.
 We have just shown that the related system of homogeneous linear
equations $q_n=lq_{n+1}$ has no nonzero solutions in~$\P$.
 Hence a solution of the system~\eqref{recovering-Z_l-contramodule}
is unique if it exists.
 Assuming a $\Z_l$\+contramodule structure in~$\P$, one produces
such a solution by setting
$$
 q_n=\sum_{i=0}^\infty l^ip_{n+i}.
$$
 The value of $\sum_{n=0}^\infty l^np_n$ can be recovered as
the vector $q_0\in\P$.

 We have essentially shown that an abelian group $P$ admits
an (always unique) $\Z_l$\+contramodule structure if and only if
the system of nonhomogeneous linear
equations~\eqref{recovering-Z_l-contramodule} has a unique solution
in~$q_n$ for every sequence of elements $p_n\in P$.
 The latter condition is equivalent to the vanishing of the two
Ext groups $\Ext^*_\Z(\Z[l^{-1}],P)$
(see~\cite[Remark~A.3]{Psemi} and~\cite[Theorem~B.1.1 and
Lemma~B.7.1]{Pweak}; cf.~\cite[Sections~VI.3\+-4]{BK}
and~\cite[Definition~4.6 and Remark~4.7]{Jan}).

\subsection{Contramodules over the Virasoro algebra}
\label{over-virasoro}
 The definition of contramodules over the formal power series
algebra in terms of infinite summation operations, as stated
in Section~\ref{over-power-series}, opens the door to
generalizations of the notion of a contramodule to various
topological algebraic structures, including not only
associative rings, but also topological Lie algebras.
 In this section we demonstrate the possibility of such
a definition in the simple example of the Virasoro Lie algebra.

 The \emph{punctured formal disk}, otherwise known as 
the \emph{formal circle} over a field~$k$ is defined as
a ``space'' such that the ring of functions on it is the ring
of formal Laurent power series~$k((z))$.
 The Lie algebra of vector fields on the formal circle
$k[[z]]d/dz$ is the set of all expressions of the form
$f(z)d/dz$ with $f(z)\in k((z))$, endowed with the obvious
$k$\+vector space structure and the Lie bracket
$[f\,d/dz\;g\,d/dz] = (f\,dg/dz-g\,df/dz)\,d/dz$.
 The vector fields $L_i=z^{i+1}\,d/dz$ form a topological basis
in the vector space $k((z))d/dz$, in which the Lie bracket
takes the form $[L_i,L_j]=(j-i)L_{i+j}$.

 The \emph{Virasoro algebra} $\Vir$ is a central extension of
the Lie algebra $k((z))d/dz$ with a one-dimensional kernel
spanned by an element denoted by~$C$.
 The $k$\+vector space $\Vir=k((z))d/dz\oplus kC$ has a topological
basis formed by the vectors $L_i$, \,$i\in\Z$, and~$C$, in
which the Lie bracket is given by the rules
$$
 [L_i,C]=0, \qquad [L_i,L_j]=(j-i)L_{i+j}+
 \delta_{i+j,\.0}\frac{i^3-i}{12}\,C,
$$
where $\delta$ is the Kronecker symbol, for all $i$, $j\in\Z$
\cite{FF,RCW,KaRa}.

 A \emph{discrete module} $\M$ over the Virasoro algebra is
a module over the Lie algebra $\Vir$ for which the action map
$\Vir\times\M\rarrow\M$ is continuous in the $z$\+adic topology
of $\Vir$ and the discrete topology of~$\M$.
 In other words, $\M$ is a vector space endowed with linear
operators $L_i$ and $C\:\M\rarrow\M$ satisfying the above
commutation relations and the discreteness condition, according
to which for any vector $x\in\M$ there should exist
an integer~$n$ such that $L_ix=0$ for all $i>n$.

\begin{rem}
 The terminology related to what we call ``discrete modules over
the Virasoro algebra'' is not consistent in the literature.
 On the one hand, an analogous class of modules over locally
compact totally disconnected topological groups (such as $p$\+adic
Lie groups) is known under the name of ``smooth modules''
(see the next Section~\ref{over-top-groups} and
Example~\ref{over-semialgebras}, or the paper~\cite{Psm} and
the references therein).

 Still, in the particular case of profinite groups, such modules
are called ``discrete'' in the Galois theory~\cite{Ser} and
class field theory~\cite[Chapter~V]{CF} context, as well as in
the abstract theory of profinite groups~\cite{RZ}.
 On the other hand, an analogous class of modules over topological
associative rings is usually called ``discrete
modules''~\cite[Section~VI.4]{Ste}, \cite[end of Section~1.4]{Beil},
\cite[Section~19.1]{FG}, or even ``torsion
modules''~\cite[Section~VI.5]{Ste}.
 
 A class of modules over affine Kac--Moody Lie algebras very similar to
the above-defined class of modules over the Virasoro is called
``smooth modules'' in such references as~\cite{Yak0,Yak1,FK}.
 (This terminology in application to the Kac--Moody algebras may go back
to~\cite[Section~1.9]{KL}; notice, however, that the terminology
in~\cite{KL} is actually different, in that what are called
``smooth modules'' in~\cite{KL} are called ``strictly smooth modules''
in~\cite{Yak1}.)
 The same class of modules over the Kac--Moody algebras is called
``discrete modules'' in~\cite[Sections~5.1 and~19.1]{FG}.
 The latter reference includes a more general context of topological
Lie algebras on par with the particular case of the affine
Kac--Moody algebras.

 In this paper, we will discuss topological Lie algebras generally
in Sections~\ref{over-Lie} and~\ref{tate-harish-chandra} below
(see also Example~\ref{fully-faithful}.2 at the very end of
the paper).
 For consistency with the exposition in the author's
monograph~\cite[Section~D.2.5]{Psemi}, which is one of our main
reference sources, we prefer the ``discrete modules'' terminology
in connection with topological Lie algebras.
 The terminology in~\cite{Psemi} was largely inspired by
the one in~\cite{Beil}.
\end{rem}

 A \emph{contramodule} $\P$ over the Virasoro algebra $\Vir$
is a $k$\+vector space endowed with a linear operator
$C\:\P\rarrow\P$ and an infinite summation operation assigning
to every sequence of vectors $p_{-n}$, $p_{-n+1}$,
$p_{-n+2}$~\dots~$\in\P$, \ $n\in\Z$, a vector denoted symbolically
by $\sum_{i=-n}^\infty L_ip_i\in\P$.
 This infinitary operation, or rather, sequence of infinitary
operations indexed by the integers~$n$, should satisfy
the equations of agreement
$$
 \sum_{i=-n}^\infty L_ip_i=\sum_{i=-m}^\infty L_ip_i
 \quad\text{when $-n<-m$ and $p_{-n}=\dotsb=p_{-m-1}=0$},
$$
linearity
$$
 \sum_{i=-n}^\infty L_i(ap_i+bq_i)=
 a\sum_{i=-n}^\infty L_ip_i+ b\sum_{i=-n}^\infty L_iq_i,
$$
and the contra-Jacobi identity
$$
 \sum_{i=-n}^\infty L_i(Cp_i)\.=\.C\sum_{i=-n}^\infty L_ip_i
$$
and
\begin{multline*}
 \sum_{i=-n}^\infty L_i\left(\sum_{j=-m}^\infty L_jp_{ij}\right)
 - \sum_{j=-m}^\infty L_j\left(\sum_{i=-n}^\infty L_ip_{ij}\right)
 \\ 
 =\,\sum_{h=-n-m}^\infty L_h\left(\sum_{i+j=h}^{i\ge-n\;j\ge-m}
 (j-i)p_{ij}\right)
 +C\sum_{i+j=0}^{i\ge-n\;j\ge-m}\frac{i^3-i}{12}p_{ij}.
\end{multline*}
for any $p_i$, $q_i$, $p_{ij}\in\P$ and $a$, $b\in k$.
 This definition (for the Lie algebra $k((z))d/dz$ without
the central extension) can be found
in~\cite[Section~D.2.7]{Psemi}.
 
 For any discrete module $\M$ over the Virasoro algebra and any
$k$\+vector space $V$, the vector space $\P=\Hom_k(\M,V)$ has
a natural structure of $\Vir$\+contramodule.
 The central element $C$ acts in $\P$ by the usual
formula $(Cp)(x)=-p(Cx)$ for $p\in\P$ and $x\in\M$, while
the infinite summation operations are provided by the rule
$$
 \left(\sum_{i=-n}^\infty L_ip_i\right)(x)=
 - \sum_{i=-n}^\infty p_i(L_ix),
$$
for $p_i\in\P$ and $x\in\M$, where the second summation sign stands
for the conventional sum of an eventually vanishing sequence of
vectors in~$V$.

 The category $\Vir\discr$ of discrete modules over the Virasoro
algebra is abelian and the forgetful functor $\Vir\discr\rarrow
k\vect$ is exact.
 Both the infinite direct sums and infinite products exist in
$\Vir\discr$.
 The forgetful functor preserves infinite direct sums (but not
infinite products), so filtered inductive limits are exact
in $\Vir\discr$.
 In other words, the abelian category $\Vir\discr$ satisfies
the axioms Ab5 and Ab3*.
 It also admits a set of generators, so it has enough injectives.

 The category $\Vir\contra$ of contramodules over the Virasoro
algebra is abelian and the forgetful functor $\Vir\contra\rarrow
k\vect$ is exact.
 Both the infinite direct sums and the infinite products exist
in $\Vir\contra$.
 The forgetful functor preserves infinite products, which are
therefore exact functors in $\Vir\contra$; so this category
satisfies Ab3 and Ab4*.
 There are also enough projective objects in $\Vir\contra$.
 We will explain their construction in Section~\ref{over-Lie} below.

\subsection{Contramodules over topological groups}
\label{over-top-groups}
 The aim of this section is to demonstrate the definition of
contramodules over a locally compact, totally disconnected
topological group.
 A typical example of such a group is the group $GL_n(\mathbb Q_l)$
of invertible square matrices over the rational $l$\+adic numbers
(endowed with the topology induced by the topology of~$\mathbb Q_l$).
 
 In this section, all the \emph{topological spaces} are presumed
to be Hausdorff, locally compact, and totally disconnected.
 Open-closed subsets in such a topological space $X$ form a topology
base~\cite[Corollaire~II.4.4]{Bour}.
 A \emph{topological group} is a topological space with a group
structure given by continuous multiplication and inverse element maps.
 Open subgroups in such a topological group $G$ form a base of
neighborhoods of zero~\cite[Corollaire~III.4.6.1]{Bour}.
 When $G$ is compact, the same can be said about its open \emph{normal}
subgroups; so $G$ is profinite.

 A \emph{discrete module} $\M$ over a topological group $G$ is
an abelian group endowed with an action of $G$ provided by
a continuous map $G\times\M\rarrow\M$ in the given topology of
$G$ and the discrete topology of~$\M$.
 In other words, an action of $G$ in $\M$ is discrete if and only
if the stabilizer of any element of $\M$ is an open subgroup in~$G$.
 A discrete action can be also viewed as a map $\M\rarrow\M\{G\}$,
where for any topological space $X$ and abelian group $A$ we denote
by $A\{X\}$ the group of all locally constant $A$\+valued functions
$X\rarrow A$ on~$X$.
 Denoting by $G\modl$ the category of nontopological $G$\+modules,
i.~e., abelian groups $M$ endowed with an arbitrary action of $G$
viewed as an abstract group, and by $G\discr$ the category of
discrete $G$\+modules, there is a natural fully faithful functor
$G\discr\rarrow G\modl$.

 Let us introduce a bit more notation.
 Given a topological space $X$ and an abelian group $A$, we denote
by $A(X)$ the group of all locally constant compactly supported
$A$\+valued functions on~$X$.
 For any topological spaces $X$ and $Y$, there is a natural isomorphism
$A(X\times Y)\simeq A(X)(Y)$.
 Furthermore, denote by $A[[X]]$ the abelian group of finitely additive
compactly supported $A$\+valued measures defined on the open-closed
subsets of~$X$.
 For any continuous map of topological spaces $X\rarrow Y$,
the push-forward map $A[[X]]\rarrow A[[Y]]$ is
defined~\cite[Section~E.1.1]{Psemi}. 

 For any topological spaces $X$, $Y$ and an abelian group $A$,
there is a natural map $A[[X\times Y]]\rarrow A[[X]][[Y]]$
assigning to an $A$\+valued measure $\nu$ on $X\times Y$
the measure taking an open-closed subset $V\subset Y$ to the measure
taking an open-closed subset $U\subset X$ to the element
$\nu(U\times V)\in A$.
 This map is an isomorphism when \emph{both} the spaces $X$ and $Y$
are discrete or \emph{both} of them are compact, but not otherwise.

 A \emph{contramodule} over a topological group $G$ is an abelian
group~$\P$ endowed with a \emph{$G$\+contraaction} map
$\pi\:\P[[G]]\rarrow\P$, which can be viewed as
an \emph{integration operation} and denoted symbolically by
$$
 \pi(\mu)=\int_G g^{-1}(d\mu_g),
$$
where $d\mu_g\in\P$ denotes the value of a measure $\mu\in\P[[G]]$
on a small piece of the group $G$ containing an element $g\in G$,
while $g^{-1}(d\mu_g)\in\P$ is a small element in $\P$ obtained
by applying to $d\mu_g$ the presumed generalized action of
$g^{-1}\in G$ in~$\P$.
 
 The map $\pi$ must satisfy the following \emph{contraassociativity}
and \emph{contraunitality} equations.
 Firstly, the composition $\P[[G\times G]]\rarrow\P[[G]][[G]]
\rarrow\P$ of the above-described map $\P[[G\times G]]\rarrow
\P[[G]][[G]]$ with the iterated contraaction map
$\P[[G]][[G]]\rarrow\P[[G]]\rarrow\P$ should be equal to
the composition $\P[[G\times G]]\rarrow\P[[G]]\rarrow\P$ of
the push-forward map $\P[[G\times G]]\rarrow\P[[G]]$ with
respect to the multiplication map $G\times G\rarrow G$ with
the contraaction map $\P[[G]]\rarrow\P$,
$$
 \P[[G\times G]]\birarrow\P[[G]]\rarrow\P.
$$
 Secondly, the point measure supported at the unit element $e\in G$
and taking a prescribed value $p\in\P$ on the neighborhoods of~$e$
should be taken to the element~$p$ by the contraaction map,
$$
 \P\rarrow\P[[G]]\rarrow\P.
$$

 Given a point $g\in G$ and an element $p\in\P$, denote by
$g^{-1}(p)\in\P$ the element one obtains by applying the contraaction
map to the point measure supported at~$g$ and taking the value~$p$
on its neighborhoods.
 This rule defines a natural action of $G$ (as an abstract,
nontopological group) on any $G$\+contramodule $\P$, providing
a forgetful functor $G\contra\rarrow G\modl$
\cite[Section~E.1.3]{Psemi}, \cite[Sections~2.6 and~3.1]{Psm}.

 For any discrete $G$\+module $\M$ and an abelian group $V$,
the abelian group $\Hom_\Z(\M,V)$ has a natural $G$\+contramodule
structure.
 The contraaction map $\Hom_\Z(\M,V)[[G]]\rarrow\Hom_\Z(\M,V)$
assigns to a measure~$\mu$ the additive map taking an element
$m\in\M$ to the value of the integral
$$
 \pi(\mu)(m) = \int_G d\mu_g(gm)\.\in\. V.
$$
 The $\M$\+valued function $g\longmapsto gm$ being locally constant
on $G$ and the $\Hom_\Z(\M,V)$-valued measure~$\mu$ being
compactly supported in~$G$, the integral is well-defined
(cf.~\cite[Section~E.1.4]{Psemi}).

 The category of discrete $G$\+modules is abelian and
the forgetful functor $G\discr\rarrow\Ab$ is exact.
 Filtered inductive limits are exact functors in $G\discr$;
they are also preserved by the forgetful functor.
 In other words, the category $G\discr$ satisfies the axioms
Ab5 and Ab3*.
 It also admits a set of generators, so it has enough
injective objects.
 The category of $G$\+contramodules is abelian and the forgetful
functor $G\contra\rarrow\Ab$ is exact.
 Infinite products are exact functors in $G\contra$; they
are also preserved by the forgetful functor.
 So the category $G\contra$ satisfies the axioms Ab3 and Ab4*.
 It has enough projective objects.

 The embedding functor $G\discr\rarrow G\modl$ and the forgetful
functor $G\contra\rarrow G\modl$ have the similar properies,
as the forgetful functor $G\modl\rarrow\Ab$ preserves the inductive
and projective limits of any diagrams.
 We will see below in Section~\ref{over-semialgebras} how
discrete $G$\+modules and $G$\+contramodules can be interpreted
as \emph{semimodules} and \emph{semicontramodules} over
a certain semialgebra $\S$, opening the way to explicit
constructions of injective and projective objects in
$G\discr$ and $G\contra$.

\Section{Comodule and Contramodule Categories}
\label{co-and-contra-categories}

\subsection{Contramodules over topological rings}
\label{over-topol-rings}
 As we discussed in
Sections~\ref{over-power-series}\+-\ref{over-l-adics},
one would like to extend the definition of a contramodule
from the topological algebras dual to coalgebras over fields
to topological rings of more general nature.
 Before proceeding to present the desired definition, let us
start with reintroducing the conventional modules over a ring.

 Given a (nontopological) associative ring $R$ with unit,
one can define left $R$\+modules in the following fancy way.
 For any set $X$, denote by $R[X]$ the set of formal linear
combinations of elements of $X$ with coefficients in $R$
(i.~e., the underlying set of the free $R$\+module
generated by~$X$).
 The assignment $X\longmapsto R[X]$ is a covariant functor
from the category of sets to itself.
 The key observation is that this functor has a natural
structure of a \emph{monad}~\cite[Chapter~VI]{McL} on
the category of sets.

 In other words, for any set $X$ there is a natural map of
``opening the parentheses'' $\phi_X\:R[R[X]]\rarrow R[X]$,
assigning a formal linear combination of elements of $X$ to
a formal linear combination of formal linear combinations.
 There is also a natural map $\epsilon_X\:X\rarrow R[X]$ defined
in terms of the zero and unit elements of the ring~$R$.
 The associativity and unitality axioms of
a monad~\cite[Section~VI.1]{McL} are satisfied by these two
natural transformations.

 Given the endofunctor $R[{-}]\:\Sets\rarrow\Sets$ endowed with
the natural transformations~$\phi$ and~$\epsilon$, one can define
a left $R$\+module as an algebra/module over this monad on
the category of sets.
 In other words, a left $R$\+module $M$ is a set endowed with
a map of sets $m\:R[M]\rarrow M$ satisfying the associativity and
unitality axioms from~\cite[Section~VI.2]{McL}.
 Specifically, the two maps $\phi_M$ and $R[m]\:R[R[M]]\rarrow R[M]$
should have equal compositions with the map~$m$,
$$
 R[R[M]]\birarrow R[M]\rarrow M,
$$
while the composition of the map $\epsilon_M\:M\rarrow R[M]$ with
the map~$m$ should be equal to the identity map~$\id_M$,
$$ 
 M\rarrow R[M]\rarrow M.
$$

 Now let $\R$ be an associative topological ring with unit.
 We will have to assume that $\R$ is complete and separated, and
open right ideals form a base of neighborhoods of zero in~$\R$.
 In other words, the natural map $\R\rarrow\varprojlim\R/\J$,
where $\J$ runs over all the open right ideals, must be
a topological isomorphism.

 Notice that these are precisely the assumptions under which
the discrete right $\R$\+modules are a good category to be
assigned to~$\R$ (see the beginning of Section~\ref{over-l-adics};
cf.~\cite[Section~1.4]{Beil}).
 Even though the notion of a discrete $\R$\+module is well-defined for
any topological ring $\R$, one observes that the annihilator of
an element in a right $\R$\+module is a right ideal in~$\R$.
 So, if we are interested in discrete right $\R$\+modules, then
the collection of all the open right ideals in $\R$ is the only
aspect of the topology of $\R$ that is relevant for the definition
of such modules.
 Hence there is no loss of generality involved in assuming that $\R$
has a base of neighborhoods of zero consisting of open right ideals
when working with discrete right $\R$\+modules (and of course,
similarly for the discrete left modules and open left ideals).
 Otherwise, a discrete right $\R$\+module is the same thing as
a discrete right module over the completion of $\R$ in the new topology
with a base of neighborhoods of zero consisting of the open right ideals
in the original one.

 For any set $X$, denote by $\R[[X]]$ the set of all infinite
formal linear combinations $\sum_x r_xx$ of elements of $X$ with
the coefficients in $\R$ for which the family of coefficients
$r_x\in\R$ converges to zero in the topology of~$\R$.
 The latter condition means that for any neighborhood of zero
$U\subset\R$ the set of all $x\in X$ for which $r_x\notin U$
must be finite.
 We will endow the functor $\R[[{-}]]\:\Sets\rarrow\Sets$ with
the structure of a monad on the category of sets by defining
an ``opening of infinite parentheses'' map $\phi_X\:\R[[\R[[X]]]]
\rarrow\R[[X]]$ and a unit map $\epsilon_X\:X\rarrow\R[[X]]$.
 
 In order to define the map~$\phi_X$, one essentially has to show
that the infinite sums of products in $\R$ that one obtains after
opening the parentheses converge in the topology of~$\R$.
 That is where our assumptions about the topological ring $\R$
have to be used.
 Indeed, one has $\R[[X]]=\varprojlim_\J\R/\J[X]$, where $\J$ runs
over all the open right ideals in $\R$ (and the notation $A[X]$
for a set $X$ and an abelian group $A$ stands for the group of all
finite formal linear combinations of the elements of $X$ with
coefficients in~$A$).

 Defining the ``opening of parentheses'' map $\R/\J[\R[[X]]]\rarrow
\R/\J[X]$ does not involve any actual infinite summation,
since $\J\subset\R$ is an open right ideal.
 It remains to consider the composition $\R[[\R[[X]]]]\rarrow
\R/\J[\R[[X]]]\rarrow\R/\J[X]$ and pass to the projective limit
over~$\J$.
 The unit map~$\epsilon_X$ is easy to define; one can say that it is
the composition $X\rarrow\R[X]\rarrow\R[[X]]$.
 Checking the monad equations for the natural transformations~$\phi$
and~$\epsilon$ is straightforward.

 A \emph{left\/ $\R$\+contramodule} $\P$ is an algebra/module over
this monad on the category of sets.
 In other words, it is a set endowed with an \emph{$\R$\+contraaction}
map $\pi\:\R[[\P]]\rarrow\P$ satisfying the (contra)associativity and
unitality equations together with the natural transformations~$\phi$
and~$\epsilon$.
 Specifically, the two maps $\phi_\P$ and $\R[[\pi]]\:\R[[\R[[\P]]]]
\rarrow\R[[\P]]$ should have equal compositions with
the contraaction map~$\pi$,
$$
 \R[[\R[[\P]]]]\birarrow\R[[\P]]\rarrow\P,
$$
while the composition of the map $\epsilon_\P\:\P\rarrow\R[[\P]]$ with
the contraaction map should be equal to the identity map~$\id_\P$,
$$
 \P\rarrow\R[[\P]]\rarrow\P.
$$
 This definition can be found in~\cite[Remark~A.3]{Psemi}
and~\cite[Section~1.2]{Pweak}.

 Notice that a systematic study of a class of monads on the category
of sets, called the \emph{algebraic} monads and viewed as ``generalized
rings'', was undertaken by Durov in~\cite{Dur}.
 The definition above was in part inspired by Durov's work.
 However, the monad $X\longmapsto\R[[X]]$ is \emph{not} algebraic, as
the functor $\R[[{-}]]$ does not preserve filtered inductive limits
of sets.

 For any set $X$, the ``opening of parentheses'' map
$\pi=\phi_X\:\R[[\R[[X]]]]\rarrow\R[[X]]$ provides the set
$\R[[X]]$ with a natural left $\R$\+contramodule structure.
 The $\R$\+contra\-modules $\R[[X]]$ are called the \emph{free}
$\R$\+contramodules.
 For the reasons common to all monads~\cite[Section~VI.5]{McL},
for any set $X$ and any left $\R$\+contramodule $\Q$ there is
a natural bijection/isomorphism of abelian groups
$\Hom^\R(\R[[X]],\Q)\simeq\Hom_\Sets(X,\Q)$, where we denote by
$\Hom^\R(\P,\Q)$ the group of morphisms from an object $\P$ to
an object $\Q$ in the category of $\R$\+contramodules.

 Equivalently, an $\R$\+contramodule can be defined as a set endowed
with the following \emph{infinite summation operations}.
 For any set of indices $\{\alpha\}$, any family of elements
$p_\alpha\in\P_\alpha$, and any family of coefficients $r_\alpha\in\R$
converging to zero in the topology of $\R$, the element denoted
symbolically by $\sum_\alpha r_\alpha p_\alpha\in\P$ must be defined.
 This series of infinitary operations in the set $\P$ should
satisfy the equations of contraassociativity
$$
 \sum_\alpha r_\alpha\sum_\beta r_{\alpha\beta}p_{\alpha\beta}
 = \sum_{\alpha,\beta}(r_\alpha r_{\alpha\beta})p_{\alpha\beta}
 \quad \text{ if $r_\alpha\to0$ and $\forall\alpha$
 $r_{\alpha\beta}\to0$ in $\R$,}
$$
unitality
$$
 \sum_\alpha r_\alpha p_\alpha = p_{\alpha_0}
 \quad\text{ if the set $\{\alpha\}$ consists of
 one element $\alpha_0$ and $r_{\alpha_0}=1$},
$$
and distributivity
$$
 \sum_{\alpha,\beta} r_{\alpha\beta} p_\alpha =
 \sum_\alpha \left(\sum_\beta r_{\alpha\beta}\right) p_\alpha
 \quad \text{ if $r_{\alpha\beta}\to 0$ in $\R$}.
$$
 Here the summation over double indices $\alpha,\beta$ presumes
a set of pairs $\{(\alpha,\beta)\}$ mapping into another set
$\{\alpha\}$ by a map denoted symbolically by $(\alpha,\beta)
\longmapsto\alpha$ (i.~e., the range of possible~$\beta$'s
may depend on a chosen~$\alpha$).
 The summation sign in the parentheses in the third equation
denotes the convergent sum in $\R$, while all the other
summation signs stand for the infinite summation operation in~$\P$.
 Our conditions on the topology of $\R$ guarantee that
the family $r_\alpha r_{\alpha\beta}$ converges to zero whenever both
the family $r_\alpha$ does and the families $r_{\alpha\beta}$
do for every fixed~$\alpha$.

 Restricting the summation operations to finite sets of indices
$\{\alpha\}$, one discovers that every left $\R$\+contramodule has
an underlying left $\R$\+module structure.
 Equivalently, one composes the contraaction map $\R[[\P]]\rarrow\P$
with the natural embedding $\R[\P]\rarrow\R[[\P]]$ in order to endow
the underlying set of an $\R$\+contramodule $\P$ with the structure
of an $\R$\+module.
 We have constructed the forgetful functor $\R\contra\allowbreak
\rarrow\R\modl$ from the category of left $\R$\+contramodules
$\R\contra$ to the category $\R\modl$ of left modules over
the ring $\R$ viewed as an abstract (nontopological) ring.

 For any discrete right $\R$\+module $\N$ and any abelian group $V$,
the group of all additive maps $\Hom_\Ab(\N,V)$ has a natural left
$\R$\+contramodule structure with the infinite summation operations
defined by the rule
$$
 \left(\sum_\alpha r_\alpha p_\alpha\right)(x)=
 \sum_\alpha p_\alpha(x r_\alpha)
$$
for any $p_\alpha\in\P$, \,$x\in\N$, and a family of
coefficients~$r_\alpha$ converging to zero in the topology of~$\R$.
 Here the summation sign in the right-hand side denotes the sum
of a family of elements in $V$ all but a finite subfamily of
which vanish, as $xr_\alpha=0$ for all but a finite subset of
indices~$\alpha$.

 For any topological ring $\R$ the category $\R\discr$ of discrete
left $\R$\+modules is abelian and the forgetful functor $\R\discr
\rarrow\Ab$ is exact.
 Filtered inductive limits are exact functors in $\R\discr$; they
are also preserved by the forgetful functor.
 In other words, the category $\R\discr$ satisfies the axioms
Ab5 and Ab3* (but \emph{not} in general Ab4*).
 It also admits a set of generators, so it has enough injectives.

 For any complete and separated topological ring $\R$ with a base
of neighborhoods of zero formed by the open right ideals
the category of left $\R$\+contramodules is abelian and
the forgetful functor $\R\contra\rarrow\Ab$ is exact.
 To convince oneself that this is so, one uses the definition
of $\R$\+contramodules in terms of infinite summation operations
in order to define the $\R$\+contramodule structures on
the kernel and cokernel of any morphism of $\R$\+contramodules
taken in the category of abelian groups.
 It helps to start from writing down the equations of compatibility
of the contramodule infinite summation operations with
the conventional finite operations in an abelian group or
an $\R$\+module~\cite[Section~1.2]{Pweak}.

 Infinite products are exact functors in $\R\contra$; they are
also preserved by the forgetful functor.
 There are enough projective objects in $\R\contra$;
an $\R$\+contramodule is projective if and only if it is
a direct summand of a free one.
 Infinite direct sums of free $\R$\+contramodules are computed
by the rule $\bigoplus_\alpha\R[[X_\alpha]]=
\R[[\coprod_\alpha X_\alpha]]$; to compute the infinite direct
sum of a family of arbitrary $\R$\+contramodules, one can
present them as the cokernels of morphisms of free contramodules
and use the fact that infinite direct sums commute with cokernels.
 Hence the category $\R\contra$ satisfies the axioms Ab3 and
Ab4* (but \emph{not} in general Ab4).

 The infinite products of discrete $\R$\+modules and the infinite
direct sums of $\R$\+con\-tramodules are not preserved by
the respective forgetful functors in general.
 The embedding functor $\R\discr\rarrow\R\modl$ and the forgetful
functor $\R\contra\rarrow\R\modl$ have the similar properties,
as the forgetful functor $\R\modl\rarrow\Ab$ preserves
the inductive and projective limits of any diagrams.

 The following version of \emph{Nakayama's lemma for discrete modules
and contramodules} over a topological ring is one of their most
important properties.

\begin{lem}
\textup{(a)} Let\/ $\R$ be a topological ring and\/ $\m\subset\R$
be a topologically nilpotent ideal, i.~e., for any neighborhood of
zero $U\subset\R$ there exists an integer $n\ge1$ such that\/
$\m^n\subset U$.
 Then for any nonzero discrete left\/ $\R$\+module\/ $\M$
the submodule ${}_\m\M\subset\M$ of elements annihilated by\/~$\m$
is nonzero. \par
\textup{(b)} Let\/ $\R$ be a complete, separated topological ring with
a base of neighborhoods of zero formed by the open right ideals, and
let\/ $\m\subset\R$ be a topologically nilpotent closed ideal.
 Then for any nonzero left\/ $\R$\+contramodule\/ $\P$
the quotient contramodule $\P/\m\P$ of\/ $\P$ by the image\/ $\m\P$
of the contraaction map $\m[[\P]]\rarrow\P$ is nonzero.
\end{lem}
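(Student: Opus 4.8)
The plan is to prove the two parts by dual methods: part~(a) by descending on powers of~$\m$ using the discreteness of~$\M$, and part~(b) by a telescoping infinite summation directly generalizing the one-variable computation of Section~\ref{recovering}.

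For part~(a), I would take any nonzero element $x\in\M$. Since $\M$ is discrete, the annihilator of~$x$ is an open left ideal, hence a neighborhood of zero in~$\R$; by topological nilpotence there is an integer $n\ge1$ with $\m^n x=0$. Choose $n$ minimal; it is positive because $\R x\ni x\ne0$, so $\m^{n-1}x\ne0$. Pick $z\in\m^{n-1}$ with $zx\ne0$ (taking $z=1$ if $n=1$). Then $\m(zx)=(\m z)x\subseteq\m^n x=0$, so $zx$ is a nonzero element of ${}_\m\M$.

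For part~(b) I argue by contradiction: assuming $\m\P=\P$, i.e.\ that the contraaction map $\m[[\P]]\rarrow\P$ is surjective, I will deduce $\P=0$. Fix $p\in\P$. Applying surjectivity repeatedly, I build an expansion tree indexed by finite words: set $q_\varnothing=p$, and, having chosen $q_w\in\P$, write $q_w=\sum_i s_{wi}\,q_{wi}$ with $s_{wi}\in\m$ converging to zero in~$\R$. Let $\rho_w$ be the product of the local coefficients along~$w$, so that $\rho_\varnothing=1$, $\rho_{wi}=\rho_w s_{wi}$, and $\rho_w\in\m^{|w|}$ since $\m$ is an ideal. Granting convergence, form the single infinite sum $\Sigma=\sum_w\rho_w q_w$ over all finite words~$w$ and apply the contraassociativity axiom to the double family with outer coefficients $\rho_w$ and inner coefficients $s_{wi}$: since $q_w=\sum_i s_{wi}q_{wi}$, the left-hand side is $\Sigma$, while the right-hand side is $\sum_{w,i}(\rho_w s_{wi})q_{wi}=\sum_{w\ne\varnothing}\rho_w q_w=\Sigma-\rho_\varnothing q_\varnothing=\Sigma-p$. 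Hence $\Sigma=\Sigma-p$, so $p=0$; as $p$ was arbitrary, $\P=0$, contradicting $\P\ne0$. This telescoping is the exact analogue of the identity $q=q-p_0$ in Section~\ref{recovering}.

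The main obstacle is to justify that $\Sigma$ is a legitimate infinite summation, i.e.\ that $(\rho_w)$ converges to zero over the set of \emph{all} words. I would fix a neighborhood of zero $U\subset\R$, choose an open right ideal $\J\subseteq U$, and show $\{w:\rho_w\notin\J\}$ is finite. Taking $N$ with $\m^N\subseteq\J$, all words of length $\ge N$ have $\rho_w\in\m^N\subseteq\J$, so it suffices to bound each lower level by induction. Because $\J$ is a right ideal, $\rho_{wi}=\rho_w s_{wi}\notin\J$ forces $\rho_w\notin\J$, so every bad word at level $k+1$ has a bad parent at level~$k$; and for each of the finitely many bad parents~$w$, continuity of left multiplication by $\rho_w$ provides a neighborhood $V$ of zero with $\rho_w V\subseteq\J$, whence $\rho_{wi}\in\J$ for the all-but-finitely-many indices $i$ with $s_{wi}\in V$. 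Thus each level contributes finitely many bad words and the total is finite. The completeness of~$\R$ and closedness of~$\m$ enter in guaranteeing that $\m[[\P]]$, the subgroup $\m\P$, and the quotient $\P/\m\P$ are well defined and that all the coefficients $\rho_w$ remain inside~$\m$, so that every expansion step and the sum $\Sigma$ genuinely take place in~$\P$. Once convergence is established the remaining steps are formal.
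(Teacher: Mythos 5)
Your part~(a) is correct and is the paper's own argument: discreteness makes the annihilator of a nonzero $x\in\M$ open, topological nilpotence gives $\m^nx=0$, and one passes to the last nonvanishing $\m^ix$ (your minimal~$n$ is the paper's maximal~$i$ plus one). For part~(b) the paper offers no proof at all---it defers to \cite[Lemma~A.2.1]{Psemi} and~\cite[Lemma~1.3.1]{Pweak}---so here your proposal supplies a genuine argument, and it is correct; indeed, the branching expansion over finite words combined with a telescoping application of contraassociativity is essentially the argument of the cited sources. The two load-bearing points are exactly the ones you isolate, and both are handled properly: the convergence of the family of branch products $(\rho_w)$ follows, as you say, from $\rho_w\in\m^{|w|}$ together with topological nilpotence (killing all levels $\ge N$), the right-ideal property of $\J$ (a bad child has a bad parent), and continuity of left multiplication in the topological ring (each bad parent has only finitely many bad children); and the identity $\Sigma=\Sigma-p$ rests on splitting off the $w=\varnothing$ term, which is the partition-additivity of contramodule infinite sums---a formal consequence of contraassociativity and contraunitality, and precisely the move the paper itself makes in Section~\ref{recovering} (``that is $q=q-p_0$''). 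The only inaccuracy is your closing sentence about which hypotheses enter where: closedness of~$\m$ is in fact never used in your argument (the ideal property alone keeps every $\rho_w$ inside~$\m$ and makes $\m\P$ a subcontramodule, hence $\P/\m\P$ a contramodule), while completeness and separatedness enter earlier and more globally, through the identification $\R[[X]]\simeq\varprojlim_\J\R/\J[X]$ that makes the monad, the infinite sums, and the combination of coefficients of repeated elements well defined. Since a proof is allowed to use fewer hypotheses than the statement provides, this does not affect correctness.
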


 Here the map $\m[[\P]]\rarrow\P$ is simply the restriction of
the contraaction map $\pi\:\R[[\P]]\rarrow\P$ to the subset
$\m[[\P]]\subset\R[[\P]]$ of all formal linear combinations with
(converging families of) coefficients in~$\m$.
 Notice that this version of Nakayama's lemma presumes \emph{no}
finite generatedness condition on either the discrete module \emph{or}
the contramodule; on the other hand, it requires a rather strong
global topological nilpotency condition on the ideal~$\m$.

\begin{proof}
 Part~(a): let $x\in\M$ be a nonzero element and $U\subset\R$ 
be its annihilator in~$\R$.
 The $\R$\+module $\M$ being discrete, $U$ is an open neighborhood
of zero in $\R$; hence there exists an integer $n\ge1$ such that
$\m^n\subset U$, so $\m^nx=0$.
 It remains to consider the maximal integer $i\ge0$ for which
$\m^ix\ne0$; then $\m^ix\subset{}_\m\M$.
 The proof of part~(b) is a bit more complicated;
see~\cite[Lemma~A.2.1]{Psemi} and~\cite[Lemma~1.3.1]{Pweak}.
\end{proof}

 For a generalization of the lemma to topologicaly T\+nilpotent ideals,
see~\cite[Lemmas~6.1 and~6.2]{Pproperf}.
 For another version of contramodule Nakayama lemma,
see~\cite[Lemma~D.1.2]{Pcosh}, \cite[Lemma~6.14]{PR},
and/or Lemma~1 in Section~\ref{fully-faithful} below.

\subsection{Contramodules over the adic completions of Noetherian rings}
\label{over-adic-completions}
 Let $R$ be a right Noetherian associative ring, and let $I\subset R$
be an ideal generated by central elements in~$R$.
 Denote by $\R=\varprojlim_n R/I^n$ the $I$\+adic completion of
the ring~$R$.
 In this section we explain how to describe the abelian category
$\R\contra$ of left contramodules over the complete ring $\R$ viewed
as a topological ring in the projective limit topology (or, which is
the same, the $I$\+adic topology) in terms of conventional modules
over the original ring~$R$.

\begin{thm}
 The composition of forgetful functors\/ $\R\contra\rarrow\R\modl
\rarrow R\modl$ provides a fully faithful embedding of abelian
categories\/ $\R\contra\rarrow R\modl$.
 A left $R$\+module $P$ belongs to the full subcategory\/
$\R\contra\subset R\modl$ if and only if any one of the following
equivalent conditions holds: \par
\textup{(a)} for any element $s\in I$ belonging to the center of
the ring $R$ and any $R[s^{-1}]$\+module $L$ one has\/
$\Ext_R^i(L,P)=0$ for all $i\ge0$; \par
\textup{(b)} for any element $s\in I$ belonging to the center of
the ring $R$ one has\/ $\Ext_R^i(R[s^{-1}],P)=0$ for all $i\ge0$; \par
\textup{(c)} for any element $s\in I$ one has\/
$\Ext^*_{\Z[t]}(\Z[t,t^{-1}],P)=0$, where\/ $\Z[t]$ denotes the ring
of polynomials in one variable with integral coefficients,
$\Z[t,t^{-1}]$ is the ring of Laurent polynomials, and\/ $t$~acts
in $P$ by the multiplication with~$s$; \par
\textup{(d)} for any $j=1$,~\dots,~$n$ and any $i=0$ or~$1$ one has\/
$\Ext_R^i(R[s_j^{-1}],P)=0$, where $s_1$,~\dots, $s_j$ is a fixed set
of central generators of the ideal $I\subset R$; \par
\textup{(e)} for any $j=1$,~\dots,~$n$ and any $i=0$ or~$1$ one has\/
$\Ext^i_{\Z[t]}(\Z[t,t^{-1}],P)=0$, where $t$~acts in $P$
by the multiplication with~$s_j$.
\end{thm}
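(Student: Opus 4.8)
The plan is to reduce the whole statement to the one-variable ``recovering'' argument of Section~\ref{recovering}, applied separately to each central generator~$s_j$, and then to reassemble the resulting one-variable summation operations into a single $\R$\+contramodule structure. The first step is to record, for any central element $s\in m$, the two-term telescope resolution of the left $R$\+module $R[s^{-1}]$ by free modules
$$
 0\rarrow\textstyle\bigoplus_{n\ge0}R\rarrow\bigoplus_{n\ge0}R
 \rarrow R[s^{-1}]\rarrow0,
$$
whose nontrivial differential sends the $n$\+th basis vector $e_n$ to $e_n-se_{n+1}$ (centrality of~$s$ is what makes multiplication by~$s$ an $R$\+module map). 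Applying $\Hom_R({-},P)$ turns this into the two-term complex
$$
 \textstyle\prod_{n\ge0}P\rarrow\prod_{n\ge0}P,
 \qquad (q_n)_n\longmapsto(q_n-sq_{n+1})_n,
$$
so that $\Ext^0_R(R[s^{-1}],P)$ and $\Ext^1_R(R[s^{-1}],P)$ are its kernel and cokernel, while $\Ext^i_R(R[s^{-1}],P)=0$ for $i\ge2$ automatically. Thus $\Ext^0=0=\Ext^1$ is exactly the assertion that the nonhomogeneous system $q_n=p_n+sq_{n+1}$ of Section~\ref{recovering} has a unique solution for every sequence $(p_n)$ in~$P$. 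The same resolution computed over $\Z[t]$ with $t$ acting by~$s$ yields the identical complex, which proves the equivalences (b)$\Leftrightarrow$(c) and (d)$\Leftrightarrow$(e) and shows that only the degrees $i=0,1$ are relevant in~(c) and~(e).

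The implications among the Ext conditions are then cheap. (a)$\Rightarrow$(b) is the special case $L=R[s^{-1}]$. For (b)$\Rightarrow$(a) I would use the derived coinduction adjunction along $R\rarrow R[s^{-1}]$: for any $R[s^{-1}]$\+module~$L$ one has $R\Hom_R(L,P)\simeq R\Hom_{R[s^{-1}]}(L,R\Hom_R(R[s^{-1}],P))$, and the inner object is acyclic by~(b), so the outer one vanishes and $\Ext^i_R(L,P)=0$ for all~$i$. Finally (b)$\Rightarrow$(d) is the restriction to $s=s_j$ and $i\in\{0,1\}$. This leaves two substantive tasks: promoting~(d) back to~(b) (passing from the chosen generators to all $s\in m$ and all~$i$), and identifying the resulting full subcategory of $R\modl$ with $\R\contra$.

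For a single generator ($n=1$) both tasks are precisely the content of Section~\ref{recovering}, carried out there for $R=k[z]$ and $R=\Z$; the same computation goes through verbatim for an arbitrary ring~$R$ with a central element~$s$, giving a bijection between $s$\+adic contramodule structures on~$P$ and the unique solvability of $q_n=p_n+sq_{n+1}$. In general I would first establish (d)$\Rightarrow$(b) by invoking weak proregularity of the sequence $s_1,\dots,s_n$, guaranteed by the Noetherian hypothesis: the alternating \v Cech (telescope) complex built from the $R[s_j^{-1}]$ computes the derived $m$\+adic completion, so vanishing of $R\Hom_R(R[s_j^{-1}],P)$ for each generator forces vanishing of $R\Hom_R(R[s^{-1}],P)$ for every $s\in m$; equivalently, one can induct on~$n$, treating the $m$\+adic completion as the $(s_n)$\+adic completion of the $(s_1,\dots,s_{n-1})$\+adic completion. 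For the identification with $\R\contra$, one direction is direct: an $\R$\+contramodule~$P$ carries, for each~$j$, the operation $(p_n)\mapsto\sum_n s_j^n p_n$, which produces a solution of the recovering system, while the contramodule Nakayama lemma of Section~\ref{over-topol-rings} (applied to the closed topologically nilpotent ideal $\overline{m\R}\subset\R$) rules out nonzero infinitely $s_j$\+divisible subobjects and hence forces uniqueness; this yields~(d) and simultaneously shows that every $R$\+module morphism between $\R$\+contramodules commutes with all the recovered sums, i.e.\ that the forgetful functor is fully faithful.

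I expect the reverse direction---building the contraaction $\R[[P]]\rarrow P$ out of the one-variable operations---to be the main obstacle. The recovering procedure defines $\sum_n s_j^n p_n$ for each generator separately, but an $\R$\+contramodule structure requires a single associative summation $\sum_\alpha r_\alpha p_\alpha$ over all null-convergent families $(r_\alpha)$ in $\R=\varprojlim R/m^k$, and one must check that the operations attached to different generators glue into a coherent action of the whole completed ring and satisfy the contraassociativity and contraunitality (monad) axioms. This is exactly where the Noetherian hypothesis and the control it gives over the $m$\+adic topology (the squeezing of $m^k$ between powers of $(s_1,\dots,s_n)$) are indispensable, and where uniqueness of solutions---the contramodule Nakayama lemma---does the heavy lifting of deducing the axioms from the identities the sums must satisfy.
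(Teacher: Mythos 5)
The step you yourself call ``the main obstacle''---assembling the $n$ one-variable summation operations into a contraaction $\R[[P]]\rarrow P$---is the actual content of the theorem, and your proposal offers no mechanism for it beyond an appeal to Nakayama-style uniqueness. That appeal cannot work here: uniqueness arguments presuppose that a candidate structure exists, whereas the problem is existence and well-definedness. Concretely, an element of $\R[[P]]$ is a formal sum $\sum_\alpha r_\alpha p_\alpha$ with a null-convergent family of coefficients $r_\alpha\in\R=\varprojlim_k R/m^k$, and such coefficients admit no canonical expression as convergent series in the monomials $s_1^{m_1}\dotsm s_n^{m_n}$; any expression involves choices, and both the well-definedness of a glued sum and its contraassociativity require comparing different choices. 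The paper resolves exactly this point by a change of rings that is absent from your proposal: it introduces the topological formal power series ring $\T=R[[t_1,\dotsc,t_n]]$, in which every element \emph{does} have a canonical coefficientwise expansion, so that the multi-variable recovering argument identifies $\T\contra$ with the full subcategory of those $T=R[t_1,\dotsc,t_n]$-modules $Q$ for which $\Ext^*_T(T[t_j^{-1}],Q)=0$ (the paper's Lemma~3). It then proves that the continuous surjection $\T\rarrow\R$, $t_j\mapsto s_j$, has kernel generated by the central elements $s_j-t_j$, \emph{and}---crucially---that null-convergent families in $\R$ lift to null-convergent families in $\T$, while null-convergent families in the kernel are combinations of null-convergent families with the coefficients $s_j-t_j$ (Lemma~1; this is where right Noetherianness and Artin--Rees enter, not merely as ``control over the $m$-adic topology''). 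These lifting statements are what make $\R\contra$ the full subcategory of $\T$-contramodules annihilated by the $s_j-t_j$ (Lemma~2); they are precisely what your direct gluing would need and does not have. Note also that even the full faithfulness you treat as settled depends on this machinery: commuting with the $n$ one-variable sums does not by itself imply commuting with the entire contraaction $\R[[P]]\rarrow P$.

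A secondary problem is your insistence on proving the implications among the Ext conditions directly. Condition~(c) quantifies over \emph{all} $s\in m$, not only central ones, and for non-central $s$ the module $R[s^{-1}]$ does not exist, so the telescope comparison cannot give (b)$\Rightarrow$(c); and your route (d)$\Rightarrow$(b) via the radical-ideal lemma of derived-completion theory (or \v Cech complexes and weak proregularity) is commutative-algebra technology, while here $R$ is noncommutative: a central element $s\in m=Rs_1+\dotsb+Rs_n$ has the form $\sum_j r_js_j$ with the $r_j$ not necessarily central, so there is no evident reason for $s$ to lie in (the radical of) the ideal of the center generated by $s_1,\dotsc,s_n$, and the commutative lemma does not apply as stated. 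The paper never proves these implications in isolation; it closes a cycle through contramodules: an $\R$-contramodule satisfies (a) (for central~$s$) and (c) (for arbitrary $s\in m$), because $t\mapsto s$ is a continuous ring homomorphism $\Z[[t]]\rarrow\R$, the relevant Ext groups inherit $\Z[[t]]$-contramodule structures on which $t$ acts invertibly, and the contramodule Nakayama lemma kills them; all remaining implications are trivial specializations together with (e)$\Rightarrow$(contramodule) from Lemmas~1--3. This circular structure makes both your (b)$\Leftrightarrow$(a) adjunction argument and your (d)$\Rightarrow$(b) step unnecessary.
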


 In other words, an $\R$\+contramodule structure on a given left
$\R$\+module is always \emph{unique}, and the theorem lists
equivalent conditions telling when it \emph{exists}.
 Of course, for contramodules over topological rings more complicated
then the adic completions no such description is in general possible.

 In the case of a commutative ring $R$, the theorem essentially says
that a contramodule over the $I$\+adic completion of $R$ is
the same thing as a \emph{cohomologically $I$\+adically complete
$R$\+module} of Porta--Shaul--Yekutieli~\cite{PSY,PSY2,Yek2}.
 A very brief sketch of the proof of the above theorem is presented
below; a detailed exposition can be found in~\cite[Appendix~B]{Pweak}
and~\cite[Section~C.5]{Pcosh} (see also~\cite[Remark~A.1.1]{Psemi}).

\begin{proof}[Sketch of proof]
 First let us explain why any left $R$\+module $P$ admitting a left
$\R$\+con\-tramodule structure satisfies the conditions~(a) and~(c).
 The choice of an element $s\in I$ defines a continuous homomorphism
of topological rings $\Z[[t]]\rarrow\R$, thus endowing any left
$\R$\+contramodule $\P$ with a left $\Z[[t]]$\+contramodule structure.
 One checks that this structure is inherited by the groups
$\Ext^i_{\Z[t]}(L,\P)$ for any $\Z[t]$\+module $L$ and, when
$s$ is central in $\R$, by the groups $\Ext^i_R(L,\P)$ for any
$R$\+module~$L$.
 Now when $t$ or~$s$ acts invertibly in $L$, the Ext groups in
question turn out to be $\Z[[t]]$\+contramodules with an invertible
action of~$t$, which have to vanish by the Nakayama
Lemma~\ref{over-topol-rings}(b) above (cf.~\cite[Section~B.2]{Pweak}).

 Furthermore, for any central element $s\in R$ and any left
$R$\+module $P$ one has $\Ext_R^i(R[s^{-1}],P)\simeq
\Ext_{\Z[t]}^i(\Z[t,t^{-1}],P)$ and, of course, both groups always
vanish for $i>1$ \cite[Lemma~B.7.1]{Pweak}.
 It remains to show that any left $R$\+module $P$ satisfying
the condition~(e) can be endowed with a left $\R$\+contramodule
structure in a unique way.
 This is accomplished by the following sequence of lemmas.
 
 Consider the topological ring of formal power series
$\T=R[[t_1,\dotsc,t_n]]$ in the central variables
$t_1$,~\dots, $t_n$ with coefficients in~$R$; then there is
a natural continuous ring homomorphism $\T\rarrow\R$
taking $t_j$ to~$s_j$.
 Consider also the ring of polynomials $T=R[t_1,\dotsc,t_n]$ and
the similar ring homomorphism $T\rarrow R$.

\begin{lem1}
 The ring homomorphism\/ $\T\rarrow\R$ is surjective, and its kernel
$\J$ is generated by the central elements $s_j-t_j$ as an ideal in
an abstract (nontopological) ring\/~$\T$.
 Moreover, any family of elements converging to zero in\/ $\R$ can be
lifted to a family of elements converging to zero in\/~$\T$, and
any family of elements in\/~$\J$ converging to zero in
the topology of\/ $\T$ can be presented as a linear combination of\/
$n$~families of elements in\/ $\T$, each of them converging to zero,
with the coefficients $s_j-t_j$.
\end{lem1} 

\begin{proof}
 This is where the Noetherianness condition on the ring $R$
is being used; see~\cite[Sections~B.3\+-B.4]{Pweak}
and~\cite[Lemma~C.5.2]{Pcosh}.
\end{proof}

\begin{lem2}
 The (contra)restriction of scalars functor $\R\contra\rarrow
\T\contra$ identifies the category of left $\R$\+contramodules with
the full subcategory in $\T\contra$ consisting of those left
$\T$\+contramodules in which the elements $s_j-t_j\in\T$ act by zero. 
\end{lem2}

\begin{proof}
 Follows from Lemma~1; see~\cite[Lemma~B.4.1]{Pweak}.
\end{proof}

 It is easy to interpret left $\T$\+contramodules as left
$R$\+modules endowed with infinite summation operations with
the coefficients $t_1^{m_1}\dotsm t_n^{m_n}$
(see~\cite[proof of Lemma~B.5.1]{Pweak}; cf.\
Section~\ref{over-power-series} above).
 Hence it follows from Lemma~2 that the definition of
contramodules over the $l$\+adic integers given in
Section~\ref{over-l-adics} is equivalent to the general
definition from Section~\ref{over-topol-rings} specialized
to the case of $\R=\Z_l$.

\begin{lem3}
 The forgetful functor\/ $\T\contra\rarrow T\modl$ identifies
the category of left\/ $\T$\+contramodules with the full
subcategory in the category of left $T$\+modules consisting
of all those modules $Q$ for which\/ $\Ext_T^*(T[t_j^{-1}],Q)=0$
for every $j=1$,~\dots,~$n$.
\end{lem3}

\begin{proof}
 The ``unique recovering'' argument here is just a more elaborated
version of the reasoning from Section~\ref{recovering} above.
 See~\cite[Sections~B.5\+-B.7]{Pweak} and~\cite[Lemma~C.5.3]{Pcosh}
for the details.
\end{proof}

 To finish the proof of the theorem, it remains to combine together
the results of Lemmas~2 and~3.
\end{proof}

 Denote by $\I=\varprojlim_n I/I^n\subset\R$ the extension of
the ideal~$I$ in the ring~$\R$.
 The following result explains the term ``cohomologically complete
module'' for left $R$\+modules satisfying the equivalent conditions
of Theorem.

\begin{prop1}
 For any left\/ $\R$\+contramodule\/ $\P$, the natural map\/
$\P\rarrow\varprojlim_n \P/\I^n\P\allowbreak=\varprojlim_n\P/I^n\P$
is surjective. \emergencystretch=0em
\end{prop1}

\begin{proof}
 See~\cite[Lemma~A.2.3]{Psemi} or~\cite[Lemma~D.1.1]{Pcosh}.
\end{proof}

 Notice that the natural functor $\R\discr\rarrow R\modl$ is fully
faithful for any topological ring $R$ with a base of neighborhoods
of zero consisting of open left ideals $J$ and its completion
$\R=\varprojlim_J R/J$.
 Moreover, when $\R=\varprojlim_n R/I^n$ is the completion of
the ring $R$ in the adic topology of an ideal $I\subset R$ generated
by a finite set of central elements~$s_j$, an $R$\+module $M$
belongs to the full subcategory $\R\discr\subset R\modl$ if and only
if it is \emph{$I$\+torsion}, i.~e., one has $R[s_j^{-1}]\ot_RM=0$
for all~$j$ or, which is the same, $\Tor^R_*(R[s_j^{-1}],M)=0$
for all~$j$.

 Let us point out that the class of $I$\+adically complete and
separated left $R$\+modules, i.~e., left $R$\+modules $P$ for which
the map $P\rarrow\varprojlim_nP/I^nP$ is an isomorphism, does
\emph{not} have good homological properties.
 Indeed, it is \emph{not} preserved not only by the passages to
the cokernels of injective morphisms (see
Section~\ref{counterexamples}), but \emph{also} by extensions in
$R\modl$ or $\R\contra$ \cite[Example~2.5]{Sim}.
 The full subcategory of left $\R$\+contramodules $\R\contra\subset
R\modl$, on the other hand, not only contains all
the $I$\+adically complete and separated left $R$\+modules, but
is also closed under the kernels, cokernels, extensions, and
projective limits in $R\modl$.

\begin{ex}
 Let $\R$ be a complete Noetherian commutative local ring with
the maximal ideal~$\m$.
 Let $E$ be an injective envelope of the residue field $\R/\m$ in
the abelian category of $\R$\+modules.
 Then the \emph{Matlis duality} (see~\cite[Corollary~4.3]{Mat1}
or~\cite[Theorem~18.6]{Mats}) is an anti-equivalence between the abelian
categories of Artinian and Noetherian $\R$\+modules provided by
the contravariant functor $\Hom_\R({-},E)$.

 Endow the ring $\R$ with the $\m$\+adic topology.
 Then any Artinian $\R$\+module is discrete (or equivalently,
$\m$\+torsion), while any Noetherian $\R$\+module is
an $\R$\+contramodule.
 It was essentially explained in Section~\ref{over-topol-rings} that,
for any discrete $\R$\+module $\N$ and any $\R$\+module $V$
the $\R$\+module $\Hom_\R(\N,V)$ has a natural $\R$\+contramodule
structure.
 Thus the construction of the $\R$\+contramodule structure on
the dual abelian group or module to a discrete $\R$\+module can be
viewed as an extension of the Matlis duality to modules with no
finiteness conditions imposed.
\end{ex}

 In the respective assumptions on a topological ring $\R$, let us
denote by $\Ext_\R^*(\L,\M)$ the Ext groups in the abelian category
$\R\discr$ and by $\Ext^{\R,\.*}(\P,\Q)$ the Ext groups in the abelian
category $\R\contra$.
 The next proposition shows that the embeddings of abelian categories
$\R\discr\rarrow R\modl$ and $\R\contra\rarrow R\modl$ have
good homological properties.

\begin{prop2}
\textup{(a)} Let $R$ be a left Noetherian ring, $I\subset R$ be
an ideal generated by central elements, and\/ $\R=\varprojlim_nR/I^n$
be the $I$\+adic completion of~$R$.
 Then the embedding functor\/ $\R\discr\rarrow R\modl$ induces
isomorphisms on all the Ext groups, $\Ext^i_\R(\L,\M)\simeq
\Ext^i_R(\L,\M)$ for all\/ $\L$, $\M\in\R\discr$ and all $i\ge0$. \par
\textup{(b)} Let $R$ be a right Noetherian ring, $I\subset R$
be an ideal generated by central elements, and $\R$ be
the $I$\+adic completion of~$R$.
 Then the embedding functor\/ $\R\contra\rarrow R\modl$ induces
isomorphisms on all the Ext groups, $\Ext^{\R,\.i}(\P,\Q)\simeq
\Ext_R^i(\P,\Q)$ for all\/ $\P$, $\Q\in\R\contra$ and all $i\ge0$.
\hbadness=1300
\end{prop2}

\begin{proof}
 Part~(a): it follows from the Artin--Rees lemma
(see~\cite[Theorem~8.5]{Mats} and~\cite[Theorems~1.9 and~13.3]{GW})
that the functor $\R\discr\rarrow R\modl$ preserves injectivity
of objects (cf.~\cite[Section~A.3]{Psing}), which is clearly sufficient.
 Part~(b): one shows that the functor $\R\contra\rarrow R\modl$
takes free $\R$\+contramodules to flat $R$\+modules and
all $\R$\+contramodules to relatively cotorsion $R$\+modules
(see~\cite[Sections~B.8\+-B.10]{Pweak}
and~\cite[Propositions~C.5.4\+-C.5.5]{Pcosh}).
\end{proof}

\begin{rem}
 In fact, the first assertion of the theorem (about the functor
$\R\contra\rarrow R\modl$ being fully faithful) holds without
the Noetherianity assumption on the ring~$R$.
 It suffices to assume that $I\subset R$ is an ideal generated by
a finite number of central elements, or even just a two-sided ideal
which, viewed as a right ideal, is finitely generated.
 Cf.\ the discussion of fully faithful contramodule forgetful
functors in Section~\ref{fully-faithful} below.

 On the other hand, the second assertion of the theorem (providing
a description of the essential image of this forgetful functor)
holds for a finitely centrally generated ideal $I\subset R$
satisfying a weak version of the \emph{weak proregularity}
condition~\cite[Examples~2.2\,(3) and~2.3\,(3)]{Pper},
\cite[Remark~3.8]{Pdc}.
 But generally speaking, this description of the essential image of
the functor $\R\contra\rarrow R\modl$ does \emph{not} hold for
arbitrary finitely generated ideals $I$ in commutative rings $R$
\cite[Example~5.2\,(8)]{Pper}, \cite[Examples~1.8]{Pdc}.

 Furthermore, the assertions of Proposition~2 hold, without
the Noetherianity assumption, for any ideal $I\subset R$ generated
by a weakly proregular finite sequence of central elements.
 Moreover, the natural functors between the unbounded derived
categories $\sD(\R\discr)\rarrow\sD(R\modl)$ and $\sD(\R\contra)
\rarrow\sD(R\modl)$ are fully faithful under these assumptions
\cite[Sections~1\+-2]{Pmgm}, \cite[Example~5.3\,(2)]{Pper},
\cite[Section~4]{Pdc}.
\end{rem}

\subsection{Contramodules over topological algebras over fields}
\label{over-top-algebras}
 The ``set-theoret\-ical'' definition of $\R$\+contramodules given
in Section~\ref{over-topol-rings} is intended to incorporate
``arithmetical'' examples such as that of the ring $\R=\Z_l$
of $l$\+adic integers.
 In the case of a topological algebra $\R$ over a field~$k$,
the definition can be simplified, facilitating the comparision
with the notion of a contramodule over a coalgebra $\C$ over~$k$.

 A topological vector space $V$ over a field~$k$ is said to have
a \emph{linear topology} if its open vector subspaces form a base
of neighborhoods of zero in it.
 In the sequel, we presume all our topological vector spaces to have
linear topologies and, unless otherwise mentioned, to be complete
and separated.
 In other words, the natural map $V\rarrow\varprojlim_UV/U$, where
$U$ runs over all the open vector subspaces in~$V$, should be
a topological isomorphism (see~\cite{Beil}
or~\cite[Section~D.1]{Psemi}).
 Given a topological vector space $V$ and an abstract (nontopological)
vector space $P$ over a field~$k$, we denote by $V\ot\comp P$
the projective limit $\varprojlim_U V/U\ot_k P$ taken over all
the open vector subspaces $U\subset V$, viewed as an abstract
(nontopological) vector space.

 For any associative algebra $R$ over a field~$k$, one can define
left $R$\+modules as modules over the monad $M\longmapsto R\ot_k M$
on the category of $k$\+vector spaces $M\in k\vect$.
 We would like to extend this definition of $R$\+modules to the case
of topological algebras over~$k$.
 Let $\R$ be a complete and separated topological associative algebra
over a field~$k$ where open right ideal form a base of neighborhoods
of zero.
 Then the functor $P\longmapsto\R\ot\comp P$ has a natural structure
of a monad on the category of (nontopological) $k$\+vector spaces
$P\in k\vect$.
 Indeed, let us construct the natural transformations of
multiplication and unit in this monad.

 For any open right ideal $\J\subset\R$ the multiplication
map $\R/\J\times\R\rarrow\R/\J$, being continuous in the discrete
topology of $\R/\J$ and the given topology of $\R$, defines
a structure of discrete right $\R$\+module on the quotient space
$\R/\J$, so the annihilator of every element of $\R/\J$ is an open
right ideal in~$\R$.
 Hence the multiplication map $\R/\J\ot_k\R\rarrow\R/\J$ induces
a natural linear map $\R/\J\ot_k(\R\ot\comp P)\rarrow\R/\J\ot_k P$.
 Composing this map with the projection $\R\ot\comp(\R\ot\comp P)
\rarrow\R/\J\ot_k(\R\ot\comp P)$ and passing to the projective
limit over open right ideals $\J$, we obtain the desired monad
multiplication map $\phi_P\:\R\ot\comp(\R\ot\comp P)\rarrow
\R\ot\comp P$.

 The unit map $\epsilon_P\:P\rarrow\R\ot\comp P$ of our monad
is obtained as the composition of the map $P\rarrow\R\ot_k P$
induced by the unit element in $\R$ with the completion map
$\R\ot_k P\rarrow\R\ot\comp P$.
 Verifying the associativity and unitality axioms of a monad for
the functor $\R\ot\comp{-}\:\allowbreak k\vect\rarrow k\vect$
endowed with the natural transformations~$\phi$ and~$\epsilon$
is straightforward.

 A \emph{left\/ $\R$\+contramodule} $\P$ is an algebra/module over
this monad on the category of $k$\+vector spaces.
 In other words, it is a $k$\+vector space endowed with
a \emph{contraaction} map $\pi\:\R\ot\comp\P\rarrow\P$ satisfying
the following \emph{contraassociativity} and contraunitality equations.
 Firstly, the two maps $\phi_\P$ and $\R\ot\comp\pi\:
\R\ot\comp(\R\ot\comp\P)\rarrow\R\ot\comp\P$ should have equal
compositions with the contraaction map~$\pi$,
$$
 \R\ot\comp(\R\ot\comp\P)\birarrow\R\ot\comp\P\rarrow\P.
$$
 Secondly, the composition of the map $\epsilon_\P\:\P\rarrow
\R\ot\comp\P$ with the contraaction map should be equal to
the identity map~$\id_\P$,
$$
 \P\rarrow\R\ot\comp\P\rarrow\P.
$$
 A \emph{free\/ $\R$\+contramodule} is an $\R$\+contramodule of
the form $\P=\R\ot\comp P$, where $P$ is a $k$\+vector space, with
the contraaction map $\pi=\phi_P\:\R\ot\comp(\R\ot\comp P)\rarrow
\R\ot\comp P$.
 This definition of $\R$\+contramodules can be found
in~\cite[Section~D.5.2]{Psemi}.

 For any discrete right $\R$\+module $\N$ and any (nontopological)
$k$\+vector space $E$, the vector space $\P=\Hom_k(\N,E)$ has
a natural left $\R$\+contramodule structure provided by
the contraaction map $\pi\:\R\ot\comp\Hom_k(\N,E)\rarrow\Hom_k(\N,E)$
defined symbolically by the formula
$$
 \pi(r\ot\comp f)(n) = f(nr),
$$
where $r\in\R$, \,$n\in\N$, \,$f\in\Hom_k(\N,E)$, and the expression
in the right-hand side makes sense, since the right action map
$\N\ot_k\R\ot_k\Hom_k(\N,E)\rarrow\N\ot_k\Hom_k(\N,E)$ restricted to
$n\ot\R\ot\P\subset\N\ot_k\R\ot_k\P$ factorizes through
the surjection $n\ot\R\ot\P\rarrow n\ot\R/\J\ot\P$ for
a certain open right ideal $\J\subset\R$.

 Let us show that our new definition of $\R$\+contramodules is
equivalent to the one from Section~\ref{over-topol-rings} in
the case of a topological algebra $\R$ over a field~$k$.
 The following argument can be found in~\cite[Section~1.10]{Pweak}.

 Recall that the category of $k$\+vector spaces can be defined
as the category of algebras/modules over the monad
$X\longmapsto k[X]$ on the category of sets.
 Hence for any $k$\+vector space $P$ there is a natural action map
$p\:k[P]\rarrow P$.
 Moreover, this map is the coequalizer of the pair of maps
$k[[P]]\birarrow k[P]$, one of which is the ``opening of
parentheses'' map~$\phi_P$, while the other one is the map
$k[p]$ induced by the map~$p$.
 Indeed, applying the forgetful functor $k\vect\rarrow\Sets$
makes this even a split coequalizer with an explicit splitting
defined in terms of the unit map of our
monad~\cite[Sections~VI.6\+-7]{McL}.
 Subtracting one of the maps in the pair from the other one, we
obtain an exact sequence in the category $k\vect$
\begin{equation}  \label{vector-space-monad-equalizer}
 k[k[P]]\rarrow k[P]\rarrow P\rarrow 0
\end{equation}
for any $k$\+vector space~$P$.

 Notice the natural isomorphism $\R\ot\comp k[X]\simeq\R[[X]]$
for any set~$X$.
 Furthermore, any additive functor on the category of $k$\+vector
spaces is exact.
 Thus applying the functor $\R\ot\comp{-}$ to the exact
sequence~\eqref{vector-space-monad-equalizer}, we obtain
an exact sequence
\begin{equation}  \label{contra-top-algebra-comparison}
 \R[[k[P]]]\lrarrow\R[[P]]\lrarrow\R\ot\comp P\lrarrow0
\end{equation}
for any $k$\+vector space~$P$.
 In particular, we have obtained a natural surjective map
$\R[[\P]]\rarrow\R\ot\comp\P$ for any $k$\+vector space~$\P$;
composing it with the contraaction map $\R\ot\comp\P\rarrow\P$
of a contramodule $\P$ over the topological $k$\+algebra $\R$,
we obtain a contraaction map $\R[[\P]]\rarrow\P$ defining
the structure of a contramodule over the topological ring
$\R$ on the set~$\P$.

 Conversely, starting from the contraaction map $\R[[\P]]\rarrow\P$
of a contramodule $\P$ over the topological ring $\R$, one can
first of all compose it with the natural embedding $k[\P]\rarrow
\R[[\P]]$, defining a $k$\+vector space structure $k[\P]\rarrow\P$
on the set~$\P$.
 Furthermore, restricting the contraassociativity equation
$\R[[\R[[\P]]]]\birarrow\R[[\P]]\rarrow\P$ to the subset
$\R[[k[\P]]]\subset\R[[\R[[\P]]]]$, one discovers that the two
maps $\R[[k[\P]]]\birarrow\R[[\P]]$ have equal compositions
with the contraaction map $\R[[\P]]\rarrow\P$.
 So we see from the exact
sequence~\eqref{contra-top-algebra-comparison}
that the contraaction map $\R[[\P]]\rarrow\P$ factorizes through
the surjective map $\R[[\P]]\rarrow\R\ot\comp\P$, providing $\P$
with a contraaction map $\R\ot\comp\P\rarrow\P$ of a contramodule
over the topological $k$\+algebra~$\R$.

 Of course, one still has to check that the map $\R\ot\comp\P\rarrow
\P$ satisfies the contraassociativity and contraunitality equations
if and only if the corresponding map $\R[[\P]]\rarrow\P$ does.
 Here it helps to notice that the natural map
\begin{equation} \label{contra-top-algebra-surjective}
 \R[[\R[[\P]]]]\lrarrow\R\ot\comp(\R\ot\comp\P)
\end{equation}
is surjective, so any two maps
$\R\ot\comp(\R\ot\comp\P)\rarrow\P$
are equal to each other whenever their compositions with
the map~\eqref{contra-top-algebra-surjective} are.
 We have also seen that the class of free $\R$\+contramodules as
defined in this section coincides with the one introduced in
Section~\ref{over-topol-rings} when $\R$ is a topological algebra
over a field.

 Now we can finally compare our notion of a contramodule over
a topological ring/topological algebra over a field~$k$ with
the definition of a contramodule over a (coassociative)
coalgebra $\C$ over~$k$ given in Section~\ref{over-coalgebras}.
 Let $\R=\C^*$ be the dual vector space to the coalgebra~$\C$
endowed with its pro-finite-dimensional topological algebra
structure (see Sections~\ref{over-power-series}\+-\ref{over-l-adics}).
 Then there is a natural isomorphism $\R\ot\comp P\simeq\Hom_k(\C,P)$
for any $k$\+vector space $P$, making an $\R$\+contraaction map
$\R\ot\comp\P\rarrow\P$ the same thing as a $\C$\+contraaction map
$\Hom_k(\C,\P)\rarrow\P$.

 The vector spaces $\R\ot\comp(\R\ot\comp\P)$ and
$\Hom_k(\C,\Hom_k(\C,\P))$ parametrizing the systems of
contraassociativity equations on the two kinds of contraaction
maps being also naturally isomorphic, one easily checks that
a map $\R\ot\comp\P\rarrow\P$ defines a left $\R$\+contramodule
structure on a $k$\+vector space $\P$ if and only if
the corresponding map $\Hom_k(\C,\P)\rarrow\P$ defines a left
$\C$\+contramodule structure on~$\P$.

\subsection{Contramodules over topological Lie algebras}
\label{over-Lie}
 The definition of the category of contramodules over
the Virasoro algebra given in Section~\ref{over-virasoro}
calls for a generalization to a reasonably large class of
topological Lie algebras.

 There are some na\"\i ve approaches: for example, it is easy to
define comodules and contramodules over Lie coalgebras in
the way analogous to the definitions for coassociative
coalgebras explained in Section~\ref{over-coalgebras}.
 This provides the notion of a contramodule over a linearly
compact topological Lie algebra.
 Notice that the class of all Lie coalgebras is in some sense
not as narrow as that of coassociative coalgebras over fields:
unlike in the coassociative case (see
Section~\ref{over-power-series}), a Lie coalgebra does not
have to be the union of its finite-dimensional subcoalgebras.

 Indeed, it suffices to consider the case of the Lie coalgebra $\L$
dual to the linearly compact Lie algebra $k[[z]]d/dz$ over
a field~$k$ of zero characteristic, that is the Lie subalgebra
topologically spanned by the generators $L_{-1}$, $L_0$,
$L_1$, $L_2$~\dots{} in the algebra $k((z))d/dz$.
 The Lie algebra $k[[z]]d/dz$ having no nonzero proper
closed ideals, the Lie coalgebra $\L$ has no nonzero proper
subcoalgebras at all.
 Nevertheless, the class of linearly compact Lie algebras does not
even contain the Virasoro algebra.

 So let us start with the class of locally linearly compact, or
\emph{Tate} Lie algebras.
 A topological vector space $V$ is said to be \emph{locally linearly
compact}, or a \emph{Tate vector space} if it has a linearly
compact open subspace, or equivalently, if (linearly) compact
open subspaces form a base of neighborhoods of zero in~$V$.
 In other words, a topological vector space is a Tate vector space
if it is topologically isomorphic to the direct sum of a compact
vector space and a discrete vector space (see~\cite[Sections~1.1\+-1.2
and the references therein]{Beil} and~\cite[Section~D.1.1]{Psemi}).

 A \emph{Tate Lie algebra} $\g$ is a Tate vector space endowed with
a continuous Lie algebra structure, i.~e., a Lie bracket $\g\times\g
\rarrow\g$ that is continuous as a function of two variables.
 Any Tate Lie algebra has a base of neighborhoods of zero consisting
of open Lie subalgebras (see footnotes in~\cite[Section~3.8.17]{BD}
or~\cite[Section~1.4]{Beil}, or a paragraph
in~\cite[Section~D.1.8]{Psemi}).
 For example, the ``Laurent totalization'' $\g=\bigoplus_{n<0}\g_n
\oplus\prod_{n\ge0}\g_n$ of any $\Z$\+graded Lie algebra
$\bigoplus_{n\in\Z}\g_n$ with finite-dimensional components~$\g_n$ 
is a Tate Lie algebra with compact open subalgebras $\prod_{i\ge n}\g_i
\subset\g$, \,$n\ge0$, forming a base of neighborhoods of zero.
 This includes such classical examples as the Virasoro and
Kac--Moody Lie algebras.

 A \emph{contramodule} $\P$ over a Tate Lie algebra $\g$ over
a field~$k$ is a $k$\+vector space endowed with a \emph{contraaction}
map $\g\ot\comp\P\rarrow\P$ satisfying the following (system of)
contra-Jacobi equation(s).
 Given a compact vector space $V$, denote by $V\dual$ the discrete
vector space to dual to $V$, so that $V\dual{}^*$ is isomorphic to
$V$ as a topological vector space.
 For any abstract (nontopological) vector space $P$, there is then
a natural isomorphism of (nontopological) vector spaces
$V\ot\comp P\simeq\Hom_k(V\dual,P)$.
 So in particular we have $\g\ot\comp\P\simeq\varinjlim_V
\Hom_k(V\dual,\P)$, where $V$ runs over all the compact vector
subspaces $V\subset\g$.

 Now let $U$, $V$, and $W\subset\g$ be three compact vector
subspaces for which $[U,V]\subset W$; then there is a natural
cobracket map $W\dual\rarrow V\dual\ot_k U\dual$.
 It required that the composition
$$
 \Hom_k(V\ot_kU\;\P)\lrarrow\Hom_k(W,\P)\lrarrow\g\ot\comp\P
 \lrarrow\P
$$
of the map induced by the cobracket map with the contraaction map
should be equal to the difference of the interated contraaction maps
\begin{alignat*}{3}
 \Hom_k(V\ot_kU\;\P)&\.\simeq\.\Hom_k(U,\Hom_k(V,\P))&&\lrarrow
 \Hom_k(U,\P)&&\lrarrow\P \\
\intertext{and}
 \Hom_k(V\ot_kU\;\P)&\.\simeq\.\Hom_k(V,\Hom_k(U,\P))&&\lrarrow
 \Hom_k(V,\P)&&\lrarrow\P.
\end{alignat*}
 This definition can be found in~\cite[Section~D.2.7]{Psemi}.
 Contramodules over Tate Lie algebras serve as the coefficients
for the theory of \emph{semi-infinite cohomology} of Lie algebras
(as opposed to the semi-infinite \emph{homology}~\cite{Feig},
\cite[Section~3.8]{BD}); see~\cite[Section~D.5.6]{Psemi} for
the definition and Section~\ref{tate-harish-chandra} below
for a brief overview.

 In order to extend the definition of a $\g$\+contramodule
to topological Lie algebras~$\g$ of more general nature, we need
to introduce a bit more topological linear algebra background.
 The following three \emph{topological tensor product} operations
were defined in~\cite[Section~1.1]{Beil} (see
also~\cite[Section~D.1.3]{Psemi} and~\cite[Section~12]{Pextop}).

 For any topological vector spaces $V$ and $W$, the $!$\+tensor
product $V\ot^!W$ is the completion of the tensor product
$V\ot_kW$ with respect to the topology with a base of neighborhoods
of zero formed by the subspaces $V'\ot W+V\ot W'\subset V\ot_k W$,
where $V'\subset V$ and $W'\subset W$ are open vector subspaces.
 In other words, one has $V\ot^!W=\varprojlim_{V',W'}V/V'\ot_k\nobreak
W/W'$, with the projective limit topology.

 Furthermore, the $*$\+tensor product $V\ot^*W$ is the completion
of the tensor product $V\ot_k W$ with respect to the topology in
which a vector subspace $T\subset V\ot_kW$ is open if and only if
it satisfies the following three conditions:
\begin{enumerate}
\renewcommand{\theenumi}{\roman{enumi}}
\item there exist open vector subspaces $V'\subset V$, \
$W'\subset W$ such that $V'\ot_k W'\subset T$;
\item for any vector $v\in V$ there exists an open subspace
$W''\subset W$ such that $v\ot W''\subset T$;
\item for any vector $w\in W$ there exists an open subspace
$V''\subset V$ such that $V''\ot w\subset T$.
\end{enumerate}
 For any topological vector space $U$, a bilinear map $V\times W
\rarrow U$ is continuous (as a function of two variables) if and only
if it can be (always uniquely) extended to a continuous linear map
$V\ot^*W\rarrow U$.

 Finally, the $\eot$\+tensor product $V\wot W$ is the completion
of the tensor product $V\ot_kW$ with respect to the topology in which
a vector subspace $T\subset V\ot_kW$ is open if and only if it
satisfies the following two conditions:
\begin{enumerate}
\renewcommand{\theenumi}{\roman{enumi}}
\item there exists an open vector subspace $V'\subset V$ such that
$V'\ot_kW\subset T$;
\item for any vector $v\in V$ there exists an open subspace
$W''\subset W$ such that $v\ot W''\subset T$.
\end{enumerate}
 The underlying abstract (nontopological) vector space of
the topological tensor product $V\wot W$ does not depend on
the topology on $W$ and is naturally isomorphic to the completed
tensor product $V\ot\comp W$ introduced in
Section~\ref{over-top-algebras}.
 The multiplication map $\R\times\R\rarrow\R$ of a (complete and
separated) topological associative algebra $\R$ can be (uniquely)
extended to a continuous linear map $\R\wot\R\rarrow\R$ if and
only if open right ideals form a base of neighborhoods of zero in~$\R$
\cite[Section~1.4]{Beil}.

 Each of the three tensor product operations $\ot^!$, $\ot^*$,
and~$\wot$ defines an associative tensor/monoidal structure on
the category of topological vector spaces; the former two tensor
products are also commutative.
 In particular, given a topological associative algebra $\R$ and
a $k$\+vector space $P$, there is a natural isomorphism of
(nontopological) vector spaces $\R\ot\comp(\R\ot\comp P)\simeq
(\R\wot\R)\ot\comp P$; hence the monad multiplication map
$\phi_P\:\R\ot\comp(\R\ot\comp P)\rarrow\R\ot\comp P$ from
Section~\ref{over-top-algebras} defined whenever open right
ideals form a base of neighborhoods of zero in~$\R$
\cite[Section~D.5.2]{Psemi}.

 For any topological vector space $V$, denote by
$\bigwedge^{2,*}(V)$ the completion of the nontopological exterior
square $\bigwedge^2(V)$ with respect to the topology in which
a vector subspace $T\subset\bigwedge^2(V)$ is open if and only if
there exists an open subspace $V'\subset V$ such that
$\bigwedge^2(V')\subset T$ and for any vector $v\in V$ there
exists an open subspace $V''\subset V$ such that
$v\wedge V''\subset T$.
 For any topological vector space $U$, a skew-commutative bilinear
map $V\times V\rarrow U$ is continuous if and only if it can be
(uniquely) extended to a continuous linear map
$\bigwedge^{2,*}(V)\rarrow U$.
 The topological vector space $\bigwedge^{2,*}(V)$ can be also
viewed as a closed vector subspace in $V\ot^*V$.

 A \emph{contramodule} $\P$ over a topological Lie algebra~$\g$
over a field~$k$ is a $k$\+vector space endowed with a contraaction
map $\pi\:\g\ot\comp\P\rarrow\P$ satisfying the following
\emph{contra-Jacobi equation}.
 The composition
$$\textstyle
 \bigwedge^{2,*}(\g)\ot\comp\P\lrarrow\g\ot\comp\P\lrarrow\P
$$
of the map induced by the Lie bracket $\bigwedge^{2,*}(\g)\rarrow\g$
of~$\g$ with the contraaction map should be equal to the composition
of the map induced by the natural maps of topological vector
spaces $\bigwedge^2(V)\rarrow V\ot^*V\rarrow V\wot V$
considered in the case $V=\g$ with the iterated contraaction map
\begin{multline*} \textstyle
 \bigwedge^{2,*}(\g)\ot\comp\P\lrarrow(\g\ot^*\g)\ot\comp\P
 \lrarrow(\g\wot\g)\ot\comp\P \\ \.\simeq\.\g\ot\comp(\g\ot\comp\P)
 \lrarrow\g\ot\comp\P\lrarrow\P. 
\end{multline*}
 This definition can be found in~\cite[Section~D.2.6]{Psemi}.

 A \emph{discrete module} $\M$ over a topological Lie algebra~$\g$
is a $\g$\+module for which the action map $\g\times\M\rarrow\M$
is continuous in the given topology of~$\g$ and the discrete
topology of~$\M$.
 In other words, this means that the annihilator of any element of
$\M$ is an open subalgebra in~$\g$.
 So one can say that discrete $\g$\+modules are a good category to
be assigned to $\g$ when open subalgebras form a base of neighborhoods
of zero in~$\g$ (cf.~\cite[Sections~1.4 and~2.4]{Beil}); otherwise,
a discrete $\g$\+module is the same thing as a discrete module over
the completion of $\g$ in the new topology with a base consisting of
the open subalgebras in the original one.
 For any discrete $\g$\+module $\M$ and any (nontopological)
$k$\+vector space $E$, the vector space $\P=\Hom_k(\M,E)$ has
a natural $\g$\+contramodule structure provided by
the contraaction map $\pi\:\g\ot\comp\Hom_k(\M,E)$ defined
symbolically by the formula
$$
 \pi(x\ot\comp f)(m)=-f(xm),
$$
where $x\in\g$, \,$m\in\M$, \,$f\in\Hom_k(\M,E)$, and the expression
in the right-hand side makes sense due to the definition of
the completed tensor product $\g\ot\comp P$ and the discreteness
condition on the $\g$\+module $\M$ \cite[Section~D.2.6]{Psemi}.

 The category $\g\discr$ of discrete $\g$\+modules is abelian and
the embedding/forgetful functors $\g\discr\rarrow\g\modl\rarrow k\vect$
from it to the categories of arbitrary $\g$\+modules and $k$\+vector
spaces are exact.
 Both infinite direct sums and infinite products exist in $\g\discr$;
the infinite direct sums are also preserved by the forgetful functors.
 It follows that filtered inductive limits are exact in $\g\discr$.
 In other words, the category $\g\discr$ satisfies the axioms Ab5
and Ab3*, but not in general Ab4*.
 It also admits a set of generators, so it has enough injective objects.

 Any $\g$\+contramodule $\P$ has an underlying structure of a module
over the Lie algebra $\g$ viewed as an abstract (nontopological)
Lie algebra; it is provided by the composition of maps
$\g\ot\P\rarrow\g\ot\comp\P\rarrow\P$.
 The category $\g\contra$ is abelian and the forgetful functors
$\g\contra\rarrow\g\modl\rarrow k\vect$ are exact.
 Infinite products exist in the category $\g\contra$ and are preserved
by the forgetful functors.
 The theorem below, when it is applicable, allows one to say more
(cf.\ Sections~\ref{over-virasoro} and~\ref{over-topol-rings}).

 The enveloping algebra $U(\g)$ of a topological Lie algebra $\g$
can be endowed with a natural topology in two opposite ways.
 Let us denote by $U_l\comp(\g)$ the completion of $U(\g)$ in
the topology where the left ideals in $U(\g)$ generated by open
subspaces in~$\g$ form a base of neighborhoods of zero, and by
$U_r\comp(\g)$ the completion of $U(\g)$ in the similar topology
with a base formed by the right ideals generated by open subspaces
in~$\g$.
 Using the assumption of continuity of the bracket in~$\g$, one can
easily check that the multiplication in $U(\g)$ can be extended to
continuous multiplications in $U_l\comp(\g)$ and $U_r\comp(\g)$.
 This construction was considered in~\cite[Section~3.8.17]{BD},
\cite[Section~2.4]{Beil}, and \cite[Section~D.5.1]{Psemi}.

\begin{thm}
\textup{(a)} For any topological Lie algebra\/~$\g$, the category
of discrete\/ $\g$\+modules is naturally isomorphic to the category
of discrete left\/ $U_l\comp(\g)$\+modules, $\g\discr\simeq
U_l\comp(\g)\discr$.
 The datum of a discrete\/ $\g$\+module structure on a vector
space\/ $\M$ is equivalent to the datum of a discrete left\/
$U_l\comp(\g)$\+module structure on\/~$\M$. \par
\textup{(b)} For any topological Lie algebra\/~$\g$ admitting
a countable base of neighborhoods of zero consisting of open Lie
subalgebras in\/~$\g$, the category of\/ $\g$\+contramodules is
naturally isomorphic to the category of left\/
$U_r\comp(\g)$\+contramodules, $\g\contra\simeq
U_r\comp(\g)\contra$.
 The datum of a\/ $\g$\+contramodule structure on a vector space\/
$\P$ is equivalent to the datum of a left\/
$U_r\comp(\g)$\+contramodule structure on\/~$\P$.
\end{thm}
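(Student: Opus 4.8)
The plan is to handle the two parts separately: part~(a) by the universal property of the enveloping algebra together with a direct comparison of the two discreteness conditions, and part~(b) by producing the restriction functor $U_r\comp(\g)\contra\rarrow\g\contra$ and then constructing an inverse to it by iterating and completing the Lie contraaction.

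For part~(a), I would use that a nontopological $\g$\+module is the same as a $U(\g)$\+module, the $\g$\+action extending uniquely along $\g\rarrow U(\g)$, and then match the topologies. A basic open neighbourhood of zero in $U_l\comp(\g)$ is a left ideal $U(\g)V$ with $V\subset\g$ an open subspace. If $\M$ is a discrete $\g$\+module and $m\in\M$, then the annihilator $\{x\in\g\:xm=0\}$ is automatically a Lie subalgebra, because $[x,y]m=x(ym)-y(xm)$, and it is open by discreteness; calling it~$\h$ and taking $V=\h$, one has $U(\g)\h\cdot m=U(\g)\cdot(\h m)=0$, so the $U(\g)$\+annihilator of $m$ is an open left ideal. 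Thus the $U(\g)$\+action on each element factors through a discrete quotient, and $\M$ is a discrete $U(\g)$\+module in the left-ideal topology, which (as in Section~\ref{over-topol-rings}) is the same as a discrete $U_l\comp(\g)$\+module. Conversely, for a discrete $U_l\comp(\g)$\+module the annihilator of any $m$ contains some $U(\g)V$, so $Vm=0$ and the annihilator of $m$ in~$\g$ is open. These assignments are mutually inverse and functorial, giving the claimed isomorphism of categories.

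For part~(b), the restriction functor is the easy half. The commutator Lie-algebra homomorphism $\g\rarrow U_r\comp(\g)$ is continuous, hence induces $\g\ot\comp\P\rarrow U_r\comp(\g)\ot\comp\P\rarrow\P$ for any $U_r\comp(\g)$\+contramodule~$\P$, the second arrow being its contraaction~$\pi$. To see that the contra-Jacobi equation holds for the resulting map $\g\ot\comp\P\rarrow\P$, I would identify, under $\g\ot\comp(\g\ot\comp\P)\simeq(\g\wot\g)\ot\comp\P$ and the multiplication $\R\wot\R\rarrow\R$, the Lie bracket $\bigwedge^{2,*}(\g)\rarrow\g$ with the antisymmetrised product $x\wedge y\mapsto xy-yx$ of $U_r\comp(\g)$. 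The contra-Jacobi identity then reads as the contraassociativity equation for $\pi$ precomposed with this antisymmetrisation, so it requires nothing beyond the topological tensor-product identifications already set up in the text.

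The substance of part~(b) lies in constructing the inverse extension functor, and this is where the countable-base hypothesis is used. Given a $\g$\+contramodule with contraaction $\pi_\g\:\g\ot\comp\P\rarrow\P$, iterating $\pi_\g$ through the associativity isomorphisms $\g\ot\comp(\g\ot\comp\P)\simeq(\g\wot\g)\ot\comp\P$ yields, for each~$n$, an action of the $n$\+fold completed tensor power of~$\g$ on~$\P$, that is, an action of monomials of each fixed length. The aim is to assemble these into a single contraaction $\R\ot\comp\P\rarrow\P$ for $\R=U_r\comp(\g)$, using that $\R$ is a completion of $U(\g)$ and, via the Poincar\'e--Birkhoff--Witt theorem, a topological quotient of a completed tensor algebra on~$\g$. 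Two points must be checked: that the assembled map descends along the Lie relations $xy-yx=[x,y]$, which is exactly the contra-Jacobi identity applied termwise, and that the sum of the contributions of monomials of all lengths converges in~$\P$. The convergence is the main obstacle. An element of $\R\ot\comp\P$ is a compatible system over the open right ideals $V U(\g)$, and evaluating $\pi_\R$ on it amounts to a single contramodule infinite summation over infinitely many monomials. Here I would fix a countable decreasing base $\h_1\supset\h_2\supset\dotsb$ of open subalgebras, group the monomials according to the least~$i$ with the monomial lying in $\h_i\R$, and realise $\pi_\R$ as the limit of its finite-length truncations $\P\rarrow\P$; the completeness of~$\P$ built into the $\g$\+contramodule infinite summation guarantees that this limit exists and is well defined, after which contraassociativity and contraunitality of $\pi_\R$ follow from those of $\pi_\g$ together with contra-Jacobi. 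Finally I would verify that restriction and extension are mutually inverse: extending and then restricting returns the degree-one part, which is $\pi_\g$ by contraunitality, while restricting and then extending returns the original $\R$\+contraaction because $\g$ topologically generates $\R$ and the two contraactions agree on all monomials. The countability enters precisely in reducing the nested infinite summations defining $\pi_\R$ to a convergent sequence of operators on~$\P$.
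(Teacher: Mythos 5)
Part~(a) of your proposal is correct and coincides with the paper's own argument, which disposes of~(a) in one sentence: a $\g$-module is the same thing as a $U(\g)$-module, and the two discreteness conditions visibly correspond; your annihilator computation merely expands the paper's ``obvious from the definitions''. The restriction half of your part~(b) is likewise fine.

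The construction of the inverse functor in part~(b), however, contains a genuine gap, and it sits exactly where the whole difficulty of the theorem lies (the paper itself offers no proof of~(b), deferring to~\cite[Section~D.5.3]{Psemi}, so your sketch has to stand on its own here). You propose to evaluate the would-be contraaction $U_r\comp(\g)\ot\comp\P\rarrow\P$ on an element mixing monomials of unbounded length as ``the limit of its finite-length truncations'', asserting that ``the completeness of $\P$ built into the $\g$-contramodule infinite summation guarantees that this limit exists''. No such completeness is available: a contramodule carries no topology, and its infinite summation operations are emphatically \emph{not} limits of finite partial sums. This is stressed in the introduction and demonstrated in Section~\ref{counterexamples}: there is a $k[[z]]$-contramodule $\P$ and a series $\sum_{n\ge1} z^np_n$ all of whose finite partial sums vanish, while the sum itself is a nonzero element lying in $z^n\P$ for every~$n$, so $\P$ is not even separated in the relevant filtration. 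On such an object any recipe of the form ``take the limit of the truncations'' either is undefined or returns~$0$ where the correct answer is nonzero; so your assembly step is not a fixable imprecision but an appeal to a property that contramodules demonstrably lack.

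What the countable base actually buys is an algebraic, not a topological, reduction, and your instinct to group terms by the least~$i$ with the monomial in $\h_iU_r\comp(\g)$ is the right first move --- but the conclusion to draw from it is different. Writing $\h_1\supset\h_2\supset\dotsb$ for the base, one has $U_r\comp(\g)\ot\comp\P=\varprojlim_n\bigl(U(\g)/\h_nU(\g)\bigr)\ot_k\P$; lifting a compatible system and telescoping, every element becomes a finite-length element plus a series whose $n$-th batch of terms has leading factors in~$\h_n$, i.e., the image under multiplication of a \emph{single} element of $\g\ot\comp\bigl(U_r\comp(\g)\ot\comp\P\bigr)$ whose second tensor factors have finite length. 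Contraassociativity then \emph{forces} the value of the contraaction to be the finite-length value plus $\pi_\g$ applied to the family of finite-length values, and all finite-length values are determined by the iterated Lie contraaction. The real work --- carried out in~\cite[Section~D.5.3]{Psemi} and absent from your sketch --- is to take this forced formula as the definition, prove its independence of the chosen liftings, and verify contraassociativity and contraunitality; uniqueness-of-solution arguments in the style of Section~\ref{recovering} (resting on the contramodule Nakayama lemma) replace the limit procedure you invoke.
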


\begin{proof}
 Part~(a): any $\g$\+module can be viewed as an $U(\g)$\+module
and vice versa; it is obvious from the definitions that
a $\g$\+module is discrete if and only if its $U(\g)$\+module
structure extends to a structure of discrete left module over
$U_l\comp(\g)$.
 The proof of part~(b) is more complicated;
see~\cite[Section~D.5.3]{Psemi}.
\end{proof}

\begin{rem}
 General topology and topological algebra are known to be treacherous
ground, and caution is advisable when working with topological vector
spaces with linear topologies, as many assertions which appear to be
natural at first glance turn out to be problematic at a closer look.
 In particular, the exposition in the paper~\cite{Beil}, while
correcting several mistakes or unfortunate definitions found in
the previous book~\cite{BD}, is still too optimistic on a few points.

 For example, the quotient space $V/U$ of a topological vector space
$V$ by a closed vector subspace $U$ is \emph{not} always \emph{complete}
in the quotient topology~\cite[Proposition~11.1]{RD},
\cite[Theorem~4.1.48]{AGM}, \cite[Section~2.11]{Pproperf},
\cite[Theorem~2.5]{Pextop}.
 (Cf.~\cite[Exercice~IV.4.10.b.$\alpha$]{Bour2}, where a related
counterexample in the setting of topological vector spaces with
nonlinear topologies compatible with the topology of the basic
field of real numbers is considered.)
 Even if the quotient space $V/U$ is complete, it does \emph{not} follow
(generally speaking) that the induced map of topological tensor products
$V\ot^!W\rarrow(V/U)\ot^!W$ or complete tensor products
$V\ot\comp P\rarrow(V/U)\ot\comp P$ is
surjective~\cite[Corollary~13.9]{Pextop}.
 In all these cases, the question is how to show that a map between
projective limits of vector spaces is surjective.
 
 The problem does not arise for topological vector spaces with
countable bases of neighborhoods of zero, as countable projective
limits are better behaved, and indeed, any closed subspace $U$
that has a countable base of neighborhoods of zero is a topological
direct summand in~$V$.
 However, the $*$\+tensor product operation leads outside of
the class of topological vector spaces with countable
topologies~\cite[Conclusion~13.11]{Pextop}.

 In particular, we formulate our system of contra-Jacobi equations
as being indexed by the complete tensor product
$\bigwedge^{2,*}(\g)\ot\comp\P$, while the somewhat simpler
alternative of having it indexed by the complete tensor product
$(\g\ot^*\g)\ot\comp\P$ would work just as well when
the characteristic of the field~$k$ is different from~$2$.
 Indeed, the natural map $(\g\ot^*\g)\ot\comp\P\rarrow
\bigwedge^{2,*}(\g)\ot\comp\P$ is surjective in this case,
the topological vector space $\bigwedge^{2,*}(\g)$ being
a direct summand in $\g\ot^*\g$.
 Then the contra-Jacobi equation could be written in the familiar
form of the difference between the two compositions
$(\g\ot^*\g)\ot\comp\P\birarrow\g\ot\comp(\g\ot\comp\P)\rarrow
\g\ot\comp\P\rarrow\P$ being equal to the composition
$(\g\ot^*\g)\ot\comp\P\rarrow\g\ot\comp\P\rarrow\P$.
 The desire to incorporate the characteristic~$2$ case leads to
the somewhat more complicated definition above.
\end{rem}

\subsection{Contramodules over corings}  \label{over-corings}
 The following scheme of categorical buildup is discussed in
the introduction to the book~\cite{Psemi}.
 Let $\sK$ be a category endowed with an (associative,
noncommutative) monoidal (tensor) category structure, $\sM$
be a left module category over it, $\sN$ be a right module
category, and $\sV$ be a category for which there is a pairing
between the module categories $\sN$ and $\sM$ over $\sK$ taking
values in~$\sV$.

 This means that, in addition to the multiplication functor
$\ot\:\sK\times\sK\rarrow\sK$, there are also action functors
$$
 \ot\:\sN\times\sK\rarrow\sN \quad\text{and}\quad
 \ot\:\sK\times\sM\rarrow\sM
$$
and a pairing functor $\ot\:\sN\times\sM\rarrow\sV$.
 Furthermore, there are associativity constraints for
the ternary multiplications
$$
 \sK\times\sK\times\sK\rarrow\sK, \quad
 \sN\times\sK\times\sK\rarrow\sK, \quad
 \sK\times\sK\times\sM\rarrow\sM, \quad
 \sN\times\sK\times\sM\rarrow\sV
$$
satisfying the pentagonal diagram equations for products of four
factors.

 Let $A$ be an associative ring object in~$\sK$.
 Then one can consider the category ${}_A\sK_A$ of $A$\+$A$\+bimodule
objects in $\sK$, the category ${}_A\sM$ of left $A$\+module objects
in $\sM$, and the category $\sN_A$ of right $A$\+module objects
in~$\sN$.
 When the categories $\sK$, $\sM$, $\sN$, and $\sV$ are abelian (or
additive categories with cokernels, or, at least, admit coequalizers),
there are the functors $\ot_A$ of tensor product over $A$, making
${}_A\sK_A$ a tensor category, ${}_A\sM$ a left module category over
it, $\sN_A$ a right module category, and providing a pairing
$\sN_A\times{}_A\sM\rarrow\sV$.
 The new tensor structures $\ot_A$ are associative whenever
the original tensor product functors $\ot$ were right exact
(preserved coequalizers).

 Inverting the arrows in all the four categories, one comes to
considering the situation of a \emph{co}ring object $C\in\sK$.
 Then there is the category ${}_C\sK_C$ of $C$\+$C$\+bicomodule
objects in $\sK$, the category ${}_C\sM$ of left $C$\+comodule
objects in $\sM$, and the category $\sN_C$ of right $C$\+comodule
objects in~$\sN$.
 When the categories $\sK$, $\sM$, $\sN$, and $\sV$ are abelian
(or, at least, admit equalizers), there are the functors $\oc_C$ of
\emph{cotensor product} over $C$, making ${}_C\sK_C$ a tensor
category, ${}_C\sM$ a left module category over it, $\sN_C$
a right module category, and providing a pairing
$\sN_C\times{}_C\sM\rarrow\sV$.
 The new tensor structures $\oc_\C$ are associative whenever
the functors $\ot$ were left exact (preserved equalizers).

 Now one may wish to iterate this construction, considering
a coring object $C$ in the category of $A$\+$A$\+bimodules
${}_A\sK_A$, the categories of $C$\+comodules in
the categories of $A$\+modules ${}_A\sM$ and $\sN_A$,
the category of $C$\+$C$\+bicomodules in ${}_A\sK_A$, etc.
 Then one encounters the typical phenomenon of progressive
relaxation/worsening of algebraic properties at every step of
a buildup.

 The functor $\ot_A$ of tensor product over a ring object $A$ is
in most cases \emph{not} left exact (being defined as a certain
coequalizer, it does not preserve equalizers).
 Hence the cotensor product over a coring object $C\in{}_A\sK_A$
will be only associative under certain (co)flatness conditions
imposed on the objects involved.
 But the associativity is necessary to even \emph{define}
tensor products over ring objects.
 So when one makes the next step and considers a ring object
$S$ in the category of $C$\+$C$\+bicomodules in ${}_A\sK_A$,
one discovers that the functors of tensor product over $S$ are
only partially defined.

\bigskip

 In this section, we consider coring objects $\C$ in the category
of bimodules over a conventional ring~$A$ (i.~e., a ring object
in the tensor category of abelian groups $\sK=\Ab$).
 So let $A$ be an associative ring (with unit).

 A \emph{coring} $\C$ over a ring $A$ is an $A$\+$A$\+bimodule endowed
with a \emph{comultiplication} map $\mu\:\C\rarrow\C\ot_A\C$ and
a \emph{counit} map $\eps\:\C\rarrow A$ satisfying the following
\emph{linearity}, \emph{coassociativity}, and \emph{counitality}
equations.
 First of all, both maps $\mu$ and~$\eps$ must be
\emph{$A$\+$A$\+bimodule morphisms}.
 Secondly, the two compositions of the comultiplication map~$\mu$ with
the two maps $\mu\ot\id$ and $\id\ot\mu\:\allowbreak\C\ot_A\C\birarrow
\C\ot_A\C\ot_A\C$ induced by the comultiplication map should be equal
to each other,
$$
 \C\rarrow\C\ot_A\C\birarrow\C\ot_A\C\ot_A\C.
$$
 Thirdly, both the compositions of the comultiplication map with
the two maps $\eps\ot\id$ and $\id\ot\eps\:\C\ot_A\C\birarrow\C$
induced by the counit map~$\eps$ should be equal to the identity
map~$\id_\C$,
$$
 \C\rarrow\C\ot_A\C\birarrow\C.
$$

 A \emph{left comodule} $\M$ over a coring $\C$ over a ring $A$
is a comodule object in the left module category of left
$A$\+modules over the coring object $\C$ in the tensor category of
$A$\+$A$\+bimodules.
 In other words, it is a left $A$\+module endowed with a \emph{left\/
$\C$\+coaction} map $\nu_\M\:\M\rarrow\C\ot_A\M$ satisfying the following
linearity, \emph{coassociativity}, and \emph{counitality} equations.
 First of all, the map $\nu=\nu_\M$ must be a \emph{left $A$\+module
morphism}.
 Secondly, the compositions of the coaction map~$\nu$ with the two maps
$\mu\ot\id$ and $\id\ot\nu\:\C\ot_A\M\birarrow\C\ot_A\C\ot_A\M$ induced
by the comultiplication and coaction maps should be equal to each other,
$$
 \M\rarrow\C\ot_A\M\birarrow\C\ot_A\C\ot_A\M.
$$
 Thirdly, the composition of the coaction map with the map
$\eps\ot\id\:\C\ot_A\M\rarrow\M$ induced by the counit map
should be equal to the identity map~$\id_\M$,
$$
 \M\rarrow\C\ot_A\M\rarrow\M.
$$
 Similarly, a \emph{right comodule} $\N$ over $\C$ is a right
$A$\+module endowed with a right $\C$\+coac\-tion map $\nu_\N\:\N\rarrow
\N\ot_A\C$, which must be a right $A$\+module morphism satisfying
the coassociativity and counitality equations
\begin{gather*}
 \N\rarrow\N\ot_A\C\birarrow\N\ot_A\C\ot_A\C, \\
 \N\rarrow\N\ot_A\C\rarrow\N.
\end{gather*}
 These definitions can be found in~\cite[Sections~17.1 and~18.1]{BW}
or~\cite[Section~1.1.1]{Psemi}.
 Corings and comodules also appear in noncommutative geometry, or
more specifically, in connection with noncommutative semi-separated
stacks~\cite{KR,KR2}.

 Before introducing \emph{$\C$\+contramodules}, let us discuss a bit
more abstract nonsense.
 The conventional tensor calculus over a ring $A$ includes, in
addition to the tensor product functor~$\ot_A$, the functor $\Hom_A$
of homomorphisms of (say, left) $A$\+modules.
 Applying the functor $\Hom_A$ to an $A$\+$A$\+bimodule $E$ and a left
$A$\+module $P$ produces a left $A$\+module $\Hom_A(E,P)$.
 In fact, the functor $\Hom_A$ endows the category $A\modl^\sop$
\emph{opposite to} the category of left $A$\+modules with
a \emph{right} module category structure over the tensor category
of $A$\+$A$\+bimodules $A\bimod A$.
 Indeed, for any $A$\+$A$\+bimodules $K$ and $L$ and a left
$A$\+module $P$ one has
$$
 \Hom_A(L,\Hom_A(K,P))\simeq\Hom_A(K\ot_A L\;P),
$$
or, denoting temporarily $P^\sop*_AK=\Hom_A(K,P)^\sop$,
$$
 (P^\sop*_AK)*_AL\simeq P^\sop*_A(K\ot_AL)
$$
(cf.\ the discussion of Hom space identification
rules~\eqref{left-tensor-identification}
and~\eqref{right-tensor-identification}
in Section~\ref{over-coalgebras}).
 In other words, one can say that the functor $\Hom_A$ makes
the category of left $A$\+modules a \emph{left Hom category} over
the tensor category of $A$\+$A$\+bimodules.
 The same functor $\Hom_A({-},{-})$ provides a pairing between the left
module category $A\modl$ and the right module category $A\modl^\sop$
over the tensor category $A\bimod A$ taking values in
the opposite category of abelian groups~$\Ab^\sop$.

 A \emph{left contramodule} $\P$ over a coring $\C$ over a ring $A$
is an object of the opposite category to the category of module
objects in the right module category $A\modl^\sop$ over the coring
object $\C$ in the tensor category $A\bimod A$.
 In other words, it is a left $A$\+module endowed with a \emph{left\/
$\C$\+contraaction} map $\pi_\P\:\Hom_A(\C,\P)\rarrow\P$ satisfying
the following linearity, \emph{contraassociativity}, and
\emph{contraunitality} equations.
 First of all, the map $\pi=\pi_\P$ must be a left $A$\+module morphism.
 Secondly, the compositions of the maps $\Hom(\mu,\P)\:
\Hom_A(\C\ot_A\C\;\P)\rarrow\Hom_A(\C,\P)$ and $\Hom(\C,\pi)\:
\allowbreak\Hom_A(\C,\Hom_A(\C,\P))\rarrow\Hom_A(\C,\P)$
induced by the comultiplication and contraaction maps with
the contraaction map should be equal to each other,
$$
 \Hom_A(\C,\Hom_A(\C,\P))\simeq\Hom_A(\C\ot_A\C\;\P)
 \birarrow\Hom_A(\C,\P)\rarrow\P.
$$
 Thirdly, the composition of the map $\P\rarrow\Hom_A(\C,\P)$ induced
by the counit map~$\eps$ with the contraaction map should be equal
to the identity map,
$$
 \P\rarrow\Hom_A(\C,\P)\rarrow\P.
$$
 This definition can be found in~\cite[Section~3.1.1]{Psemi}.
 In a slightly lesser generality of coassociative coalgebras over
commutative rings, it was first given, together with the definition
of a comodule, in the memoir~\cite[Section~III.5]{EM}.

 For any right $\C$\+comodule $\N$ endowed with a left action of
a ring $B$ by right $\C$\+comodule endomorphisms, and any left
$B$\+module $V$, the abelian group $\Hom_B(\N,V)$ has a natural
left $\C$\+contramodule structure.
 Here the left action of $A$ in $\Hom_B(\N,V)$ is induced
by the right action of $A$ in $\N$, and the left
$\C$\+contraaction morphism $\pi\:\Hom_A(\C,\Hom_B(\N,V))\rarrow
\Hom_B(\N,V)$ is obtained by applying the contravariant
functor $\Hom_B({-},V)$ to the right $\C$\+coaction morphism
$\nu\:\N\rarrow\N\ot_A\C$,
$$
 \Hom_A(\C,\Hom_B(\N,V))\simeq\Hom_B(\N\ot_A\C\;V)\rarrow
 \Hom_B(\N,V).
$$

 The left $\C$\+comodule $\C\ot_A V$, where $V$ is a left $A$\+module,
is called the $\C$\+comodule \emph{coinduced} from an $A$\+module~$V$.
 For any left $\C$\+comodule $\L$ there is a natural isomorphism of
abelian groups
$$
 \Hom_\C(\L\;\C\ot_AV)\simeq\Hom_A(\L,V),
$$
where $\Hom_\C(\L,\M)$ denotes the group of morphisms from
a $\C$\+comodule $\L$ to a $\C$\+comodule $\M$ in the category
$\C\comodl$ of left $\C$\+comodules~\cite[Section~1.1.2]{Psemi}.
 
 The left $\C$\+contramodule $\Hom_A(\C,V)$, where $V$ is a left
$A$\+module, is called the $\C$\+contramodule \emph{induced} from
an $A$\+module~$V$.
 For any left $\C$\+contramodule $\Q$ there is a natural isomorphism of
abelian groups
$$
 \Hom^\C(\Hom_A(\C,V),\Q)\simeq\Hom_A(V,\Q),
$$
where $\Hom^\C(\P,\Q)$ denotes the group of morphisms from
a $\C$\+contramodule $\P$ to a $\C$\+contramodule $\Q$ in the category
$\C\contra$ of left $\C$\+contramodules~\cite[Section~3.1.2]{Psemi}.

\begin{prop}
\textup{(a)} The following two conditions on a coring\/ $\C$ are
equivalent:
\begin{itemize}
\item the category of left\ $\C$\+comodules is abelian and
the forgetful functor\/ $\C\comodl\allowbreak\rarrow A\modl$ is exact;
\item the coring\/ $\C$ is a flat right $A$\+module.
\end{itemize}
\textup{(b)} The following two conditions on a coring\/ $\C$ are
equivalent:
\begin{itemize}
\item the category of left\/ $\C$\+contramodules is abelian and
the forgetful functor\/ $\C\contra\rarrow A\modl$ is exact;
\item the coring\/ $\C$ is a projective left $A$\+module.
\end{itemize}
\end{prop}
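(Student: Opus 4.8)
The plan is to prove each part as an equivalence, taking advantage of the exact formal duality between them: part~(a) revolves around the functor $\C\ot_A{-}$, whose exactness properties are governed by right $A$\+flatness of $\C$, while part~(b) revolves around $\Hom_A(\C,{-})$, governed by left $A$\+projectivity of $\C$. In both parts I would prove the implication ``(co)flatness/projectivity $\Rightarrow$ good category'' by constructing kernels and cokernels of morphisms explicitly in $A\modl$ and equipping them with induced structure maps, and the reverse implication by an abstract argument using the adjunction isomorphisms $\Hom_\C(\L\;\C\ot_AV)\simeq\Hom_A(\L,V)$ and $\Hom^\C(\Hom_A(\C,V),\Q)\simeq\Hom_A(V,\Q)$ recorded above.

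\textbf{Part~(a).} Suppose first that $\C$ is flat as a right $A$\+module, so that $\C\ot_A{-}$ is exact. Given a morphism $f\:\L\rarrow\M$ in $\C\comodl$, I would form $\ker f$ and $\operatorname{coker}f$ in $A\modl$. The cokernel $C$ carries a coaction because right exactness of $\C\ot_A{-}$ (which needs no hypothesis) identifies $\C\ot_AC$ with the cokernel of $\id\ot f$, and the composite of $\nu_\M$ with the projection $\C\ot_A\M\rarrow\C\ot_AC$ kills the image of $f$, hence factors through $C$. The kernel $K$ carries a coaction because the comodule-morphism identity $(\id\ot f)\circ\nu_\L=\nu_\M\circ f$ places $\nu_\L(K)$ inside $\ker(\id\ot f)$; here flatness is used, via $\ker(\id\ot f)=\C\ot_AK$, to see that $\nu_\L$ restricts to a map $K\rarrow\C\ot_AK$. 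Coassociativity and counitality of the induced maps are inherited from those of $\nu_\L$ and $\nu_\M$, and since the forgetful functor to $A\modl$ is additive and faithful, $\C\comodl$ becomes abelian with this functor exact. For the converse, the displayed adjunction isomorphism exhibits coinduction $V\mapsto\C\ot_AV$ as right adjoint to the forgetful functor; a right adjoint preserves monomorphisms, so a monomorphism $V'\rarrow V$ in $A\modl$ yields a monomorphism $\C\ot_AV'\rarrow\C\ot_AV$ in $\C\comodl$, and an exact functor preserves monomorphisms, so the underlying $A$\+module map is injective. As $V'\rarrow V$ was arbitrary, $\C\ot_A{-}$ preserves monomorphisms, i.e.\ $\C$ is flat over $A$ on the right.

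\textbf{Part~(b).} This is the term-by-term dual, now with $\Hom_A(\C,{-})$ in place of $\C\ot_A{-}$. Assuming $\C$ projective as a left $A$\+module, so that $\Hom_A(\C,{-})$ is exact, I would again compute $\ker f$ and $\operatorname{coker}f$ of a contramodule morphism in $A\modl$. This time the kernel is the ``free'' case: left exactness of $\Hom_A(\C,{-})$ (no hypothesis) places $\Hom_A(\C,K)$ inside $\Hom_A(\C,\P)$, and the contraaction-morphism identity forces $\pi_\P$ to restrict to a map $\Hom_A(\C,K)\rarrow K$. The cokernel $C$ is the case consuming projectivity: right exactness makes $\Hom_A(\C,\Q)\rarrow\Hom_A(\C,C)$ surjective, and since the composite of $\pi_\Q$ with $\Q\rarrow C$ annihilates the image of $\Hom_A(\C,f)$, it descends along this surjection to a contraaction $\Hom_A(\C,C)\rarrow C$. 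For the converse, the displayed adjunction exhibits induction $V\mapsto\Hom_A(\C,V)$ as left adjoint to the forgetful functor; a left adjoint preserves epimorphisms, and an exact functor preserves epimorphisms, so the underlying $A$\+module map of $\Hom_A(\C,V)\rarrow\Hom_A(\C,V'')$ is surjective for every epimorphism $V\rarrow V''$. Hence $\Hom_A(\C,{-})$ is right exact, i.e.\ $\C$ is projective over $A$ on the left.

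\textbf{Main obstacle.} I expect the slick adjunction arguments to settle the two reverse implications without difficulty. The real work lies in the forward implications, and specifically in correctly diagnosing which of the kernel and cokernel is free and which consumes the hypothesis---the kernel in the comodule case, the cokernel in the contramodule case---and in verifying that the induced structure maps genuinely satisfy (co/contra)associativity and (co/contra)unitality. The finicky part is the left/right $A$\+module bookkeeping needed to identify $\ker(\id\ot f)$ with $\C\ot_AK$ and $\operatorname{coker}\Hom_A(\C,f)$ with $\Hom_A(\C,C)$, which is precisely where flatness on the correct side (resp.\ projectivity on the correct side) is invoked.
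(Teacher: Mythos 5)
Your proposal is correct and follows essentially the same route as the paper's proof: the forward implications by equipping kernels and cokernels computed in $A\modl$ with induced coaction/contraaction maps (with the same diagnosis of which of the two is automatic and which consumes flatness resp.\ projectivity), and the reverse implications by factoring $\C\ot_A{-}$ (resp.\ $\Hom_A(\C,{-})$) as the coinduction (resp.\ induction) functor followed by the forgetful functor and invoking the adjunction. The only cosmetic difference is that the paper states the adjunction step as ``right adjoints are left exact'' and ``left adjoints are right exact,'' whereas you phrase it via preservation of monomorphisms and epimorphisms, which amounts to the same thing here.
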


\begin{proof}
 One defines a $\C$\+comodule or $\C$\+contramodule structure on
the kernel and cokernel of any morphism of left $\C$\+comodules or
left $\C$\+contramodules computed in the category of abelian
groups/left $A$\+modules, assuming respectively that the functor
$\C\ot_A{-}\:A\modl\rarrow A\modl$ is exact (preserves kernels) or
the functor $\Hom_A(\C,{-})\:A\modl\rarrow A\modl$ is exact
(preserves cokernels).
 This allows to show that the second condition implies the first
one in either part~(a) or~(b).

 To prove the converse implication in part~(a), notice that
the functor $\C\ot_A{-}\:\allowbreak A\modl\rarrow A\modl$ is
the composition of the coinduction functor $A\modl\rarrow\C\comodl$
and the forgetful functor $\C\comodl\rarrow A\modl$, the former of
which is right adjoint to the latter one.
 Since any right adjoint functor between abelian categories is
left exact, one concludes that the functor $\C\ot_A{-}$ is left exact
whenever the forgetful functor is exact.
 Similarly, in part~(b) the functor $\Hom_A(\C,{-})\:A\modl\rarrow
A\modl$ is the composition of the induction functor $A\modl\rarrow
\C\contra$ and the forgetful functor $\C\contra\rarrow A\modl$,
the former of which is left adjoint to the latter one.
 Since any left adjoint functor is right exact, the functor
$\Hom_A(\C,{-})$ is right exact whenever the forgetful functor is
exact.
\end{proof}

 Generally speaking, the cokernels of arbitrary morphisms exist
in $\C\comodl$ and are preserved by the forgetful functor
$\C\comodl\rarrow A\modl$, but the kernels in $\C\comodl$ may be
problematic when $\C$ is not a flat right $A$\+module.
 Similarly, the kernels of arbitrary morphisms exist in
$\C\contra$ and are preserved by the forgeftul functor
$\C\contra\rarrow A\modl$, but the cokernels in $\C\contra$ may be
problematic when $\C$ is not a projective left $A$\+module.
 Counterexamples showing that the categories $\C\comodl$ and
$\C\contra$ are \emph{not} abelian in general can be found
in~\cite[Example~B.1.1]{Pcosh}.

 Assume that the coring $\C$ is a flat right $A$\+module; then,
according to Proposition, the category $\C\comodl$ is abelian
and the forgetful functor $\C\comodl\rarrow A\modl$ is exact.
 Both the infinite direct sums and infinite products exist in
$\C\comodl$; the infinite direct sums are exact and are preserved
by the forgetful functor.
 Filtered inductive limits are exact in the category of left
$\C$\+comodules; so it satisfies the axioms Ab5 and Ab3*, but
\emph{not} in general Ab4*.
 The category $\C\comodl$ also has a set of
generators~\cite[Sections~3.13 and~18.14]{BW}; moreover,
when $A$ is a left Noetherian ring or $\C$ is a projective right
$A$\+module, every left $\C$\+comodule is the union of its
subcomodules that are finitely generated as
$A$\+modules~\cite[Sections~18.16 and~19.12]{BW}.

 When $\C$ is a projective right $A$\+module (and also under some
weaker assumptions), the $A$\+$A$\+bimodule $\gA=\Hom_{A^\rop}(\C,A)$
can be endowed with a topological ring structure (with a base of
neighborhoods of zero formed by open left ideals) such that
the category of left $\C$\+comodules is isomorphic to the category
of discrete left $\gA$\+modules, $\C\comodl\simeq\gA\discr$
\cite[Proposition~10.5 and Remark~10.6]{PS1}.
 This provides another point of view on the categorical properties
of the category $\C\comodl$.

 The coaction map $\nu\:\M\rarrow\C\ot_A\M$ embeds every left
$\C$\+comodule $\M$ as a subcomodule into the coinduced 
$\C$\+comodule $\C\ot_A\M$.
 Infinite products of coinduced $\C$\+comodules are computed
by the rule $\prod_\alpha\C\ot_A V_\alpha=\C\ot_A\prod_\alpha V_\alpha$
\cite[Section~18.13]{BW}; to compute the product of an arbitrary
family of left $\C$\+comodules, one can present them as the kernels
of morphisms of coinduced $\C$\+comodules and use the fact
that infinite products always commute with
the kernels~\cite[Section~1.1.2]{Psemi}.
 There are enough injective objects in the category $\C\comodl$;
a left $\C$\+comodule is injective if and only if it is isomorphic
to a direct summand of a $\C$\+comodule $\C\ot_AJ$ coinduced from
an injective left $A$\+module~$J$ (see~\cite[Section~18.19]{BW}
or~\cite[Section~5.1.5]{Psemi}).

 Assume that the coring $\C$ is a projective left $A$\+module; then,
according to Proposition, the category $\C\contra$ is abelian and
the forgetful functor $\C\contra\rarrow A\modl$ is exact.
 Both the infinite direct sums and infinite products exist in
$\C\contra$; the infinite products are exact and are preserved by
the forgetful functor.
 So the category of left $\C$\+contramodules satisfies the axioms
Ab3 and Ab4*, but \emph{not} in general Ab4 or Ab5*.

 The contraaction map $\pi\:\Hom_A(\C,\P)\rarrow\P$ presents every
left $\C$\+contramodule as a quotient contramodule of the induced
$\C$\+contramodule $\Hom_A(\C,\P)$.
 Infinite direct sums of induced $\C$\+contramodules are computed
by the rule $\bigoplus_\alpha\Hom_A(\C,V_\alpha)=
\Hom_A(\C\;\bigoplus_\alpha V_\alpha)$; to compute the direct sum of
an arbitrary family of left $\C$\+con\-tramodules, one can present
them as the cokernels of morphisms of induced $\C$\+contramodules
and use the fact that infinite direct sums always commute with
the cokernels~\cite[Section~3.1.2]{Psemi}.
 There are enough projective objects in $\C\contra$; a left
$\C$\+contramodule is projective if and only if it is a direct summand
of a $\C$\+contramodule $\Hom_A(\C,F)$ induced from a projective
left $A$\+module~$F$ \cite[Section~5.1.5]{Psemi}.

 Furthermore, under the same assumption of $\C$ being a projective
left $A$\+module, the $A$\+$A$\+bimodule $\R=\Hom_A(\C,A)$
can be endowed with a topological ring structure (with a base of
neighborhoods of zero formed by open right ideals) such that
the category of left $\C$\+contramodules is isomorphic to the category
of left $\R$\+contramodules, $\C\contra\simeq\R\contra$
\cite[Propositions~10.4 and~10.5]{PS1}
(see Section~\ref{addm} below for a further discussion).
 This provides another point of view on the categorical properties
of the category $\C\contra$.

\bigskip

 The discussion in the beginning of this section suggests that one
should consider, in addition to the categories of left $\C$\+comodules,
right $\C$\+comodules, and left $\C$\+contramodules, the pairing
functors of \emph{cotensor product} and \emph{cohomomorphisms} acting
from those categories to the category of abelian groups.
 Let us define these functors of two co/contramodule
arguments now.

 The \emph{cotensor product} $\N\oc_\C\M$ of a right $\C$\+comodule $\N$
and a left $\C$\+comodule $\M$ is an abelian group defined as the kernel
of the difference of the pair of maps
$$
 \nu_\N\ot\id, \ \,\id\ot\nu_\M\:\N\ot_A\M\birarrow\N\ot_A\C\ot_A\M
$$
one of which is induced by the $\C$\+coaction in $\N$ and the other
one by the $\C$\+coaction in~$\M$.
 For any right $\C$\+comodule $\N$ and any left $A$\+module $V$
there is a natural isomorphism of abelian groups
$$
 \N\oc_\C(\C\ot_AV)\simeq\N\ot_AV;
$$
the similar formula holds for the cotensor product of a coinduced
right $\C$\+comodule and an arbitrary left $\C$\+comodule.
 In particular, one has $\N\oc_\C\C\simeq\N$ and $\C\oc_\C\M\simeq\M$
(see~\cite[Section~21]{BW} or~\cite[Sections~0.2.1 and~1.2.1]{Psemi}).

 The abelian group of \emph{cohomomorphisms} $\Cohom_\C(\M,\P)$ from
a left $\C$\+comodule $\M$ to a left $\C$\+contramodule $\P$ is
defined as the cokernel of (the difference of) the pair of maps
\begin{multline*}
 \Hom(\nu_\M,\id), \ \,\Hom(\id,\pi_\P)\:
 \Hom_A(\C\ot_A\M\;\P) \\ \simeq\Hom_A(\M,\Hom_A(\C,\P))
 \birarrow\Hom_A(\M,\P).
\end{multline*}
 For any left $\C$\+comodule $\M$, any left $\C$\+contramodule $\P$,
and any left $A$\+module $V$, there are natural isomorphisms of
abelian groups
\begin{gather*}
 \Cohom_\C(\C\ot_A V\;\P)\simeq\Hom_A(V,\P) \\
 \Cohom_\C(\M,\Hom_A(\C,V))\simeq\Hom_A(\M,V);
\end{gather*}
in particular, one has $\Cohom_\C(\C,\P)\simeq\P$
\cite[Sections~0.2.4 and~3.2.1]{Psemi}.

 Notice that the functor of cotensor product $\oc_\C$ over
a coring~$\C$, being defined as the kernel of a morphism of
cokernels, is \emph{neither} left \emph{nor} right exact in general.
 Similarly, the functor $\Cohom_\C$, being defined as the cokernel
of a morphism of kernels, is neither left \emph{nor} right exact
(even when all the categories involved are abelian and all
the forgetful functors are exact).

\subsection{Semicontramodules over semialgebras}
\label{over-semialgebras}
 The notion of a \emph{semialgebra} over a coalgebra over a field
is dual to that of a coring in the same way as the notion of
a coalgebra over a field is dual to that of an (associative)
ring~\cite{Agu,Brz0,Plet,Psemi}.
 In this section we present the related piece of theory, aiming to
define semimodules and semicontramodules over semialgebras and
interpret contramodules over topological groups as semicontramodules
over certain semialgebras, as it was promised in
Section~\ref{over-top-groups}.

 Let $\C$ be a (coassociative) coalgebra (with counit) over a field~$k$.
 In addition to the definitions of left $\C$\+comodules, right
$\C$\+comodules and left $\C$\+contramodules given in
Section~\ref{over-coalgebras} and then repeated, in the greater
generality of a coring~$\C$, in the previous Section~\ref{over-corings},
we will also need the definition of a $\C$\+$\C$\+bicomodule.

 Let $\D$ be another coalgebra over~$k$.
 A \emph{$\C$\+$\D$\+bicomodule} $\K$ is a $k$\+vector space endowed
with a left $\C$\+comodule and a right $\D$\+comodule structures
$\nu'\:\K\rarrow\C\ot_k\K$ and $\nu''\:\K\rarrow\K\ot_k\D$ which
commute with each other in the following sense.
 The composition of the left coaction map $\nu'\:\K\rarrow\C\ot_k\K$
with the map $\id\ot\nu'':\C\ot_k\K\rarrow\C\ot_k\K\ot_k\D$ induced
by the right coaction map~$\nu''$ should be equal to the composition
of the right coaction map $\nu''\:\K\rarrow\K\ot_k\D$ with the map
$\nu'\ot\id\:\K\ot_k\D\rarrow\C\ot_k\K\ot_k\D$ induced by the left
coaction map~$\nu'$.
 Equivalently, the vector space $\K$ should be endowed with
a \emph{$\C$\+$\D$\+bicoaction} map $\nu\:\K\rarrow\C\ot_k\K\ot_k\D$
satisfying the coassociativity and counitality equations
$(\mu_\C\ot\id_\K\ot\mu_\D)\circ\nu=(\id_\C\ot\nu\ot\id_\D)\circ\nu$
and $(\eps_\C\ot\id_\K\ot\eps_\D)\circ\nu=\id_\K$,
\begin{gather*}
 \K\rarrow\C\ot_k\K\ot_k\D\birarrow\C\ot_k\C\ot_k\K\ot_k\D\ot_k\D \\
 \K\rarrow\C\ot_k\K\ot_k\D\rarrow\K
\end{gather*}
(see~\cite[Sections~11.1 or~22.1]{BW} or~\cite[Sections~0.3.1
or~1.2.4]{Psemi}).

 Recall from the end of the previous section that the \emph{cotensor
product} $\N\oc_\C\M$ of a right $\C$\+comodule $\N$ and a left
$\C$\+comodule $\M$ is the $k$\+vector space constructed as the kernel
of (the difference of) the pair of maps 
$$
 \nu_\N\ot\id, \ \,\id\ot\nu_\M\:\N\ot_k\M\birarrow\N\ot_k\C\ot_k\M
$$
induced by the $\C$\+coaction maps in $\N$ and~$\M$.
 Similarly, the $k$\+vector space of \emph{cohomomorphisms} from
a left $\C$\+comodule $\M$ to a left $\C$\+contra\-module $\P$ is
construced as the cokernel of the pair of maps
$$
 \Hom_k(\C\ot_k\M\;\P)\simeq\Hom_k(\M,\Hom_k(\C,\P))
 \birarrow\Hom_k(\M,\P)
$$
one of which is induced by the $\C$\+coaction in $\M$ and the other
one by the $\C$\+contra\-action in~$\P$.
 The functor of cotensor product of comodules over a coalgebra~$\C$
over a field~$k$, being defined as the kernel of a morphism of exact
functors, is left exact; while the functor of cohomomorphisms of
comodules and contramodules over~$\C$, defined as the cokernel
of a morphism of exact functors, is right exact.
 For any left $\C$\+comodule $\M$, right $\C$\+comodule $\N$, and
$k$\+vector space $V$, there is a natural isomorphism of $k$\+vector
spaces~\cite[Sections~0.2.4 and~3.2.2, and Proposition~3.2.3.1]{Psemi}
$$
 \Cohom_\C(\M,\Hom_k(\N,V))\simeq\Hom_k(\N\oc_\C\M\;V),
$$
where the $k$\+vector space $\Hom_k(\N,V)$ is endowed with
a left $\C$\+contramodule structure as explained in
Section~\ref{basic-properties}.

 For any three coalgebras $\C$, $\D$, and $\cE$, any
$\C$\+$\D$\+bicomodule $\N$, and any $\D$\+$\cE$\+bicomodule $\M$,
the cotensor product $\N\oc_\D\M$ has a natural
$\C$\+$\cE$\+bicomodule structure.
 Furthermore, for any right $\C$\+comodule $\N$, any
$\C$\+$\D$\+bicomodule $\K$, and any left $\D$\+comodule $\M$
there is a natural associativity isomorphism
$$
 (\N\oc_\C\K)\oc_\D\M\simeq\N\oc_\C(\K\oc_\D\M).
$$
 To put it simply, both the iterated cotensor products are
identified with one and the same subspace in the vector space
$\N\ot_k\K\ot_k\M$ (cf.\ the beginning of
Section~\ref{over-corings}).

 Similarly, for any $\C$\+$\D$\+bicomodule $\K$ and any left
$\C$\+contramodule $\P$, the space of cohomomorphisms
$\Cohom_\C(\K,\P)$ has a natural left $\D$\+contramodule structure.
 One can define it by noticing that $\Cohom_\C(\K,\P)$ is
a quotient contramodule of the left $\D$\+contramodule $\Hom_k(\K,\P)$,
whose contramodule structure is induced by the right
$\D$\+comodule structure on $\K$ via the construction described
in Section~\ref{basic-properties}. 
 For any $\C$\+$\D$\+bicomodule $\K$, any left $\D$\+comodule $\M$,
and any left $\C$\+contramodule $\P$, there is a natural
associativity isomorphism
$$
 \Cohom_\C(\K\oc_\D\M\;\P)\simeq\Cohom_\D(\M\;\Cohom_\C(\K,\P)).
$$
 Both the (iterated) Cohom spaces are identified with
the quotient space of the vector space $\Hom_k(\K\ot_k\M\;\P)\simeq
\Hom_k(\M,\Hom_k(\K,\P))$ by one and the same vector subspace
\cite[Sections~0.3.4 or~3.2.4]{Psemi}.

 In particular, it follows from these associativity isomorphisms
for a coalgebra $\C=\D$ that the category of $\C$\+$\C$\+bicomodules
$\C\bicomod\C$ is an associative tensor category with respect to
the cotensor product functor~$\oc_\C$, the category of left
$\C$\+comodules $\C\comodl$ is a left module category over
$\C\bicomod\C$, and the category $\C\contra^\sop$ opposite to
the category of left $\C$\+contramodules is a right module category
over $\C\bicomod\C$ with respect to the cohomomorphism functor
$\Cohom_\C$.

 A \emph{semialgebra} $\S$ over a coalgebra $\C$ over a field~$k$
is an associative ring object in the tensor category of
$\C$\+$\C$\+bicomodules.
 In other words, it is a $\C$\+$\C$\+bicomodule endowed with
a \emph{semimultiplication} map $\bm\:\S\oc_\C\S\rarrow\S$ and
a \emph{semiunit} map $\be\:\C\rarrow\S$ satisfying the following
\emph{colinearity}, \emph{semiassociativity} and~\emph{semiunitality}
equations.
 First of all, the maps~$\bm$ and~$\be$ must be $\C$\+$\C$\+bicomodule
morphisms.
 Secondly, the compositions of the two maps $\bm\oc\id_\S$ and
$\id_\S\oc\bm\:\S\oc_\C\S\oc_\C\S\birarrow\S\oc_\C\S$ induced by
the semimultiplication map~$\bm$ with the semimultiplication map
$$
 \S\oc_\C\S\oc_\C\S\birarrow\S\oc_\C\S\rarrow\S
$$
should be equal to each other, $\bm\circ(\bm\oc\id_\S) =
\bm\circ(\id_\S\oc\bm)$.
 Thirdly, both the compositions of the maps $\be\oc\id_\S$ and
$\id_\S\oc\be\:\S\birarrow\S\oc_\C\S$ induced by the semiunit
map~$\be$ with the semimultiplication map~$\bm$
$$
 \S\birarrow\S\oc_\C\S\rarrow\S
$$
should be equal to the identity map, $\bm\circ(\be\oc\id_\S)=\id_\S=
\bm\circ(\id_\S\oc\be)$.

 A \emph{left semimodule} $\bM$ over a semialgebra $\S$ over
a coalgebra $\C$ is a module object in the left module category
of left $\C$\+comodules over the ring object $\S$ in
the tensor category of $\C$\+$\C$\+bicomodules.
 In other words, it is a left $\C$\+comodule endowed with
a \emph{left semiaction} map $\bn\:\S\oc_\C\bM\rarrow\bM$
satisfying the following colinearity, \emph{semiassociativity}
and \emph{semiunitality} equations.
 First of all, the map~$\bn$ must be a left $\C$\+comodule morphism.
 Secondly, the compositions of the two maps $\bm\oc\id_\bM$ and
$\id_\S\oc\bn\:\S\oc_\C\S\oc_\C\bM\birarrow\S\oc_\C\bM$ induced
by the semimultiplication and semiaction maps with
the semiaction map
$$
 \S\oc_\C\S\oc_\C\bM\birarrow\S\oc_\C\bM\rarrow\bM
$$
should be equal to each other, $\bn\circ(\bm\oc\id_\bM)=
\bn\circ(\id_\S\oc\bn)$.
 Thirdly, the composition of the map $\be\oc_\C\id_\bM\:\bM\rarrow
\S\oc_\C\bM$ induced by the semiunit map~$\be$ with the semiaction
map~$\bn$
$$
 \bM\rarrow\S\oc_\C\bM\rarrow\bM
$$
should be equal to the identity map, $\bn\circ(\be\oc_\C\id_\bM)
= \id_\bM$.
 A \emph{right semimodule} $\bN$ over $\S$ is a right $\C$\+comodule
endowed with a right semiaction map $\bn\:\bN\oc_\C\S\rarrow\bN$
satisfying the similar equations
\begin{gather*}
 \bN\oc_\C\S\oc_\C\S\birarrow\bN\oc_\C\S\rarrow\bN, \\
 \bN\rarrow\bN\oc_\C\S\rarrow\bN.
\end{gather*}
 These definitions can be found in~\cite[Sections~2.3 and~6.1]{Agu},
\cite[Section~6]{Brz0}, \cite[Section~8]{Brz},
and~\cite[Sections~0.3.2 and~1.3.1]{Psemi};
see~\cite[Section~0.3.10]{Psemi} for some further references.

 Before defining semicontramodules, let us recall from the discussion
in Section~\ref{over-coalgebras} that there are two ways to define
the conventional modules over associative algebras over~$k$ in
tensor/polylinear algebra terms.
 In addition to the familiar definition of a left $A$\+module $M$
as a $k$\+vector space endowed with a $k$\+linear map
$n\:A\ot_kM\allowbreak\rarrow M$ satisfying the associativity and
unitality equations, one can also say that a left $A$\+module
structure on $M$ is defined by a linear map $p\:M\rarrow\Hom_k(A,M)$
satisfying the correspodingly rewritten equations.

 A \emph{left semicontramodule} $\bP$ over a semialgebra $\S$ over
a coalgebra $\C$ is an object of the opposite category to
the category of module objects in the right module category
$\C\contra^\sop$ over the ring object $\S$ in the tensor category
$\C\bicomod\C$.
 In other words, it is a left $\C$\+contramodule endowed with
a \emph{left semicontraaction} map $\bp\:\bP\rarrow\Cohom_\C(\S,\bP)$
satisfying the following \emph{contralinearity},
\emph{semicontraassociativity}, and \emph{semicontraunitality}
equations.
 First of all, the map~$\bp$ must be a left $\C$\+contramodule
morphism.
 Secondly, the compositions of the semicontraaction map~$\bp$
with the two maps $\Cohom(\bm,\bP)\:\Cohom_\C(\S,\bP)
\rarrow\Cohom_\C(\S\oc_\C\S\;\bP)$ and $\Cohom(\S,\bp)\:
\Cohom_\C(\S,\bP)\rarrow\Cohom_\C(\S,\Cohom_\C(\S,\bP))$
$$
 \bP\rarrow\Cohom_\C(\S,\bP)\birarrow
 \Cohom_\C(\S\oc_\C\S\;\bP)\simeq\Cohom_\C(\S,\Cohom_\C(\S,\bP))
$$
should be equal to each other, $\Cohom(\bm,\bP)\circ\bp=
\Cohom(\S,\bp)\circ\bp$, where the above identification
$\Cohom_\C(\S\oc_\C\S\;\bP)\simeq\Cohom_\C(\S,\Cohom_\C(\S,\bP))$
is presumed.
 Thirdly, the composition of the semicontraaction map with
the map $\Cohom_\C(\be,\bP)\:\Cohom_\C(\S,\bP)\rarrow\bP$
induced by the semiunit map~$\be$
$$
 \bP\rarrow\Cohom_\C(\S,\bP)\rarrow\bP
$$
should be equal to the identity map, $\Cohom_\C(\be,\bP)\circ\bp
= \id_\bP$.
 This definition can be found in~\cite[Sections~0.3.5
or~3.3.1]{Psemi}.

 For any right $\S$\+semimodule $\bN$ and any $k$\+vector space $V$,
the left $\C$\+contramodule $\Hom_k(\bN,V)$ has a natural left
$\S$\+semicontramodule structure.
 The left semicontraaction map $\bp\:\Hom_k(\bN,V)\rarrow
\Cohom_\C(\S,\Hom_k(\bN,V))$ is constructed by applying the functor
$\Hom_k({-},V)$ to the right semiaction map~$\bn$ of
the $\S$\+semimodule~$\bN$
$$
 \Hom_k(\bN,V)\rarrow\Hom_k(\bN\oc_\C\S\;V)\simeq
 \Cohom_\C(\S,\Hom_k(\bN,V)).
$$

 Generally speaking, the kernels of arbitrary morphisms exist in
the category of left $\S$\+semimodules $\S\simodl$ and are preserved
by the forgetful functors $\S\simodl\rarrow\C\comodl\rarrow k\vect$,
but the cokernels in $\S\simodl$ may be problematic when $\C$ is not
an injective right $\C$\+comodule.
 Similarly, the cokernels of arbitrary morphisms exist in
the category of left $\S$\+semicontramodules $\S\sicntr$ and are
preserved by the forgetful functors $\S\sicntr\rarrow\C\contra
\rarrow k\vect$, but the kernels in $\S\sicntr$ may be problematic
when $\C$ is not an injective left $\C$\+comodule.

 Now let us assume that the semialgebra $\S$ is an injective
right $\C$\+comodule.
 Then the cotensor product functor $\S\oc_\C{-}\:\C\comodl\rarrow
\C\comodl$ is exact, so the category $\S\simodl$ of left
$\S$\+semimodules is abelian and the forgetful functors
$\S\simodl\rarrow\C\comodl\rarrow k\vect$ are exact.
 Both the infinite direct sums and infinite products exist in
$\S\simodl$ and both are preserved by the forgetful functor
$\S\simodl\rarrow\C\comodl$, though only the infinite direct sums
are preserved by the full forgetful functor $\S\simodl\rarrow k\vect$.

 Indeed, let $\bM_\alpha$ be a family of left $\S$\+semimodules and
$\prod_\alpha\bM_\alpha$ be their infinite product in the category
of left $\C$\+comodules $\C\comodl$; then one can easily construct
a left semiaction map $\bm\:\S\oc_\C\prod_\alpha\bM_\alpha\rarrow
\prod_\alpha\bM_\alpha$ and show that the left $\S$\+semimodule so
obtained is the product of the family of objects $\bM_\alpha$ in
$\S\simodl$.
 So the category $\S\simodl$ satisfies the axioms Ab5 and Ab3*, but
not in general Ab4*.
 It also has a set of generators, for which one can take
the $\S$\+semimodules $\S\oc_\C\L$ induced from finite-dimensional
left $\C$\+comodules~$\L$.
 Hence there are enough injective objects in $\S\simodl$; we will see
in Section~\ref{underived-semi} below how one can construct them.

 Furthermore, the vector space $\gA=\Hom_{\S^\rop}(\S,\S)=
\Hom_{\C^\rop}(\C,\S)$ can be endowed with a topological $k$\+algebra
structure (with a base of neighborhoods of zero formed by open
left ideals) such that the category of left $\S$\+semimodules is
isomorphic to the category of discrete left $\gA$\+modules,
$\S\simodl\simeq\gA\discr$ \cite[Remark~10.9]{PS1}.
 This provides another point of view on the categorical properties
of the category $\S\simodl$.

 Assume that the semialgebra $\S$ is an injective left $\C$\+comodule.
 Then the cohomomorphism functor $\Cohom_\C(\S,{-})\:\C\contra\rarrow
\C\contra$ is exact, so the category $\S\sicntr$ of left
$\S$\+semicontramodules is abelian and the forgetful functors
$\S\sicntr\rarrow\C\contra\rarrow k\vect$ are exact.
 Both the infinite direct sums and infinite products exist in
the category $\S\sicntr$ and both are preserved by the forgetful functor
$\S\sicntr\rarrow\C\contra$, though only the infinite products are
preserved by the full forgetful functor $\S\sicntr\rarrow k\vect$.

 Indeed, let $\bP_\alpha$ be a family of left $\S$\+semicontramodules
and $\bigoplus_\alpha\bP_\alpha$ be their infinite direct sum in
the category $\C\contra$.
 Then one can easily construct a left semicontraaction map
$\bp:\bigoplus_\alpha\bP_\alpha\rarrow
\Cohom_\C(\S\;\bigoplus_\alpha\bP_\alpha)$
and show that the left $\S$\+semicontramodule so obtained is
the direct sum of the family of objects $\bP_\alpha$ in $\S\sicntr$.
 So the category $\S\sicntr$ satisfies the axioms Ab3 and Ab4*,
but not in general Ab4 or Ab5*.
 There are enough projective objects in $\S\sicntr$; we will see
in Section~\ref{underived-semi} how to construct them.

 Furthermore, the vector space $\R=\Hom_\S(\S,\S)=\Hom_\C(\C,\S)$
can be endowed with a topological $k$\+algebra structure (with a base of neighborhoods of zero formed by open right ideals) such that
the category of left $\S$\+semicontramodules is
isomorphic to the category of left $\R$\+contramodules,
$\S\sicntr\simeq\R\contra$ \cite[Proposition~10.8 and subsequent
discussion]{PS1} (see also Section~\ref{addm} below).
 This provides another point of view on the categorical properties
of the category $\S\contra$, including a description of its
projective objects.

\begin{ex}
 Let us explain the construction of the semialgebra $\S$ for which
the category of $\S$\+semicontramodules is equivalent to
the category of contramodules over a (locally compact totally
disconnected) topological group~$G$, as it was promised in
Section~\ref{over-top-groups}.
 In fact, we will see that for any given group $G$ there is a whole
family of such semialgebras $\S$ depending on the choice of
a compact (i.~e., profinite) open subgroup $H\subset G$.
 All of them are Morita equivalent to each other in the sense
of~\cite[Section~8.4.5]{Psemi}, i.~e., the categories of
(say, left) semimodules over all of them are equivalent, as are
the categories of semicontramodules.

 Given a commutative ring $k$, by a \emph{discrete $G$\+module
over~$k$} we mean a $k$\+module $\M$ endowed with a $k$\+linear
discrete $G$\+module structure $\M\rarrow\M\{G\}$; similarly,
a \emph{$G$\+contramodule over~$k$} is a $k$\+module endowed
with a $k$\+linear $G$\+contramodule structure $\P[[G]]\rarrow\P$.
 In other words, a discrete $G$\+module over~$k$ is a $k$\+linear
object in the additive category $G\discr$ and a $G$\+contramodule
over~$k$ is a $k$\+linear object in the additive category $G\contra$.

 For the beginning, let $k$~be a field.
 We will freely use the terminology and notation of
Section~\ref{over-top-groups}; in particular, $k(X)$ denotes
the vector space of locally constant compactly supported
$k$\+valued functions on a (locally compact totally disconnected)
topological space~$X$.
 Then for any topological spaces $X$ and $Y$ there is a natural
isomorphism $k(X\times Y)\simeq k(X)\ot_k k(Y)$.
 For any profinite group $H$, the inverse image map
$k(H)\rarrow k(H\times H)$ with respect to the multiplication
map $H\times H\rarrow H$, together with the map $k(H)\rarrow k$
of evaluation at the unit element $e\in H$, endow
the vector space $k(H)$ with the structure of a coassociative
coalgebra over~$k$.
 For any $k$\+vector space $A$ one has $A\{H\}=A(H)\simeq k(H)\ot_k A$
and $A[[H]]\simeq\Hom_k(k(H),A)$, so one can easily identify
discrete $H$\+modules over~$k$ with (left or right) $k(H)$\+comodules
and $H$\+contramodules over~$k$ with $k(H)$\+contramodules.

 Let $H$ be a compact open subgroup in a topological group~$G$;
then the left and right actions of $H$ in $G$ endow $k(G)$ with
a natural structure of bicomodule over~$k(H)$.
 Denote by $G\times^HG$ the quotient space of the Cartesian square
$G\times G$ by the equivalence relation $(g'h,g'')\sim(g',hg'')$
for all $g'$, $g''\in G$ and $h\in H$.
 Being a disjoint union of $G/H$ copies of $G$, this quotient
is also a locally compact and totally disconnected topological space.
 The inverse image of functions with respect to the factorization
map $G\times G\rarrow G\times^HG$ identifies the vector space
$k(G\times^HG)$ with the cotensor product $k(G)\oc_{k(H)}k(G)
\subset k(G)\ot_k k(G)=k(G\times G)$.

 For any \'etale map (local homeomorphism) of topological spaces
$p\:X\rarrow Y$ and any abelian group $A$, the push-forward map
$A(X)\rarrow A(Y)$, assigning to a function $f\:X\rarrow A$
the function $p_*(f)\:Y\rarrow A$,
$$
 (p_*f)(y)=\sum_{p(x)=y}f(x), \qquad y\in Y, \ \ x\in X,
$$
is defined~\cite[Section~E.1.1]{Psemi}.
 In particular, the push-forwards with respect to the multiplication
map $G\times^HG\rarrow G$ and the embedding map $H\rarrow G$ provide
natural left and right $H$\+equivariant $k$\+linear maps
$k(G)\oc_{k(H)}k(G)\rarrow k(G)$ and $k(H)\rarrow k(G)$ endowing
the vector space $\S_k(G,H)=k(G)$ with the structure of a semialgebra
over the coalgebra $\C_k(H)=k(H)$ \cite[Section~E.1.2]{Psemi}.

 It is claimed that the category of (left or right)
$\S_k(G,H)$\+semimodules is isomorphic to the category of
discrete $G$\+modules over~$k$; the datum of a $\S_k(G,H)$\+semimodule
structure on a $k$\+vector space $\M$ is equivalent to the datum of
a discrete $G$\+module structure on~$\M$.
 Indeed, according to~\cite[Sections~E.1.3 and~10.2.2]{Psemi},
the datum of a $\S_k(G,H)$\+semimodule structure on $\M$ is equivalent
to that of two structures of a $G$\+module and a $\C_k(H)$\+comodule
satisfying two compatibility equations; essentially, this is the same
as an action of $G$ in $\M$ whose restriction to $H$ comes from
a $\C_k(H)$\+coaction.
 It remains to notice that an action of $G$ whose restriction to $H$
is discrete is the same thing as a discrete action of~$G$.

 Similarly, the category of $\S_k(G,H)$\+semicontramodules is isomorphic
to the category of $G$\+contramodules over~$k$; the datum of
a $\S_k(G,H)$\+semicontramodule structure on a $k$\+vector space $\P$
is equivalent to the datum of a $G$\+contramodule structure.
 Indeed, according to \emph{loc.\ cit.}, the datum of
a $\S_k(G,H)$\+semicontramodule structure on $\M$ is equivalent to
that of two structures of a $G$\+module and a $\C_k(H)$\+contramodule
satisfying two compatibility equations.
 Essentially, it is claimed that a contraassociative $G$\+contraction
map $\P[[G]]\rarrow\P$ can be uniquely recovered from its restriction
to the point measures in $G$ and the measures supported inside $H$,
provided that the two compatibility equations are satisfied.
 One notices that there is an external product map $k[[G]]\ot_k
\P[[G]]\rarrow\P[[G\times G]]$ or $\P[[G]]\ot_k k[[G]]\rarrow
\P[[G\times G]]$ assiging to a $k$\+valued and a $\P$\+valued
measures on $G$ a $\P$\+valued measure on $G\times G$.
 The contraassociativity equation in the definition of
a $G$\+contramodule, restricted to the external products of
$k$\+valued point measures in $G$ and $\P$\+valued measures in~$H$,
taken in any fixed order, provides a prescription for the desired
recovering of the $G$\+contraaction map from its restrictions to
the two specific kinds of measures (cf.~\cite[proof of
Corollary~3.1]{Psm}).

 Notice that the underlying $k$\+vector space $k(G)$ of
the semialgebras $\S_k(G,H)$ does not depend on the choice of
a compact open subgroup $H\subset G$, but the semialgebra structure
does, $\S_k(G,H)$ being a semialgebra over the coalgebra
$\C_k(H)=k(H)$ depending on~$H$.
 Still, the abelian categories of semimodules and semicontramodules
over $\S_k(G,H)$ do not depend on~$H$; but their semiderived
categories and the functors of semi-infinite (co)homology and
the derived semimodule-semicontramodule correspondence, whose
construction is the aim of the book~\cite{Psemi}, \emph{do} depend
on $H$ in a quite essential way~\cite[Section~8.4.6
and Remark~E.3.2]{Psemi}.

 Now one would like to replace a field~$k$ with an arbitrary
commutative ring, including first of all $k=\Z$.
 This was one of the motivating examples for developing the theory
of semimodules and semicontramodules in the generality of
semialgebras over corings over (generally speaking, noncommutative)
rings rather than just over coalgebras over fields in the main
body of the book~\cite{Psemi}.
 One unpleasant technical complication that arises in this
connection is the possible nonassociativity of cotensor product
over a coring (see the discussion in the beginning of
Section~\ref{over-corings}).
 Hence the importance of various sufficient conditions guaranteeing
such associativity; see~\cite[Sections~11.6 and 22.5\+-6]{BW}
and~\cite[Section~1.2.5]{Psemi}.

 In particular, the results of~\cite[11.6(iv)]{BW}
or~\cite[Proposition~1.2.5(f)]{Psemi} ensure that the notions of
a semialgebra $\S_k(G,H)$ and arbitrary semimodules over it are
unproblematic for any commutative ring~$k$.
 To consider semicontramodules, one also needs associativity
of the cohomomorphism functor, which holds in this case by
the result of~\cite[Proposition~3.2.5(j)]{Psemi}.
 All the assertions mentioned above in this example still hold in
this setting.
 The more advanced homological constructions and results
of~\cite{Psemi} in the application to the semialgebras $\S_k(G,H)$
depend on the assumption of the ring~$k$ having finite
homological dimension, though.
\end{ex}

\subsection{The category $\sO^\ctr$}  \label{category-o-contra}
 The conventional concept of representations of an algebraic group
$G$ is that of comodules over the coalgebra of regular
functions $\C(G)$ on~$G$.
 Since every comodule over a coassociative coalgebra is the union
of its finite-dimensional subcomodules~\cite[Section~2.1]{Swe}
(cf.\ the discussion in
Sections~\ref{over-power-series}\+-\ref{over-l-adics}), it means
that infinite-dimensional representations of $G$ are simply
the unions of finite-dimensional representations, or
the \emph{ind-finite-dimensional} representations.

 In particular, while every finite-dimensional representation
of the Lie algebra~$\g$ of a simply connected semisimple complex
algebraic group $G$ can be integrated to a representation of $G$,
this is \emph{no} longer true for infinite-dimensional representations.
 Indeed, for any nonzero Lie algebra~$\g$ one can easily construct
a module that is not a union of its finite-dimensional submodules
(it suffices to consider the enveloping algebra $U(\g)$ with
the action of~$\g$ by left multiplications).

 The Lie correspondence takes a particularly simple form in the case
of unipotent algebraic groups and nilpotent Lie algebras over
a field of characteristic zero, which are two equivalent
categories~\cite[Corollaire~VI.2.4.5]{DeGa}
(see~\cite[Section~D.6.1]{Psemi} for further references and
a discussion including the finite characteristic case).
 A module over a finite-dimensional nilpotent Lie algebra~$\g$
comes from an (always unique) representation of the corresponding
unipotent algebraic group~$G$ if and only if it is a union of
finite-dimensional $\g$\+modules where all the vectors from~$\g$
act by nilpotent linear operators~\cite[Sections~3.3.5\+-7]{VO}.

 It is a classical idea to work with categories intermediate between
those of representations of a Lie (or algebraic) group $G$ and modules
over its Lie algebra~$\g$.
 For this purpose, one starts from a Lie algebra $\g$ with 
a chosen \emph{subgroup} $H$, i.~e., an algebraic group corresponding
to a Lie subalgebra $\h\subset\g$.
 Then one considers $\g$\+modules $\M$ for which the restriction to
$\h$ of the action of $\g$ in $\M$ can be/has been integrated to
an algebraic action of~$H$.
 As to the choice of the subgroup $H$, there are two basic
approaches: given a complex (or real) semisimple Lie group $G$ and
its Lie algebra~$\g$, one can take $H$ to be a maximal compact
subgroup of~$G$; or otherwise one can use a Borel (or maximal
unipotent) subgroup of $G$ in the role of~$H$.

 Modules over a semisimple Lie algebra $\g$ integrable to
representations of a maximal compact Lie subgroup $H$ are classically
known as \emph{Harish-Chandra modules}~\cite{Dix,Wal},
while $\g$\+modules which can be integrated to an algebraic action of
the Borel subgroup form what has been called
the \emph{BGG} (Bernstein--Gelfand--Gelfand) \emph{category\/~$\sO$}
\cite{BGG,Hum}.
 Both can be united under an umbrella notion of \emph{algebraic
Harish-Chandra modules}, which means simply ``modules over a pair
consisting of a Lie algebra and an algebraic subgroup''
(see a terminological discussion in~\cite[Remark~D.2.5]{Psemi}).

 An \emph{algebraic Harish-Chandra
pair}~\cite[Sections~1.8.2 and~3.3.2]{BB} is a set of data consisting
of a Lie algebra $\g$ over a field~$k$, a finite-dimensional Lie
subalgebra $\h\subset H$, an algebraic group $H$ over~$k$ whose Lie
algebra is identified with~$\h$, and an action of $H$ by automorphisms
of the Lie algebra~$\g$.
 The following two compatibility conditions have to be satisfied.
 Firstly, the subalgebra~$\h$ must be an $H$\+invariant
subspace in~$\g$, and the restriction of the action of $H$ in~$\g$
to~$\h$ should coincide with the adjoint action of $H$ in~$\h$.
 Secondly, the action of~$\h$ in~$\g$ obtained by taking the derivative
of the action of $H$ in~$\g$ should coincide with the adjoint action
of~$\h$ in~$\g$.

 A \emph{Harish-Chandra module} $\M$ over an algebraic Harish-Chandra
pair $(\g,H)$ is a $k$\+vector space endowed with a $\g$\+module
structure and an action of $H$ satisfying the following compatibility
conditions.
 Firstly, the restriction of the $\g$\+action in $\M$ to the Lie
subalgebra~$\h$ should coincide with the derivative of the action
of $H$ in~$\M$.
 Secondly, the $\g$\+action map $\g\ot_k\M\rarrow\M$ should be
$H$\+equivariant.

 In algebraic (rather than algebro-geometric) terms, an (affine)
algebraic group $G$ over a field~$k$ is described by
the $k$\+vector space $\C(G)$ of regular functions on $G$,
endowed with a noncommutative convolution comultiplication
$\mu\:\C(G)\rarrow\C(G)\ot_k\C(G)$ induced by the multiplication map
$G\times G\rarrow G$ and a commutative pointwise multiplication
$m\:\C(G)\ot_k\C(G)\rarrow\C(G)$.
 Together with the antipode map $s\:\C(G)\rarrow\C(G)$ induced
by the inverse element map $G\rarrow G$, these structures make
$\C(G)$ a commutative Hopf algebra~\cite{Swe}.
 By a representation of $G$ one conventionally means
a $\C(G)$\+comodule, while the multiplication on $\C(G)$ is being
used in order to define a $\C(G)$\+comodule structure on
the tensor product $\L\ot_k\M$ of any two $k$\+vector spaces $\L$
and $\M$ endowed with $\C(G)$\+comodule structures.
 The antipode map~$s$, being (always) an anti-automorphism for (both
the multiplication and) the comultiplication of $\C=\C(G)$ (or any
other Hopf algebra~$\C$), allows to identify the categories of left
and right $\C$\+comodules, so the difference between them is
inessential here.

 Over a field~$k$ of characteristic~$0$, the enveloping algebra $U(\g)$
of the Lie algebra~$\g$ of an algebraic group $G$ is interpreted as
the algebra of left or right invariant differential operators on $G$
or, simpler yet, the algebra of distributions (``delta functions'')
on $G$ supported at the unit element $e\in G$.
 The $k$\+vector space of distributions here is defined as the discrete
dual vector space to the linearly compact $k$\+vector space of functions
on the formal completion of $G$ at~$e$.
 The noncommutative convolution multiplication $m\:U(\g)\ot_kU(\g)\rarrow
U(\g)$ in the Hopf algebra $U(\g)$ is induced by the multiplication map
$G\times G\rarrow G$, while the commutative comultiplication
$\mu\:U(\g)\rarrow U(\g)\ot_kU(\g)$ is induced by the diagonal embedding
$G\rarrow G\times G$, and the antipode map $s\:U(\g)\rarrow U(\g)$
simply multiplies every vector from~$\g$ by~$-1$.

 Evaluating a $\{e\}$\+supported distribution at a regular function
on $G$ defines a natural pairing $\phi\:\C(G)\ot_k U(\g)\rarrow k$.
 For example, the pairing with an element of~$\g$ assigns to
a regular function on $G$ the value of its derivative along
the corresponding tangent vector at the origin $e\in G$.
 The pairing~$\phi$ is compatible with the Hopf algebra structures
on $\C(G)$ and $U(\g)$, transforming the comultiplication in
the former into the multiplication in the latter and vice versa.
 In our left-right conventions (see Section~\ref{over-l-adics} for
a discussion), this compatibility is expressed by the equations
\begin{align*}
 \phi(f,uv)&=\phi(f_{(2)},u)\phi(f_{(1)},v) \\
 \phi(fg,u)&=\phi(f,u_{(2)})\phi(g,u_{(1)}),
\end{align*}
for any $f$, $g\in\C(G)$ and $u$, $v\in U(\g)$,
where $\mu(f)=f_{(1)}\ot f_{(2)}$ and $\mu(u)=u_{(1)}\ot u_{(2)}$
is Sweedler's symbolic notation for the comultiplication maps.

 Given a $\C(G)$\+comodule $\M$, one defines the ``derivative''
$U(\g)$\+module structure $m\:U(\g)\ot_k\M\rarrow\M$ on $\M$
as given by the composition $U(\g)\ot_k\M\rarrow
U(\g)\ot_k\C(G)\ot_k\M\rarrow\M$ of the maps induced by
the coaction map and the pairing~$\phi$
(with the arguments' positions inverted).
 Alternatively, one can obtain a left action map in the form
$p\:\M\rarrow\Hom_k(U(g),\M)$ as the composition
$\M\rarrow\C\ot_k\M\rarrow\Hom_k(U(\g),\M)$ of the left coaction
map and the map induced by the pairing~$\phi$.
 Furthermore, the adjoint action of $G$ in itself preserves the origin,
so, \emph{unlike} the left and right actions of $\g$ in the enveloping
algebra $U(\g)$, the adjoint action of $\g$ can be integrated to
a representation of $G$ in $U(\g)$ as well as in~$\g$.
 Hence both $\g$ and $U(\g)$ are endowed with natural $\C(G)$\+comodule
structures.

 Now an algebraic Harish-Chandra pair $(\g,H)$ is described in purely
algebraic terms as a set of data consisting of a Lie algebra~$\g$,
a Hopf algebra $\C(H)$, a Lie subalgebra $\h\subset\g$, a pairing
$\phi\:\C(H)\ot_k U(\h)\rarrow k$ compatible with the Hopf algebra
structures, and a coaction of $\C(H)$ in $\g$ satisfying the following
compatibility conditions.
 Firstly, the coaction of $\C(H)$ in $\g$ should be compatible with
the Lie algebra structure on~$\g$; then there is also the induced
coaction of $\C(H)$ in $U(\g)$.
 Secondly, the Lie subalgebra $\h\subset\g$ should be preserved by
the $\C(H)$\+coaction and the pairing $\phi$ should be compatible
with the induced coaction of $\C(H)$ in $U(\h)$ and the adjoint
coaction of $\C(H)$ in itself (equivalently, the restriction
$\psi\:\C(H)\times\h\rarrow k$ of the pairing~$\phi$ should be
compatible with the adjoint $\C(H)$\+coaction in $\C(H)$ and
$\C(H)$\+coaction in~$\g$ restricted to~$\h$).
 Thirdly, the adjoint action of~$\h$ in~$\g$ should coincide with
the derivative $\h$\+action of the $\C(H)$\+coaction, which is
defined in terms of the given pairing~$\phi$.

 Two generalizations of this setting, in two different directions,
are discussed at length in the book~\cite{Psemi}.
 A ``quantum'' version, with two noncommutative, noncocommutative
Hopf algebras $\C$ and $K$ in place of $\C(H)$ and $U(\h)$,
an associative algebra $R$ in the role of $U(\g)$, and ``adjoint''
coactions of $\C$ in $K$ and $R$, is introduced
in~\cite[Section~C.1]{Psemi}.
 A ``Tate'' version, with the Hopf algebra $\C(H)$ of regular functions
on an infinite-dimensional pro-affine pro-algebraic group $H$ and
a linearly compact open subalgebra~$\h$ in a Tate (locally linearly
compact) Lie algebra~$\g$, can be found in~\cite[Section~D.2.1]{Psemi}
(see the overview in Section~\ref{tate-harish-chandra} below).

 To repeat a previously given definition in the slightly new language,
a Harish-Chandra module $\M$ over an algebraic Harish-Chandra pair
$(\g,H)$ is a $k$\+vector space endowed with a $\g$\+module and
a $\C(H)$\+comodule structures satisfying two compatibility conditions.
 The restriction of the $\g$\+action in $\M$ to~$\h$ should coincide
with the derivative $\h$\+action of the $\C(H)$\+coaction, and
the action map $\g\ot_k\M\rarrow\M$ should be a $\C(H)$\+comodule
morphism (where the coaction in the tensor product is defined
in terms of the multiplication in the Hopf algebra~$\C(H)$);
equivalently, the action map $U(\g)\ot_k\M\rarrow\M$ should be
a $\C(H)$\+comodule morphism.

 Before defining \emph{Harish-Chandra contramodules}, let us say
a few words about contramodules over the coalgebra $\C(G)$ of
regular functions on an algebraic group~$G$.
 Unfortunately, there does \emph{not} seem to be any particular way
to \emph{interpret} a $\C(G)$\+con\-tramodule structure on
a $k$\+vector space $\P$ in any terms more explicit than the general
definition of a contramodule over a coalgebra over a field~$k$.
 The only known exception is the case of a reductive algebraic
group~$H$ over a field~$k$ of characteristic~$0$, when
\cite[Lemma~A.2.2]{Psemi} or the last sentence of
Section~\ref{basic-properties} apply and the semisimple abelian
categories of $\C(H)$\+comodules and $\C(H)$\+contramodules
are equivalent.
 Otherwise there is only the general intuition of contramodules as
modules with infinite summation operations, supported by examples
such as that of comodules and contramodules over the coalgebra $\C(H)$
of regular functions on the one-dimensional unipotent algebraic
group $H=\mathbb G_a$ considered in Section~\ref{over-power-series}.

 However, the vector spaces $\P=\Hom_k(\M,V)$ for all
$\C(H)$\+comodules $\M$ (i.~e., vector spaces with the algebraic
group $H$ acting in them in the conventional sense) 
and all $k$\+vector spaces $V$ provide \emph{examples} of
$\C(H)$\+contramodules for \emph{any} algebraic group~$H$
(see Section~\ref{basic-properties}).
 Moreover, for any commutative (for simplicity) Hopf algebra $\C$,
a $\C$\+comodule $\M$, and a $\C$\+contramodule $\P$, the $k$\+vector
space $\Q=\Hom_k(\M,\P)$ has a natural $\C$\+contramodule structure.
 To construct the desired left $\C$\+contraaction map $\pi_\Q\:
\Hom_k(\C,\Hom_k(\M,\P))\rarrow\Hom_k(\M,\P)$, suppose that
we are given a $k$\+linear map $g\:\C\rarrow\Hom_k(\M,\P)$
and a vector $m\in\M$.
 Consider the $k$\+linear map $f\:\C\rarrow\P$ assigning to
any element $c\in\C$ the vector
$$
 f(c)=g\big(s(m_{(-1)})c\big)(m_{(0)})\in\P,
$$
and set
$$
\pi_\Q(g)=\pi_\P(f),
$$
where $m\longmapsto m_{(-1)}\ot m_{(0)}$ is the Sweedler notation for
the left coaction map $\nu_\M\:\M\rarrow\C\ot_k\M$, and
$\pi_\P\:\Hom_k(\C,\P)\rarrow\P$ is the left contraaction map of
the original $\C$\+contramodule~$\P$ \cite[Section~C.4.2]{Psemi}.

 Furthermore, given an algebraic group $G$ and a $\C(G)$\+contramodule
$\P$, one defines the ``contraderivative'' $U(\g)$\+module structure
$U(\g)\ot_k\P\rarrow\P$ on $\P$ as given by the composition
$U(\g)\ot_k\P\rarrow\Hom_k(\C(G),\P)\rarrow\P$ of the map
$$
 u\ot p\longmapsto (c\mapsto \phi(c,u)p), \qquad
 u\in U(\g), \ \,c\in\C(G), \ \,p\in\P
$$
induced by the pairing $\phi\:\C(G)\ot_kU(\g)\rarrow k$ with
the left contraaction map~$\pi$
(cf.~\cite[Sections~10.1.2]{Psemi}).

 A \emph{Harish-Chandra contramodule} $\P$ over an algebraic
Harish-Chandra pair $(\g,H)$ is a $k$\+vector space endowed with
a $\g$\+module and a $\C(H)$\+contramodule structures satisfying
the following two compatibility conditions.
 Firstly, the restriction of the $\g$\+module structure on $\P$
to the Lie subalgebra $\h\subset\g$ should coincide with
the contraderivative $\h$\+module structure of
the $\C(H)$\+contramodule structure on~$\P$.
 Secondly, the $U(\g)$\+action map in the form
$\P\rarrow\Hom_k(U(\g),\P)$ should be a $\C(H)$\+contramodule morphism,
where the $\C(H)$\+contramodule structure on $\Hom_k(U(\g),\P)$ is
obtained from the $\C(H)$\+comodule structure on $U(\g)$ and
the $\C(H)$\+contramodule structure on $\P$ as described above.
 
 The latter condition is equivalent to the $\g$\+action map
$\P\rarrow\Hom_k(\g,\P)$ being a $\C(H)$\+contramodule morphism.
 To convince oneself that this is so, one can present the space
$\Hom_k(U(\g),\P)$ as the projective limit of the Hom spaces
$\Hom_k(F_nU(\g),\P)$, where $F$ denotes the Poincar\'e--Birkhoff--Witt
filtration of $U(\g)$, and further present every space
$\Hom_k(F_nU(\g),\P)$ as a subspace of the Hom space
$\Hom_k(\bigoplus_{i=0}^n\g^{\ot n}\;\P)$.
 Then it remains to use the fact that the $\C(H)$\+comodule structure
on $U(\g)$ is compatible with the associative algebra structure, i.~e.,
the multiplication map $U(\g)\ot_k U(\g)\rarrow U(\g)$ is
a $\C(H)$\+comodule morphism, together with the assumption of
associativity of the $U(\g)$\+action in~$\P$.

 Viewing the case of a semisimple Lie algebra $\g$ with a Borel or
maximal unipotent subgroup $H$ as our main example, we denote by
$\sO(\g,H)$ the category of Harish-Chandra modules over
an algebraic Harish-Chandra pair $(\g,H)$ and by $\sO^\ctr(\g,H)$
the category of Harish-Chandra contramodules.
 These are abelian categories with exact forgetful functors
$\sO(\g,H)\rarrow k\vect$ and $\sO^\ctr(\g,H)\rarrow k\vect$,
the former of which preserves infinite direct sums, while
the latter preserves infinite products.
 The category $\sO(\g,H)$ satisfies the axioms Ab5 and Ab3* (but
not Ab4*), while the category $\sO^\ctr(\g,H)$ satisfies
the axioms Ab3 and Ab4* (but not Ab4 or Ab5*).

 Now it is claimed that there is a semialgebra $\S$ over
the coalgebra $\C=\C(H)$ such that the categories $\sO(\g,H)$
and $\sO^\ctr(\g,H)$ are identified with the categories of
semimodules and semicontramodules over~$\S$.
 More precisely, there are naturally \emph{two} such semialgebras
$\S^l(\g,H)$ and $\S^r(\g,H)$, differing by the left-right symmetry.
 The category of Harish-Chandra modules $\sO(\g,H)$ is isomorphic
to the categories of left semimodules over $\S^l(\g,H)$ and
right semimodules over $\S^r(\g,H)$, while the category of
Harish-Chandra contramodules $\sO^\ctr(\g,H)$ is isomorphic
to the category of left semicontramodules over $\S^r(\g,H)$.

 The semialgebra $\S^l(\g,H)$ is constructed as the tensor product
$U(\g)\ot_{U(\h)}\C(H)$, where the left $U(\h)$\+module structure
on $\C(H)$ is obtained by deriving the left coaction of $\C(H)$
in itself.
 The right $\C$\+comodule structure on $U(\g)\ot_{U(\h)}\C(H)$ is
induced by the right coaction of $\C$ in itself, while the left
$\C$\+comodule structure on this tensor product is defined in
terms of the multiplication in $\C$, as the tensor product of
the left coaction of $\C$ in itself and the adjoint coaction of
$\C$ in $U(\g)$.
 The semiunit map $\be\:\C\rarrow \S^l=U(\g)\ot_{U(\h)}\C$ is induced
by the embedding of algebras $U(\h)\rarrow U(\g)$.
 Finally, the semimultiplication map $\bm\:\S^l\oc_\C\S^l\rarrow\S^l$
is defined
as the composition
\begin{multline*}
 \big(U(\g)\ot_{U(\h)}\C\big)\oc_\C\big(U(\g)\ot_{U(\h)}\C\big)\.\simeq\.
 U(\g)\ot_{U(\h)}\big(\C\oc_\C(U(\g)\ot_{U(\h)}\C)\big)\\
 \.\simeq\. U(\g)\ot_{U(\h)}U(\g)\ot_{U(\h)}\C\lrarrow
 U(\g)\ot_{U(\h)}\C
\end{multline*}
of the mutual associativity isomorphism of the tensor and cotensor
products, whose natural existence in this case can be easily
established from the fact that $U(\g)$ is a projective right
$U(\h)$\+module (by the Poincar\'e--Birkhoff--Witt theorem)
\cite[Section~1.2.3]{Psemi}, and the map induced by
the multiplication map $U(\g)\ot_{U(\h)}U(\g)\rarrow U(\g)$.
 This construction can be found in~\cite[Section~10.2.1]{Psemi}.

 Notice that for any left $\C$\+comodule $\M$ there is a natural
isomorphism
$$
 \S^l\oc_\C\M=\big(U(\g)\ot_{U(\h)}\C\big)\oc_\C\M\simeq
 U(\g)\ot_{U(\h)}\M,
$$
where the $U(\h)$\+module structure on $\M$ is obtained by deriving
the $\C(H)$\+comodule structure.
 Hence it follows that the datum of a left $\S^l$\+semimodule
structure on a $k$\+vector space $\bM$ is equivalent to that of
a left $\C(H)$\+comodule and a left $U(\g)$\+module structures
on $\bM$ satisfying two compatibility
equations~\cite[Section~10.2.2]{Psemi}.
 These are easily seen to express the definition of a structure of
Harish-Chandra module over $(\g,H)$ on the vector space~$\bM$.

 Similarly, the semialgebra $\S^r=\S^r(\g,H)$ is constructed as
the tensor product $\C(H)\ot_{U(\h)}U(\g)$, where the right
$U(\h)$\+module structure on $\C(H)$ is obtained by deriving
the right coaction of $\C(H)$ in itself.
 The left $\C$\+comodule structure on this tensor product is
induced by the left coaction of $\C$ in itself, while the right
$\C$\+comodule structure is obtained by multiplying
the right coaction of $\C$ in itself and the adjoint
coaction of $\C$ in~$U(\g)$.
 The semimultiplication and semiunit maps of the semialgebra $\S^r$
are induced by the multiplication map $U(\g)\ot_{U(\h)}U(\g)\rarrow
U(\g)$ and the embedding $U(\h)\rarrow U(\g)$, as above.

 For any left $\C$\+contramodule $\P$ there is a natural isomorphism
$$
 \Cohom_\C(\S^r,\P)=\Cohom_\C(\C\ot_{U(\h)}U(\g)\;\P)
 \simeq\Hom_{U(\h)}(U(\g),\P),
$$
where the $U(\h)$\+module structure on $\P$ is obtained by
contraderiving the $\C(H)$\+contra\-module structure.
 Hence one concludes that the datum of a left $\S^r$\+semicontramodule
structure on a $k$\+vector spaced $\bP$ is equivalent to that of
a left $\C$\+contramodule and left $U(\g)$\+module structures on $\bP$
satisfying two compatibility equations (cf.\ Example in
Section~\ref{over-semialgebras}).
 These are easily seen to express the definition of a structure of
Harish-Chandra contramodule over $(\g,H)$ on the vector space~$\bP$.

\bigskip

 In addition to these descriptions of left $\S^l$\+semimodules and
left $\S^r$\+semicontra\-modules, one would like to have an explicit
interpretation of what it means to have a left $\S^r$\+semimodule
structure on a $k$\+vector space~$\bM$.
 Such a description of left $\S^r$\+semimodules is indeed obtained
in~\cite[Sections~C.2 and~C.4.3\+-4]{Psemi} under certain mild
assumptions, which we will now discuss.

 So far we used the notions of an ``algebraic group $H$''
and ``a commutative Hopf algebra $\C(H)$'' interchangeably, but
in fact there are several differences between the two
(cf.~\cite{DM}).
 First of all, it is only \emph{affine} algebraic groups that
can be described by the algebras of global functions on them.
 As our aim is to consider linear representations of our algebraic
groups, we can safely assume all of them to be affine.

 Secondly, it is only \emph{finitely generated} commutative
algebras over a field that correspond to algebraic varieties;
the spectrum of an arbitrary commutative algebra is, generally
speaking, a \emph{pro-affine pro-algebraic variety}.
 Considering Harish-Chandra pairs with pro-affine pro-algebraic
groups $H$ presumes also having an infinite-dimensional
linearly compact Lie subalgebra $\h\subset\g$.
 Postponing this discussion to Section~\ref{tate-harish-chandra},
we for now assume all our algebraic groups $H$ to be
finite-dimensional, or the commutative Hopf algebras $\C(H)$
to be finitely generated as algebras over~$k$.

 Thirdly and finally, \emph{algebraic varieties} are generally
assumed to be \emph{reduced schemes}, i.~e., to have no nilpotent
elements in their structure sheaves (or, if they are affine, in
the algebras of global functions on them).
 Now, over a field~$k$ of characteristic~$0$, every algebraic
\emph{group} scheme is a smooth variety, and over any field~$k$
every reduced algebraic group scheme is a smooth variety; but
over a field~$k$ of finite characteristic there exist
\emph{nonreduced} algebraic group schemes.
 It suffices to consider the spectrum of the finite-dimensional
algebra $\C=k[x]/(x^p)$ over a field~$k$ of characteristic~$p$
and notice that the rule $\mu(x)=1\ot x+x\ot 1$ describes
a well-defined coassociative, counital comultiplication making
$\C$ a Hopf algebra over~$k$.

 Now, assuming the algebraic group $H$ to be a smooth
finite-dimensional variety over a field~$k$, there is
the one-dimensional vector bundle of differential forms of
the top degree on~$H$.
 The group $H$ acts in itself by the left and right multiplications,
and there are the two induced actions in the vector space of
global differential top forms $\cE=\Omega^{top}(H)$.
 The subspace of left $H$\+invariant top forms in~$\cE$ is always
one-dimensional, as is the subspace of right $H$\+invariant top
forms, but these two subspaces may not coincide.
 A smooth algebraic group $H$ admitting a nonzero \emph{biinvariant}
top form is said to be \emph{unimodular}.
 An algebraic group $H$ is unimodular if and only if all
the operators of its adjoint representation $\Ad_H\:H\rarrow\GL(\h)$
belong to the subgroup $\SL(\h)\subset\GL(\h)$.
 All the reductive algebraic groups are unimodular, as are all
the nilpotent ones; but many solvable groups are not. 

 Let us first assume the smooth algebraic group $H$ to be unimodular.
 Then the semialgebras $\S^l(\g,H)$ and $\S^r(\g,H)$ are naturally
isomorphic to each other; the isomorphism is provided by the maps
given by the formulas
$$
 c\ot_{U(\h)}u\longmapsto u_{[0]}\ot_{U(\h)}cu_{[1]}
 \quad\text{and}\quad 
 u\ot_{U(\h)}c\longmapsto s(u_{[1]})c\ot_{U(\h)}u_{[0]},
$$
where $c\in\C(H)$, \,$u\in U(\g)$, the notation $c\ot_{U(\h)}u$
and $u\ot_{U(\h)}c$ stands for elements of $\S^r=\C(H)\ot_{U(\h)}U(\g)$
and $\S^l=U(\g)\ot_{U(\h)}\C(H)$, respectively, and
$u\longmapsto u_{[0]}\ot u_{[1]}$ with $u_{[0]}\in U(\g)$ and
$u_{[1]}\in\C(H)$ is the Sweedler notation for right coaction
map defining the adjoint coaction of $\C(H)$ in $U(\g)$
\cite[Section~C.2.6]{Psemi}.
 Accordingly, the categories of left $\S^r$\+semimodules and
left $\S^l$\+semimodules are naturally isomorphic.

 In the general case of a nonunimodular smooth algebraic group $H$,
the categories of left $\S^r$\+semimodules and left
$\S^l$\+semimodules are still naturally equivalent, but
the equivalence $\S^r\simodl\simeq\S^l\simodl$ does \emph{not} commute
with the forgetful functors $\S^r\simodl\rarrow\C\comodl$ and
$\S^l\simodl\rarrow\C\comodl$.
 Instead, the two forgetful functors differ by a twist with
the modular character $\det\Ad_H\:H\rarrow GL(\h)\rarrow k^*$
\cite[Sections~C.2.4\+-5]{Psemi}.

 When the ground field~$k$ has characteristic~$0$
and the Harish-Chandra pair $(\g,H)$ originates from a closed
embedding of algebraic groups $H\subset G$ over~$k$, choosing
a biinvariant top form on~$H$ allows to interpret elements of
the semialgebra $\S^l\simeq\S^r$ as distributions on the smooth
variety $G$, supported in the smooth closed subvariety $H$ and
regular along~$H$ \cite[Remark~C.4.4]{Psemi}.
 In the nonunimodular case, the $k$\+vector space of all
distributions on $G$, supported in $H$ and regular along $H$,
has a natural structure of an \emph{$\S^l$\+$\S^r$\+bisemimodule}
providing the Morita equivalence between the semialgebras
$\S^l$ and~$\S^r$.

\subsection{Tate Harish-Chandra pairs}
\label{tate-harish-chandra}
 It appears that one cannot integrate the Virasoro Lie algebra to
a Lie group, but \emph{a half} of it one easily can.
 Indeed, let $k$~be a field of characteristic~$0$.
 Denote by $H(k)$ the set of all formal Taylor power series
$a(z)=a_1z+a_2z^2+a_3z^3+\dotsb$ with a vanishing coefficient
in degree~$0$ and a nonvanishing coefficient $a_1\ne0$ in degree~$1$.
 Then the composition multiplication $(a*b)(z)=a(b(z))$ defines
a group structure on the set~$H(k)$.
 This group is naturally the group of $k$\+points of a certain
pro-affine pro-algebraic group, which we denote by~$H$.
 The Lie algebra $\h$ of the pro-algebraic group $H$ can be easily
identified with the algebra of vector fields on the formal disk
with a vanishing vector at the origin $zk[[z]]d/dz$, or with
the closed subalgebra in the Virasoro Lie algebra topologically
spanned by the basis vectors $L_0$, $L_1$, $L_2$,~\dots{}
(see Section~\ref{over-virasoro}).

 Let us say a few words about Lie theory in the pro-algebraic
group setting.
 For any subcoalgebra $\D$ in a commutative Hopf algebra $\C$,
the subalgebra generated by $\D+s(\D)$ in a Hopf subalgebra in~$\C$.
 Since $\C$ is the union of its finite-dimensional subcoalgebras
(see Section~\ref{over-power-series}), it is also the union of
its Hopf subalgebras that are finitely generated as commutative
algebras.
 Thus there is no difference between the notions of a pro-affine
pro-algebraic variety with a group structure and a pro-object in
the category of affine algebraic groups.
 The Lie functor on the category of (pro-afffine) pro-algebraic
groups can be simply obtained by passing to the pro-objects on
both sides of the functor assigning a Lie algebra to
an algebraic group; so the Lie algebra of a pro-algebraic group is
a filtered projective limit of finite-dimensional Lie algebras.
 In particular, it follows that the topological Lie algebra
$k[[z]]d/dz$, which has no closed ideals (see Section~\ref{over-Lie}),
\emph{cannot} correspond to any pro-algebraic group, though its
subalgebra $zk[[z]]d/dz$ does, as we have just seen.

 We are interested in considering Harish-Chandra pairs with
a Tate Lie algebra~$\g$ and a pro-algebraic subgroup $H$ corresponding
to an open linearly compact subalgebra $\h\subset\g$.
 A precise definition presents a small technical difficulty in that
one has to explain what it means for the coalgebra $\C=\C(H)$ to act
in a Tate vector space~$\g$.
 Neither the notion of a $\C$\+comodule \emph{nor} that of
a $\C$\+contramodule are suitable for the task; rather, the compact
vector space~$\h$, being dual to a $\C$\+comodule, can be viewed as
a $\C$\+contramodule, while the quotient space $\g/\h$ has
a $\C$\+comodule structure.

 However, an action of a algebraic group in a vector space is,
of course, \emph{not} determined by its restriction to an invariant
subspace and the induced action in the quotient space.
 The authors of the manuscript~\cite{BFM}, where (what we call)
Tate Harish-Chandra pairs seem to have first appeared, solve
the problem by working over the field of complex numbers~$\mathbb C$
and considering an action of the \emph{group of points}
$H(\mathbb C)$ of a pro-algebraic group $H$ in a Tate Lie
algebra~$\g$ \cite[Section~3.1]{BFM}.
 The approach taken in~\cite[Appendix~D]{Psemi}, which works over
an arbitrary field, is to give the definition of a \emph{continuous
coaction} of a discrete coalgebra in a topological vector space.

 Here we restrict ourselves to a brief sketch.
 Let $\C$ be a coassociative coalgebra and $V$ be a topological
vector space over~$k$ (see
Sections~\ref{over-top-algebras}\+-\ref{over-Lie}).
 A \emph{continuous right coaction} of $\C$ in $V$ is a continuous
linear map $V\rarrow V\ot^!\C$, where $\C$ is considered as
a discrete topological vector space, satisfying the coassociativity
and counitality equations.
 Equivalently, a continuous right coaction can be defined as
a continuous linear map $V\ot^*\C^*\rarrow V$, where $\C^*$ is
viewed as a linearly compact vector space, satisfying
the associativity and unitality equations.

 For any topological vector space $V$ with a continuous coaction of
a coassociative coalgebra~$\C$, open subspaces of $V$ invariant
under the continuous coaction form a base of neighborhoods of zero
in~$V$.
 Given a topological vector space $V$ endowed with a continuous
coaction of a coalgebra $\C$ and a topological vector space $W$
endowed with a continuous coaction of a coalgebra $\D$, all
the three topological tensor products $V\ot^!W$, \ $V\ot^*W$, and
$V\wot W$ are naturally endowed with a continuous coaction of
the coalgebra $\C\ot_k\D$.
 In particular, when two topological vector spaces $V$ and $W$ are
endowed with continuous coactions of a Hopf algebra $\C$,
the three topological tensor products acquire the tensor product
coactions of~$\C$ \cite[Sections~D.1.3\+-4]{Psemi}.

 A continuous coaction of a commutative Hopf algebra $\C$ in
a topological Lie algebra~$\g$ is said to be \emph{compatible} with
the Lie algebra structure if the bracket map $[\,,\,]\:\g\ot^*\g
\rarrow\g$ commutes with the continuous coactions of~$\C$.
 Similarly one can speak about compatibility of topological
associative algebra structures, continuous actions, pairings,
etc.\ with continuous coactions of~$\C$ \cite[Section~D.1.5]{Psemi}.
 A Tate Lie algebra~$\g$ with a continuous coaction of
a commutative Hopf algebra $\C$ compatible with the Lie algebra
structure has a base of neighborhoods of zero consisting of
$\C$\+invariant compact open Lie subalgebras $\h\subset\g$
\cite[Section~D.1.8]{Psemi}.

 A \emph{Tate Harish-Chandra pair} $(\g,\C)$ is a set of data consisting
of a Tate Lie algebra~$\g$, a commutative Hopf algebra~$\C$,
a continuous coaction of $\C$ in~$\g$ compatible with the Lie algebra
structure, a $\C$\+invariant compact open subalgebra $\h\subset\g$,
and a continuous pairing $\psi\:\C\times\h\rarrow k$, where $\C$ is
endowed with the discrete topology.
 These data should satisfy four compatibility equations:
the pairing~$\psi$ should be compatible with the comultiplication
in $\C$ and the Lie bracket in~$\g$, and also with the multiplication
in~$\C$; the pairing~$\psi$ should be compatible with the restriction
to~$\h$ of the continuous coaction of $\C$ in~$\g$ and the adjoint
coaction of $\C$ in itself; the action of~$\h$ in~$\g$ obtained by
deriving the continuous coaction of $\C$ in~$\g$ using
the paring~$\psi$ should coincide with the adjoint action of~$\h$
in~$\g$.
 We refer to~\cite[Section~D.2.1]{Psemi} for the details.

 In particular, let $\g$~be a Tate Lie algebra with a pro-nilpotent
compact open subalgebra $\h\subset\g$, i.~e., $\h$~is the projective
limit of a filtered projective system of finite-dimensional
nilpotent Lie algebras.
 Assume further that the discrete $\h$\+module $\g/\h$ is nilpotent
or ``ind-nilpotent'', i.~e., it is the union of finite-dimensional
nilpotent modules over finite-dimensional quotient Lie algebras
of~$\h$ by its open ideals.
 Then, over a field of characteristic~$0$, one can integrate
the pair of Lie algebras~$(\g,\h)$ to a Tate Harish-Chandra pair
with the Hopf algebra $\C=\C(H)$ of the pro-unipotent pro-algebraic 
group $H$ corresponding to the Lie algebra~$\h$.
 A version of this construction is applicable over fields
of arbitrary characteristic~\cite[Section~D.6]{Psemi}.
 In particular, let $\g=\bigoplus_{n<0}\g_n\oplus\prod_{n\ge0}\g_n$
be the Laurent totalization a $\Z$\+graded Lie algebra with
finite-dimensional components (see Section~\ref{over-Lie}); then
for any integer $m\ge1$ the Lie subalgebra $\h=\prod_{n\ge m}\g_m
\subset\g$ satisfies the above nilpotency conditions, so there is
the corresponding Tate Harish-Chandra pair $(\g,\C)$.
 
 A \emph{Harish-Chandra module} $\M$ over a Tate Harish-Chandra pair
$(\g,\C)$ is a $k$\+vector space endowed with a discrete $\g$\+module
and a $\C$\+comodule structures satisfying the following two
compatibility equations.
 Firstly, the derivative $\h$\+action of the $\C$\+coaction in $\M$,
which is always a discrete action due to the continuity/discreteness
condition imposed on the pairing~$\psi$, should coincide with
the restriction of the $\g$\+action in $\M$ to~$\h$.
 Secondly, the action map $\g\ot^*\M\rarrow\M$ should be compatible
with the continuous coactions of~$\C$; equivalently, the action map
$\g/U\ot_k\L\rarrow\M$ should be a morphism of $\C$\+comodules for
any finite-dimensional $\C$\+subcomodule $\L\subset\M$ and any
$\C$\+invariant compact open subspace $U\subset\g$ annihilating~$\L$
\cite[Section~D.2.5]{Psemi}.

 The pairing $\psi\:\C\times\h\rarrow k$ can be viewed as a Lie
algebra morphism $\h\rarrow\C^*$ (where the Lie algebra structure
on $\C^*$ is defined in terms of its associative algebra structure),
and as such, can be uniquely extended to an associative algebra
morphism $U(\h)\rarrow\C^*$, providing a pairing $\phi\:\C\ot_kU(\h)
\rarrow k$ compatible with the Hopf algebra structures on both sides.
 When the pairing~$\phi$ is nondegenerate in~$\C$
\cite[condition~D.2.2\,(iv)]{Psemi}, the derivative action
functor $\C\comodl\rarrow\h\discr$ is fully
faithful~\cite[Section~10.1.4]{Psemi}, which allows to simplify
the definition of a Harish-Chandra module over~$(\g,\C)$.
 Namely, the second one of the above two compatibility equations
holds automatically in this case and can be dropped, so
Harish-Chandra modules over $(\g,\C)$ can be simply defined as
discrete $\g$\+modules whose discrete $\h$\+module structure
comes from a $\C$\+comodule structure (cf.~\cite[Section~D.2.2]{Psemi}).
 In particular, this nondegeneracy condition holds in the above
example of a Tate Harish-Chandra pair associated with a Tate
Lie algebra~$\g$ with a pro-nilpotent compact open subalgebra~$\h$
acting ind-nilpotently in~$\g/\h$ (over any field~$k$).

 A \emph{Harish-Chandra contramodule} $\P$ over a Tate Harish-Chandra
pair $(\g,\C)$ is a $k$\+vector space endowed with a $\g$\+contramodule
and a $\C$\+contramodule structures satisfying the following two
compatibility equations.
 Firstly, the contraderivative $\h$\+contraaction of
the $\C$\+contraaction in $\P$, which is defined as the composition
$$
 \h\ot\comp\P\.\simeq\.\Hom_k(\h\dual,\P)\lrarrow\Hom_k(\C,\P)\lrarrow\P
$$
of the map induced by the pairing~$\psi$ and the $\C$\+contraaction map,
should coincide with the restriction of the $\g$\+contraaction in $\P$
to the subalgebra~$\h$ (see the definition of a contramodule over
a Tate Lie algebra~$\g$ in Section~\ref{over-Lie}).
 Secondly, for any $\C$\+invariant compact open subspace $U\subset\g$
the $\g$\+contraaction map $\Hom_k(U\dual,\P)\rarrow\P$ should be
a morphism of $\C$\+contramodules (where the $\C$\+contramodule
structure on the Hom space from a $\C$\+comodule $U\dual$ into
a $\C$\+contramodule $\P$ is provided by the construction from
Section~\ref{category-o-contra}).
 This definition can be found in~\cite[Sections~D.2.7\+-8]{Psemi}.

 Moreover, under some additional assumptions this definition of
the category of Harish-Chandra contramodules can be also simplified,
similarly to the above discussion of the simpler definition of
the category of Harish-Chandra modules.
 See Theorem~\ref{fully-faithful}.1, Corollary~\ref{fully-faithful},
and Example~\ref{fully-faithful}.2 below.

 For any Harish-Chandra module $\M$ over $(\g,\C)$ and any $k$\+vector
space $E$, the vector space $\P=\Hom_k(\M,E)$ has a natural
Harish-Chandra contramodule structure with the $\C$\+contraaction in
$\P$ provided by the construction from Section~\ref{basic-properties}
and the $\g$\+contraaction in $\P$ defined according to
the construction from Section~\ref{over-Lie}.

 The categories $\sO(\g,\C)$ and $\sO^\ctr(\g,\C)$ of Harish-Chandra
modules and contramodules over a Tate Harish-Chandra pair $(\g,\C)$
are abelian, and the forgetful functors $\sO(\g,\C)\rarrow k\vect$
and $\sO^\ctr(\g,\C)\rarrow k\vect$ are exact.
 Both the infinite direct sums and infinite products exist in
the cagories $\sO(\g,\C)$ and $\sO^\ctr(\g,\C)$, but only the direct
sums are preserved by the forgetful functor $\sO(\g,\C)\rarrow k\vect$
and only the products are preserved by the functor
$\sO^\ctr(\g,\C)\rarrow k\vect$.
 The category $\sO(\g,\C)$ satisfies the axioms Ab5 and Ab3*
(but not Ab4*), while the category $\sO(\g,\C)$ satisfies
the axioms Ab3 and Ab4* (but not Ab4 or Ab5*).

 There are enough injective objects in the category $\sO(\g,\C)$
and enough projective objects in the category $\sO^\ctr(\g,\C)$.
 The identification of these categories with categories of
semimodules and semicontramodules over certain semialgebras, which
we will now briefly discuss, will make the explicit constructions
of such injective and projective objects explained below in
Section~\ref{underived-semi} applicable in this case.

 As in Section~\ref{category-o-contra}, the semialgebras $\S^l(\g,\C)$
and $\S^r(\g,\C)$ are defined as the tensor products
$$
 \S^l=U(\g)\ot_{U(\h)}\C \quad\text{and}\quad \S^r=\C\ot_{U(\h)}U(\g),
$$
where, as above, $U(\g)$ and $U(\h)$ denote the universal enveloping
algebras of the Lie algebras $\g$ and~$\h$ considered as abstract
Lie algebras without any topologies.
 The left and right $U(\h)$\+module structures on $\C$ are obtained
by deriving the left and right coactions of $\C$ in itself using
the pairing~$\phi$.
 The right coaction of $\C$ in $\S^l$ and the left coaction in $\S^r$
are induced by the right and left coactions of $\C$ in itself.
 The construction of the left coaction of $\C$ in $\S^l$ and the right
coaction in $\S^r$ is rather delicate~\cite[Section~D.2.3]{Psemi}.
 Once the $\C$\+$\C$\+bicomodule structures on $\S^l$ and $\S^r$
has been defined, the constructions of the semimultiplication and
semiunit maps are similar to those in
Section~\ref{category-o-contra} (see~\cite[Section~10.2.1]{Psemi}),
though one still has to check that the maps $\S^l\oc_\C\S^l\rarrow
\S^l$ and $\S^r\oc_\C\S^r\rarrow\S^r$ so obtained are morphisms of
$\C$\+$\C$\+bicomodules (the nontrivial part is to show that
the former is a morphism of left $\C$\+comodules and
the latter a morphism of right ones).

 Now the category $\sO(\g,\C)$ of Harish-Chandra modules over
a Tate Harish-Chandra pair $(\g,\C)$ is isomorphic to the category
of left semimodules over the semialgebra $\S^l(\g,\C)$; the datum
of a left $\S^l$\+semimodule structure on a $k$\+vector space $\bM$
is equivalent to that of a Harish-Chandra module structure on~$\bM$.
 Similarly, the category $\sO^\ctr(\g,\C)$ of Harish-Chandra
contramodules over $(\g,\C)$ is isomorphic to the category of
left semicontramodules over the semialgebra $\S^r(\g,\C)$; the datum of
a left $\S^r$\+semicontramodule structure on a $k$\+vector space $\bP$
is equivalent to that a Harish-Chandra contramodule structure on~$\bP$
\cite[Sections~10.2.2, D.2.5, and~D.2.8]{Psemi}.

\bigskip

 As in Section~\ref{category-o-contra}, one would like to have also
an explicit description of what it means to have a left
$\S^r$\+semimodule structure on a $k$\+vector space~$\bM$.
 In the infinite-dimensional situation, one cannot hope to have
a Morita equivalence between the semialgebras $\S^l$ and~$\S^r$.
 Rather, the determinantal anomaly moves one step higher in
the cohomological degree when one passes to Tate vector spaces,
and what used to be a twist with the modular character in
the finite-dimensional case becomes \emph{a shift of the central
charge} in the Tate Harish-Chandra situation.
 Before formulating the precise assertion, let us give the definition
of a \emph{central extension} $\kap\:(\g',\C)\rarrow(\g,\C)$
of a Tate Harish-Chandra pair $(\g,\C)$.

 A \emph{morphism of Tate Harish-Chandra pairs} $(\g',\C')\rarrow
(\g,\C)$ can be defined as a set of data consisting of
a continuous morphism of Tate Lie algebras $\g'\rarrow\g$ and
a morphism of Hopf algebras $\C\rarrow\C'$ such that the map
$\g'\rarrow\g$ commutes with the continuous $\C'$\+coactions and
takes the subalgebra $\h'\subset\g'$ into the subalgebra
$\h\subset\g$, while the maps $\C\rarrow\C'$ and $\h'\rarrow\h$
are compatible with the pairings $\psi$ and~$\psi'$.
 A \emph{central extension of Tate Harish-Chandra pairs with
the kernel~$k$} is a morphism $(\g',\C')\rarrow(\g,\C)$
such that $\C'=\C$ is the same Hopf algebra and $\C\rarrow\C'$
the identity map, $\g'\rarrow\g$ is a quotient map of topological
vector spaces with a one-dimensional kernel $k\subset\g'$ in
which a fixed basis vector $1_{\g'}\in k\subset\g'$ is chosen,
the map of Lie subalgebras $\h'\rarrow\h$ is an isomorphism,
the kernel $k=k\cdot 1_{\g'}$ lies in the center of
the Lie algebra~$\g'$, and the coaction of the Hopf algebra $\C$
in the subspace $k\subset\g'$ is trivial~\cite[Section~D.2.2]{Psemi}.
 As it is usual for extensions with a fixed kernel, the set of
all (isomorphism classes of) central extensions of a given Tate
Harish-Chandra pair $(\g,\C)$ with the kernel~$k$ has a natural
structure of abelian group (and in fact, even of a $k$\+vector
space) with respect to the Baer sum
operation~\cite[Section~D.3.1]{Psemi}.

 The Lie algebra $\gl(V)$ of continuous endomorphisms of a Tate
vector space $V$ has a canonical central extension $\gl(V)\til$
with the kernel~$k$ defined in terms of the trace functional
on the space of all continuous linear operators $V\rarrow V$
of finite rank~\cite[Sections~2.7.8 and~3.8.17\+-18]{BD}
(see also~\cite[Sections~D.1.6\+-8]{Psemi}; a historical discussion
can be found in~\cite[the beginning of Section~2.7]{BD}).
 The central extension $\gamma_0\:\gl(V)\til\rarrow\gl(V)$ can be
characterized by the property that there is a well-defined linear action
of the Lie algebra $\gl(V)\til$ in the space $\bigwedge^{\infty/2}(V)$ of
semi-infinite exterior forms over $V$ lifting the projective action
of the Lie algebra $\gl(V)$ (see~\cite{Feig}, \cite[Lecture~4]{KaRa},
and~\cite[Sections~4.2.13 and~7.13.16]{BD2}).
 One chooses the canonical basis element $1_{\gl\til}\in k
\subset\gl(V)\til$ so that it acts by the identity operator in
the space of semi-infinite forms; abusing the terminology, we say
that $\bigwedge^{\infty/2}(V)$ is a ``$\gl(V)$\+module with
the central charge~$\gamma_0$''.

 Pulling back the central extension $\gl(\g)\til\rarrow\gl(\g)$ with
respect to the adjoint representation $\ad_\g\:\g\rarrow\gl(\g)$, one
obtains the canonical central extension $\kap_0\:\g\til\rarrow\g$ of
an arbitrary Tate Lie algebra~$\g$.
 The above convention for the choice of the canonical basis element
$1_{\gl\til}$ and the consequent choice of the basis element
$1_{\g\til}\in k\subset\g$ provide it that for any discrete module $M$
over the Lie algebra~$\g\til$ where the element~$1_{\g\til}$
acts by minus the identity operator (``a discrete $\g$\+module with
the central charge~$-\kap_0$'') there is a well-defined discrete
action of the original Lie algebra~$\g$ in the tensor product
$\bigwedge^{\infty/2}(V)\ot_kM$, allowing to define
the \emph{semi-infinite homology} of~$\g$ with coefficients in $M$
\cite[Sections~3.8.19\+-22]{BD} as the homology of a natural
differential on $\bigwedge^{\infty/2}(V)\ot_kM$ (see~\cite[the beginning
of Section~3.8]{BD} for a historical discussion).
 In addition, in~\cite[Sections~D.5.5\+-6]{Psemi}, the semi-infinite
\emph{cohomology} of a Tate Lie algebra $\g$ with coefficients in any
\emph{$\g\til$\+contramodule} $\P$ where $1_{\g\til}$~acts
by the identity (``a $\g$\+contramodule with the central
charge~$\kap_0$'') is also defined as the cohomology of a natural
differential on the space $\Hom_k(\bigwedge^{\infty/2}(V)\;\P)$.

 In particular, for the Lie algebra $\g=k((z))d/dz$ of vector fields
on the formal circle, the canonical central extension $\g\til$
is the Virasoro algebra $\Vir$ (see Section~\ref{over-virasoro}).
 The canonical basis element $1\in k\subset\Vir $ is $-C/26$, i.~e.,
the central element $C\in\Vir$ acts by the scalar~$-26$ in the space 
of semi-infinite forms $\bigwedge^{\infty/2}\big(k((z))d/dz\big)$.
 So the semi-infinite homology and cohomology is defined for any
discrete $\Vir$\+module (or, as we will say, ``$k((z))d/dz$\+module'')
with the central charge $C=26$ and any $\Vir$\+contramodule
(``$k((z))d/dz$\+contramodule'') with the central charge~$-26$.

 When a Tate vector space $V$ is endowed with a continuous coaction
of a commutative Hopf algebra $\C$, the topological Lie algebra
$\gl(V)\til$ acquires the induced continuous coaction of~$\C$
\cite[Sections~D.1.6\+-7]{Psemi}.
 Hence a continuous coaction of a commutative Hopf algebra $\C$
in a Tate Lie algebra~$\g$ always lifts naturally to a continuous
coaction of $\C$ in the canonical central extension~$\g\til$ of~$\g$.
 Furthermore, the canonical central extension $\g\til\rarrow\g$ splits
naturally over any compact open Lie subalgebra $\h\subset\g$
\cite[Section~3.8.19]{BD}, \cite[Section~D.1.8]{Psemi} (warning:
over a pair of embedded compact open Lie subalgebras $\h'\subset
\h''\subset\g$, these splittings do \emph{not} agree, but rather
differ by a relative adjoint trace character).
 For any Tate Harish-Chandra pair $(\g,\C)$, this allows to extend
the canonical central extension $\kap_0\:\g\til\rarrow\g$ of
the Tate Lie algebra~$\g$ to a canonical central extension of Tate
Harish-Chandra pairs $(\g\til,\C)\rarrow(\g,\C)$, which we will
denote also by~$\kap_0$.

 Let $\kap\:(\g',\C)\rarrow(\g,\C)$ be a central extension of
Tate Harish-Chandra pairs with the kernel $k=k\cdot 1_{\g'}$.
 By a \emph{Harish-Chandra module} $\M$ \emph{over\/ $(\g,\C)$ with
the central charge~$\kap$} we mean a Harish-Chandra module
over the Tate Harish-Chandra pair $(\g',\C)$ such that the central
element $1_{\g'}\in\g'$ acts by the identity operator in~$\M$.
 Similarly, a \emph{Harish-Chandra contramodule} $\P$ over
a Tate Harish-Chandra pair $(\g,\C)$ \emph{with the central
charge~$\kap$} is a Harish-Chandra contramodule over $(\g',\C)$
such that the central element~$1_{\g'}$ contraacts by
the identity operator in $\P$, that is the composition
$$
 \P\simeq k\ot\comp\P\rarrow\g'\ot\comp\P\rarrow\P
$$
of the map induced by the choice of the element~$1_{\g'}\in\g'$ with
the $\g'$\+contraaction map is equal to the identity map $\P\rarrow\P$.

 The category $\sO_\kap(\g,\C)$ of Harish-Chandra modules over $(\g,\C)$
with the central charge~$\kap$ is abelian; the fully faithful embedding
functor $\sO_\kap(\g,\C)\rarrow\sO(\g',\C)$ is exact and preserves
both the infinite direct sums and products.
 The category $\sO^\ctr_\kap(\g,\C)$ of Harish-Chandra contramodules over
$(\g,\C)$ with the central charge~$\kap$ is also abelian; the fully
faithful embedding functor $\sO_\kap(\g,\C)\rarrow\sO(\g',\C)$ is exact
and preserves the infinite direct sums and products.
 There are enough injective objects in the category $\sO_\kap(\g',\C)$
and enough projective objects in the category $\sO^\ctr_\kap(\g',\C)$;
we will see below in Section~\ref{underived-semi} how one can
construct them.

 Our next goal is to define semialgebras $\S^l=\S^l_\kap(\g,\C)$
and $\S^r=\S^r_\kap(\g,\C)$ such that the categories of Harish-Chandra
modules and contramodules over a Tate Harish-Chandra pair $(\g,\C)$
with the central charge~$\kap$ could be identified with
the categories of semimodules and semicontramodules over $\S^l$
and~$\S^r$.
 Let us start with the related elementary construction of
the modified universal enveloping algebra corresponding to
a central extension of (nontopological) Lie algebras.

 Given a central extension $\kap\:\g'\rarrow\g$ with the kernel
$k=k\cdot 1_{\g'}$, the algebra $U_\kap(\g)$ is constructed
as the quotient algebra $U(\g')/(1_{U(\g')}-1_{\g'})$ of the enveloping
algebra $U(\g')$ by the ideal generated by the difference between
the unit element $1_{U(\g')}$ of the associative algebra $U(\g')$ and
the fixed basis vector $1_{\g'}$ in the kernel of the central extension.
 Then the category of left $U_\kap(\g)$\+modules is isomorphic to
the category of ``$\g$\+modules with the central charge~$\kap$'',
i.~e., $\g'$\+modules where the element $1_{\g'}$ acts by the identity
operator, while the category of right $U_\kap(\g)$\+modules is
isomorphic to the category of $\g$\+modules with the central
charge~$-\kap$.

 Now the semialgebras $\S^l_\kap(\g,\C)$ and $\S^r_\kap(\g,\C)$ are
constructed as the tensor products
$$
 \S^l=U_\kap(\g)\ot_{U(\h)}\C \quad\text{and}\quad
 \S^r=\C\ot_{U(\h)}U_\kap(\g).
$$
 The datum of a left $\S^l$\+semimodule structure on a given $k$\+vector
space $\bM$ is equivalent to that of a Harish-Chandra module structure
over $(\g,\C)$ with the central charge~$\kap$, while the datum of a left
$\S^r$\+semicontramodule structure on a given $k$\+vector space
$\bP$ is equivalent to that of a Harish-Chandra contramodule structure
over $(\g,\C)$ with the central charge~$\kap$
\cite[Sections~D.2.2, D.2.5 and~D.2.8]{Psemi}.
 The following theorem, when its condition is satisfied, allows to
describe left $\S^r$\+semimodules.

\begin{thm}
 Assume that the pairing $\phi\:\C\ot_kU(\h)\rarrow k$ is nondegenerate
in~$\C$.
 Then there is a natural isomorphism of semialgebras
$$
 \S^r_{\kap+\kap_0}(\g,\C)\simeq\S^l_\kap(\g,\C)
$$
over the coalgebra\/~$\C$.
\end{thm}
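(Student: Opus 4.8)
The plan is to transplant the explicit flip isomorphism that establishes $\S^l(\g,H)\simeq\S^r(\g,H)$ in the unimodular finite-dimensional case of Section~\ref{category-o-contra} to the present Tate setting, and to show that the trace anomaly which there produced the modular character $\det\Ad_H$ now manifests itself as the shift of the central charge by~$\kap_0$. Concretely, I would define the underlying map
$$
 \C\ot_{U(\h)}U_{\kap+\kap_0}(\g)\lrarrow U_\kap(\g)\ot_{U(\h)}\C,
 \qquad c\ot_{U(\h)}u\longmapsto u_{[0]}\ot_{U(\h)}cu_{[1]},
$$
together with its candidate inverse $u\ot_{U(\h)}c\longmapsto s(u_{[1]})c\ot_{U(\h)}u_{[0]}$, where $u\longmapsto u_{[0]}\ot u_{[1]}$ is the right adjoint coaction of $\C$ in the enveloping algebra and $s$ is the antipode of~$\C$, exactly as in the unimodular formulas recalled above.

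First I would verify that these maps descend to the tensor products over $U(\h)$ and are mutually inverse. As in the finite-dimensional argument, this rests on the Poincar\'e--Birkhoff--Witt theorem, which makes $U(\g)$ a projective (indeed free) right $U(\h)$\+module and supplies the associativity isomorphisms for the mixed tensor and cotensor products used in Section~\ref{tate-harish-chandra}, together with the antipode and counit axioms for the Hopf algebra~$\C$. The hypothesis that $\phi\:\C\ot_kU(\h)\rarrow k$ is nondegenerate enters precisely where one needs the derived $\h$\+action to faithfully encode the $\C$\+coaction, as in the earlier description of $\S^r$\+semimodules. This part is formally parallel to the computation behind the unimodular isomorphism and should present no essential difficulty beyond careful bookkeeping of the completed tensor products.

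The heart of the matter is to track the central charge through the flip. The map above transports the left regular $U(\g)$\+action by conjugating it with the adjoint coaction. In the Tate situation the adjoint representation $\ad_\g\:\g\rarrow\gl(\g)$ lifts to $\gl(\g)\til$ only after the regularized trace correction recalled in Section~\ref{tate-harish-chandra}, so that the pullback of the canonical central extension is precisely $\kap_0\:\g\til\rarrow\g$. Consequently the would-be adjoint coaction of $\C$ in $U(\g)$ fails to close up on the nose: its failure is governed by the canonical cocycle, and rewriting the semimultiplication through the flip introduces exactly the shift by~$\kap_0$. Thus the element $1_{\g'}$ acting by the identity on the $\S^r_{\kap+\kap_0}$ side corresponds, after the flip, to $1_{\g'}$ acting by the identity on the $\S^l_\kap$ side, which is the content of the charge matching. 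The conceptual shadow of this computation is that $\S^r$ is obtained from $\S^l$ by twisting with the semi-infinite determinant module $\bigwedge^{\infty/2}(\g)$, whose $\g$\+action carries central charge~$\kap_0$.

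The main obstacle I expect is precisely this anomaly bookkeeping: one must carry out the regularized trace computation for the adjoint action inside the completed tensor products $\g\ot^*\g$ and $\g\ot\comp\g$ with enough care to identify the correction term with the canonical cocycle defining $\g\til$, and with coefficient exactly~$1$, rather than with some other multiple or an extraneous coboundary. This is compounded by the delicacy, already flagged in Section~\ref{tate-harish-chandra}, of the left $\C$\+coaction on $\S^l$ and the right $\C$\+coaction on $\S^r$; one must check that the flip is a morphism of $\C$\+$\C$\+bicomodules intertwining the semimultiplication $\bm$ and semiunit $\be$, the nontrivial point being compatibility with the subtler of the two coactions on each side. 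Once the charge shift is pinned down, verifying that the flip respects $\bm$ and $\be$ follows the same pattern as the unimodular finite-dimensional case.
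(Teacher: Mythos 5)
There is a genuine gap, and it occurs at the very first step: the flip map you propose to transplant does not exist in the Tate setting. The formula $c\ot_{U(\h)}u\longmapsto u_{[0]}\ot_{U(\h)}cu_{[1]}$ presupposes that the enveloping algebra carries an honest right \emph{adjoint $\C$\+comodule structure}, i.e., that $u\longmapsto u_{[0]}\ot u_{[1]}$ lands in the algebraic tensor product $U(\g)\ot_k\C$. In the finite-dimensional case of Section~\ref{category-o-contra} this holds because $U(\g)$ is a union of finite-dimensional $H$\+invariant subspaces. For a Tate Lie algebra~$\g$, the coaction of $\C$ in~$\g$ is only a \emph{continuous} coaction: $\h$~is compact (of contramodule type), $\g/\h$ is discrete (of comodule type), and $\g$ itself is neither; consequently $U_\kap(\g)$, being the enveloping algebra of the underlying abstract Lie algebra, carries no adjoint coaction in Sweedler form---the would-be tensor $u_{[0]}\ot u_{[1]}$ is an infinite sum with no meaning in $U_\kap(\g)\ot_k\C$. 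This is exactly why even the construction of the left $\C$\+coaction on $\S^l$ and the right $\C$\+coaction on $\S^r$ is described as ``rather delicate''~\cite[Section~D.2.3]{Psemi}, and why the anomaly exists in the first place; a ``regularized'' coaction valued in a completed tensor product would not descend to a map between the algebraic tensor products $\C\ot_{U(\h)}U_{\kap+\kap_0}(\g)$ and $U_\kap(\g)\ot_{U(\h)}\C$. Note also that your formula does not even typecheck with respect to the central charges: it takes $u\in U_{\kap+\kap_0}(\g)$ to an element $u_{[0]}$ of the \emph{same} algebra $U_{\kap+\kap_0}(\g)$, whereas the target requires an element of $U_\kap(\g)$. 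The twist by $\bigwedge^{\infty/2}(\g)$ is not an elementwise operation on enveloping algebras, and asserting that careful anomaly bookkeeping ``introduces exactly the shift by~$\kap_0$'' is asserting the theorem rather than proving it.

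This also explains why the actual proof takes an entirely different route. The isomorphism $\S^r_{\kap+\kap_0}(\g,\C)\simeq\S^l_\kap(\g,\C)$ is not given by any explicit flip; it is uniquely \emph{characterized} by a list of conditions~\cite[Section~D.3.1]{Psemi} and is constructed by the machinery of relative nonhomogeneous quadratic (Koszul) duality developed in~\cite[Chapter~11]{Psemi}, in a lengthy argument occupying the rest of~\cite[Section~D.3]{Psemi}---a result the present paper itself calls one of the most difficult and least well-understood in the book. So your closing expectation that, once the charge shift is pinned down, checking compatibility with $\bm$ and~$\be$ ``follows the same pattern as the unimodular finite-dimensional case'' is unfounded: the finite-dimensional pattern is unavailable from the start, because the object it manipulates (the adjoint comodule structure on the enveloping algebra) has no Tate analogue.
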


\begin{proof}
 This is one of the most difficult, and at the same time a singularly
least well-understood result of the book~\cite{Psemi}.
 The precise formulation, where the desired isomorphism is uniquely
characterized by a certain list of conditions, can be found
in~\cite[Section~D.3.1]{Psemi}.
 The lengthy proof, which is based on the relative nonhomogeneous
quadratic duality theory developed in~\cite[Chapter~11]{Psemi}
(see~\cite[Section~0.4]{Psemi} for an introduction, cf.~\cite{Prel}),
occupies the rest of~\cite[Section~D.3]{Psemi}.
\end{proof}

\Section{Tensor Operations and Adjusted Objects}
\label{tensor-and-adjusted}

\subsection{Comodules and contramodules over coalgebras over fields}
\label{tensor-adjusted-over-coalgebra}
 Let $\C$ be a coassociative coalgebra over a field~$k$.
 The constructions of the \emph{cotensor product} $\N\oc_\C\M$ of a right
$\C$\+comodule $\N$ and a left $\C$\+comodule $\M$ and the vector space
of \emph{cohomomorphisms} $\Cohom_\C(\M,\P)$ from a left $\C$\+comodule
$\M$ to a left $\C$\+contramodule $\P$ were already presented at the end
of Section~\ref{over-corings} and repeated in the beginning of
Section~\ref{over-semialgebras}.
 The main function of these two tensor operations on comodules and
contramodules in the theories developed in the book~\cite{Psemi} is
to provide the tensor and module category structures in whose terms
the notion of a semialgebra and the categories of semimodules
and semicontramodules are subsequently defined.

 Let us now introduce the definition of the \emph{contratensor
product} of a right $\C$\+comodule $\N$ and a left $\C$\+contramodule
$\P$, which plays a key role in the comodule-contramodule
correspondence constructions.
 The contratensor product $\N\ocn_\C\P$ is a $k$\+vector space defined
as the cokernel of (the difference of) the pair of maps
$$
 (\id\ot\ev_\C)\circ(\nu_N\ot\id), \ \,\id\ot\pi_\P\.\:
 \N\ot_k\Hom_k(\C,\P)\birarrow\N\ot_k\P,
$$
one of which is the composition $\N\ot_k\Hom_k(\C,\P)\rarrow
\N\ot_k\C\ot_k\Hom_k(\C,\P)\rarrow\N\ot_k\P$ of the map induced
by the $\C$\+coaction in $\N$ and the map induced by the evaluation
map $\ev_\C\:\C\ot_k\Hom_k(\C,\P)\rarrow\P$, while the other one is
induced by the $\C$\+contraaction in~$\P$.
 The functor of contratensor product of comodules and contramodules
over a coalgebra $\C$ is right exact.
 For any right $\C$\+comodule $\N$ and $k$\+vector space $V$
there is a natural isomorphism of $k$\+vector spaces
$$
 \N\ocn_\C\Hom_k(\C,V)\simeq\N\ot_kV.
$$
 Furthermore, for any right $\C$\+comodule $\N$, left
$\C$\+contramodule $\P$, and $k$\+vector space $V$ there is a natural
isomorphism of $k$\+vector spaces
$$
 \Hom_k(\N\ocn_\C\P\;V)\simeq\Hom^\C(\P,\Hom_k(\N,V)),
$$
where the vector space $\Hom_k(\N,V)$ is endowed with a left
$\C$\+contramodule structure as explained in
Section~\ref{basic-properties}
\,\cite[Sections~0.2.6 and~5.1.1]{Psemi}.

 For any two coalgebras $\C$ and $\D$ over~$k$, any
$\C$\+$\D$\+bicomodule $\K$, and any left $\C$\+comodule $\M$,
the vector space of $\C$\+comodule homomorphisms
$\Hom_\C(\K,\M)$ has a natural left $\D$\+contramodule structure.
 One can define it by noticing that $\Hom_\C(\K,\M)$ is
a subcontramodule in the left $\D$\+contramodule $\Hom_k(\K,\M)$,
whose contramodule structure is induced by the right $\D$\+comodule
structure on~$\K$.
 Similarly, for any $\C$\+$\D$\+bicomodule $\K$ and left
$\D$\+contramodule $\P$ the vector space $\K\ocn_\D\P$ has
a natural left $\C$\+comodule structure.

 For any $\C$\+$\D$\+bicomodule $\K$, left $\C$\+comodule $\M$,
and left $\D$\+contramodule $\P$ there is a natural isomorphism
of $k$\+vector spaces
$$
 \Hom_\C(\K\ocn_\D\P\;\M)\simeq\Hom^\D(\P,\Hom_\C(\K,\M)).
$$
 In other words, the functor $\K\ocn_\D{-}\:\D\contra\rarrow
\C\comodl$ is left adjoint to the functor $\Hom_\C(\K,{-})\:
\C\comodl\rarrow\D\contra$ \cite[Section~5.1.2]{Psemi}.
 The adjoint functors $\Hom_\C(\C,{-})$ and $\C\ocn_\C{-}$ of
comodule homomorphisms from and contratensor product with
the $\C$\+$\C$\+bicomodule $\K=\C$ restricted to the additive
subcategories of injective $\C$\+comodules and projective
$\C$\+contramodules provide the equivalence of additive categories
$\C\comodl_\inj\simeq\C\contra_\proj$ described at the end of
Section~\ref{basic-properties}.

\bigskip

 Relations between (or rather, in the case of a coalgebra $\C$
over a field~$k$, the coincidences of) the following classes of
adjusted objects, together with the classes of injective and
projective objects, in the comodule and contramodule categories play
an important role in the co/contramodule and semico/semicontramodule
theory.
 A discussion of these coincidences is the aim of the remaining part
of this section.
 We will see in the proof of Proposition~\ref{underived-semi} below
how these results are being used.

 A left $\C$\+comodule $\M$ is called \emph{coflat} if the functor
${-}\oc_\C\M\:\comodr \C\rarrow k\vect$ of cotensor product with $\M$
is exact on the abelian category of right $\C$\+comodules.
 A left $\C$\+comodule $\M$ is called \emph{coprojective} if
the functor $\Cohom_\C(\M,{-})\:\C\contra\rarrow k\vect$ is
exact on the abelian category of left $\C$\+contramodules.

 Similarly, a left $\C$\+contramodule $\P$ is called \emph{contraflat}
if the functor ${-}\ocn_\C\nobreak\P\:\allowbreak\comodr \C\rarrow
k\vect$ of contratensor product with $\P$ is exact on the abelian
category of right $\C$\+comodules.
 A left $\C$\+contramodule $\P$ is called \emph{coinjective} if
the functor $\Cohom_\C({-},\P)\:\C\comodl^\sop\rarrow k\vect$ is exact
on the abelian category of left
$\C$\+comodules~\cite[Section~0.2.9]{Psemi}.

\begin{lem}
 Let\/ $\C$ be a coassociative coalgebra over a field~$k$.  Then \par
 \textup{(a)} a\/ $\C$\+comodule is coflat if and only if it is
coprojective and if and only if it is injective; \par
 \textup{(b)} a\/ $\C$\+contramodule is contraflat if and only if it is
coinjective and if and only if it is projective.
\end{lem}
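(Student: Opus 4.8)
The plan is to prove each part by running a cycle of implications through the three properties, in which the implications \emph{from} injectivity (resp.\ projectivity) are the routine ones and the implications \emph{to} injectivity (resp.\ projectivity) carry all the weight. The routine directions all follow one pattern: a cofree comodule $\C\ot_kV$ satisfies $\N\oc_\C(\C\ot_kV)\simeq\N\ot_kV$ and $\Cohom_\C(\C\ot_kV,\P)\simeq\Hom_k(V,\P)$, both exact in the remaining variable, and a free contramodule $\Hom_k(\C,V)$ satisfies $\N\ocn_\C\Hom_k(\C,V)\simeq\N\ot_kV$ and $\Cohom_\C(\M,\Hom_k(\C,V))\simeq\Hom_k(\M,V)$, again exact; since an injective comodule (resp.\ projective contramodule) is a direct summand of such an object and exactness is inherited by direct summands, injectivity yields coflatness and coprojectivity, while projectivity yields contraflatness and coinjectivity. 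The two nonobvious identifications I will lean on throughout are the natural isomorphisms $\Hom_\C(\L,\M)\simeq\L^*\oc_\C\M$ and $\Cohom_\C(\M,\P)\simeq\M^*\ocn_\C\P$, valid for \emph{finite-dimensional} left comodules $\L$, $\M$ (here $\L^*=\Hom_k(\L,k)$ is the dual right comodule); both are checked directly from the definitions of $\oc_\C$, $\ocn_\C$, and $\Cohom_\C$.

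For part~(a) I would close the cycle coflat $\Rightarrow$ injective $\Rightarrow$ coprojective $\Rightarrow$ coflat. The step coprojective $\Rightarrow$ coflat is formal: given a short exact sequence of right comodules, apply $k$\+linear duality to obtain a short exact sequence of left contramodules $0\to\N''^*\to\N^*\to\N'^*\to0$, feed it through the exact functor $\Cohom_\C(\M,{-})$, and use $\Cohom_\C(\M,\Hom_k(\N,k))\simeq(\N\oc_\C\M)^*$ to recognize the result as the $k$\+dual of $0\to\N'\oc_\C\M\to\N\oc_\C\M\to\N''\oc_\C\M\to0$, which is therefore exact. The heart of part~(a) is coflat $\Rightarrow$ injective. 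Here I would use that the finite-dimensional comodules form a generating family of the Grothendieck category $\C\comodl$, so that by the Baer-type criterion $\M$ is injective as soon as every morphism $\L'\to\M$ from a subcomodule $\L'$ of a finite-dimensional comodule $\L$ extends to $\L$. Dualizing $0\to\L'\to\L\to\L/\L'\to0$ and applying the exact (by coflatness) functor ${-}\oc_\C\M$ yields, via $\Hom_\C(\L,\M)\simeq\L^*\oc_\C\M$, the surjectivity of $\Hom_\C(\L,\M)\to\Hom_\C(\L',\M)$, which is exactly the required extension property.

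For part~(b) I would first establish contraflat $\Leftrightarrow$ coinjective, again reducing to finite dimensions through $\Cohom_\C(\M,\P)\simeq\M^*\ocn_\C\P$: coinjectivity restricts to exactness of ${-}\ocn_\C\P$ on finite-dimensional right comodules, and since ${-}\ocn_\C\P$ commutes with filtered colimits and every comodule is the filtered union of its finite-dimensional subcomodules, this upgrades to exactness on all right comodules, i.e.\ contraflatness. Conversely, contraflatness gives exactness of $\Cohom_\C({-},\P)$ on finite-dimensional comodules, and the passage to arbitrary comodules $\M=\varinjlim_i\M_i$ uses that $\Cohom_\C({-},\P)$ turns this colimit into the inverse limit of a system with surjective transition maps, so that the relevant $\varprojlim^1$ vanishes (Mittag--Leffler) and exactness is preserved.

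The genuinely hard implication, and the main obstacle, is contraflat $\Rightarrow$ projective, for which the finite-dimensional/Baer strategy of part~(a) is unavailable (contramodules are not unions of finite-dimensional subcontramodules, and $\C\contra$ has no injectives). My plan is to reduce it to part~(a) via the mixed associativity isomorphism $\N\oc_\C(\C\ocn_\C\P)\simeq\N\ocn_\C\P$ (natural in the right comodule $\N$, proved by reduction to free $\P$ using right exactness), which shows that $\P$ is contraflat if and only if the comodule $\C\ocn_\C\P$ is coflat, hence injective by part~(a). I then invoke the adjunction $\C\ocn_\C{-}\dashv\Hom_\C(\C,{-})$, whose unit I denote $\eta_\P\:\P\to\Hom_\C(\C,\C\ocn_\C\P)$, together with the additive equivalence $\C\comodl_\inj\simeq\C\contra_\proj$ carried by these same functors. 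Since $\C\ocn_\C\P$ is injective, a free cover $\Hom_k(\C,\P)\twoheadrightarrow\P$ maps under $\C\ocn_\C{-}$ to an epimorphism onto an injective comodule, which therefore splits; chasing this splitting through the adjunction shows $\eta_\P$ is a split epimorphism and that $\C\ocn_\C(\ker\eta_\P)=0$. It remains to see that a contramodule $\P_0$ with $\C\ocn_\C\P_0=0$ vanishes; this I would obtain from the reconstruction $\P_0\simeq\Cohom_\C(\C,\P_0)\simeq\varprojlim_i\Cohom_\C(\C_i,\P_0)\simeq\varprojlim_i(\C_i^*\ocn_\C\P_0)$, where $\C=\varinjlim_i\C_i$ is the union of its finite-dimensional subcoalgebras and each $\C_i^*\ocn_\C\P_0\simeq\C_i^*\oc_\C(\C\ocn_\C\P_0)=0$. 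Thus $\ker\eta_\P=0$, $\eta_\P$ is an isomorphism, and $\P\simeq\Hom_\C(\C,\C\ocn_\C\P)$ is projective as the image of an injective comodule under the equivalence. Establishing and correctly sequencing this last chain---especially the vanishing of contraflat contramodules killed by $\C\ocn_\C{-}$---is where I expect the real work to lie.
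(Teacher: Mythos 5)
Your part~(a) and the routine implications of part~(b) (projective $\Rightarrow$ coinjective, and coinjective $\Rightarrow$ contraflat) are correct and coincide with the paper's own proof---the same cycle, the same Baer-type reduction to finite-dimensional comodules, the same identifications $\Hom_\C(\L,\M)\simeq\L^*\oc_\C\M$ and $\Cohom_\C(\L,\P)\simeq\L^*\ocn_\C\P$. But the two implications that carry all the weight in your part~(b), namely contraflat $\Rightarrow$ coinjective and contraflat $\Rightarrow$ projective, are both broken, and this is no accident: the paper explicitly treats ``contraflat $\Rightarrow$ projective'' as a difficult theorem (a conjecture of~\cite{Plet}, eventually proved in~\cite[Section~A.3]{Psemi} using the structure theory of coalgebras and the contramodule Nakayama lemma), and in its own exposition proves only ``coinjective $\Rightarrow$ projective'', via the mutual associativity isomorphism of Proposition~3 and a splitting diagram---an argument your proposal does not contain. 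The first fatal gap is the isomorphism $\Cohom_\C(\varinjlim_i\M_i,\P)\simeq\varprojlim_i\Cohom_\C(\M_i,\P)$, which you use twice: in the Mittag--Leffler step, and, with $\M=\C=\varinjlim_i\C_i$, in the vanishing argument $\P_0\simeq\varprojlim_i(\C_i^*\ocn_\C\P_0)$. It is false: $\Cohom_\C$ is a cokernel, and inverse limits do not commute with cokernels. Concretely, for $\C^*\simeq k[[z]]$ one has $\Cohom_\C(\C,\P)\simeq\P$, whereas $\varprojlim_i\Cohom_\C(\C_i,\P)\simeq\varprojlim_n\P/z^n\P$; so your isomorphism would make every $k[[z]]$\+contramodule $z$\+adically separated, contradicting the counterexamples of Section~\ref{counterexamples} (the map $\P\rarrow\varprojlim_n\P/z^n\P$ is surjective but not injective in general). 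In the vanishing argument you moreover apply the associativity isomorphism $\C_i^*\ocn_\C\P_0\simeq\C_i^*\oc_\C(\C\ocn_\C\P_0)$ to $\P_0=\ker\eta_\P$, which is not known to be contraflat, while the finite-dimensional comodule $\C_i^*$ is not coflat---so that identification is unjustified as well.

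The second fatal gap is the splitting step: you assert that the epimorphism $\C\ocn_\C\Hom_k(\C,\P)\rarrow\C\ocn_\C\P$ splits ``because the target is injective''. This inverts the lifting properties: epimorphisms onto \emph{projectives} split, while monomorphisms \emph{from} injectives split. In $\C\comodl$ there are in general no nonzero projective objects at all, and epimorphisms onto injective comodules need not split: for $\C^*\simeq k[[z]]$, the surjection $\bigoplus_{n\ge1} z^{-n}k[[z]]/k[[z]]\rarrow k((z))/k[[z]]=\C$ onto the injective (cofree) comodule $\C$ admits no section, because $\C=z\C$ is $z$\+divisible while every finite-dimensional comodule is annihilated by a power of~$z$, so every comodule morphism from $\C$ into that direct sum vanishes. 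Without this splitting, your conclusion that $\eta_\P$ is a split epimorphism with $\C\ocn_\C(\ker\eta_\P)=0$ has no support, and the reduction of ``contraflat $\Rightarrow$ projective'' to part~(a) collapses. (A lesser, repairable point: your claimed proof of $\N\ocn_\C\P\simeq\N\oc_\C(\C\ocn_\C\P)$ ``by reduction to free $\P$ using right exactness'' fails, since $\N\oc_\C(\C\ocn_\C{-})$ is not right exact---$\oc_\C$ is only left exact; the isomorphism holds under the hypothesis that $\P$ is contraflat, which is the paper's Proposition~1 and happens to be the only case in which you use it.)
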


\begin{proof}
 Part~(a): it is clear from the natural isomorphism
$\Hom_k(\N\oc_\C\M\;V)\simeq\Cohom_\C(\M,\Hom_k(\N,V))$ for
any right $\C$\+comodule $\N$, left $\C$\+comodule $\M$, and
$k$\+vector space $V$ (see Section~\ref{over-semialgebras}) that
any coprojective left $\C$\+comodule $\M$ is coflat, and from
the natural isomorphism $\Cohom_\C(\C\ot_kV\;\P)\simeq\Hom_k(V,\P)$
for any $k$\+vector space $V$ and left $\C$\+contramodule $\P$
(see Section~\ref{over-corings}) that any injective left
$\C$\+comodule $\M$ is coprojective.

 Conversely, by a comodule version of Baer's criterion,
a left $\C$\+comodule $\M$ is injective whenever the functor
$\Hom_\C({-},\M)$ is exact on the category of \emph{finite-dimensional}
left $\C$\+comodules.
 Indeed, a $\C$\+comodule morphism into $\M$ from a subcomodule of
any $\C$\+comodule can be successively extended to larger and larger
subcomodules in the way of a transfinite induction or Zorn's lemma,
and one only has to deal with subcomodules of finite-dimensional
$\C$\+comodules in the process.
 It remains to notice the natural right $\C$\+comodule structure
on the dual vector space $\L^*$ to any finite-dimensional left
$\C$\+comodule $\L$, and the natural isomorphism
$$
 \Hom_\C(\L,\M)\simeq\L^*\oc_\C\M
$$
for any finite-dimensional left $\C$\+comodule $\L$ and any left
$\C$\+comodule $\M$, in order to conclude that any coflat left
$\C$\+comodule $\M$ is injective.

 Part~(b): since any $\C$\+comodule is a union of its
finite-dimensional subcomodules, and the functor of contratensor
product ${-}\ocn_\C\P$ preserves inductive limits (in its comodule
argument), a left $\C$\+contramodule $\P$ is contraflat whenever
the functor ${-}\ocn_\C\P$ is exact on the category of
finite-dimensional right $\C$\+comodules.
 It remains to notice the natural isomorphism
$$
 \Cohom_\C(\L,\P)\simeq\L^*\ocn_\C\P
$$
for any finite-dimensional left $\C$\+comodule $\L$ and any left
$\C$\+contramodule $\P$ in order to conclude that any coinjective
left $\C$\+contramodule is contraflat.
 The assertion that every projective $\C$\+contramodule is
coinjective follows immediately from the natural isomorphism
$\Cohom_\C(\M,\Hom_k(\C,V))\simeq\Hom_k(\M,V)$ for any left
$\C$\+comodule $\M$ and $k$\+vector space $V$ (see
Section~\ref{over-corings}).

 Showing that every contraflat $\C$\+contramodule is projective
is much more difficult.
 This assertion was formulated as a conjecture in~\cite{Plet}
and eventually proven in~\cite[Section~A.3]{Psemi}.
 The result in question is a generalization of the classical theorem
that flat modules over a finite-dimensional associative algebra are
projective~\cite[Theorem~P and Examples~I.3(1)]{Bas}.
 The argument is~\cite{Bas} is based on the structure theory of
Artinian rings.
 Similarly, the proof of projectivity of contraflat contramodules
in~\cite{Psemi} is based on the structure theory of coalgebras over
fields~\cite[Sections~9.0\+-1]{Swe} and the structure theory of
contramodules over them developed in~\cite[Section~A.2]{Psemi}, and
first of all, on the contramodule Nakayama lemma
(see Section~\ref{over-topol-rings}).

 In the exposition below, we restrict ourselves to proving that
any coinjective $\C$\+contramodule is projective.
 This is the result that has an important application to
semicontramodules that will be considered in
Section~\ref{underived-semi}.
 The argument that we describe here also has an advantage of being
generalizable to comodules and contramodules over corings over
arbitrary noncommutative rings~\cite[Lemma~5.2]{Psemi}.

 The assertion comes out as an unexpected consequence of one of
the results establishing mutual associativity of the cotensor
product/cohomomorphism operations $\oc_\C$ or $\Cohom_\C$ with
the operations of contratensor product, comodule homomorphisms,
or contramodule homomorphisms $\ocn_\C$, \,$\Hom_\C$, or $\Hom^\C$.
 The operations from the first and the second list are exact on
different sides, so they are only mutually associative under certain
adjustedness conditions on the objects
involved~\cite[Section~5.2]{Psemi} (cf.~\cite{Plet}, where these
mutual associativity assertions were formulated in a less general
form insufficient for deducing the corollary under discussion).

 Let $\C$ and $\D$ be two coalgebras over a field~$k$.

\begin{prop1}
 Let\/ $\N$ be a right\/ $\D$\+comodule, $\K$ be
a\/ $\D$\+$\C$\+bicomodule, and\/ $\P$ be a left\/ $\C$\+contramodule.
 Then there is a natural map of $k$\+vector spaces
$$
 (\N\oc_\D\K)\ocn_\C\P\lrarrow\N\oc_\D(\K\ocn_\C\P),
$$
which is an isomorphism whenever\/ $\P$ is a contraflat left\/
$\C$\+contramodule or\/ $\N$ is a coflat right\/ $\D$\+comodule. 
\end{prop1}

\begin{prop2}
 Let\/ $\L$ be a left\/ $\D$\+comodule, $\K$ be
a\/ $\C$\+$\D$\+bicomodule, and\/ $\M$ be a left\/ $\C$\+comodule.
 Then there is a natural map of $k$\+vector spaces
$$
 \Cohom_\D(\L,\Hom_\C(\K,\M))\lrarrow
 \Hom_\C(\K\oc_\D\L\;\M),
$$
which is an isomorphism whenever\/ $\M$ is an injective left\/
$\C$\+comodule or\/ $\L$ is a coprojective left\/ $\D$\+comodule.
\end{prop2}

\begin{prop3}
 Let\/ $\P$ be a left\/ $\C$\+contramodule, $\K$ be
a\/ $\D$\+$\C$\+bicomodule, and\/ $\Q$ be a left\/ $\D$\+contramodule.
 Then there is a natural map of $k$\+vector spaces
$$
 \Cohom_\D(\K\ocn_\C\P\;\Q)\lrarrow
 \Hom^\C(\P,\Cohom_\D(\K,\Q)),
$$
which is an isomorphism whenever\/ $\P$ is a projective left\/
$\C$\+contramodule or\/ $\Q$ is a coinjective left\/
$\D$\+contramodule.
\end{prop3}

\begin{proof}
 To construct the natural map in Proposition~1, one considers
the composition
$$
 (\N\oc_\D\K)\ot_k\P\lrarrow\N\ot_k\K\ot_k\P\lrarrow
 \N\ot_k(\K\ocn_\C\P)
$$
and observes that it is has equal compositions with the two maps
$(\N\oc_\D\K)\ot_k\Hom_k(\C,\P)\birarrow(\N\oc_\D\K)\ot_k\P$,
as well as with the two maps
$\N\ot_k(\K\ocn_\C\P)\birarrow\N\ot_k\D\ot_k(\K\ocn_\C\P)$.
 This map is an isomorphism when the $\C$\+contramodule $\P$
is contraflat, since the exact sequence
$$
 0\lrarrow\N\oc_\D\K\lrarrow\N\ot_k\K\lrarrow\N\ot_k\D\ot_k\K
$$
remains exact after applying the functor ${-}\ocn_\C\P$,
and when the $\D$\+comodule $\N$ is coflat, since the exact
sequence
$$
 \K\ot_k\Hom_k(\C,\P)\lrarrow\K\ot_k\P\lrarrow\K\ocn_\C\P\lrarrow0
$$
remains exact after applying the functor $\N\oc_\D{-}$.

 To construct the natural map in Proposition~3, one considers
the composition
\begin{multline*}
 \Hom_k(\K\ocn_\C\P\;\Q)\lrarrow\Hom_k(\K\ot_k\P\;\Q) \\ \.\simeq\.
 \Hom_k(\P,\Hom_k(\K,\Q))\lrarrow\Hom_k(\P,\Cohom_\D(\K,\Q))
\end{multline*}
and observes that it has equal compositions with the two maps
$\Hom_k(\D\ot_k(\K\ocn_\C\P)\;\allowbreak\Q)\birarrow
\Hom_k(\K\ocn_\C\P\;\Q)$,
as well as with the two maps $\Hom_k(\P,\Cohom_\D(\K,\Q))\allowbreak
\birarrow\Hom_k(\Hom_k(\C,\P),\Cohom_\D(\K,\Q))$.
 This map is an isomorphism when the $\C$\+contramodule $\P$ is
projective, since the exact sequence
$$
 \Hom_k(\D\ot_k\K\;\Q)\lrarrow\Hom_k(\K,\Q)\lrarrow\Cohom_\D(\K,\Q)
 \lrarrow0
$$
remains exact after applying the functor $\Hom^\C(\P,{-})$, and
when the $\D$\+contramodule $\Q$ is coinjective, since the exact
sequence
$$
 \K\ot_k\Hom_k(\C,\P)\lrarrow\K\ot_k\P\lrarrow\K\ocn_\C\P\lrarrow0
$$
remains exact after applying the functor $\Cohom_\D({-},\Q)$.
 The proof of Proposition~2 is similar.
\end{proof}

 Now we can prove that any coinjective left $\C$\+contramodule $\P$
is projective.
 Let $l\:\E\rarrow\P$ be a surjective $\C$\+contramodule morphism.
 According to the second assertion of Proposition~3 applied to
the coalgebras $\C=\D$, the bicomodule $\K=\C$, and the contramodules
$\Q=\P$, there is a commutative diagram of maps of $k$\+vector spaces
with a lower horizontal isomorphism
$$\dgARROWLENGTH=4em
\begin{diagram} 
\node{\Cohom_\C(\C\ocn_\C\P\;\E)}\arrow[2]{e}
\arrow{s,l,A}{\Cohom_\C(\C\ocn_\C\P\;l)}
\node[2]{\Hom^\C(\P,\E)}\arrow{s,r}{\Hom^\C(\P\;l)} \\
\node{\Cohom_\C(\C\ocn_\C\P\;\P)} \arrow[2]{e,=}
\node[2]{\Hom^\C(\P,\P)} 
\end{diagram}
$$
 The leftmost vertical map, being a quotient of the surjective map
$$
 \Hom_k(\C\ocn_\C\P\;l)\:\Hom_k(\C\ocn_\C\P\;\E)
 \lrarrow\Hom_k(\C\ocn_\C\P\;\P),
$$
is surjective; so the rightmost vertical map is surjective, too.
 It follows that the morphism $\id\:\P\rarrow\P$ can be lifted to
a $\C$\+contramodule morphism $\P\rarrow\E$, i.~e., our surjective
morphism of $\C$\+contramodules $l\:\E\rarrow\P$ splits.
\end{proof}

\subsection{Contramodules over pro-Artinian local rings}
\label{pro-artinian}
 A \emph{pro-Artinian commutative ring} $\R$ is the projective limit of
a filtered projective system of Artinian commutative rings and
surjective morphisms between them.
 Equivalently, $\R$ is a complete and separated topological
commutative ring where open ideals form a base of neighborhoods of
zero and all the discrete quotient rings are
Artinian~\cite[Section~A.2]{Pweak}.

 A \emph{pro-Artinian local ring} is the projective limit of a filtered
projective system of Artinian commutative local rings and surjective
morphisms between them.
 E.~g., any complete Noetherian local ring can be viewed as
a pro-Artinian local ring.

 Contramodules over pro-Artinian local rings are suggested
in~\cite{Pweak} for use in the role of coefficients in homological
theories involving tensor operations, infinite direct sums and products,
and reductions to the residue field.
 The point is that the use of contramodules with their well-behaved
reductions (satisfying Nakayama's lemma irrespectively of any finite
generatedness assumptions) allows to extend to pro-Artinian local rings
many results originally provable over a field only.

 Let $\R$ be a pro-Artinian local ring with the maximal ideal~$\m$ and
the residue field~$k$.
 The category of $\R$\+contramodules $\R\contra$ and the reduction 
functor $\P\longmapsto\P/\m\P$ acting from it to the category of 
$k$\+vector spaces have the following formal properties:
\begin{enumerate}
\renewcommand{\theenumi}{\roman{enumi}}
\item $\R\contra$ is an abelian category with infinite direct sums
and products; the infinite product functors in $\R\contra$ are exact
and preserved by the forgetful functor $\R\contra\rarrow\R\modl$;
\item $\R\contra$ is a tensor category with a right exact tensor
product functor $\ot^\R$ and an internal Hom functor $\Hom^\R$;
the forgetful functor $\R\contra\rarrow\R\modl$ takes the unit
object of the tensor structure $\R\in\R\contra$ to the unit object
$\R\in\R\modl$ and commutes with the internal Hom functors;
\item there are enough projective objects in $\R\contra$; these are
called the \emph{free $\R$\+contramodules}, as they are precisely
the direct sums of copies of the object~$\R$; the class of free
$\R$\+contramodules is preserved by infinite direct sums, infinite
products, and the operations $\ot^\R$ and $\Hom^\R$;
\item the reduction functor $\R\contra\rarrow k\vect$ preserves
infinite direct sums, infinite products, commutes with the tensor
products, and commutes with the internal Hom from free
$\R$\+contramodules;
\item the reduction functor does not annihilate any objects:
$\P/\m\P=0$ implies $\P=\nobreak0$ for any $\P\in\R\contra$;
in other words, a morphism of free $\R$\+contra\-modules $\F\rarrow\gG$
is an isomorphism whenever its reduction $\F/\m\F\rarrow\gG/\m\gG$ is
an isomorphism of $k$\+vector spaces.
\end{enumerate}

 The constructions of these structures and the proofs of the listed
assertions can be found in~\cite[Sections~1.1\+-1.6]{Pweak}.
 Let us point out two caveats, which are in fact two ways to formulate
one and the same observation.
 Firstly, the infinite direct sums of $\R$\+contramodules are \emph{not}
always exact functors.
 Secondly, the functors of tensor product $\F\ot^\R{-}$ with a free
$\R$\+contramodule $\F$ is \emph{not} always exact in $\R\contra$.

 Both problems do not occur in the homological dimension~$1$ case.
 E.~g., in the case of the ring of $l$\+adic integers $\R=\Z_l$
or the ring of formal power series in one variable $\R=k[[z]]$,
the infinite direct sums in $\R\contra$ are still exact, as are
the tensor products with free $\R$\+contramodules.
 Of course, even in these cases the infinite direct sums and
tensor products of $\R$\+contramodules are not preserved
by the forgetful functor $\R\contra\rarrow\R\modl$ (and one would
not expect them to be).

 The definition of a left contramodule over a complete and separated
topological ring $\R$ with open right ideals forming a base of
neighborhoods of zero was explained in Section~\ref{over-topol-rings}.
 Let us now define the operation of tensor product $\ot^\R$ of
contramodules over a commutative topological ring $\R$ with open
ideals forming a base of neighborhoods of zero.

 The following definition of a contrabilinear map for contramodules
over a commutative topological ring was suggested to the author
by Deligne.
 Let $\P$, $\Q$, and $\gK$ be three contramodules over~$\R$.
 A map $b\:\P\times\Q\rarrow\gK$ is called \emph{contrabilinear} over
$\R$ if for any two families of coefficients $r_\alpha$ and $s_\beta\in\R$
converging to zero in the topology of $\R$ and any two families of
elements $p_\alpha\in\P$ and $q_\beta\in\Q$ the equation
$$
 b\left(\sum_\alpha r_\alpha p_\alpha\;\sum_\beta s_\beta q_\beta\right)
 = \sum_{\alpha,\beta}(r_\alpha s_\beta)\,b(p_\alpha,q_\beta)
$$
holds in~$\gK$.
 An $\R$\+contramodule $\gL$ endowed with an $\R$\+contrabilinear map
$\P\times\Q\rarrow\gL$ is called the \emph{contramodule tensor product}
of the $\R$\+contramodules $\P$ and $\Q$ if for any $\R$\+contramodule
$\gK$ and any $\R$\+contrabilinear map $\P\times\Q\rarrow\gK$ there
exists a unique $\R$\+contramodule morphism $\gL\rarrow\gK$ making
the triangle diagram $\P\times\Q\rarrow\gL\rarrow\gK$
commutative~\cite[Section~1.6]{Pweak}.

 One has to check that, for any $\R$\+subcontramodules $\P'\subset\P$
and $\Q'\subset\Q$, any $\R$\+contrabilinear map $\P\times\Q\rarrow\gK$
annihilating $\P'\times\Q$ and $\P\times\Q'$ factorizes uniquely
through a contrabilinear map $\P/\P'\times\Q/\Q'\rarrow\gK$.
 Then it follows that the contramodule tensor product $\P/\P'\ot^\R
\Q/\Q'$ can be obtained as the cokernel of the natural morphism
of contramodule tensor products
$$
 \P'\ot^\R\Q\,\oplus\,\P\ot^\R\Q'\lrarrow\P\ot^\R\Q,
$$
so in order to produce the contramodule tensor products of arbitrary
pairs of $\R$\+con\-tramodules it remains to explain what
the tensor products of free $\R$\+contramodules are.
 Here one notices that setting $\R[[X]]\ot^\R\R[[Y]]\simeq
\R[[X\times Y]]$ for any two sets $X$ and $Y$ does the job.
 Moreover, one has $\R[[X]]\ot^\R\P\simeq\bigoplus_X\P$ for any set
$X$ and any $\R$\+contramodule $\P$, and generally the contramodule
tensor product is a right exact functor preserving infinite direct
sums in the category $\R\contra$.

 The set $\Hom^\R(\P,\Q)$ of all $\R$\+contramodule morphisms between
two $\R$\+contra\-modules $\P$ and $\Q$ is endowed with
the $\R$\+contramodule structure provided by the pointwise infinite
summation operations
$$
 \left(\sum_\alpha r_\alpha f_\alpha\right)(p)=
 \sum_\alpha r_\alpha f_\alpha(p)
$$
for any family of morphisms $f_\alpha\:\P\rarrow\Q$, any element
$p\in\P$, and any family of coefficients $r_\alpha\in\R$ converging
to zero in the topology of~$\R$ \cite[Section~1.5]{Pweak}.
 One easily checks from the definitions that
$$
 \Hom^\R(\P\ot^\R\Q\;\gK)\simeq\Hom^\R(\P,\Hom^\R(\Q,\gK))
$$
for any $\R$\+contramodules $\P$, $\Q$, and $\gK$, as is required
of the internal Hom functor in a tensor category.

 Given an $\R$\+contramodule $\P$, one denotes by $\m\P\subset\P$
the image of the contraaction map $\R[[\P]]\supset\m[[\P]]\rarrow\P$.
 The passage to the quotient $k$\+vector space $\P/\m\P$ provides
the construction of the reduction functor $\P\longmapsto\P/\m\P$.

 We have explained the constructions of all the structures mentioned
in~(i\+v).
 The proof of the assertion that the class of projective
$\R$\+contramodules is closed under the operations of infinite product
and the internal Hom functor depends on the assumption that
the ring $\R$ is pro-Artinian~\cite[Lemma~1.3.7]{Pweak}, as does
the proof of the fact that the reduction functor preserves
infinite products~\cite[Lemma~1.3.6]{Pweak}.
 The proof of the Nakayama lemma~(v) is based on the assumption of
topological nilpotence of the ideal~$\m$ implied by the definition
of a pro-Artinial local ring (see Section~\ref{over-topol-rings}),
and the assertion that all projective $\R$\+contramodules are free
follows from the Nakayama lemma~\cite[Lemma~1.3.2]{Pweak}.

 In addition to $\R$\+contramodules, the coefficient formalism developed
in~\cite[Section~1]{Pweak} also includes \emph{discrete\/ $\R$\+modules}
or \emph{$\R$\+comodules}; see Section~\ref{underived-top-rings} for
a short discussion and~\cite[Sections~1.4 and~1.9]{Pweak} for
the details.

\subsection{Flat contramodules over topological rings}
\label{flat-contra}
 The aim of this section is to describe a certain class of adjusted
contramodules over topological rings which plays a crucial role in
the arguments in~\cite[Section~5]{PSl1} and~\cite[Section~7]{PSl2},
and will probably prove to be important and useful in other contexts
as well (cf.~\cite{Pproperf,PS3,BP3,BPS,PT}).
 To begin with, let us briefly return to the discussion of contramodules
over the adic completions of Noetherian rings by centrally generated
ideals from Section~\ref{over-adic-completions}.

 Let $I$ be an ideal generated by central elements in a right
Noetherian ring $R$; denote by $\R=\varprojlim_n R/I^n$
the $I$\+adic completion of the ring $R$, viewed as a complete
and separated topological ring in its natural topology.
 Let $\I=\varprojlim_n I/I^n$ denote the ideal generated by
the image of $I\subset R$ in the ring~$\R$.

 The following result is~\cite[Proposition~C.5.4]{Pcosh}
or~\cite[Lemma~10.2 and proof of Proposition~5.5 in Section~10]{PT}.
 In the commutative case, its proof can be found
in~\cite[Lemma~B.9.2]{Pweak} or~\cite[Corollary~10.3]{Pcta}.

\begin{lem1} \label{noetherian-adic-flat}
 A left\/ $\R$\+contramodule\/ $\P$ is a flat $R$\+module if and only
if the $R/I^n$\+module\/ $\P/\I^n\P$ is flat for every $n\ge1$.
 The natural map\/ $\P\rarrow\varprojlim_n\P/\I^n\P$ is an isomorphism
if this is the case. \qed
\end{lem1}

 In other words, the nonseparatedness phenomenon demonstrated by
the counterexamples from~\cite{Sim,Yek1,Psemi} described
in Section~\ref{counterexamples} above does not occur for
$\R$\+contramodules satisfying any one of the two equivalent flatness
conditions from the first sentence of Lemma.
 The situation turns out to be similar in the much more general
contexts discussed below.

 Let us now pass to the setting of~\cite[Section~D.1]{Pcosh}.
 Let $R_0\larrow R_1\larrow R_2\larrow R_3\larrow\dotsb$ be
a projective system of associative rings and surjective morphisms
between them.
 Consider the projective limit $\R=\varprojlim_n R_n$ and endow it
with the topology of projective limit of discrete rings~$R_n$.
 Let $\I_n\subset\R$ denote the kernels of the natural surjective
morphisms of rings $\R\rarrow R_n$; then the open ideals $\I_n$ form
a base of neighborhoods of zero in the topological ring~$\R$.
 To make our notation closer to that of~\cite[Appendix~D]{Pcosh}
and~\cite{PR}, we switch a bit away from our previous notation
pattern and denote by $\J\tim\P\subset\P$ the image of the contraaction
map $\J[[\P]]\rarrow\P$ for any closed ideal $\J\subset\R$ and any
$\R$\+contramodule~$\P$.

 We recall from Section~\ref{over-topol-rings} that the category
of left $\R$\+contramodules $\R\contra$ is an abelian category with
infinite direct sums, exact infinite products, and enough projective
objects.
 The next result is~\cite[Lemma~D.1.1]{Pcosh}; see
also~\cite[Lemma~A.2.3]{Psemi}.

\begin{lem2}
 For any left\/ $\R$\+contramodule\/ $\P$, the natural map\/
$\P\rarrow\varprojlim_n\P/\I_n\tim\P$ is surjective. \qed
\end{lem2}

 Here is the promised definition in this context.
 A left $\R$\+contramodule $\F$ is called \emph{flat} if
the left $R_n$\+module $\F/\I_n\tim\F$ is flat for every~$n$.
 The following proposition is the main related result.

\begin{prop1}
\textup{(a)} For any flat left\/ $\R$\+contramodule\/ $\F$, the map\/
$\F\rarrow\varprojlim_n\F/\I_n\tim\F$ is an isomorphism. \par
\textup{(b)} The class of flat left\/ $\R$\+contramodules is closed
under extensions and the passages to the kernels of surjective
morphisms.
 For any short exact sequence of flat\/ $\R$\+contramodules\/
$0\rarrow\gH\rarrow\gG\rarrow\F\rarrow0$, the short sequences
of $R_n$\+modules\/ $0\rarrow\gH/\I_n\tim\gH\rarrow\gG/\I_n\tim\gG
\rarrow\F/\I_n\tim\F\rarrow0$ are exact.
\end{prop1}

\begin{proof}
 Part~(a) is~\cite[Corollary~D.1.7]{Pcosh}.
 Part~(b) is~\cite[Lemmas~D.1.4 and~D.1.5]{Pcosh}.
\end{proof}

 The next result provides a characterization of projective
$\R$\+contramodules, generalizing the results
of~\cite[Corollary~4.5]{Yek1} and~\cite[Corollary~B.8.2]{Pweak}
(see also~\cite[Theorem~1.10]{PSY2}
and~\cite[Corollary~C.5.6(a)]{Pcosh}).

\begin{prop2}
 A left\/ $\R$\+contramodule\/ $\F$ is a projective object in\/
$\R\contra$ if and only if the left $\R_n$\+modules\/ $\F/\I_n\tim\F$
are projective for all~$n\ge0$.
\end{prop2}

\begin{proof}
 This is~\cite[Corollary~D.1.10(a)]{Pcosh}.
\end{proof}

 Finally, we consider the quite general setting of a complete, separated
topological ring\/ $\R$ with a countable base of neighborhoods of zero
consisting of open right ideals~\cite[Sections~5\+-7]{PR}.
 The next lemma is~\cite[Lemma~6.3(b)]{PR}.

\begin{lem3}
 For any left\/ $\R$\+contramodule\/ $\P$, the natural map\/
$\P\rarrow\varprojlim_\I\P/\I\tim\P$, where the projective limit is
taken over the poset of open right ideals\/ $\I\subset\R$ ordered by
the inverse inclusion, is surjective. \qed
\end{lem3}

 Let $\N$ be a discrete right $\R$\+module and $\P$ be a left
$\R$\+contramodule.
 Then the \emph{contratensor product} $\N\ocn_\R\P$ is an abelian
group constructed as the quotient group of $\N\ot_\Z\P$ by
the subgroup of all elements of the form
$$
 \sum_\alpha xr_\alpha\ot p_\alpha - x\ot \sum_\alpha r_\alpha p_\alpha.
$$
 Here $r_\alpha\in\R$ is family of elements of converging to zero in
the topology of $\R$, indexed by some family of indices~$\alpha$
(which can be assumed to be at most countable when $\R$ has a countable
base of neighborhoods of zero), $p_\alpha\in\P$ is an arbitrary family
of elements, and $x\in\N$ is an element.
 The sum in the left-hand side is well-defined since $xr_\alpha=0$ for
all but a finite subset of indices~$\alpha$, while the sum in
the right-hand side denotes the contramodule infinite summation
operation.

 For any discrete right $\R$\+module $\N$, any left $\R$\+contramodule
$\P$, and any abelian group~$V$, there is a natural isomorphism of
abelian groups
$$
 \Hom_\Ab(\N\ocn_\R\P\;V)\simeq\Hom^\R(\P,\Hom_\Ab(\N,V)),
$$
where the left $\R$\+contramodule structure on the abelian group
$\Hom_\Ab(\N,V)$ is constructed as explained in
Section~\ref{over-topol-rings}.
 For any set $X$ and any discrete right $\R$\+module $\N$, there is
a natural isomorphism of abelian groups
$$
 \N\ocn_\R\R[[X]]\simeq\N[X].
$$

 Here is our key definition.
 A left $\R$\+contramodule $\F$ is called \emph{flat} if the functor
of contratensor product $\N\longmapsto\N\ocn_\R\F$ is exact on
the abelian category of discrete right $\R$\+modules~$\N$.
 For a (complete and separated) topological ring $\R$ with a countable
base of neighborhoods of zero consisting of open \emph{two-sided}
ideals, this definition is equivalent to the previous one.
 The following proposition lists some of the main results of
the theory.

\begin{prop3}
\textup{(a)} For any flat left\/ $\R$\+contramodule\/ $\F$, the map\/
$\F\rarrow\varprojlim_\I\F/\I\tim\F$ is an isomorphism. \par
\textup{(b)} All projective left\/ $\R$\+contramodules are flat.
 The class of flat left\/ $\R$\+contra\-modules is closed
under extensions and the passages to the kernels of surjective
morphisms.
 For any short exact sequence of flat left\/ $\R$\+contramodules\/
$0\rarrow\gH\rarrow\gG\rarrow\F\rarrow0$ and any discrete right\/
$\R$\+module\/ $\N$, the short sequence of abelian groups\/
$0\rarrow\N\ocn_\R\gH\rarrow\N\ocn_\R\gG\rarrow\N\ocn_\R\F\rarrow0$
is exact. \par
\textup{(c)} The class of flat left\/ $\R$\+contramodules is closed
under filtered inductive limits in\/ $\R\contra$.
 The functor of filtered inductive limit of diagrams of flat left\/
$\R$\+contramodules is exact.
\end{prop3}

\begin{proof}
 Part~(a) is~\cite[Corollary~6.15]{PR}.
 Part~(b) is~\cite[Lemma~6.7, Corollary~6.8, Lemma~6.9, and
Corollary~6.13]{PR} (see also~\cite[Lemma~6.10]{PR}).
 Part~(c) is~\cite[Lemmas~5.6 and~6.16]{PR} (see
also~\cite[Proposition~6.17]{PR}).
\end{proof}

 An even more general setting of contramodules over a complete,
separated topological ring with a (not necessarily countable) base
of neighborhoods of zero consisting of open right ideals is
discussed in the papers~\cite{Pproperf,PS3,BPS,PT}.
 In this context, the definition of a flat contramodule is the same
as in the previous one.
 The class of flat contramodules is still closed under filtered
inductive limits, but it \emph{no} longer needs to be closed under
the kernels of surjective morphisms~\cite[Example~12.4]{PT}.
 An example of a flat contramodule which cannot be obtained as
a filtered inductive limit of projective ones can be found
in~\cite[Example~9.2]{PT}.
 Topological rings over which the classes of projective and flat
left contramodules coincide are characterized, under the name of
\emph{topologically left perfect} topological rings, in
the paper~\cite[Section~14]{PS3}.

\subsection{Underived co-contra correspondence over corings}
\label{underived-over-corings}
 It was already mentioned in the end of Section~\ref{basic-properties}
that the categories of \emph{injective} left \emph{comodules} and
\emph{projective} left \emph{contramodules} over a coalgebra $\C$ over
a field~$k$ are naturally equivalent.
 Similarly, at the end of Section~\ref{over-l-adics} we pointed out
the equivalence between the categories of injective discrete modules
and projective contramodules over the ring of $l$\+adic integers~$\Z_l$.
 These are the simplest instances of a very general homological
phenomenon called the \emph{comodule-contramodule correspondence},
which has many manifestations in
algebra~\cite{Mat2,DG,PSY,Jor,Kra,IK,Pkoszul,Pweak,Pmgm,Pfp},
algebraic geometry~\cite{Neem,Murf,Psing,Pcosh}, and representation
theory~\cite{FF0,FF,RCW,Psemi,PS1,PS2}.
 In the following three sections we restrict ourselves to an overview of
those versions of the co-contra correspondence that can be readily
formulated on the level of \emph{additive} or \emph{exact} categories,
while referring to the presentation~\cite{Psli} and the introduction to
the paper~\cite{Pmgm} for a discussion of the derived
comodule-contramodule correspondence.

 Let $\C$ be a coassociative coring over an associative ring~$A$
(see Section~\ref{over-corings}).
 Then the assignment of the left $\C$\+contramodule $\Hom_A(\C,V)$ to
the left $\C$\+comodule $\C\ot_AV$ and vice versa establishes
an equivalence between the full additive subcategories of
coinduced $\C$\+comodules in $\C\comodl$ and induced
$\C$\+contramodules in $\C\contra$.
 Indeed, one has
\begin{multline*}
 \Hom_\C(\C\ot_AU\;\C\ot_AV)\simeq\Hom_A(\C\ot_AU\;V)\\ \simeq
 \Hom_A(U,\Hom_A(\C,V))\simeq\Hom^\C(\Hom_A(\C,U),\Hom_A(\C,V))
\end{multline*}
for any left $A$\+modules $U$ and~$V$ \cite[Section~0.2.6]{Psemi}.
 This is a particular case of the isomorphism of \emph{Kleisli
categories} for a pair (left adjoint comonad, right adjoint monad)
in any base category~\cite{Kle,BBW}.
 
 Adjoining the direct summands to the full subcategories of
coinduced $\C$\+comodules and induced $\C$\+contramodules, one
obtains what are called the full subcategories of relatively
injective (or $\C/A$\+injective) $\C$\+comodules and relatively
projective $\C$\+contra\-modules in
the book~\cite[Sections~18.17\+-18]{BW} and
the paper~\cite[Sections~2.7\+-8]{BBW}.
 This may be the standard terminology; still in
the monograph~\cite[Section~5.1.3]{Psemi} we chose to call these
\emph{quite relatively injective} (quite $\C/A$\+injective)
$\C$\+comodules and \emph{quite relatively projective}
(quite $\C/A$\+projective) $\C$\+contramodules, while preserving
the shorter terms with the single word ``relatively'' for
the wider and more important classes of comodules and
contramodules discussed below.

 A left $\C$\+comodule $\cJ$ is said to be \emph{quite
$\C/A$\+injective} if the short sequence of abelian groups
\begin{equation} \label{comodule-hom-sequence}
 0\rarrow\Hom_\C(\M,\cJ)\rarrow\Hom_\C(\L,\cJ)
 \rarrow\Hom_\C(\K,\cJ)\rarrow0
\end{equation}
is exact for every short exact sequence of left $\C$\+comodules
$0\rarrow\K\rarrow\L\rarrow\M\rarrow0$ that \emph{splits} as
a short exact sequence of left \emph{$A$\+modules}.
 We recall that the category of left $\C$\+comodules is not abelian
in general, there being a problem with the kernels of morphisms
(see Section~\ref{over-corings}).
 However, any $A$\+split surjection of $\C$\+comodules has
a kernel preserved by the forgetful functor $\C\comodl\rarrow
A\modl$.

 Moreover, the category of left $\C$\+comodules with the class of
all $A$\+split short exact sequences is an exact category
(see~\cite{Bueh} for the definition, discussion, and references);
the quite $\C/A$\+injective $\C$\+comodules are simply
the injective objects of this exact category structure.
 In particular, the coaction morphism $\M\rarrow\C\ot_A\M$ of
any left $\C$\+comodule $\M$ is split by the $A$\+module map
$\C\ot_A\M\allowbreak\rarrow\M$ induced by the counit~$\eps$
of the coring~$\C$.
 Considering the corresponding $A$\+split short exact sequence of
$\C$\+comodules, one easily concludes that a $\C$\+comodule is
quite $\C/A$\+injective if and only if it is a direct summand of
a coinduced $\C$\+comodule.

 Similarly, a left $\C$\+contramodule $\F$ is said to be \emph{quite
$\C/A$\+projective} if the short sequence of abelian groups
\begin{equation} \label{contramodule-hom-sequence}
 0\rarrow\Hom^\C(\F,\P)\rarrow\Hom^\C(\F,\Q)
 \rarrow\Hom^\C(\F,\R)\rarrow0
\end{equation}
is exact for every short exact sequence of left $\C$\+contramodules
$0\rarrow\P\rarrow\Q\rarrow\R\rarrow0$ that \emph{splits} as
a short exact sequence of left $A$\+modules.
 Recall that the category of left $\C$\+contramodules is not abelian
in general, there being a problem with the cokernels of morphisms.
 However, any $A$\+split embedding of $\C$\+contramodules has
a cokernel preserved by the forgetful functor $\C\contra\rarrow A\modl$.

 Moreover, the category of left $\C$\+contramodules with the class of
all $A$\+split short exact sequences is an exact category;
the quite $\C/A$\+projective $\C$\+contramodules are simply
the projective objects of this exact category structure.
 In particular, the contraaction morphism $\Hom_A(\C,\P)\rarrow\P$
of any $\C$\+contramodule $\P$ is split by the $A$\+module map
$\P\rarrow\Hom_A(\C,\P)$ induced by the counit of the coring~$\C$.
 Considering the corresponding $A$\+split short exact sequence of
$\C$\+contramodules, one easily concludes that a $\C$\+contramodule
is quite $\C/A$\+projective if and only if it is a direct summand of
an induced $\C$\+contramodule.

 So the full additive subcategories of quite $\C/A$\+injective
$\C$\+comodules in $\C\comodl$ and quite $\C/A$\+projective
$\C$\+contramodules in $\C\contra$ are equivalent for any
coassociative coring~$\C$.
 When the coring $\C$ coincides with the ring~$A$ (i.~e.,
the counit map $\C\rarrow A$ is bijective), this reduces to
the identity equivalence of the category of left $A$\+modules
with itself.
 However, the category of $A$\+modules is abelian and not only
additive; viewing it just as an additive category is rather
unsatisfactory from our point of view.
 Still the categories of quite relatively injective comodules and
quite relatively projective contramodules do not seem to carry
any homological structures beyond those of additive categories;
in particular, they do \emph{not} have any nontrivial exact category
structures.
 The following definitions~\cite[Sections~5.1.4 and~5.3]{Psemi}
are purported to overcome this drawback (cf.~\cite[Sections~4.1
and~4.3]{BP}, where similarly defined relatively adjusted
modules are called, more in line with the traditional
terminology, ``weakly relatively projective'' and ``weakly 
relatively injective'').

 Assume that the coring $\C$ is a projective left and a flat right
$A$\+module; then the categories of left $\C$\+comodules and
left $\C$\+contramodules are abelian.
 A left $\C$\+comodule $\cJ$ is called \emph{injective relative
to~$A$} ($\C/A$\+injective) if the short sequence of Hom
groups~\eqref{comodule-hom-sequence} is exact for any short
exact sequence of left $\C$\+comodules $0\rarrow\K\rarrow\L\rarrow\M
\rarrow0$ that are \emph{projective as left $A$\+modules}.
 Similarly, a left $\C$\+contramodule $\F$ is called \emph{projective
relative to~$A$} ($\C/A$\+projective) if the short sequence of Hom
groups~\eqref{contramodule-hom-sequence} is exact for any short
exact sequence of left $\C$\+contramodules $0\rarrow\P\rarrow\Q
\rarrow\R\rarrow0$ that are \emph{injective as left $A$\+modules}.

 Assume further that the ring $A$ has finite left homological dimension
(i.~e., the category of left $A$\+modules has finite homological
dimension; cf.\ the last sentence of Section~\ref{over-semialgebras}).
 Then the full subcategory $\C\comodl_{\C/A\dinj}$ of
$\C/A$\+injective $\C$\+comodules is closed under extensions and
the passages to the cokernels of injective morphisms in $\C\comodl$.
 Similarly, the full subcategory $\C\contra_{\C/A\dproj}$ of
$\C/A$\+projective $\C$\+contra\-modules is closed under extensions
and the passages to the kernels of subjective morphisms in $\C\contra$
\cite[Lemma~5.3.1]{Psemi}.
 Being closed under extensions, the full subcategories
$\C\comodl_{\C/A\dinj}$ and $\C\contra_{\C/A\dproj}$ inherit the exact
category structures of the abelian categories $\C\comodl$ and
$\C\contra$.
 Moreover, the following strong converse assertions hold.

\begin{lem} \hbadness=2250
\textup{(a)} The subcategory of\/ $\C/A$\+injective\/ $\C$\+comodules\/
$\C\comodl_{\C/A\dinj}\subset\C\comodl$ is the minimal full subcategory
of the abelian category\/ $\C\comodl$ containing the coinduced\/
$\C$\+comodules and closed under extensions and direct summands. \par
\textup{(b)} The subcategory of\/ $\C/A$\+projective\/
$\C$\+contramodules\/ $\C\contra_{\C/A\dproj}\subset\C\contra$ is
the minimal full subcategory of the abelian category\/ $\C\contra$
containing the induced $\C$\+contramodules and closed under
extensions and direct summands.
\end{lem}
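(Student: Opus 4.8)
The plan is to prove the two inclusions separately, the substantial one being that every $\C/A$\+injective comodule lies in the minimal subcategory. Write $\mathcal F$ for the minimal full subcategory of $\C\comodl$ containing the coinduced comodules and closed under extensions and direct summands, and let $d$ denote the (finite) left homological dimension of $A$. The inclusion $\mathcal F\subseteq\C\comodl_{\C/A\dinj}$ is immediate: a coinduced comodule $\C\ot_A V$ is $\C/A$\+injective because any short exact sequence of comodules with $A$\+projective terms is automatically $A$\+split and $\Hom_\C(-,\C\ot_A V)\simeq\Hom_A(-,V)$ carries $A$\+split sequences to exact ones; the class $\C\comodl_{\C/A\dinj}$ is closed under extensions and cokernels of monomorphisms by the cited \cite[Lemma~5.3.1]{Psemi}, and under direct summands because $\Hom_\C(-,\cJ_1\oplus\cJ_2)$ decomposes as a direct sum of functors. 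Thus $\C\comodl_{\C/A\dinj}$ is one of the full subcategories among which $\mathcal F$ is the minimum, whence $\mathcal F\subseteq\C\comodl_{\C/A\dinj}$.

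For the reverse inclusion I would induct on the projective dimension $e=\operatorname{pd}_A\cJ$ of the underlying left $A$\+module of a $\C/A$\+injective comodule $\cJ$; this is legitimate since $e\le d<\infty$. The engine of the induction is a \emph{splitting lemma} handling the base case $e=0$: if $\cJ$ is $\C/A$\+injective and projective as an $A$\+module, then the coaction $\nu\:\cJ\rarrow\C\ot_A\cJ$ is an $A$\+split monomorphism (split by $\eps\ot\id$) whose middle term is now $A$\+projective (as $\C$ is a projective left $A$\+module and $\cJ$ is $A$\+projective) and whose cokernel is therefore an $A$\+module summand of an $A$\+projective, hence again $A$\+projective. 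All three terms being $A$\+projective, the defining property of $\C/A$\+injectivity applies and yields a comodule retraction, so $\cJ$ is a direct summand of the coinduced comodule $\C\ot_A\cJ$ and lies in $\mathcal F$.

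For the inductive step ($e\ge1$) I would start from one step of an $A$\+projective resolution $0\rarrow K\rarrow P\rarrow\cJ\rarrow0$ of the underlying module, with $P$ projective and $\operatorname{pd}_A K=e-1$. Applying the exact functor $\C\ot_A-$ gives a short exact sequence of coinduced comodules, and pulling it back along $\nu$ produces a comodule $\widetilde Q$ sitting in two short exact sequences $0\rarrow\C\ot_A K\rarrow\widetilde Q\rarrow\cJ\rarrow0$ and $0\rarrow\widetilde Q\rarrow\C\ot_A P\rarrow Z^1\rarrow0$, where $Z^1=(\C\ot_A\cJ)/\cJ$ is the first cobar cosyzygy. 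The former sequence exhibits $\widetilde Q$ as an extension of the $\C/A$\+injective $\cJ$ by the coinduced comodule $\C\ot_A K$, so $\widetilde Q$ is $\C/A$\+injective by \cite[Lemma~5.3.1]{Psemi}; the latter exhibits its underlying $A$\+module as a first syzygy inside the $A$\+projective module $\C\ot_A P$, so $\operatorname{pd}_A\widetilde Q\le e-1$. By the induction hypothesis $\widetilde Q\in\mathcal F$.

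The hard part will be the final assembly. In the extension $0\rarrow\C\ot_A K\rarrow\widetilde Q\rarrow\cJ\rarrow0$ the comodule $\cJ$ appears as a \emph{quotient}, a position not controlled by closure under extensions and direct summands alone; indeed this sequence is in general non-split (its quotient $\cJ$ has strictly larger $A$\+projective dimension than its middle term $\widetilde Q$), so $\cJ$ need not be a summand of an object already known to be in $\mathcal F$. The genuine obstacle is therefore to upgrade the foregoing reduction into an actual finite filtration of $\cJ$ with coinduced subquotients, realizing $\cJ$ itself as an iterated extension of coinduced pieces rather than merely as a quotient of an $\mathcal F$\+object. This is precisely where the finiteness of $d$ must be combined with the closure of $\C\comodl_{\C/A\dinj}$ under cokernels of monomorphisms in a more careful way than the naive pullback allows---iterating the construction down the whole $A$\+projective resolution and rearranging the successive syzygies so that they assemble $\cJ$ from the coinduced terms. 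Part~(b) is then proved by the formally dual argument: replace comodules by contramodules, coinduced comodules $\C\ot_A V$ by induced contramodules $\Hom_A(\C,V)$, the functors $\Hom_\C$ by $\Hom^\C$, the coaction embedding by the contraaction surjection $\Hom_A(\C,\P)\rarrow\P$, and the terms ``projective as an $A$\+module'' by ``injective as an $A$\+module'' throughout, invoking the contramodule forms of the splitting lemma and of \cite[Lemma~5.3.1]{Psemi}.
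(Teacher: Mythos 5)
Your first inclusion and your base case are correct: coinduced comodules are $\C/A$\+injective, the class $\C\comodl_{\C/A\dinj}$ is closed under extensions, cokernels of monomorphisms and direct summands, and a $\C/A$\+injective comodule $\cJ$ that is projective over $A$ is indeed split off from $\C\ot_A\cJ$ by the coaction sequence. But the inductive step has a genuine gap, exactly where you flag it, and it is fatal: the pullback realizes $\cJ$ only as a \emph{quotient} of $\widetilde Q\in\mathcal F$ by the coinduced subcomodule $\C\ot_A K$, and a class closed merely under extensions and direct summands has no reason to contain quotients of its objects by coinduced subobjects---indeed, as you yourself observe, the sequence $0\rarrow\C\ot_A K\rarrow\widetilde Q\rarrow\cJ\rarrow0$ is non-split in general. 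The concluding appeal to ``iterating the construction down the whole resolution and rearranging the successive syzygies'' is not an argument: nothing in your setup produces a filtration of $\cJ$ itself, so the substantial inclusion remains unproved and the induction never closes.

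The paper avoids the induction altogether and, crucially, builds the auxiliary extension with the pieces on the \emph{other} side, in the style of the Eklof--Trlifaj argument~\cite{ET}. In part~(b): write $\P$ as the quotient of the induced contramodule $\gG=\Hom_A(\C,\P)$ by the kernel $\gK$ of the contraaction; by~\cite[Lemma~3.1.3(b)]{Psemi} (this is where the finite homological dimension of $A$ enters, bounding the length of the filtration) there is an embedding $\gK\rarrow\E$ into an $A$\+injective contramodule $\E$ with $\E/\gK$ a finitely iterated extension of induced contramodules; the pushout $\F=(\E\oplus\gG)/\gK$ is then itself a finitely iterated extension of induced contramodules and sits in a short exact sequence $0\rarrow\E\rarrow\F\rarrow\P\rarrow0$. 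The splitting tool your proposal never invokes is the Ext-orthogonality half of~\cite[Lemma~5.3.1]{Psemi}: for $\C/A$\+projective $\P$ and $A$\+injective $\E$ one has $\Ext^{\C,\.1}(\P,\E)=0$, so this last sequence splits and $\P$ is a direct summand of $\F$. Dually in part~(a), one must exhibit $\cJ$ as a \emph{subobject}, with $A$\+projective cokernel, of a comodule that is a finitely iterated extension of coinduced ones; then $\Ext_\C^1$ from an $A$\+projective comodule into the $\C/A$\+injective comodule $\cJ$ vanishes and splits the sequence. Your pullback produced precisely the opposite configuration---the coinduced piece as the kernel and $\cJ$ as the quotient---which is why it cannot be completed by closure properties alone.
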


\begin{proof}
 This is the strengthening of the result of~\cite[Remark~9.1]{Psemi}
that one obtains by replacing the resolution technique
of~\cite[Lemma~9.1.2]{Psemi} with that of~\cite[second half of
the proof of Theorem~10]{ET}; see also~\cite[Corollary~B.2.4]{Pcosh}.

 To be more specific, let us prove part~(b).
 Let $\P$ be a left $\C$\+contramodule.
 The $\C$\+contraaction map $\gG=\Hom_A(\C,\P)\rarrow\P$ is a surjective
morphism of $\C$\+contra\-modules; let us denote its kernel by~$\gK$.
 According to~\cite[Lemma~3.1.3(b)]{Psemi}, there exists an injective
$\C$\+contramodule morphism $\gK\rarrow\E$ from $\gK$ into
an $A$\+injective left $\C$\+contramodule $\E$ such that the quotient
contramodule $\E/\gK$ is a finitely iterated extension of induced
$\C$\+contramodules.
 Denote by $\F$ the fibered coproduct $(\E\oplus\gG)/\gK$ of
the $\C$\+contramodules $\E$ and $\gG$ over~$\gK$; then $\F$ is
an extension of the $\C$\+contramodules $\E/\gK$ and $\gG$, and
there is a surjective morphism $\F\rarrow\P$ with the kernel~$\E$.

 Now suppose that the $\C$\+contramodule $\P$ is $\C/A$\+projective.
 Then the Ext group $\Ext^{\C,\.1}(\P,\E)$ in the abelian category
$\C\contra$ vanishes by~\cite[Lemma~5.3.1(b)]{Psemi}, hence
the $\C$\+contramodule $\P$ is a direct summand of a finitely
iterated extension of induced $\C$\+contramodules~$\F$.
 Notice that the length of the iterated extension in this construction
is bounded by the left homological dimension of the ring~$A$.
\end{proof}

 According to~\cite[Theorem~5.3]{Psemi}, the exact categories of
$\C/A$\+injective left $\C$\+comod\-ules and $\C/A$\+projective left 
$\C$\+contramodules are naturally equivalent
$$
 \C\comodl_{\C/A\dinj}\simeq\C\contra_{\C/A\dproj}.
$$
 The equivalence is provided by the functor $\Psi_\C\:\M\longmapsto
\Hom_\C(\C,\M)$ of $\C$\+comodule homomorphisms from the left
$\C$\+comodule $\C$ and the functor $\Phi_\C\:\P\longmapsto\C\ocn_\C\P$
of \emph{contratensor product} of left $\C$\+contramodules with
the right $\C$\+comodule~$\C$ \cite[Sections~0.2.6\+-7 and~5.1.1]{Psemi}
(cf.\ Section~\ref{tensor-adjusted-over-coalgebra};
see also~\cite[Section~5]{BBW}).
 This equivalence of exact categories can be viewed as an instance of
the \emph{$\infty$\+tilting-cotilting correspondence}
phenomenon~\cite[Example~6.7]{PS2}.

\subsection{Underived semico-semicontra correspondence}
\label{underived-semi}
 Let $\S$ be a semialgebra over a coalgebra $\C$ over a field~$k$
(see Section~\ref{over-semialgebras}).
 Assume that $\S$ is an injective left $\C$\+comodule and an injective
right $\C$\+comodule, so the categories of left $\S$\+semimodules and
left $\S$\+semicontramodules $\S\simodl$ and $\S\sicntr$ are abelian.
 The full subcategory $\S\simodl_{\C\dinj}$ of left $\S$\+semimodules
that are \emph{injective as left\/ $\C$\+comodules} is obviously
closed under extensions (and cokernels of injective morphisms) in
$\S\simodl$, while the full subcategory $\S\sicntr_{\C\dproj}$ of
left $\S$\+semicontra\-modules that are \emph{projective as left\/
$\C$\+contramodules} is closed under extensions (and kernels of
surjective morphisms) in $\S\sicntr$.
 Hence the full subcategories $\S\simodl_{\C\dinj}$ and
$\S\sicntr_{\C\dproj}$ inherit the exact category structures of
the abelian categories $\S\simodl$ and $\S\sicntr$.

 According to~\cite[Sections~0.3.7 and~6.2]{Psemi} (see
also~\cite{Plet}), the exact categories of $\C$\+injective left
$\S$\+semimodules and $\C$\+projective left $\S$\+semicontramodules
are naturally equivalent.
 The equivalence is provided by the functor $\Psi_\S\:\bM\longmapsto
\Hom_\S(\S,\bM)$ of $\S$\+semimodule homomorphisms from the left
$\S$\+semimodule $\S$ and the functor $\Phi_\S\:\bP\longmapsto
\S\Ocn_\S\bP$ of (\emph{semi})\emph{contratensor product} of
left $\S$\+semicontramodules with the right $\S$\+semimodule~$\S$
(cf.\ the definition of the contratensor product over a coalgebra
in Section~\ref{tensor-adjusted-over-coalgebra}).
 Furthermore, the equivalence of exact categories $\S\simodl_{\C\dinj}
\simeq\S\sicntr_{\C\dproj}$ forms a commutative diagram of functors with
the equivalence $\C\comodl_\inj\simeq\C\contra_\proj$ between
the additive categories of injective left $\C$\+comodules and
projective left $\C$\+contramodules, and the forgetful functors
$\S\simodl_{\C\dinj}\rarrow\C\comodl_\inj$ and
$\S\sicntr_{\C\dproj}\rarrow\C\contra_\proj$,
\begin{equation} \label{semi-co-contra} \dgARROWLENGTH=4em
\begin{diagram} 
\node{\S\simodl_{\C\dinj}}
\arrow[2]{e,tb,=}{\Psi_\S\ \ \rightarrow}{\leftarrow\ \ \Phi_\S}
\arrow{s}
\node[2]{\S\sicntr_{\C\dproj}} \arrow{s} \\
\node{\C\comodl_\inj}
\arrow[2]{e,tb,=}{\Psi_\C\ \ \rightarrow}{\leftarrow\ \ \Phi_\C}
\node[2]{\C\contra_\proj.}
\end{diagram}
\end{equation}
 The equivalence of exact categories in the upper line of
the diagram~\eqref{semi-co-contra} can be also viewed as an instance of
the $\infty$\+tilting-cotilting correspondence~\cite[Example~6.9]{PS2}.

 The particular case considered in Section~\ref{tate-harish-chandra},
with the semialgebra
$$
 \S^r_{\kap+\kap_0}(\g,\C)\simeq\S=\S^l_\kap(\g,\C)
$$
corresponding to a central extension $\kap\:(\g',\C)\rarrow(\g,\C)$
of Tate Harish-Chandra pairs satisfying the condition of
Theorem~\ref{tate-harish-chandra}, is especially notable.
 In this situation we obtain the commutative diagram
of equivalences of exact/additive categories and forgetful functors
\begin{equation} \label{o-co-contra} \dgARROWLENGTH=4em
\begin{diagram} 
\node{\sO_\kap(\g,\C)_{\C\dinj}}
\arrow[2]{e,tb,=}{\Psi_\S\ \ \rightarrow}{\leftarrow\ \ \Phi_\S}
\arrow{s}
\node[2]{\sO^\ctr_{\kap+\kap_0}(\g,\C)_{\C\dproj}} \arrow{s} \\
\node{\C\comodl_\inj}
\arrow[2]{e,tb,=}{\Psi_\C\ \ \rightarrow}{\leftarrow\ \ \Phi_\C}
\node[2]{\C\contra_\proj,}
\end{diagram}
\end{equation}
where $\sO_\kap(\g,\C)_{\C\dinj}\subset\sO_\kap(\g,\C)$ and
$\sO^\ctr_\kap(\g,\C)_{\C\dproj}\subset\sO^\ctr_\kap(\g,\C)$ denote
the full exact subcategories of $\C$\+injective Harish-Chandra
modules and $\C$\+projective Harish-Chandra contramodules with
the central charge~$\kap$.

\bigskip

 It was pointed out by Feigin--Fuchs~\cite{FF0}, \cite[Remark~2.4]{FF}
and Meurman--Frenkel--Rocha\+Caridi--Wallach~\cite{RCW} back in
the first half of 1980's that the categories of Verma modules
over the Virasoro algebra on any pair of complementary levels
$C=c$ and $C=26-c$ are anti-isomorphic.
 The above result extends this classical observation to the whole
exact subcategories of $\C$\+adjusted objects in the abelian
categories $\sO$ and~$\sO^\ctr$ over any Tate Harish-Chandra
pair satisfying the nondegeneracy condition.

 Indeed, consider the Tate Harish-Chandra pair
$(\g,\C)=\big(k((z))d/dz,\C(H)\big)$ over a field~$k$ of
characteristic~$0$ with the pro-algebraic subgroup $H$
corresponding to the compact open Lie subalgebra $\h=zk[[z]]d/dz
\subset k((z))d/dz$ as described in the beginning of
Section~\ref{tate-harish-chandra}.
 The group~$H$ acts in the Lie algebra~$\g$ by changing
the independent variable in the vector fields,
$a^{-1}\big(f(z)d/dz\big) = f(a(z))\,d/da(z)=f(a(z))/a'(z)\,d/dz$
for all $a\in H(k)$, \,$f\in k((z))$.

 A \emph{Verma module} over $\Vir$ is an $U(\Vir)$\+module induced
from a one-dimensional module $kv_0$ over the compact open subalgebra
$\h\oplus kC\subset\Vir$.
 The subalgebra $z^2k[[z]]d/dz\subset\h$ topologically spanned
by the basis vectors $L_n$ with $n\ge1$ acts by zero in~$kv_0$,
while the generators $C$ and $L_0$ act by certain scalars $c$
and~$h_0\in k$.
 These modules belong to the categories $\sO_\kap(\g,\C)$ with
the respective central charges $\kap=c$, but, as such, do \emph{not}
play any noticeable role in our theory.
 Indeed, they do \emph{not} belong to the subcategories
$\sO_\kap(\g,\C)_{\C\dinj}\subset\sO_\kap(\g,\C)$, being freely
generated as modules over a Lie subalgebra \emph{complementary}
to~$\h$ in~$\Vir$ and having no particular adjustedness properties
as comodules over $\C=\C(H)$.

 The \emph{contragredient} Verma modules are relevant for us instead.
 Notice first of all that the pro-algebraic group $H$ contains
a subgroup whose group of points consists of the coordinate
changes $z\longmapsto a_1z$ multiplying the coordinate~$z$ with
a scalar factor $a_1\in k$.
 The category of comodules (as well as contramodules) over
(the coalgebra of) this algebraic group, which is isomorphic to
the multiplicative group $\mathbb G_m$, is equivalent to
the category of graded $k$\+vector spaces.
 In particular, the Verma modules $\M(c,h_0)$ over $\Vir$ carry
the grading by the weights of the semisimple operator~$L_0$.

 Furthermore, the discrete Lie subalgebra $\bigoplus_n kL_n\oplus C
\subset\Vir$ spanned \emph{nontopologically} by the generators
$C$ and $L_n\in\Vir$ has an involutive automorphism~$\sigma$ given
by the rules $\sigma(L_n)=-L_{-n}$ and $\sigma(C)=-C$.
 The \emph{contragredient Verma module} $\M(c,h_0)\dual$ is
the graded dual vector space to $\M(c,h_0)$ endowed by the induced
action of the Lie subalgebra $\bigoplus_n kL_n\oplus C$, twisted
by the automorphism~$\sigma$ and then extended to the whole 
Lie algebra~$\Vir$ by continuity.
 Both the passage to the dual module and the involution~$\sigma$
change the sign of the central charge; hence $\M(c,h_0)\dual$
is again a $\Vir$\+module with the central charge~$c$.
 So the full subcategory of Verma modules on the level $\kap=c$
(with a varying parameter~$h_0\in k$) in $\sO_\kap(\g,\C)$ is
anti-isomorphic to the full subcategory of contragredient Verma
modules in the same category $\sO_\kap(\g,\C)$.

 Denoting by $H_+\subset H$ the pro-unipotent pro-algebraic subgroup
whose points are the power series $a(z)=1+a_2z^2+a_3z^3+\dotsb$ with
$a_n\in k$ for $n\ge2$, the contragredient Verma modules can be
described as precisely those objects of the category $\sO_\kap(\g,\C)$
whose structures of $\C(H_+)$\+comodules are those of cofree
comodules with one cogenerator. 
 Alternatively, the contragredient Verma modules are distinguished
among all the objects of $\sO_\kap(\g,\C)$ by the property that
their underlying $\C$\+comodules are the ``minimal possible'',
i.~e., \emph{indecomposable} injective comodules.

 Finally, consider the category of Harish-Chandra \emph{contramodules}
over $(\g,\C)$ with the central charge $\kap+\kap_0=-26+c$ whose
$\C(H_+)$\+contramodule structures are those of free contramodules
with one generator, or equivalently, whose underlying
$\C$\+contramodules are indecomposable projective contramodules.
 This full subcategory in $\sO_{\kap+\kap_0}^\ctr(\g,\C)$ is
anti-equivalent to the category of contragredient Verma modules
with the central charge $26-c$ via the linear duality functor
$\P=\N^*=\Hom_k(\N,k)$.
 It is also \emph{equivalent} to the category of Verma modules with
the central charge $26-c$ via the functor assigning to a Verma
module $\M$ the \emph{infinite product} $\P=\prod_{n\in\Z}\M_n$ of
its grading components, endowed with the $\sigma$\+twisted
action of the Virasoro Lie algebra and a natural
$\C$\+contramodule structure (cf.\ the discussion of graded
contramodules in~\cite[Section~11.1.1]{Psemi}
and~\cite[Remark~2.2]{Pkoszul}).

 So the classical duality between the categories of Verma modules
on the complementary levels appears in our setting, after
the na\"\i ve twisting and linear duality are taken into account,
as the restriction of the equivalence of exact
categories~\eqref{o-co-contra} to the full subcategories
of objects ``of the minimal possible size''.
 (Cf.~\cite[Corollary and Remark~D.3.1]{Psemi}, where a discussion
of these results in the \emph{derived} comodule-contramodule
correspondence context can be found; see
also~\cite[Sections~0.2.6\+-7]{Psemi} for relevant counterexamples 
demonstrating how exotic derived categories appear
in the derived co-contra correspondence.)

\bigskip

 Now let us explain, as it was promised in
Sections~\ref{over-semialgebras} and~\ref{tate-harish-chandra},
how to use the equivalence of exact categories~\eqref{semi-co-contra}
in order to construct injective objects in the category
$\S\simodl$ and projective objects in the category $\S\sicntr$.
 As above, we assume that the semialgebra $\S$ is an injective left
and right $\C$\+comodule.

\begin{prop}
\textup{(a)} There are enough injective objects in the abelian
category of left\/ $\S$\+semimodules.
 A left\/ $\S$\+semimodule is injective if and only if it is
a direct summand of an\/ $\S$\+semimodule of the form\/
$\Phi_\S(\Hom_k(\S,V))$, where $V$ is a $k$\+vector space. \par
\textup{(b)} There are enough projective objects in the abelian
category of left\/ $\S$\+semicontra\-modules.
 A left\/ $\S$\+semicontramodule is projective if and only if it is
a direct summand of an\/ $\S$\+semicontramodule of the form\/
$\Psi_\S(\S\ot_kV)$, where $V$ is a $k$\+vector space.
\end{prop}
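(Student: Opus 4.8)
The plan is to prove the two parts in parallel, deducing everything from the single claim that the functors $V\longmapsto\Phi_\S(\Hom_k(\S,V))$ and $V\longmapsto\Psi_\S(\S\ot_kV)$ are, respectively, right adjoint to the forgetful functor $\S\simodl\rarrow k\vect$ and left adjoint to the forgetful functor $\S\sicntr\rarrow k\vect$. First I would check that these objects are well defined and land in the adjusted subcategories of the diagram~\eqref{semi-co-contra}. Since $\S$ is an injective right $\C$\+comodule it is a direct summand of a cofree one, so $\Hom_k(\S,V)$ is a direct summand of a free $\C$\+contramodule and hence projective; thus $\Hom_k(\S,V)\in\S\sicntr_{\C\dproj}$, and $\Phi_\S(\Hom_k(\S,V))\in\S\simodl_{\C\dinj}$ is a genuine semimodule. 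Dually, $\S\ot_kV\simeq\S\oc_\C(\C\ot_kV)$ is injective as a left $\C$\+comodule, so $\S\ot_kV\in\S\simodl_{\C\dinj}$ and $\Psi_\S(\S\ot_kV)\in\S\sicntr_{\C\dproj}$.

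The core step is to establish natural isomorphisms $\Hom_\S(\bM,\Phi_\S(\Hom_k(\S,V)))\simeq\Hom_k(\bM,V)$ for all $\bM\in\S\simodl$ and $\Hom^\S(\Psi_\S(\S\ot_kV),\bP)\simeq\Hom_k(V,\bP)$ for all $\bP\in\S\sicntr$. I would build the comparison maps from the contratensor--Hom adjunction over $\S$ (the semialgebra lift of the isomorphism $\Hom_k(\N\ocn_\C\P\;V)\simeq\Hom^\C(\P,\Hom_k(\N,V))$ from Section~\ref{tensor-adjusted-over-coalgebra}): the identity of $\Hom_k(\S,V)$ then produces a canonical evaluation morphism $\Phi_\S(\Hom_k(\S,V))\rarrow V$, and dually a canonical coevaluation $V\rarrow\Psi_\S(\S\ot_kV)$. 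To see that the induced maps are isomorphisms I would first verify them on the semi-induced semimodules $\S\oc_\C\L$ and the induced semicontramodules, where the semi-induction adjunction $\Hom_\S(\S\oc_\C\L,\bM)\simeq\Hom_\C(\L,\bM)$ reduces the assertion to the cofree/free comodule--contramodule adjunctions over the coalgebra $\C$; then I would extend to arbitrary objects by resolving them through such (co)induced objects and comparing the resulting exact sequences by the five lemma.

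Granting these two isomorphisms, the stated conclusions are formal. Since $\Hom_k(\bM,V)$ is exact in $\bM$ (the forgetful functor to $k\vect$ is exact and $k$ is a field), the object $\Phi_\S(\Hom_k(\S,V))$ is injective, and dually $\Psi_\S(\S\ot_kV)$ is projective. Taking $V$ to be the underlying space of $\bM$, the adjunction unit $\bM\rarrow\Phi_\S(\Hom_k(\S,\bM))$ is a monomorphism, because a triangle identity forces its underlying $k$\+linear map to be a split injection and the forgetful functor is faithful and exact; this yields enough injectives in $\S\simodl$, and dually the counit $\Psi_\S(\S\ot_k\bP)\rarrow\bP$ yields enough projectives in $\S\sicntr$. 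Finally, any injective $\S$\+semimodule embeds into some $\Phi_\S(\Hom_k(\S,V))$ and the embedding splits, so the injectives are precisely the direct summands of these objects, and the projective case is entirely dual.

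The main obstacle will be the core step for \emph{arbitrary}, non-adjusted, semimodules and semicontramodules: the comparison maps are transparent on (co)induced objects, but promoting them to natural isomorphisms on all objects requires the semialgebra-level mutual associativity isomorphisms, i.e.\ the lifts of Propositions~1--3 of Section~\ref{tensor-adjusted-over-coalgebra}, whose hypotheses are met only because $\S$ is coflat and coprojective as a $\C$\+comodule---which is exactly the content of the Lemma of that section identifying injectivity with coflatness and coprojectivity. Carrying the naturality faithfully through these reductions, rather than any individual computation, is the delicate point.
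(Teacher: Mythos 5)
Your proposal is sound, but it takes a genuinely different route from the paper's proof, which never establishes your global adjunctions. The paper works inside the exact categories $\S\simodl_{\C\dinj}$ and $\S\sicntr_{\C\dproj}$: it first identifies their projective (resp.\ injective) objects as the ``semiprojective'' semimodules, i.e.\ summands of $\S\ot_kV$, and the ``semiinjective'' semicontramodules, i.e.\ summands of $\Hom_k(\S,V)$, by observing that $\Hom_\S(\S\ot_kV,{-})\simeq\Hom_\C(\C\ot_kV,{-})$ is exact on $\C$\+injective semimodules; it then transports these through the equivalence $\Phi_\S$, $\Psi_\S$ to conclude that the injectives of the exact category $\S\simodl_{\C\dinj}$ are exactly the summands of $\Phi_\S(\Hom_k(\S,V))$ (dually on the contra side); and finally it descends to the abelian categories using two further ingredients: every semimodule embeds into a $\C$\+injective one and every semicontramodule is a quotient of a $\C$\+projective one (this is where \cite[Lemmas~1.3.3 and~3.3.3]{Psemi} and, crucially, the hard implication of the Lemma of Section~\ref{tensor-adjusted-over-coalgebra}---coinjective contramodules are projective---enter), plus a splitting argument showing that injectives of the exact subcategory remain injective in the ambient abelian category. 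Your route replaces all of this bootstrapping by the formal consequences of the adjunctions $\Hom_\S(\bM,\Phi_\S(\Hom_k(\S,V)))\simeq\Hom_k(\bM,V)$ and $\Hom^\S(\Psi_\S(\S\ot_kV),\bP)\simeq\Hom_k(V,\bP)$: the unit and counit give the embeddings and surjections directly, with adjusted underlying co/contramodules for free. What your approach buys: it bypasses the quotient lemma and the coinjective-implies-projective theorem entirely (your objects $\Hom_k(\S,V)$ and $\S\ot_kV$ are $\C$\+adjusted for elementary ``summand of cofree/free'' reasons), and it records the stronger---and, as far as I can check, true---statement that $V\longmapsto\Phi_\S(\Hom_k(\S,V))$ is right adjoint to the forgetful functor $\S\simodl\rarrow k\vect$, which the paper does not state. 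What it costs: the verification of the adjunction on \emph{all} objects, which needs the commutation of $\Phi_\S$, $\Psi_\S$ with the forgetful functors in diagram~\eqref{semi-co-contra} to compute the underlying comodule of $\Phi_\S(\Hom_k(\S,V))$ as $\Phi_\C$ of a projective contramodule, the coalgebra-level identifications on free/cofree objects, and the presentation/five-lemma bootstrap (available because the semiaction map $\S\oc_\C\bM\rarrow\bM$ is always surjective and the semicontraaction map $\bP\rarrow\Cohom_\C(\S,\bP)$ always injective, both being split over~$\C$). That naturality bookkeeping, which you correctly flag as the delicate point, is real work, roughly comparable in volume to the paper's argument.
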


\begin{proof}
 A left semimodule $\bM$ over a semialgebra $\S$ over a coalgebra
$\C$ over a field~$k$ is called \emph{semiprojective} if it is
a direct summand of the $\S$\+semimodule $\S\ot_k V$ for some
$k$\+vector space~$V$.
 Similarly, a left semicontramodule $\bP$ over $\S$ is called
\emph{semiinjective} if it is a direct summand of
the $\S$\+semicontramodule $\Hom_k(\S,V)$ for some vector space~$V$
\cite[Sections~3.4.3 and~9.2]{Psemi}.

 The semiprojective semimodules are projective objects in
the exact category of $\C$\+injective left $\S$\+semimodules,
as the functor
\begin{multline*}
 \bM\longmapsto\Hom_\S(\S\ot_kV\;\bM)\simeq\Hom_k(V,\Psi_\S(\bM)) \\
 \simeq\Hom_k(V,\Psi_\C(\bM))=\Hom_k(V,\Hom_\C(\C,\bM))\simeq
 \Hom_\C(\C\ot_kV\;\bM)
\end{multline*}
is exact on $\S\simodl_{\C\dinj}$.
 For any $\C$\+injective left $\S$\+semimodule $\bM$,
the semiaction morphism $\S\oc_\C\bM\rarrow\bM$ is an admissible
epimorphism in $\S\simodl_{\C\dinj}$ from a semiprojective left
$\S$\+semimodule $\S\oc_\C\bM$ onto~$\bM$; so the projective
objects of the category $\S\simodl_{\C\dinj}$ are precisely
the semiprojective semimodules.

 Similarly, the semiinjective semicontramodules are injective objects
in the exact category of $\C$\+projective left $\S$\+semicontramodules,
as the functor
\begin{multline*}
 \bP\longmapsto\Hom^\S(\bP\;\Hom_k(\S,V))
 \simeq\Hom_k(\Phi_\S(\bP),V) \\ \simeq
 \Hom_k(\Phi_\C(\bP),V)\simeq\Hom^\C(\bP\;\Hom_k(\C,V))
\end{multline*}
is exact on $\S\sicntr_{\C\dproj}$.
 For any $\C$\+projective left $\S$\+semicontramodule $\bP$,
the semicontraaction morphism $\bP\rarrow\Cohom_\C(\S,\bP)$ is
an admissible monomorphism in $\S\sicntr_{\C\dproj}$ from $\bP$ into
a semiinjective left $\S$\+semicontramodule $\Cohom_\C(\S,\bP)$.
 So the injective objects of the category $\S\sicntr_{\C\dproj}$ are
precisely the semiinjective semicontramodules.

 The functors $\Psi_\S$ and $\Phi_\S$ being mutually inverse
equivalences between the exact categories $\S\simodl_{\C\dinj}$ and
$\S\sicntr_{\C\dproj}$, it follows that the $\S$\+semimodules
$\Phi_\S(\Hom_k(\S,V))$ and their direct summands are the injective
objects of the exact category $\S\simodl_{\C\dinj}$, while
the $\S$\+semicontramodules $\Psi_\S(\S\ot_kV)$ and their direct
summands are the projective objects of the exact category
$\S\sicntr_{\C\dproj}$.
 Furthermore, any left $\S$\+semimodule can be embedded into
a $\C$\+injective $\S$\+semimodule.
 This assertion is provided by the combination of the construction
of~\cite[Lemma~1.3.3]{Psemi} (see also~\cite{Plet}) with the result
of Lemma~\ref{tensor-adjusted-over-coalgebra}(a) above.

 Similarly, any left $\S$\+semicontramodule is the quotient
contramodule of a $\C$\+pro\-jective $\S$\+semicontramodule.
 To prove this fact, one has to combine the construction
of~\cite[Lemma~3.3.3]{Psemi} (which was present already
in~\cite{Plet}) with the assertion of~\cite[Lemma~5.2 or~5.3.2]{Psemi}
whose proof we reproduced, in our generality of coalgebras over fields,
in Lemma~\ref{tensor-adjusted-over-coalgebra}(b) above.
 Therefore, any left $\S$\+semimodule can be embedded into
an $\S$\+semimodule of the form $\Phi_\S(\Hom_k(\S,V))$, and
any left $\S$\+semicontramodule is the quotient contramodule of
an $\S$\+semicontramodule of the form $\Psi_\S(\S\ot_kV)$.

 Finally, in order to show that any injective object $\bcJ$ of
the exact category $\S\simodl_{\C\dinj}$ is also an injective
object in the abelian category $\S\simodl$, suppose that we are
given an injective morphism $\bcJ\rarrow\bL$ from $\bcJ$ into
a left $\S$\+semimodule~$\bL$.
 Let $\bL\rarrow\bM$ be an injective morphism from $\bL$ into
a $\C$\+injective left $\S$\+semimodule~$\bM$.
 Then $\bcJ$ is a direct summand in $\bM$, so there is a projection
$\bM\rarrow\bcJ$ splitting the embedding $\bcJ\rarrow\bM$.
 Restricting this projection to the subsemimodule $\bL\subset\bM$,
we see that the embedding $\bcJ\rarrow\bL$ also splits.

 Similarly, in order to prove that any projective object $\bF$ of
the exact category $\S\sicntr_{\C\dproj}$ is also a projective
object in the abelian category $\S\sicntr$, suppose that we
are given a surjective morphism $\bQ\rarrow\bF$ onto $\bF$ from
a left $\S$\+semi\-contramodule~$\bQ$.
 Let $\bP\rarrow\bQ$ be a surjective morphism onto $\bQ$ from
a $\C$\+projective left $\S$\+semicontramodule~$\bP$.
 Then $\bF$ is a direct summand in $\bP$, so there is a section
$\bF\rarrow\bP$ splitting the surjection $\bP\rarrow\bF$.
 Composing this section with the projection $\bP\rarrow\bQ$,
we obtain a section $\bF\rarrow\bQ$ showing that the surjection
$\bQ\rarrow\bF$ also splits (cf.~\cite[proof of Lemma~9.2.1]{Psemi}).
\end{proof}

\subsection{Co-contra correspondence over topological rings}
\label{underived-top-rings}
 In this section we discuss generalizations of the equivalence
between the additive categories of injective comodules and projective
contramodules over a coalgebra over a field to topological rings $\R$
more complicated than the linearly compact topological algebras
(which are dual to coalgebras over fields).
 For examples of \emph{derived} co-contra correspondence over
topological rings the reader is referred to~\cite[Sections~C.1,
C.5, and~D.2]{Pcosh} (see also~\cite[Sections~8 and~10.3]{PS1}).

 First let us suppose that $\R$ is a pro-Artinian commutative ring
(see Section~\ref{pro-artinian}).
 By the definition, an \emph{$\R$\+comodule} is an ind-object in
the abelian category opposite to the category of discrete
$\R$\+modules of finite length~\cite[Section~1.4]{Pweak}.
 There is a natural contravariant functor $\M\longmapsto\M^\rop$
assigning to every $\R$\+comodule a pro-object in the category
of discrete $\R$\+modules of finite length.
 Furthermore, there is a distinguished object $\C=\C(\R)$ in
the category $\R\comodl$ of $\R$\+comodules such that
$\C^\rop=\R$; the functor $\M\longmapsto\M^\rop$, viewed as
a contravariant functor from $\R\comodl$ to the category of
abelian groups, is represented by $\C(\R)$.
 A \emph{cofree} $\R$\+comodule is a direct sum of copies of
the $\R$\+comodule~$\C$; the cofree $\R$\+comodules are injective,
and any $\R$\+comodule can be embedded into a cofree one.

 According to Matlis' duality
(see~\cite[Corollary~4.3]{Mat1} or~\cite[Theorem~18.6]{Mats}),
choosing an injective hull of the irreducible module over
an Artinian commutative local ring $R$ fixes an anti-equivalence
of the category of $R$\+modules of finite length with itself.
 Passing to the inductive limit of such auto-anti-equivalences over
all the discrete quotient rings of a pro-Artinian commutative
ring~$\R$, one obtains an auto-anti-equivalence of the category
of discrete $\R$\+modules of finite length depending on
the choice of a ``minimal injective cogenerator'' of the abelian
category $\R\discr$, i.~e., an injective hull of the direct sum
of the irreducible discrete $\R$\+modules.
 Hence choosing such an injective object $\cE\in\R\discr$
identifies the category of discrete $\R$\+modules with the category
of $\R$\+comodules; the equivalence of categories $\R\comodl\simeq
\R\discr$ takes the object $\C\in\R\comodl$ to the object~$\cE$
\cite[Section~1.9]{Pweak}.

 For any pro-Artinian commutative ring $\R$, the categories of
injective $\R$\+comodules and projective $\R$\+contramodules are
naturally equivalent, $\R\comodl_\inj\simeq\R\contra_\proj$.
 The equivalence is provided by the functor $\Psi_\R\:\M\longmapsto
\Hom_\R(\C,\M)$ of $\R$\+comodule homomorphisms from
the $\R$\+comodule $\C(\R)$ and the functor $\Phi_\R\:\P\longmapsto
\C\ocn_\R\P$ of \emph{contratensor product} of $\R$\+contramodules
with the $\R$\+comodule $\C(\R)$.
 It assigns the free $\R$\+contramodule $\R[[X]]$ to the cofree
$\R$\+comodule $\bigoplus_X\C(\R)$ for any set~$X$.
 This result can be found in~\cite[Section~1.5]{Pweak}.

\bigskip

 More generally, let $\R$ be a \emph{right pseudo-compact}
topological ring, i.~e., a complete, separated topological ring
with a base of neighborhoods of zero formed by open right
ideals~$\J$ for which the right $\R$\+modules $\R/\J$ have
finite length~\cite[\SS\,IV.3]{Gab}.
 We define \emph{left\/ $\R$\+comodules} as the ind-objects in
the abelian category opposite to the category of discrete right
$\R$\+modules of finite length.
 The category of left $\R$\+comodules $\R\comodl$ is anti-equivalent
to the category of \emph{pseudo-compact right\/ $\R$\+modules},
i.~e., pro-objects in the category of discrete right $\R$\+modules
of finite length or, which is the same, complete and separated
topological right $\R$\+modules with a base of neighborhoods of
zero formed by open submodules with discrete quotient modules of
finite length.
 As above, we denote this anti-equivalence by $\M\longmapsto\M^\rop$.

 There is a distinguished left $\R$\+comodule $\C=\C(\R)$ for which
$\C^\rop=\R$; the functor $\M\longmapsto\M^\rop$, viewed as
a contravariant functor from $\R\comodl$ to the category of abelian
groups, is represented by $\C(\R)$.
 A \emph{cofree} left $\R$\+comodule is a direct sum of copies of
the $\R$\+comodule~$\C$; the cofree $\R$\+comodules are injective,
and any left $\R$\+comodule can be embedded into a cofree one.
 The abelian category $\R\comodl$ is locally
finite~\cite[\SS\,II.4]{Gab}; and the choice of an injective
cogenerator $\cE$ in any locally finite abelian category $\sA$ fixes
an equivalence between $\sA$ and the category of left comodules
over the topological ring $\R=\End_\sA(\cE)^\rop$ opposite to the ring
of endomorphisms of the object $\cE\in\sA$.
 The topology on the ring $\R$ is defined to have a base of
neighborhoods of zero consisting of (the right ideals opposite to)
the annihilators of subobjects of finite length $\L\subset\cE$.
 The equivalence of categories $\sA\simeq\R\comodl$ assigns
the object $\C\in\R\comodl$ to the object~$\cE$
\cite[Corollaire~VI.4.1]{Gab}.

 For any right pseudo-compact topological ring~$\R$, the categories
of injective left $\R$\+comodules and projective left
$\R$\+contramodules are naturally equivalent,
$$
 \R\comodl_\inj\simeq\R\contra_\proj.
$$
 Indeed, the injective left $\R$\+comodules are the direct summands
of the cofree $\R$\+co\-modules $\bigoplus_X\C(R)$, and the projective
left $\R$\+contramodules are the direct summands of the free
$\R$\+contramodules $\R[[X]]$, where $X$ denotes arbitrary sets.
 It remains to compute the groups of morphisms
between the cofree $\R$\+comodules and the free $\R$\+contramodules
in terms of projective limits over the subcomodules of finite
length $\L\subset\C$ and the open right ideals $\J\subset\R$,
\begin{multline*}\textstyle
 \Hom_\R\big(\bigoplus_X\C\;\bigoplus_Y\C\big)\simeq
 \prod_X\Hom_\R\big(\C\;\bigoplus_Y\C\big)\simeq
 \prod_X\varprojlim_{\L\subset\C}\Hom_\R\big(\L\;\bigoplus_Y\C\big) \\
 \textstyle\simeq
 \prod_X\varprojlim_{\L\subset\C}\bigoplus_Y\Hom_\R(\L,\C)\simeq
 \prod_X\varprojlim_{\J\subset\R}\bigoplus_Y\R/\J 
 =\prod_X\varprojlim_{\J\subset\R}\R/\J[Y] \\
 \textstyle\simeq
 \prod_X\R[[Y]]\simeq\prod_X\Hom^\R\big(\R,\R[[Y]]\big)\simeq
 \Hom^\R\big(\R[[X]],\R[[Y]]\big),
\end{multline*}
in order obtain an isomorphism of the categories they form.
 One also has to check that these isomorphisms agree with
the compositions of morphisms in the two categories.

 In other words, we can conclude that the additive categories of
projective left $\R$\+contramodules and projective pseudo-compact right
$\R$\+modules are \emph{anti-equivalent} (cf.~\cite[the end
of Remark~A.3]{Psemi}).

\bigskip

 One would like to generalize this equivalence from locally finite to
locally Noetherian abelian categories, i.~e., abelian categories
satisfying the axiom Ab5 and admitting a set of generators consisting
of Noetherian objects, or equivalently, Ab5-categories where every
object is the union of its Noetherian subobjects and isomorphism
classes of Noetherian objects form
a set~\cite[\SS\,II.4]{Gab} (cf.~\cite{Kra}).

\begin{rem}
 A remark at the end of~\cite[\SS\,IV.3]{Gab} suggests considering
topological rings $\R$ with a base of neighborhoods of zero formed
by (say, right) ideals $\J$ such that the quotient modules $\R/\J$
are Artinian, and topological right $\R$\+modules with a base of
neighborhoods of zero formed by open $\R$\+submodules with Artinian
quotient modules.
 Then the opposite abelian category $\sE(\R)$ to such category of
$\R$\+modules is locally Noetherian, and the object $\C$ opposite to
the right $\R$\+module $\R$ is injective in it.

 However, the direct summands of direct sums of copies of the object
$\C$ do \emph{not} exhaust the class of injective objects in
$\sE(\R)$, as one can see already in the example of the topological
ring $\R=\Z_l$ with the Artinian discrete right $\R$\+module
$\mathbb Q_l/\Z_l$, which admits no surjective continuous
morphisms from topological products of copies of the right
$\R$\+module~$\R$.
 Furthermore, given a locally Noetherian abelian category $\sA$,
choosing an injective object $\cE\in\sA$ such that all
the injectives in $\sA$ are direct summands of the direct sums
of copies of $\cE$ leads to a topological ring
$\R=\End_\sA(\cE)^\rop$ which does \emph{not} satisfy the above
Artinianness condition in general.
 
 Indeed, it suffices to take $\sA=\Ab$ and $\cE=\mathbb R/\Z$;
then the Noetherian object $\Z\in\sA$ is embeddable into $\cE$
by means of any irrational number in $\mathbb R/\Z$, hence
the discrete right $\R$\+module $\Hom_\Ab(\Z,\mathbb R/\Z)=
\mathbb R/\Z$ is the quotient module of
$\R=\End_\Ab(\mathbb R/\Z)^\rop$ by a certain open
right ideal, and it is \emph{not} an Artinian module.
 \emph{Nor} does the functor assigning the $\R$\+module
$\Hom_\Ab(\Z,\mathbb R/\Z)$ to the abelian group $\Z$ appear
anywhere close to being an anti-equivalence of abelian categories.
 That is why we choose a different path below.
\end{rem}

 Let us start from an arbitrary locally Noetherian abelian category~$\sA$.
 Recall that, being a Grothendieck abelian category (an Ab5-category
with a set of generators), any locally Noetherian category has
enough injectives~\cite[N$^{\mathrm o}$~1.10]{GrToh}.

\begin{thm}
 For any locally Noetherian abelian category\/ $\sA$ there exists 
a unique abelian category\/ $\sB$ with enough projectives such
that the full additive subcategories of injective objects in\/ $\sA$
and projective objects in\/ $\sB$ are (covariantly) equivalent,
$$
 \sA_\inj\simeq\sB_\proj.
$$
 All the infinite direct sums and products exist in the abelian
categories\/ $\sA$ and\/ $\sB$, and both the subcategories of injective
objects in\/ $\sA$ and projective objects in\/ $\sB$ are closed under
both the infinite direct sums and the infinite products.
\end{thm}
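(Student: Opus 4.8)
The plan is to realize $\sB$ as a category of finitely presented functors on the additive category $\mathcal P:=\sA_\inj$ of injective objects of $\sA$, and to deduce uniqueness from the general principle that an abelian category with enough projectives is recovered from its additive category of projectives. First I would record the structural input. As a Grothendieck category, $\sA$ is complete and cocomplete and has enough injectives \cite{GrToh}. The locally Noetherian hypothesis is used through the Matlis--Faith--Gabriel theorem: in a locally Noetherian Grothendieck category arbitrary direct sums of injective objects are injective (and, by Gabriel's structure theory, every injective is a direct sum of indecomposable injectives whose isomorphism classes form a set). Hence $\mathcal P=\sA_\inj$ is an additive, idempotent-complete category closed in $\sA$ under arbitrary products (automatic in any abelian category) and arbitrary coproducts (this is the one essential use of local Noetherianity). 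I would also check that $\mathcal P$ has \emph{weak kernels}: given $f\colon E\rarrow E'$ in $\mathcal P$, form $\ker f$ in $\sA$, choose an embedding $\ker f\hookrightarrow K$ into an injective, and extend the inclusion $\ker f\hookrightarrow E$ along $\ker f\hookrightarrow K$ using injectivity of $E$; the resulting map $K\rarrow E$ is a weak kernel of $f$.

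Next I would set $\sB:=\operatorname{mod}\text{-}\mathcal P$, the category of finitely presented functors $\mathcal P^{\sop}\rarrow\Ab$, that is, cokernels of morphisms between representable functors $h_E=\Hom_\sA(-,E)$. By Freyd's theorem, the existence of weak kernels in $\mathcal P$ makes $\sB$ abelian; the Yoneda embedding identifies the idempotent-complete category $\mathcal P$ with the full subcategory of projective objects of $\sB$, and every object of $\sB$ has a two-term presentation by representables, so $\sB$ has enough projectives and $\sA_\inj\simeq\sB_\proj$. Uniqueness is then formal: if $\sB'$ is any abelian category with enough projectives and $\sB'_\proj\simeq\mathcal P$, then sending each object to the restriction to $\sB'_\proj$ of the functor it corepresents, and using a projective presentation to see that this restriction is finitely presented, yields an equivalence $\sB'\simeq\operatorname{mod}\text{-}(\sB'_\proj)\simeq\sB$.

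The remaining and most delicate point is the behaviour of infinite products and coproducts in $\sB$ together with the closure of $\sB_\proj$ under them. Products in $\sB$ are computed pointwise: since products are exact in $\Ab$, a pointwise product of finitely presented functors is again the cokernel of a map of representables $h_{\prod P^1_\alpha}\rarrow h_{\prod P^0_\alpha}$ (using that $\mathcal P$ is closed under products and that $\prod_\alpha h_{P_\alpha}=h_{\prod_\alpha P_\alpha}$), so it lies in $\sB$ and is projective when the factors are. Coproducts are the crux. I would first prove that for every finitely presented $F$ and every family $(E_\alpha)$ in $\mathcal P$ there is a natural isomorphism $F\big(\bigoplus_\alpha E_\alpha\big)\simeq\prod_\alpha F(E_\alpha)$: for representable $F$ this is the universal property of the coproduct $\bigoplus_\alpha E_\alpha\in\mathcal P$, and for a general presentation it follows by applying exactness of products in $\Ab$ to the pointwise right-exact sequence. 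This identity says precisely that $h_{\bigoplus_\alpha E_\alpha}$ corepresents $F\mapsto\prod_\alpha F(E_\alpha)$, so the coproduct of the $h_{E_\alpha}$ exists in $\sB$ and equals the representable $h_{\bigoplus_\alpha E_\alpha}$; since coproducts commute with cokernels, coproducts of arbitrary finitely presented functors then exist as well. Thus $\sB$ has all infinite direct sums and products, and $\sB_\proj$ is closed under both.

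I expect the main obstacle to be exactly this coproduct analysis, since closure of the projectives under infinite direct sums is a genuinely non-formal phenomenon---it fails in a general abelian category with enough projectives---and it is here that the locally Noetherian hypothesis is indispensable, through the closure of $\sA_\inj$ under coproducts. A secondary technical matter to dispatch is set-theoretic: although $\mathcal P=\sA_\inj$ is a large category, it is controlled by the set of isomorphism classes of indecomposable injectives, so $\sB$ is locally small and the functor category $\operatorname{mod}\text{-}\mathcal P$ is well defined.
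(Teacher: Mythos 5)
Your route --- Freyd's category $\operatorname{mod}\text{-}\mathcal P$ of finitely presented functors on $\mathcal P=\sA_\inj$ --- is genuinely different from the paper's, which realizes $\sB$ concretely as the category $\R\contra$ of left contramodules over the ring $\R=\End_\sA(\cE)^\rop$ of a suitably chosen injective object $\cE$, topologized by the annihilators of Noetherian subobjects; and your uniqueness argument, your product argument, and your corepresentability argument for coproducts are all correct. However, there is a genuine gap at the single point on which everything else rests: the existence of weak kernels in $\mathcal P$. The map you construct is not a weak kernel. Extending the inclusion $\ker f\hookrightarrow E$ along $\ker f\hookrightarrow K$ by injectivity of $E$ produces a map $k\colon K\rarrow E$ that restricts to the inclusion on $\ker f$, but nothing forces $f\circ k=0$: the extension need not land in $\ker f$. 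Already for $\sA=\Ab$ and the quotient map $f\colon\mathbb Q\rarrow\mathbb Q/\Z$ one has $\ker f=\Z$; taking $K=\mathbb Q$, the only extension of $\Z\hookrightarrow\mathbb Q$ to an endomorphism of $\mathbb Q$ is the identity map, for which $f\circ\id=f\ne0$. (The true weak kernel here is $0\rarrow\mathbb Q$, since no nonzero morphism from a divisible group lands in $\Z$.) Without weak kernels Freyd's theorem does not apply, $\operatorname{mod}\text{-}\mathcal P$ need not be abelian, and your $\sB$ is not known to exist.

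What a weak kernel of $f$ really amounts to is an injective precover of $N=\ker f$: a morphism $q\colon K\rarrow N$ with $K$ injective such that every morphism from an injective object to $N$ factors through~$q$; then the composite $K\rarrow N\hookrightarrow E$ is a weak kernel. This is not a formal fact --- for module categories, the existence of injective precovers characterizes Noetherian rings, by a theorem of Enochs --- and it is the second, hidden place where local Noetherianity must enter your argument, beyond the closure of $\sA_\inj$ under coproducts. Fortunately, it follows from the structure theory you already recorded but did not use here: the isomorphism classes of indecomposable injectives form a set $\{U_i\}$, every injective object of $\sA$ is a direct sum of copies of the $U_i$, and direct sums of injectives are injective. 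Hence $K=\bigoplus_i\bigoplus_{g\in\Hom_\sA(U_i,N)}U_i$, with the tautological map $q\colon K\rarrow N$, is injective, and any morphism $T\rarrow N$ from an injective $T=\bigoplus_\beta T_\beta$ with indecomposable $T_\beta$ factors through $q$ componentwise. With this lemma inserted, your proof goes through; note that this precover construction is the precise analogue of the step in the paper's proof where every injective of $\sA$ is a direct summand of a direct sum of copies of the single object $\cE$ and the groups $\Hom_\sA(\bigoplus_X\cE,\bigoplus_Y\cE)$ are computed to be $\Hom^\R(\R[[X]],\R[[Y]])$.
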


\begin{proof}
 To prove uniqueness, we notice that an abelian category with enough
projectives is determined by its full additive subcategory of
projective objects.
 Indeed, given an abelian category $\sB'$ with enough projectives
and an abelian category $\sB''$, any additive functor $\sB'_\proj
\rarrow\sB''$ can be uniquely extended to a right exact functor
$\sB'\rarrow\sB''$.
 In particular, let $\sB'$ and $\sB''$ be two abelian categories with
equivalent full subcategories of projectives $\sB'_\proj\simeq
\sB''_\proj$.
 Then the embedding functor $\sB'_\proj\rarrow\sB''$ extends uniquely
to a right exact functor $\sB'\rarrow\sB''$, while the embedding
functor $\sB''_\proj\rarrow\sB'$ extends uniquely to a right exact
functor $\sB''\rarrow\sB'$.
 The compositions $\sB'\rarrow\sB''\rarrow\sB'$ and $\sB''\rarrow
\sB'\rarrow\sB''$ are right exact functors isomorphic to
the identity functors on the full subcategories of projective
objects, so they are also naturally isomorphic to identity functors
on the whole abelian categories $\sB'$ and~$\sB''$.

 Now, given a locally Noetherian abelian category $\sA$, choose
an injective object $\cE\in\sA$ such that all the injectives in
$\sA$ are directs summands of the direct sums of copies of~$\cE$.
 E.~g., one can take $\cE$ to be the direct sum of injective
envelopes of all the quotient objects of Noetherian generators
of~$\sA$.
 Consider the topological ring $\R=\End_\sA(\cE)^\rop$ with a base
of neighborhoods of zero formed by the right ideals opposite to
the annihilators of Noetherian submodules $\L\subset\cE$
in $\End_\sA(\cE)$.
 Set $\sB=\R\contra$ to be the abelian category of left
$\R$\+contramodules.

 To identify the full subcategory of direct sums of copies of
the object $\cE$ in $\sA$ with the full subcategory of free
$\R$\+contramodules in $\sB$, one computes the Hom groups
\begin{multline*}\textstyle
 \Hom_\sA\big(\bigoplus_X\cE\;\bigoplus_Y\cE\big)\simeq
 \prod_X\Hom_\sA\big(\cE\;\bigoplus_Y\cE\big)\simeq
 \prod_X\varprojlim_{\L\subset\cE}
 \Hom_\sA\big(\L\;\bigoplus_Y\cE\big) \\
 \textstyle\simeq
 \prod_X\varprojlim_{\L\subset\cE}\bigoplus_Y\Hom_\sA(\L,\cE)\simeq
 \prod_X\varprojlim_{\J\subset\R}\bigoplus_Y\R/\J \\
 \textstyle\simeq
 \prod_X\R[[Y]]\simeq\prod_X\Hom^\R\big(\R,\R[[Y]]\big)\simeq
 \Hom^\R\big(\R[[X]],\R[[Y]]\big)
\end{multline*}
in both subcategories in terms of projective limits over
the Noetherian subobjects $\L\subset\cE$ and the open right
ideals $\J\subset\R$.
 Once again, one has to check that these isomorphisms agree with
the compositions of morphisms in the two categories.
 Adjoining the direct summands (the images of idempotent endomorphisms)
to both the full subcategories, one obtains the desired equivalence
between the full subcategories of injective objects in $\sA$ and
projective objects in~$\sB$.

 Any abelian category with infinite direct sums and a set of generators
has infinite products by Freyd's Special Adjoint Functor existence
Theorem~\cite[Corollary~V.8]{McL}; and the category of left contramodules
over a topological ring $\R$ with a base of neighborhoods of zero
consisting of open right ideals always has both the infinite direct
sums and products, as we have seen in Section~\ref{over-topol-rings}.
 In any abelian category the infinite direct sums of projective
objects are projective and the infinite products of injective objects
are injective.

 The infinite direct sums of injective objects in a locally Noetherian
abelian category $\sA$ are injective~\cite[Corollaire~II.4.1 and
Proposition~IV.2.6]{Gab}.
 Finally, to prove that the infinite products of projective objects in
our category $\sB$ are projective, we notice that any
family of objects of the full subcategory $\sB_\proj\subset\sB$
has an infinite product in $\sB_\proj$ (since any family of
objects of $\sA_\inj$ has an infinite product in $\sA_\inj$).
 It is claimed that whenever an abelian category $\sB$ has enough
projectives and the full subcategory of projectives $\sB_\proj\subset\sB$
has infinite products, these are also the infinite products of
objects of $\sB_\proj$ in the whole category $\sB$, i.~e., the embedding
functor $\sB_\proj\rarrow\sB$ preserves infinite products.

 Indeed, let an object $X\in\sB$ be presented as the cokernel of
a morphism of projective objects $Q\rarrow P$, so that there is
an initial fragment of projective resolution $Q\rarrow P\rarrow X
\rarrow0$ in~$\sB$.
 Let $F_\alpha$ be a family of objects in $\sB_\proj$ and
$F=\prod^{\sB_\proj}_\alpha F_\alpha$ be their product in $\sB_\proj$.
 Then one computes the group $\Hom_\sB(X,F)$ as the kernel of
the map of abelian groups $\Hom_\sB(P,F)\rarrow\Hom_\sB(Q,F)$, which
is isomorphic to the product of the kernels of the maps
$\Hom_\sB(P,F_\alpha)\rarrow\Hom_\sB(Q,F_\alpha)$, that is the group
$\prod_\alpha\Hom_\sB(X,F_\alpha)$.
\end{proof}

 A further discussion of the correspondence between the abelian
categories $\sA$ and $\sB$ described in the theorem can be found
in~\cite[Examples~6.3\+-6.5]{PS2} and~\cite[Section~10.2]{PS1}.

\subsection{$\Add(M)$ and projective contramodules} \label{addm}
 The following results and constructions from
the papers~\cite{PS1,PS2,Pper} provide a series of far-reaching
generalizations of the theorem and proof from the previous section.

 Let $\sA$ be an additive category with infinite direct sums
and $M\in\sA$ be an object.
 Then we denote by $\Add(M)\subset\sA$ the full subcategory in $\sA$
consisting of all the direct summands of (infinite) direct sums
of copies of the object~$M$.
 
 We start with the case of a module over an associative ring.

\begin{prop1}
 Let $R$ be an associative ring and $M$ be a left $R$\+module.
 Then there is a complete and separated topological ring\/ $\gS$
with a base of neighborhoods of zero formed by open right ideals
such that the full subcategory\/ $\Add(M)\subset R\modl$ is equivalent
to the category of projective left\/ $\gS$\+contramodules,
$\Add(M)\simeq\gS\contra_\proj$.
\end{prop1}

\begin{proof}
 When the $R$\+module $M$ is finitely generated, the category
$\Add(M)$ is equivalent to the category of projective left
$S$\+modules, $\Add(M)\simeq S\modl$, for the discrete ring $S$ of
endomorphisms of the $R$\+module~$M$.
 This is a classical result (see~\cite[Remark~7.2]{PS1} for
references and a further discussion).

 To be more precise, in the general case $\gS=\Hom_R(M,M)^\rop$ is
the opposite ring to the ring of endomorphisms of the $R$\+module $M$;
so the ring $\gS$ acts in $M$ on the right, making $M$
an $R$\+$\gS$\+bimodule.
 The topology on $\gS$ is defined by the condition that $M$ should be
a discrete right $\gS$\+module; specifically, the annihilators of
finitely generated $R$\+submodules $L\subset M$ form a base of
neighborhoods of zero in~$\gS$.
 We refer to~\cite[Theorem~7.1]{PS1} for the details.
\end{proof}

 The next proposition provides a generalization to the categorical
context.

\begin{prop2}
 Let\/ $\sA$ be a locally finitely presentable, or more generally,
a locally finitely generated abelian category in the sense of
the book~\cite[Sections~1.A and~1.E]{AR}.
 Let $M\in\sA$ be an object.
 Then there is a complete and separated topological ring\/ $\R$
with a base of neighborhoods of zero formed by open right ideals
such that the full subcategory\/ $\Add(M)\subset\sA$ is equivalent
to the category of projective left\/ $\R$\+contramodules,
$\Add(M)\simeq\R\contra_\proj$.
\end{prop2}

\begin{proof}
 As above, put $\R=\Hom_\sA(M,M)^\rop$; so $\R$ is opposite ring to
the ring of endomorphisms of $M$, or in other words, the universal ring
acting in the object $M\in\sA$ on the right.
 The annihilators of finitely generated subobjects $L\subset M$ form
a base of the topology on~$\R$.
 The proof is the same as in Proposition~1.
\end{proof}

 Even more generally, let $\sA$ be a \emph{locally weakly finitely
generated} abelian category with a generator, in the sense of
the paper~\cite[Section~9.2]{PS1} or which is the same,
a \emph{nearly locally finitely presentable} abelian category in
the sense of the paper~\cite{PR2}.
 Then, for any object $M\in\sA$, the full subcategory
$\Add(M)\subset\sA$ is equivalent to the category of projective
left contramodules $\R\contra_\proj$ over the topological ring
$\R=\Hom_\sA(M,M)^\rop$ with a base of the topology formed by
the annihilators of \emph{weakly finitely generated}
(\,$=$~nearly finitely presentable) subobjects $L\subset M$.
 This is the result of~\cite[Theorem~9.9]{PS1}.

 A further generalization to \emph{additive categories with closed
functors} discussed in~\cite[Section~9.3]{PS1} makes the above
assertions applicable to the abelian categories $\sA=\C\comodl$ and
$\sA=\S\simodl$ of comodules over corings and semimodules over
semialgebras~\cite[Section~10.3]{PS1}.
 These results allow to interpret the abelian categories of
left $\C$\+contramodules and left $\S$\+semicontramodules
$\C\contra$ and $\S\sicntr$ as the categories of contramodules
over appropriately constructed topological rings $\R$, as mentioned
above in Sections~\ref{over-corings} and~\ref{over-semialgebras}.

 The following proposition is even more general.

\begin{prop3}
 Let\/ $\sA$ be an idempotent-complete additive category with infinite
direct sums, and let $M\in\sA$ be an object.
 Then there exists a unique abelian category\/ $\sB$ with enough
projective objects such that the full subcategory\/ $\Add(M)\subset\sA$
is equivalent to the full subcategory of projective objects in $\sB$,
that is\/ $\Add(M)\simeq\sB_\proj$.
\end{prop3}

\begin{proof}
 The uniqueness was explained in the proof of
Theorem~\ref{underived-top-rings}.
 Concerning existence, there are two constructions of the category
$\sB$ suggested in~\cite[proof of Theorem~1.1(a)]{PS2}.
 Fistly, one can construct $\sB$ as the category of
\emph{finitely presented} (\emph{coherent}) \emph{functors}
$\Add(M)^\sop\rarrow\Ab$.
 Since the category $\Add(M)$ has weak kernels, the category of
coherent functors on it is abelian.

 Secondly, $\sB$ is the category of algebras/modules over the additive
monad $X\longmapsto\Hom_\sA(M,M^{(X)})$ on the category of
sets (see also~\cite[Section~6]{PS1} and~\cite[Section~1]{Pper}).
 The advantage of this point of view is that it specializes to
the above descriptions of $\sB$ as the abelian category of contramodules
over a topological ring, under appropriate assumptions.
\end{proof}

 An approach to the \emph{Enochs conjecture on covers and direct limits}
based on the results described above in this section was suggested in
the papers~\cite{Pproperf,BP3,BPS}.
 An application of these results to direct limits of classes of modules
was developed in~\cite{PT}.

\subsection{Fully faithful contramodule forgetful functors}
\label{fully-faithful}
 Let $\R$ be a complete and separated topological ring with a base of
neighborhoods of zero formed by open right ideals.
 Let $R$ be an associative ring, and let $\rho\:R\rarrow\R$ be a ring
homomorphism.
 Consider the composition of the natural forgetful functor
$\R\contra\rarrow\R\modl$ with the functor of restriction of
scalars $\R\modl\rarrow R\modl$.
 When is the forgetful functor $\R\contra\rarrow R\modl$ fully faithful?

 We have seen in Section~\ref{recovering} that the forgetful functors
$k[[z]]\contra\rarrow k[z]\modl$ and $\Z_l\contra\rarrow\Z\modl$ are
fully faithful.
 Moreover, for a right Noetherian associative ring $R$ and its
completion $\R=\varprojlim_n R/I^n$ in the adic topology of
a centrally generated ideal $I\subset R$, 
Theorem~\ref{over-adic-completions} stated that the forgetful functor
$\R\contra\rarrow R\modl$ is fully faithful and explicitly described
its essential image.
 Far-reaching generalizations of these relatively basic observations
were obtained in the papers~\cite[Theorem~2.1]{Psm},
\cite[Section~3]{Pper}, and~\cite[Section~6]{Pcoun}.

 Let us start with the case of a coalgebra $\C$ \cite[Section~2.2]{Psm}.
 A coalgebra $\C$ over a field~$k$ is said to be \emph{coaugmented}
if it is endowed with a coalgebra morphism (coagmentation)
$\gamma\:k\rarrow\C$.
 Over a coaugmented coalgebra $\C$, the one-dimensional vector
space~$k$ (as well as any other $k$\+vector space) carries
the so-called \emph{trivial} left $\C$\+comodule structure, which is
defined in terms of the coagmentation.
 The \emph{cohomology} of a coaugmented coalgebra $\C$ is
the graded vector space (or the graded algebra) of Yoneda Ext in
the category of left $\C$\+comodules, $H^*(\C)=\Ext_\C^*(k,k)$.
 In particular, one has $H^0(\C)=k$, and the space $H^1(\C)=
\Ext^1_\C(k,k)$ can be computed as the kernel of the comultiplication
map $\mu\:\C_+\rarrow\C_+\ot\C_+$, where $\C_+=\C/\gamma(k)$.

 A coaugmented coalgebra $\C$ is said to be \emph{conilpotent}
(see, e.~g., \cite[Sections~5.5 and~6.4]{Pkoszul}) if for every element
$c\in\C$ there exists an integer $m\ge0$ such that the element~$c$ is annihilated by the composition $\C\rarrow\C^{\ot m+1}\rarrow
\C_+^{\ot m+1}$ of the iterated comultiplication map
$\mu^{(m)}\:\C\rarrow\C^{\ot m+1}$ with the natural surjection
$\C^{\ot m+1}\rarrow\C_+^{\ot m+1}$.
 The vector space $H^1(\C)=\ker(\C_+\to C_+^{\ot2})$ is interpreted as
the vector space of \emph{cogenerators} of a conilpotent coalgebra $\C$
(see~\cite[Section~5]{Pqf} for a discussion of cogenerators and
corelations of conilpotent coalgebras).
 A conilpotent coalgebra $\C$ is said to be \emph{finitely cogenerated}
if the $k$\+vector space $H^1(\C)$ is finite-dimensional.

 Following the discussion in Section~\ref{over-top-algebras}, for any
coalgebra $\C$ over a field~$k$, a left $\C$\+contramodule is the same
thing as a left contramodule over the pro-finite-dimensional
topological algebra $\R=\C^*$.
 Hence we have a natural forgetful functor $\C\contra\rarrow\C^*\modl$
\cite[Section~A.1.2]{Psemi}, \cite[Section~2.1]{Psm}.
 Given a left $\C$\+contramodule $\P$, the left action of
the $k$\+algebra $\C^*$ in $\P$ is constructed as the composition
$\C^*\ot_k\P\rarrow\Hom_k(\C,\P)\rarrow\P$ of the natural embedding
$\C^*\ot_k\P\rarrow\Hom_k(\C,\P)$ with the $\C$\+contraaction map
$\pi_\P\:\Hom_k(\C,\P)\rarrow\P$.
 As above, given an associative ring homomorphism $\rho\:R\rarrow\C^*$,
we obtain a forgetful functor $\C\contra\rarrow R\modl$.

\begin{thm1}
 Let\/ $\C$ be a finitely cogenerated conilpotent coalgebra over~$k$.
 Then the forgetful functor\/ $\C\contra\rarrow\C^*\modl$ is fully
faithful.
 Moreover, for any dense subring $R\subset\C^*$ in
the pro-finite-dimensional topology of the $k$\+algebra\/ $\C^*$,
the forgetful functor\/ $\C\contra\rarrow R\modl$ is fully faithful.
\end{thm1}

\begin{proof}
 This is~\cite[Theorem~2.1]{Psm}.
 The argument is based on the contramodule Nakayama lemma for
coalgebras over fields~\cite[Lemma~A.2.1]{Psemi}.
\end{proof}

\begin{ex1}
 The following example is a particular case
of~\cite[Examples~3.3]{Pper}.
 Let $\R=k\{\{z_1,\dotsc,z_m\}\}$ be the algebra of noncommutative
formal Taylor power series in a finite set of variables $z_1$,~\dots,
$z_m$ over a field~$k$.
 We consider $\R$ as a topological ring in the formal power series
topology (or, in other words, the adic topology for the ideal
generated by the variables).
 Then $\R\simeq\C^*$ is the dual pro-finite-dimensional algebra to
the \emph{cofree conilpotent coalgebra} $\C$ with $m$~cogenerators
$z_1^*$,~\dots,~$z_m^*$.
 Let $R=k\{z_1,\dotsc,z_m\}$ be the $k$\+algebra of noncommutative
polynomials in $z_1$,~\dots, $z_m$, and let $\rho\:R\rarrow\R$ be
the natural embedding.
 According to Theorem~1, the forgetful functor $\R\contra\rarrow
R\modl$ is fully faithful.
\end{ex1}

 Now we turn to a discussion of topological rings $\R$ with
a \emph{countable} base of neighborhoods of zero.
 Let $\R$ be a topological ring and $\J\subset\R$ be a right ideal.
 One says that a finite set of elements $s_1$, \dots, $s_m\in\J$
\emph{strongly generates} $\J$ if for every family of elements
$r_x\in\J$, indexed by a set $X$ and converging to zero in
the topology of $\R$, there exist $m$~families of elements
$t_{j,x}\in\R$, \ $j=1$,~\dots,~$m$, each of them indexed by
the set $X$ and converging to zero in the topology of $\R$, such that
$r_x=\sum_{j=1}^m s_jt_{j,x}$ for every $x\in X$.
 Since any element $r\in\J$ can be viewed as a family of elements
indexed by a one-point set $X$, and any finite family of elements in
$\R$ converges to zero in $\R$ by definition, any finite family of
elements~$s_j$ strongly generating a right ideal $\J$ also generates
$\J$ in the conventional sense~\cite[Section~3]{Pper}.

 The following proposition is a generalization of Theorem~1 to
topological rings of much more general nature than
the pro-finite-dimensional algebras over fields.

\begin{prop}
 Let\/ $\R$ be a complete and separated topological associative ring
and $R\subset\R$ be a dense subring.
 Assume that\/ $\R$ has a countable base of neighborhoods of zero
consisting of open two-sided ideals\/ $\J$, each of which, viewed as
a right ideal, is strongly generated by a finite set of elements
belonging to $R\cap\J$.
 Then the forgetful functor\/ $\R\contra\rarrow R\modl$ is fully
faithful.
\end{prop}

\begin{proof}
 This is~\cite[Theorem~3.1]{Pper}.
 The argument is based on a suitable version of the contramodule
Nakayama lemma for topological rings (\cite[Lemma~D.1.2]{Pcosh}
or~\cite[Lemma~6.14]{PR}; see Lemma~1 below).
\end{proof}

 An even more general result can be found in~\cite[Section~6]{Pcoun}.
 Let $\R$ be a complete and separated topological ring with a base
of neighborhoods of zero formed by open right ideals, and let
$\rho\:R\rarrow\R$ be an associative ring homomorphism with a dense
image.
 Then the full preimages of open right ideals in $\R$ under~$\rho$
form a base of a topology on $R$, making $R$ a topological ring.
 The ring $\R$ is the completion of the ring $R$ in this topology.
 Furthermore, the assignment $\I\longmapsto I=\rho^{-1}(I)$ defines
a bijection between open right ideals $\I\subset\R$ and open right
ideals $I\subset R$ \cite[Section~4]{Pcoun}.

 Generalizing the previous definition, we say that an open right
ideal $I\subset R$ is \emph{strongly generated} (or, in other words,
the corresponding open right ideal $\I\subset\R$ is \emph{strongly
generated by elements coming from~$R$}) if, for any set $X$ and any
$X$\+indexed family of elements $r_x\in\I$ converging to zero in
the topology of $\R$, there exists a finite set of elements
$s_1$,~\dots, $s_m\in I$ and $m$~families of elements
$t_{j,x}\in\R$, \ $j=1$,~\dots,~$m$, each of them indexed by
the set $X$ and converging to zero in the topology of $\R$,
such that $r_x=\sum_{j=1}^m\rho(s_j)t_{j,x}$ for every $x\in X$.
 When the topological ring $\R$ has a countable base of neighborhoods
of zero, it suffices to check this condition for families of elements
$r_x$ belonging to the dense subring $\rho(R)\subset\R$
\cite[Lemma~6.4]{Pcoun}.

 Moreover, as explained in~\cite[Section~6]{Pcoun}, the property of
an open right ideal $\I\subset\R$ to be strongly generated by elements
coming from $R$ can be expressed consizely by the equation
$\I[[X]]=I\R[[X]]$.
 Here $\I[[X]]\subset\R[[X]]$ is the subgroup of all $X$\+indexed
zero-convergent families of elements from $\I$ in the left
$\R$\+module of all $X$\+indexed zero-convergent families of
elements from~$\R$.
 According to~\cite[Remark~6.5]{Pcoun}, an open right ideal $I\subset R$
is strongly generated by a finite set of its elements (in the sense of
our previous definition) whenever it is strongly generated \emph{and}
is finitely generated as a right ideal in an abstract associative
ring~$R$.

\begin{thm2}
 Let\/ $\R$ be a complete and separated topological ring with
a countable base of neighborhoods of zero consisting of open right
ideals, and let $\rho\:R\rarrow\R$ be a ring homomorphism with
a dense image.
 Then the forgetful functor\/ $\R\contra\rarrow R\modl$ is fully
faithful \emph{if and only if} all the open right ideals\/ $\I\subset\R$
are strongly generated by elements coming from~$R$.
 It suffices to check the latter condition for any chosen base of
neighborhoods of zero in\/ $\R$ consisting of open right ideals.
\end{thm2}

\begin{proof}
 This is~\cite[Theorem~6.2]{Pcoun}.
 Once again, the proof of the ``if'' part is based on the contramodule
Nakayama lemma, which is formulated immediately below.
\end{proof}

\begin{lem1}
 Let\/ $\R$ be a complete and separated topological ring with
a countable base of neighborhoods of zero consisting of open right
ideals, and let\/ $\P$ be a left\/ $\R$\+contramodule.
 Assume that one has\/ $\I\tim\P=\P$ for every open right ideal\/
$\I\subset\R$ (see Section~\ref{flat-contra} for the notation).
 Then\/ $\P=0$.
\end{lem1}

\begin{proof}
 This is~\cite[Lemma~6.14]{PR} (cf.\ Lemma~\ref{over-topol-rings}).
\end{proof}

 In the rest of this section we discuss the full-and-faithfulness of
certain forgetful functors originating from the categories of
semicontramodules $\S\sicntr$ over semialgebras $\S$ over finitely
cogenerated conilpotent coalgebras~$\C$ (see
Section~\ref{over-semialgebras}).

 Let us briefly recall a construction of semialgebras
from~\cite[Section~10.2]{Psemi} that was already used in
Sections~\ref{category-o-contra}\+-\ref{tate-harish-chandra} above.
 Let $k$ be a field, $\C$ be a coalgebra over $k$, and $K$ be
an associative algebra over~$k$.
 Let $K\rarrow\C^*$ be a homomorphism of $k$\+algebras whose image
is dense in the pro-finite-dimensional topology of~$\C^*$.
 Then the related pairing $\phi\:\C\ot_kK\rarrow k$ is nondegenerate
in~$\C$.
 Following~\cite[Section~10.1.4]{Psemi}, the composition $\comodr\C
\rarrow\modr\C^*\rarrow\modr K$ of the natural functor $\comodr\C\rarrow
\modr\C^*$ (see Section~\ref{over-l-adics}) with the functor of
restriction of scalars $\modr\C^*\rarrow\modr K$ is fully faithful.
 The same applies to the similar composition of functors $\C\comodl
\rarrow\C^*\modl\rarrow K\modl$ between the categories of
left (co)modules.

 Let $R$ be another associative algebra over~$k$ and $f\:K\rarrow R$
be a $k$\+algebra homomorphism such that $R$ is a flat left
$K$\+module in the (bi)module structure induced by~$f$.
 Set $\S=\C\ot_KR$, where the right $K$\+module structure on $\C$
is provided by the above functor $\comodr\C\rarrow\modr K$.
 Then $\S$ is naturally an injective left $\C$\+comodule and
a right $R$\+module.
 Assume that the underlying right $K$\+module structure on $\S$
originates from a right $\C$\+comodule structure (i.~e., the right
$K$\+module $\S$ belongs to the essential image of the fully
faithful functor $\comodr\C\rarrow\modr K$).
 Then the $\C$\+$\C$\+bicomodule $\S$ has a natural structure of
a semialgebra over $\C$ with the semiunit map $\be\:\C\rarrow\S$ induced
by the morphism $f\:K\rarrow R$ and the semimultiplication map
$\bm\:\S\oc_\C\S\rarrow\S$ induced by the multiplication map
$R\ot_KR\rarrow R$ \cite[Section~10.2.1]{Psemi}.

\begin{lem2}
 Assume that $R$ is a projective left $K$\+module.
 Then the abelian category of left\/ $\S$\+semicontramodules\/
$\S\sicntr$ is isomorphic to the category of $k$\+vector spaces\/ $\bP$
endowed with the structures of a left\/ $\C$\+contramodule and a left
$R$\+module satisfying the following two compatibility conditions:
firstly, the two underlying left $K$\+module structures should coincide,
and secondly, the $R$\+action map\/ $\bP\rarrow\Hom_K(R,\bP)$ should be
a morphism of left\/ $\C$\+contramodules.

 Here the left\/ $\C$\+contramodule structure on the $k$\+vector space\/
$\Hom_K(R,\bP)$ is provided by the natural isomorphism\/
$\Hom_K(R,\P)\simeq\Cohom_\C(\S,\P)$, which holds for any left\/
$\C$\+contramodule\/~$\P$.

 In particular, there is a natural exact, faithful forgetful functor\/
$\S\sicntr\rarrow R\modl$ forming a commutative square diagram with
the forgetful functors\/ $\S\sicntr\rarrow\C\contra\rarrow K\modl$
and $R\modl\rarrow K\modl$.
\end{lem2}

\begin{proof}
 This is explained in~\cite[Section~10.2.2]{Psemi}.
\end{proof}

\begin{cor}
 Let\/ $\C$ be a coalgebra over a field~$k$, let $K\rarrow R$ be
a morphism of associative algebras over~$k$ making $R$ a projective left
$K$\+module, and let $K\rarrow\C^*$ be a morphism of algebras with
a dense image.
 Assume that the right $K$\+module structure on\/ $\S=\C\ot_KR$ comes
from a right\/ $\C$\+comodule structure, so\/ $\S$ is a semialgebra
over\/ $\C$; and assume further that\/ $\C$ is a finitely cogenerated
conilpotent coalgebra.
 Then the forgetful functor\/ $\S\sicntr\rarrow R\modl$ is fully
faithful, and its essential image consists of all the left
$R$\+modules whose underlying left $K$\+module structure comes from
a left\/ $\C$\+contramodule structure.
\end{cor}

\begin{proof}
 By Theorem~1, the forgetful functor $\C\contra\rarrow K\modl$ is
fully faithful.
 Hence the first assertion of the corollary follows immediately from
the first assertion of Lemma~2.
 To deduce the second assertion of the corollary, it remains to observe
that, for any left $R$\+module $P$, the action map $P\rarrow\Hom_k(R,P)$
is a left $R$\+module morphism, hence also a left $K$\+module morphism,
and use Theorem~1 again.
\end{proof}

\begin{ex2}
 Let $\g$ be a Tate (locally linearly compact) Lie algebra
(see Section~\ref{over-Lie}), and let $\h\subset\g$ be a compact open
subalgebra.
 Assume that the topological Lie algebra~$\h$ is pro-nilpotent and
the discrete $\h$\+module $\g/\h$ is nilpotent.
 Let $\C$ be the coassociative coalgebra related to~$\h$ and
$(\g,\C)$ be the Tate Harish-Chandra pair related to $\g$ and $\h$
(see Section~\ref{tate-harish-chandra} and~\cite[Section~D.6]{Psemi}).
 Assume that the coalgebra $\C$ is finitely cogenerated, or
equivalently, the Lie algebra $\h$ is topologically finitely generated
(see~\cite[Section~D.6.1]{Psemi} for the isomorphism of cohomology
of the Lie coalgebra $\L=\h\dual$ and the coassociative coalgebra~$\C$).

 Let $\bar\g\subset\g$ be a dense Lie subalgebra; then $\bar\h=
\h\cap\bar\g$ is a dense Lie subalgebra in~$\h$.
 Consider the two enveloping algebras $K=U(\bar\h)$ and $R=U(\bar\g)$.
 Then the associative algebra morphism $K\rarrow R$ induced by
the embedding $\bar\h\rarrow\bar\g$ and the associative algebra
morphism $K\rarrow\C^*$ obtained as the composition
$U(\bar\h)\rarrow U(\h)\rarrow\C^*$ satisfy the assumptions of
Corollary.
 The semialgebra $\S=\C\ot_KR$ is naturally isomorphic to
the semialgebra $\S^r=\C\ot_{U(\h)}U(\g)$ from
Section~\ref{tate-harish-chandra}.

 Hence the following description of the category of (Tate)
Harish-Chandra contramodules $\sO^\ctr(\g,\C)=\S^r\sicntr$ is
provided by Corollary.
 The category of Harish-Chandra contramodules $\P$ over $(\g,\C)$ is
isomorphic to the category of $\bar\g$\+modules whose underlying
$\bar\h$\+module structure comes from a $\C$\+contramodule structure.
 In particular, the forgetful functor $\sO^\ctr(\g,\C)\rarrow
\bar\g\modl$ is fully faithful.

 For example, let $\g=\Vir$ be the Virasoro Lie algebra,
$\h\subset\Vir$ be a compact open subalgebra contained in
the topological span of the generators $L_i$, \,$i\ge1$, and
$\bar\g=\mathrm{Vir}\subset\Vir$ be a dense subalgebra.
 Then $\h$ is a topologically finitely generated pro-nilpotent
Lie algebra, so the above considerations apply.
 Thus the forgetful functor $\sO^\ctr(\Vir,\C)\rarrow
\mathrm{Vir}\modl$ is fully faithful.
\end{ex2}

\bigskip\addtocontents{toc}{\smallskip}


\begin{thebibliography}{999}
\smallskip \hbadness=4500

\bibitem{AR}
 J.~Ad\'amek, J.~Rosick\'y.
   Locally presentable  and accessible categories.
London Math.\ Society Lecture Note Series~189,
Cambridge University Press, 1994.

\bibitem{Agu}
 M.~Aguiar.
   Internal categories and quantum groups.
Cornell Univ.\ Ph.D. Thesis, 1997.  Available from
\url{http://www.math.cornell.edu/~maguiar/}

\bibitem{Ang}
 L.~Angeleri H\"ugel.
   Silting objects.
\textit{Bull.\ London Math.\ Soc.}\ \textbf{51}, \#4, p.~658--690, 2019.
\texttt{arXiv:1809.02815 [math.RT]}

\bibitem{AGM}
  V.~I.~Arnautov, S.~T.~Glavatsky, A.~V.~Mikhalev.
    Introduction to the theory of topological rings and modules.
Monographs and Textbooks in Pure and Applied Mathematics, 197,
Marcel Dekker, New York, 1996. 

\bibitem{Bar}
 M.~Barr.
   Coequalizers and free triples.
\textit{Math.\ Zeitschrift} \textbf{116}, \#4, p.~307--322, 1970.

\bibitem{Bas}
 H.~Bass.
   Finitistic dimension and a homological generalization of
semi-primary rings.
\textit{Trans.\ Amer.\ Math.\ Soc.}\ \textbf{95}, \#3, p.~466--488,
1960.

\bibitem{BP1}
 S.~Bazzoni, L.~Positselski.
   $S$\+almost perfect commutative rings.
\textit{Journ.\ of Algebra} \textbf{532}, p.~323--356, 2019.
\texttt{arXiv:1801.04820 [math.AC]}

\bibitem{BP2}
 S.~Bazzoni, L.~Positselski.
   Matlis category equivalences for a ring epimorphism.
\textit{Journ.\ of Pure and Appl.\ Algebra} \textbf{224}, \#10,
article ID~106398, 25~pp., 2020.  \texttt{arXiv:1907.04973 [math.RA]}

\bibitem{BP3}
 S.~Bazzoni, L.~Positselski.
   Covers and direct limits: a contramodule-based approach.
\textit{Math.\ Zeitschrift} \textbf{299}, \#1--2, p.~1--52, 2021.
\texttt{arXiv:1907.05537 [math.CT]}

\bibitem{BPS}
 S.~Bazzoni, L.~Positselski, J.~\v St\!'ov\'\i\v cek.
   Projective covers of flat contramodules.
\textit{Internat.\ Math.\ Research Notices}, published online at
\texttt{https://doi.org/10.1093/imrn/rnab202} in September~2021.
\texttt{arXiv:1911.11720 [math.RA]}

\bibitem{BGG}
 I.~N.~Bernstein, I.~M.~Gelfand, S.~I.~Gelfand.
   On a category of $\g$\+modules.
\textit{Functional Analysis and its Appl.}\ \textbf{10}, \#2,
p.~87--92, 1976.

\bibitem{BB}
 A.~Beilinson, J.~Bernstein.
   A proof of Jantzen conjectures.
\textit{Advances in Soviet Math.} \textbf{16}, \#1, p.~1--50, 1993.

\bibitem{BFM}
 A.~Beilinson, B.~Feigin, B.~Mazur.
   Notes on conformal field theory (incomplete), 1991.
Available from
\url{http://www.math.sunysb.edu/~kirillov/manuscripts.html}

\bibitem{BD}
 A.~Beilinson, V.~Drinfeld.
   Chiral algebras.
AMS Colloquium Publications, 51.  American Math.\ Society,
Providence, RI, 2004.

\bibitem{BD2}
 A.~Beilinson, V.~Drinfeld.
   Quantization of Hitchin's integrable system and Hecke
eigensheaves.  February 2000.  Available from
\url{http://www.math.utexas.edu/~benzvi/Langlands.html} or
\url{http://www.math.uchicago.edu/~drinfeld/langlands.html}

\bibitem{Beil}
 A.~Beilinson.
   Remarks on topological algebras.
\textit{Moscow Math.\ Journ.}\ \textbf{8}, \#1, p.~1--20, 2008.
\texttt{arXiv:0711.2527 [math.QA]}

\bibitem{BP}
 R.~Bezrukavnikov, L.~Positselski.
   On semi-infinite cohomology of finite-dimensional graded algebras.
\textit{Compositio Math.}\ \textbf{146}, \#2, pp.~480--496, 2010.
\texttt{arXiv:0803.3252 [math.QA]}

\bibitem{Bour}
 N.~Bourbaki.
   Topologie g\'en\'erale.  Chapitres 1 \`a~4.
Springer-Verlag Berlin--Heidelberg--New York, 2007.
R\'eimpression inchang\'ee de l'\'edition originale,
Hermann, Paris, 1971.

\bibitem{Bour2}
 N.~Bourbaki.
   Espaces vectoriels topologiques.  Chapitres 1 \`a~5.
Springer, 2007.
R\'eimpression inchang\'ee de l'\'edition originale,
Masson, Paris, 1981.

\bibitem{BK}
 A.~K.~Bousfield, D.~M.~Kan.
   Homotopy limits, completions and localizations.
\textit{Lecture Notes in Math.}\ \textbf{304}, Springer, 1972--1987.

\bibitem{Brz0}
 T.~Brzezi\'nski.
   The structure of corings: induction functors, Maschke-type
theorem, and Frobenius and Galois-type properties.
\textit{Algebras and Representation Theory} \textbf{5}, \#4,
p.~389--410, 2002.  \texttt{arXiv:math.RA/0002105}

\bibitem{BW}
 T.~Brzezinski, R.~Wisbauer.
   Corings and comodules.
London Mathematical Society Lecture Note Series, 309.
Cambridge University Press, Cambridge, 2003.

\bibitem{Brz}
 T.~Brzezi\'nski.
   Flat connections and (co)modules.
New Techniques in Hopf Algebras and Graded Ring Theory,
Universa Press, Wetteren, 2007, pp.~35--52.
\texttt{arXiv:math.QA/0608170}

\bibitem{Bsli}
 T.~Brzezi\'nski.  Contramodules.
   Slides of the presentation at the conference on ``Categories in
Geometry'', Split, September 2007.
 Available from
\url{http://www.irb.hr/korisnici/zskoda/BrzezinskiSplitSlides.pdf}

\bibitem{BBW}
 G.~B\"ohm, T.~Brzezi\'nski, R.~Wisbauer.
   Monads and comonads in module categories.
\textit{Journ.\ of Algebra} \textbf{233}, \#5, p.~1719--1747, 2009.
\texttt{arXiv:0804.1460 [math.RA]}

\bibitem{Bueh}
 T.~B\"uhler.
   Exact categories.
\textit{Expositiones Math.}\ \textbf{28}, \#1, p.~1--69, 2010.
\texttt{arXiv:0811.1480 [math.HO]}

\bibitem{CF}
 J.~W.~S.~Cassels and A.~Fr\"ohlich, Editors.
   Algebraic Number Theory.
 Proceedings of an instructional conference organized by
the London Mathematical Society (a NATO Advanced Study Institute)
with the support of the International Mathematical Union.
 Academic Press, 1967.

\bibitem{DM}
 P.~Deligne, J.~S.~Milne.
   Tannakian categories.
\textit{Lecture Notes Math.}\ \textbf{900}, Springer-Verlag,
Berlin--Heidelberg--New York, 1982, p.~101--228.

\bibitem{DeGa}
 M.~Demazure, P.~Gabriel.
   Groupes alg\'ebriques, Tome~I.
North-Holland, Amsterdam, 1970.

\bibitem{Dix}
 J.~Dixmier.
   Enveloping Algebras.
Translated from French.  North-Holland, Amsterdam, 1977.
Or: Graduate Studies in Math.\ 11, AMS, Providence, RI, 1996.

\bibitem{Dur}
 N.~Durov.
   New approach to Arakelov geometry.
Doctoral Dissertation, University of Bonn, 2007.
\texttt{arXiv:0704.2030 [math.AG]}

\bibitem{DG}
 W.~G.~Dwyer, J.~P.~C.~Greenlees.
   Complete modules and torsion modules.
\textit{American Journ.\ of Math.}\ \textbf{124}, \#1, p.~199--220,
2002.

\bibitem{Psing}
 A.~I.~Efimov, L.~Positselski.
   Coherent analogues of matrix factorizations and relative
singularity categories.
\textit{Algebra and Number Theory} \textbf{9}, \#5, p.~1159--1292,
2015.  \texttt{arXiv:1102.0261 [math.CT]}

\bibitem{EM}
 S.~Eilenberg, J.~C.~Moore.
   Foundations of relative homological algebra.
\textit{Memoirs of the American Math.\ Society} \textbf{55}, 1965,
39~pp.

\bibitem{ET}
 P.~C.~Eklof, J.~Trlifaj.
   How to make Ext vanish.
\textit{Bulletin of the London Math.\ Society} \textbf{33}, \#1,
p.~41--51, 2001.

\bibitem{En}
 E.~Enochs.
   Flat covers and flat cotorsion modules.
\textit{Proceedings of the American Math. Society} \textbf{92}, \#2,
p.~179--184, 1984.

\bibitem{FF0}
 B.~L.~Feigin, D.~B.~Fuchs.
   Verma modules over the Virasoro algebra.
\textit{Functional Analysis and its Appl.}\ \textbf{17}, \#3,
p.~241\+-242, 1983.

\bibitem{FF}
 B.~L.~Feigin, D.~B.~Fuchs.
   Verma modules over the Virasoro algebra.
Topology (Leningrad, 1982), p.~230--245,
\textit{Lecture Notes in Math.} \textbf{1060}, Springer-Verlag,
Berlin, 1984.

\bibitem{Feig}
 B.~L.~Feigin.
   The semi-infinite homology of Kac-Moody and Virasoro Lie algebras.
\textit{Russian Math.\ Surveys} \textbf{39}, \#2, p.~155--156, 1984.

\bibitem{FG}
 E.~Frenkel, D.~Gaitsgory.
   Local geometric Langlands correspondence and affine Kac--Moody
algebras.
Algebraic Geometry and Number Theory (in honor of V.~Drinfeld's
50th birthday), Birkh\"auser Boston, 2006, p.~69--260.

\bibitem{FK}
 V.~Futorny, L.~K\v ri\v zka.
   Positive energy representations of affine vertex algebras.
\textit{Communicat.\ in Math.\ Physics} \textbf{383}, \#2,
p.~841--894, 2021.  \texttt{arXiv:2002.05586 [math.RT]}

\bibitem{Gab}
 P.~Gabriel.
   Des cat\'egories ab\'eliennes.
\textit{Bulletin de la Soc.\ Math.\ de France} \textbf{90},
p.~323--448, 1962.

\bibitem{GK2}
 D.~Gaitsgory, D.~Kazhdan.
   Algebraic groups over a 2-dimensional local field: some further
constructions.
Studies in Lie theory, p.~97--130, \textit{Progress in Math.}
\textbf{243}, Birkh\"auser Boston, Boston, MA, 2006.
\texttt{arXiv:math.RT/0406282}

\bibitem{GW}
 K.~Goodearl, R.~Warfield.
   An introduction to noncommutative Noetherian rings.
London Mathematical Society Student Texts, 16.
Cambridge University Press, 1989--2004.

\bibitem{GrToh}
 A.~Grothendieck.
   Sur quelques points d'alg\`ebre homologique.
\textit{Tohoku Math.\ Journ.\ (2)} \textbf{9}, \#2, p.~119--221, 1957.

\bibitem{Har}
 D.~K.~Harrison.
   Infinite abelian groups and homological methods.
\textit{Annals of Math.}\ \textbf{69}, \#2, p.~366--391, 1959.

\bibitem{Hum}
 J.~E.~Humphreys.
   Representations of semisimple Lie algebras in the BGG
category~$\mathcal O$.
Graduate Studies in Math.\ 94, AMS, Providence, RI, 2008.

\bibitem{IK}
 S.~Iyengar, H.~Krause.
   Acyclicity versus total acyclicity for complexes over
noetherian rings.
\textit{Documenta Math.} \textbf{11}, p.~207--240, 2006.

\bibitem{Jan}
 U.~Jannsen.
   Continuous \'etale cohomology.
\textit{Mathematische Annalen} \textbf{280}, \#2, p.~207--245, 1988.

\bibitem{Jor}
 P.~J\o rgensen.
   The homotopy category of complexes of projective modules.
\textit{Advances in Math.} \textbf{193}, \#1, p.~223--232, 2005.
\texttt{arXiv:math.RA/0312088}

\bibitem{KaRa}
 V.~G.~Kac, A.~K.~Raina.
   Bombay lectures on highest weight representations of
infinite-dimensional Lie algebras.
Advanced Series in Math.\ Physics, vol.~2, World Scientific,
Singapore--New Jersey--Hong Kong, 1987.

\bibitem{KL}
 D.~Kazhdan, G.~Lusztig.
   Tensor structures arising from affine Lie algebras.~I.
\textit{Journ.\ of the American Math.\ Society} \textbf{6}, \#4,
p.~905--947, 1993.

\bibitem{Kle}
 M.~Kleiner.
   Adjoint monads and an isomorphism of the Kleisli categories.
\textit{Journ.\ of Algebra} \textbf{133}, \#1, p.~79\+-82, 1990.

\bibitem{KR}
 M.~Kontsevich, A.~Rosenberg.
   Noncommutative smooth spaces.
\textit{The Gelfand Mathematical Seminars 1996--1999}, p.~85--108,
Birkh\"auser Boston, Boston, MA, 2000.  \texttt{arXiv:math.AG/9812158}

\bibitem{KR2}
 M.~Kontsevich, A.~Rosenberg.
   Noncommutative spaces and flat descent.
Max-Planck-Institut f\"ur Mathematik (Bonn) preprint MPIM\.2004-36.

\bibitem{Kra}
 H.~Krause.
   The stable derived category of a Noetherian scheme.
\textit{Compositio Math.}\ \textbf{141}, \#5, p.~1128--1162, 2005.
\texttt{arXiv:math.AG/0403526}

\bibitem{McL}
 S.~MacLane.
   Categories for the working mathematician.
Graduate Texts in Mathematics, 5.  Springer-Verlag, New York--Berlin,
1971--1998.

\bibitem{Mat1}
 E.~Matlis.
   Injective modules over Noetherian rings.
\textit{Pacific Journ.\ of Math.}\ \textbf{8}, \#3, p.~511--528, 1958.

\bibitem{Mat2}
 E.~Matlis.
   The higher properties of $R$\+sequences.
\textit{Journ.\ of Algebra} \textbf{50}, \#1, p.~77--112, 1978.

\bibitem{Mats}
 H.~Matsumura.
   Commutative ring theory.
Translated by M.~Reid.
Cambidge Studies in Advanced Mathematics, 8.
Cambridge University Press, 1986--2006.

\bibitem{Murf}
 D.~Murfet.
   The mock homotopy category of projectives and Grothendieck duality.
Ph.~D.\ Thesis, Australian National University, September 2007.
Available from \url{http://www.therisingsea.org/thesis.pdf}

\bibitem{Neem}
 A.~Neeman.
   The homotopy category of flat modules, and Grothendieck duality.
\textit{Inventiones Math.}\ \textbf{174}, p.~225--308, 2008.

\bibitem{VO}
 A.~L.~Onishchik, E.~B.~Vinberg.
   Lie groups and algebraic groups.
Translated from the Russian and with a preface by D.~A.~Leites.
Springer Series in Soviet Mathematics, Springer-Verlag, Berlin, 1990. 

\bibitem{PSY}
 M.~Porta, L.~Shaul, A.~Yekutieli.
   On the homology of completion and torsion.
\textit{Algebras and Representation Theory} \textbf{17}, \#1, p.~31--67,
2014.  \texttt{arXiv:1010.4386 [math.AC]}

\bibitem{PSY2}
 M.~Porta, L.~Shaul, A.~Yekutieli.
   Cohomologically cofinite complexes.
\textit{Communications in Algebra} \textbf{43}, \#2, p.~597--615, 2015.
\texttt{arXiv:1208.4064 [math.AC]}

\bibitem{Plet}
 L.~Positselski.
   Seriya pisem pro polubeskonechnye (ko)gomologii associativnyh algebr
(``A series of letters about the semi-infinite (co)homology of
associative algebras'', transliterated Russian).  2000, 2002.
Available from \url{http://positselski.livejournal.com/314.html}
or \url{http://posic.livejournal.com/413.html}

\bibitem{Psemi}
 L.~Positselski.
   Homological algebra of semimodules and semicontramodules:
Semi-infinite homological algebra of associative algebraic structures.
 Appendix~C in collaboration with D.~Rumynin; Appendix~D in
collaboration with S.~Arkhipov.
 Monografie Matematyczne vol.~70, Birkh\"auser/Springer Basel, 2010. 
xxiv+349~pp. \texttt{arXiv:0708.3398 [math.CT]}

\bibitem{Pkoszul}
 L.~Positselski.
   Two kinds of derived categories, Koszul duality, and
comodule-contramodule correspondence.
\textit{Memoirs of the American Math.\ Society} \textbf{212},
\#996, 2011.  vi+133~pp.  \texttt{arXiv:0905.2621 [math.CT]}

\bibitem{Pweak}
 L.~Positselski.
   Weakly curved A${}_\infty$-algebras over a topological local ring.
\textit{M\'emoires de la Soci\'et\'e Math\'ematique de France}
\textbf{159}, 2018.  vi+206~pp.  \texttt{arXiv:1202.2697 [math.CT]}
 
\bibitem{Pcosh}
 L.~Positselski.
   Contraherent cosheaves.
Electronic preprint \texttt{arXiv:1209.2995 [math.CT]}, 2012--17.

\bibitem{Psli}
 L.~Positselski.
   Comodule-contramodule correspondence.
Slides of the presentation at the meeting in T\v re\v st\!',
Czech Republic, April 2014.  Expanded version.
Available from \url{http://math.cas.cz/~positselski/}

\bibitem{Pmgm}
 L.~Positselski.
   Dedualizing complexes and MGM duality.
\textit{Journ.\ of Pure and Appl.\ Algebra} \textbf{220}, \#12,
p.~3866--3909, 2016.  \texttt{arXiv:1503.05523 [math.CT]}

\bibitem{Pfp}
 L.~Positselski.
   Coherent rings, fp\+injective modules, dualizing complexes, and
covariant Serre--Grothendieck duality.
\textit{Selecta Math.\ (New Ser.)} \textbf{23}, \#2, p.~1279--1307,
2017.  \texttt{arXiv:1504.00700 [math.CT]}

\bibitem{Pqf}
 L.~Positselski.
   Koszulity of cohomology $=$ $K(\pi,1)$\+ness $+$ quasi-formality.
\textit{Journ.\ of Algebra} \textbf{483}, p.~188--229, 2017.
\texttt{arXiv:1507.04691 [math.KT]}

\bibitem{Psli2}
 L.~Positselski.
   Semi-infinite algebraic geometry.
Slides of the presentation at the conference ``Some Trends
in Algebra 2015'', Prague, September~2015.
Available from \url{http://math.cas.cz/~positselski/}
   
\bibitem{Pcta}
 L.~Positselski.
   Contraadjusted modules, contramodules, and reduced cotorsion modules.
\textit{Moscow Math.\ Journ.}\ \textbf{17}, \#3, p.~385--455, 2017.
\texttt{arXiv:1605.03934 [math.CT]}

\bibitem{PMat}
 L.~Positselski.
   Triangulated Matlis equivalence.
\textit{Journ.\ of Algebra and its Appl.}\ \textbf{17}, \#4,
article ID~1850067, 2018.  \texttt{arXiv:1605.08018 [math.CT]}

\bibitem{Psm}
 L.~Positselski.
   Smooth duality and co-contra correspondence.
\textit{Journ.\ of Lie Theory} \textbf{30}, \#1, p.~85--144, 2020.
\texttt{arXiv:1609.04597 [math.CT]}

\bibitem{Pper}
 L.~Positselski.
   Abelian right perpendicular subcategories in module categories.
Electronic preprint \texttt{arXiv:1705.04960 [math.CT]}, 2017--18.

\bibitem{Pproperf}
 L.~Positselski.
   Contramodules over pro-perfect topological rings.
\textit{Forum Mathematicum} \textbf{34}, \#1, p.~1--39, 2022.
\texttt{arXiv:1807.10671 [math.CT]}

\bibitem{Pcoun}
 L.~Positselski.
   Flat ring epimorphisms of countable type.
\textit{Glasgow Math.\ Journ.} \textbf{62}, \#2, p.~383--439, 2020.
\texttt{arXiv:1808.00937 [math.RA]}

\bibitem{Psli3}
 L.~Positselski.
   Contramodules: their history, and applications in commutative
and noncommutative algebra.
Slides of the presentation at the external meeting of the Institute
of Mathematics of the Czech Academy of Sciences, Zv\'anovice,
October~2018.  Available from \url{http://math.cas.cz/~positselski/}

\bibitem{Prel}
 L.~Positselski.
   Relative nonhomogeneous Koszul duality.
Frontiers in Mathematics, Birkh\"auser/Springer Nature, Cham,
Switzerland, 2021.  xxix+278~pp.  \texttt{arXiv:1911.07402 [math.RA]}

\bibitem{Pdc}
 L.~Positselski.
   Remarks on derived complete modules and complexes.
Electronic preprint \texttt{arXiv:2002.12331 [math.AC]}, 2020--21,
to appear in \textit{Math.\ Nachrichten}.

\bibitem{Pextop}
 L.~Positselski.
   Exact categories of topological vector spaces with linear topology.
Electronic preprint \texttt{arXiv:2012.15431 [math.CT]}, 2020--21.

\bibitem{PR}
 L.~Positselski, J.~Rosick\'y.
   Covers, envelopes, and cotorsion theories in locally presentable
abelian categories and contramodule categories.
\textit{Journ.\ of Algebra} \textbf{483}, p.~83--128, 2017.
\texttt{arXiv:1512.08119 [math.CT]}

\bibitem{PR2}
 L.~Positselski, J.~Rosick\'y.
   Nearly locally presentable categories.
\textit{Theory and Appl.\ of Categories} \textbf{33}, \#10,
p.~253--264, 2018.  \texttt{arXiv:1710.10476 [math.CT]}

\bibitem{PSl1}
 L.~Positselski, A.~Sl\'avik.
   Flat morphisms of finite presentation are very flat.
\textit{Annali di Matem.\ Pura ed Appl.}\ \textbf{199}, \#3,
p.~875--924, 2020.  \texttt{arXiv:1708.00846 [math.AC]}
 
\bibitem{PSl2}
 L.~Positselski, A.~Sl\'avik.
   On strongly flat and weakly cotorsion modules.
\textit{Math.\ Zeitschrift} \textbf{291}, \#3--4, p.~831--875, 2019.
\texttt{arXiv:1708.06833 [math.AC]}

\bibitem{PS1}
 L.~Positselski, J.~\v St\!'ov\'\i\v cek.
   The tilting-cotilting correspondence.
\textit{Internat.\ Math.\ Research Notices} \textbf{2021}, \#1,
p.~189--274, 2021.  \texttt{arXiv:1710.02230 [math.CT]} 

\bibitem{PS2}
 L.~Positselski, J.~\v St\!'ov\'\i\v cek.
   $\infty$-tilting theory.
\textit{Pacific Journ.\ of Math.} \textbf{301}, \#1,
p.~297--334, 2019.  \texttt{arXiv:1711.06169 [math.CT]}

\bibitem{PS3}
 L.~Positselski, J.~\v St\!'ov\'\i\v cek.
   Topologically semisimple and topologically perfect topological rings.
Electronic preprint \texttt{arXiv:1909.12203 [math.CT]}, 2019--21,
to appear in \textit{Publicacions Matem\`atiques}.

\bibitem{PT}
 L.~Positselski, J.~Trlifaj.
   Closure properties of $\varinjlim\mathcal C$.
Electronic preprint \texttt{arXiv:2110.13105 [math.RA]}, 2021.

\bibitem{PV}
 C.~Psaroudakis, J.~Vit\'oria.
   Realization functors in tilting theory.
\textit{Math.\ Zeitschrift} \textbf{288}, \#3--4, p.965--1028, 2018.
\texttt{arXiv:1511.02677 [math.RT]}

\bibitem{RZ}
 L.~Ribes, P.~Zalesskii.
   Profinite groups.
Ergebnisse der Mathematik und ihrer Grenzgebiete, 40.
Springer, Berlin--Heidelberg, 2000--2010.

\bibitem{RCW}
 A.~Rocha-Caridi, N.~Wallach.
   Characters of irreducible representations of the Virasoro algebra.
\textit{Math.\ Zeitschrift} \textbf{185}, \#1, p.~1--21, 1984.

\bibitem{RD}
 W.~Roelcke, S.~Dierolf.
   Uniform structures on topological groups and their quotients.
Advanced Book Program, McGraw--Hill, New York, 1981.  xi+276~pp.

\bibitem{Ser}
 J.-P.~Serre.
   Galois cohomology.
Translated from French by P.~Ion.
Springer, Berlin--Heidelberg, 1997--2002.

\bibitem{Sim}
 A.-M.~Simon.
   Approximations of complete modules by complete big
Cohen--Macaulay modules over a Cohen--Macaulay local ring.
\textit{Algebras and Representation Theory} \textbf{12}, \#2--5,
p.~385--400, 2009.

\bibitem{Ste}
 B.~Stenstr\"om.
   Rings of quotients.  An introduction to methods of ring theory.
Springer-Verlag, Berlin--Heidelberg--New York, 1975. 

\bibitem{Sto}
 J.~\v St\!'ov\'\i\v cek.
   On purity and applications to coderived and singularity categories.
Electronic preprint \texttt{arXiv:1412.1615 [math.CT]}, 2014.

\bibitem{Swe}
 M.~E.~Sweedler.
   Hopf algebras.
Mathematics Lecture Note Series, W.~A.~Benjamin, Inc., New York, 1969.

\bibitem{Vaz}
 R.~V\'azquez Garc\'\i a.
   The category of the triples in a category. (Spanish)
\textit{Anales del Instituto de Matem\'aticas, Universidad Nacional
Aut\'onoma de M\'exico} \textbf{5}, p.~21--34, 1965.

\bibitem{Vit}
 E.~M.~Vitale.
   Localizations of algebraic categories~II.
\textit{Journ.\ of Pure and Appl.\ Algebra} \textbf{133}, \#3,
p.~317--326, 1998.

\bibitem{Wal}
 N.~Wallach.
   Real reductive groups I, II.
Pure and Applied Mathematics, 132.  Academic Press, 1988, 1992.

\bibitem{Wis}
 R.~Wisbauer.
   Comodules and contramodules.
\textit{Glasgow Math.\ Journ.} \textbf{52}, \#A, pp.~151--162, 2010.

\bibitem{Wr}
 G.~C.~Wraith.
   Algebraic theories.
Lecture Notes Series, vol.~22, Matematisk Institut,
Aarhus Universitet, 1970.

\bibitem{Yak0}
 M.~Yakimov.
   Affine Jacquet functors and Harish-Chandra categories.
\textit{Advances in Math.}\ \textbf{208}, \#1, p.~40--74, 2007.
\texttt{arXiv:math.RT/0302269}

\bibitem{Yak1}
 M.~Yakimov.
   Categories of modules over an affine Kac--Moody algebra and
finiteness of the Kazhdan--Lusztig tensor product.
\textit{Journ.\ of Algebra} \textbf{319}, \#8, p.~3175--3196, 2008.
\texttt{arXiv:math.RT/0210180}

\bibitem{Yek1}
 A.~Yekutieli.
   On flatness and completion for infinitely generated modules over
noetherian rings.
\textit{Communications in Algebra} \textbf{39}, \#11, p.~4221--4245,
2011.  \texttt{arXiv:0902.4378 [math.AC]}

\bibitem{Yek2}
 A.~Yekutieli.
   A separated cohomologically complete module is complete.
\textit{Communications in Algebra} \textbf{43}, \#2, p.~616--622, 2015.
\texttt{arXiv:1312.2714 [math.AC]}

\end{thebibliography}
\end{document}